\providecommand{\tabularnewline}{\\}
\numberwithin{equation}{section}
\numberwithin{figure}{section}
\theoremstyle{plain}
\newtheorem{thm}{\protect\theoremname}
  \theoremstyle{remark}
  \newtheorem{rem}[thm]{\protect\remarkname}
  \theoremstyle{definition}
  \newtheorem{defn}[thm]{\protect\definitionname}
  \theoremstyle{definition}
  \newtheorem{example}[thm]{\protect\examplename}
  \theoremstyle{plain}
  \newtheorem{cor}[thm]{\protect\corollaryname}
  \theoremstyle{plain}
  \newtheorem{lem}[thm]{\protect\lemmaname}
  \theoremstyle{plain}
  \newtheorem{prop}[thm]{\protect\propositionname}
  \theoremstyle{plain}
  \newtheorem{fact}[thm]{\protect\factname}
  \theoremstyle{plain}
  \newtheorem{conjecture}[thm]{\protect\conjecturename}
  \theoremstyle{remark}
  \newtheorem{claim}[thm]{\protect\claimname}
\date{\today}
\theoremstyle{plain}     
\long\def\symbolfootnote[#1]#2{\begingroup%
\def\thefootnote{\fnsymbol{footnote}}\footnote[#1]{#2}\endgroup}
\numberwithin{thm}{section}
\theoremstyle{plain}
\newtheorem{claim}[thm]{\protect\claimname}
\theoremstyle{plain}
\newcommand{\genus}{\mathfb{genus}}
\newcommand{\FigBesBeg}[1][1.0]{%
 \let\MyFigure\figure
 \let\MyEndfigure\endfigure
 \renewenvironment{figure}[1]{\begin{SCfigure}[#1]##1}{\end{SCfigure}}}
\newcommand{\FigBesEnd}{%
 \let\figure\MyFigure
 \let\endfigure\MyEndfigure}
  \providecommand{\claimname}{Claim}
  \providecommand{\conjecturename}{Conjecture}
  \providecommand{\corollaryname}{Corollary}
  \providecommand{\definitionname}{Definition}
  \providecommand{\examplename}{Example}
  \providecommand{\factname}{Fact}
  \providecommand{\lemmaname}{Lemma}
  \providecommand{\propositionname}{Proposition}
  \providecommand{\remarkname}{Remark}
\providecommand{\theoremname}{Theorem}
\begin{document}
\global\long\def\genus{\mathrm{genus}}
 \global\long\def\Hom{\mathrm{Hom}}
 \global\long\def\surface{\Sigma_{g,1} }
 \global\long\def\wedger{{\textstyle \bigvee^{r}S^{1}} }
 \global\long\def\Stab{\mathrm{Stab}}
 \global\long\def\aa{\vec{\alpha}}
 \global\long\def\bb{\vec{\beta}}
 \global\long\def\cc{\vec{\gamma}}
 \global\long\def\MCG{\mathrm{MCG}}
\global\long\def\ap{{\cal AP} }
\global\long\def\pmp{{\cal PMP} }
\global\long\def\trw{{\cal T}r_{w} }
\global\long\def\trwl{{\cal T}r_{w_{1},\ldots,w_{\ell}} }
 \global\long\def\tr{{\cal T}r }
 \global\long\def\cl{{\cal \mathrm{cl}} }
\global\long\def\wg{{\cal \mathrm{Wg}} }
\global\long\def\moeb{\mathrm{M\ddot{o}b} }
\global\long\def\F{\mathrm{\mathbf{F}} }
\global\long\def\P{{\cal P} }

\global\long\def\id{\mathrm{id}}
\global\long\def\e{\varepsilon}
\global\long\def\U{\mathcal{U}}
\global\long\def\Aut{\mathrm{Aut}}
\global\long\def\E{\mathbb{E}}
 \global\long\def\Q{\mathbb{\mathbb{\mathbf{Q}}}}
\global\long\def\d{\delta}
\global\long\def\G{\Gamma}
\global\long\def\g{\gamma}
\global\long\def\ch{\mathrm{chi}}
\global\long\def\sol{\mathrm{Solu}}
\global\long\def\match{\mathrm{Match}}
\global\long\def\wl{w_{1},\ldots,w_{\ell}}
\global\long\def\st{\sigma,\tau}

\title{Word Measures on Unitary Groups}

\author{Michael Magee%
\thanks{Author Magee was partially supported by the National Science Foundation
under agreement No. DMS-1128155.%
} ~~and~~ Doron Puder%
\thanks{Author Puder was supported by the Rothschild fellowship and by the
National Science Foundation under agreement No. DMS-1128155.%
}}
\maketitle
\begin{abstract}
We combine concepts from random matrix theory and free probability
together with ideas from the theory of commutator length in groups
and maps from surfaces, and establish new connections between the
two. 

More particularly, we study measures induced by free words on the
unitary groups $\U\left(n\right)$. Every word $w$ in the free group
$\F_{r}$ on $r$ generators determines a word map from $\U\left(n\right)^{r}$
to $\U\left(n\right)$, defined by substitutions. The $w$-measure
on $\U\left(n\right)$ is defined as the pushforward via this word
map of the Haar measure on $\U\left(n\right)^{r}$.

Let $\trw\left(n\right)$ denote the expected trace of a random unitary
matrix sampled from $\U\left(n\right)$ according to the $w$-measure.
It was shown by Voiculescu \cite{VOIC} that for $w\ne1$ this expected
trace is $o\left(n\right)$ asymptotically in $n$. We relate the
numbers $\trw\left(n\right)$ to the theory of commutator length of
words and obtain a much stronger statement: $\trw\left(n\right)=O\left(n^{1-2g}\right)$,
where $g$ is the commutator length of $w$. Moreover, we analyze
the number\linebreak{}
$\lim_{n\to\infty}n^{2g-1}\cdot\trw\left(n\right)$ and show it is
an integer which, roughly, counts the number of (equivalence classes
of) solutions to the equation $\left[u_{1},v_{1}\right]\ldots\left[u_{g},v_{g}\right]=w$
with $u_{i},v_{i}\in\F_{r}$. 

Similar results are obtained for finite sets of words and their
commutator length, and we deduce that one can {}``hear'' the stable
commutator length of a word by {}``listening'' to its unitary measures.
\end{abstract}
\tableofcontents{}

\section{Introduction\label{sec:Introduction}}

\subsection{The expected trace \label{sub:The-expected-trace}}

Let $x_{1},\ldots,x_{r}$\marginpar{$x_{1},\ldots,x_{r}$} denote
generators of the free group $\F_{r}$ on $r$ generators. Consider
a word $w\in\F_{r}$, given by
\begin{equation}
w=\prod_{1\leq j\leq\left|w\right|}x_{i_{j}}^{\varepsilon_{j}},\label{eq:word-expression}
\end{equation}
where each $\e_{j}\in\{\pm1\}$ and%
\footnote{We use the standard notation $\left[r\right]$ for $\left\{ 1,\ldots,r\right\} $.%
} $i_{j}\in[r]$. Let $\left(\U\left(n\right),\mu_{n}\right)$\marginpar{$\left(\U\left(n\right),\mu_{n}\right)$}
be the probability space of $n\times n$ unitary matrices, equipped
with unit-normalized Haar measure. We consider a tuple $\{U_{i}^{\left(n\right)}\}{}_{i\in\left[r\right]}$
of $r$ independent random matrices sampled from $\left(\U\left(n\right),\mu_{n}\right)$.
For each $n$ we can form the \textbf{\textit{\emph{word map}}}\textit{}%
\footnote{Unless we stick to reduced forms, every word $w\in\F_{r}$ has different
expressions as products of the generators $x_{1},\ldots,x_{r}$ and
their inverses. However, the word map $w:\U\left(n\right)^{r}\to\U\left(n\right)$
is well-defined independently of the particular expression. Namely,
omitting from the expression for $w$ or adding to it subwords of
the form $x_{i}x_{i}^{-1}$ or $x_{i}^{-1}x_{i}$ does not effect
the resulting word map.%
} 
\begin{equation}
w:\U\left(n\right){}^{r}\to\U\left(n\right),\quad w\left(u_{1},\ldots,u_{r}\right)\equiv\prod_{1\leq j\leq\left|w\right|}u_{i_{j}}^{\e_{j}}\label{eq:word}
\end{equation}
where we abuse notation to identify $w$ with the corresponding map
and suppress the dependence on $n$. We call the pushforward by $w$
of the Haar measure $\mu_{n}^{\, r}$ on $\U\left(n\right)^{r}$ the
\textbf{$w$-measure} on $\U\left(n\right)$. In this paper we study
word measures on $\U\left(n\right)$ and relate them to algebraic
properties of the word $w$.

Word measures on unitary groups were studied mostly in the context
of free probability. Let $\mathrm{tr}$\marginpar{$\mathrm{tr}$}
denote the standard trace on complex $n\times n$ matrices, and denote
by $\trw\left(n\right)$\marginpar{$\trw\left(n\right)$} the expected
value of the trace of a random unitary matrix in ${\cal U}\left(n\right)$
under the $w$-measure. It is a fundamental result of Voiculescu \cite[Theorem 3.8]{VOIC}
that for $w\in\F_{r}$, 
\begin{equation}
\trw\left(n\right)\overset{\mathrm{def}}{=}\E\left[\mathrm{tr}\left(w\left(U_{1}^{\left(n\right)},\ldots,U_{r}^{\left(n\right)}\right)\right)\right]=\begin{cases}
n & \text{if }w=1\\
o\left(n\right) & \text{else}
\end{cases}\label{eq:firstorder}
\end{equation}
(the small $o$ notation is in the regime $n\to\infty$). It follows
that the random variables\linebreak{}
$U_{1}^{\left(n\right)},(U_{1}^{\left(n\right)})^{*},\ldots,U_{r}^{\left(n\right)},(U_{r}^{\left(n\right)})^{*}$
are \textit{asymptotically free}%
\footnote{This is sometimes called asymptotically $*$-freeness of $U_{1}^{\left(n\right)},\ldots,U_{r}^{\left(n\right)}$.
The statement of \cite[Theorem 3.8]{VOIC} is actually stronger: it
involves additional deterministic matrices.%
}, referring to the fact that in the limit, as $n\to\infty$, the family
$\{U_{i}^{\left(n\right)},(U_{i}^{\left(n\right)})^{*}\}_{i\in\left[r\right]}$
can be modeled by the {}``Free Probability Theory'' developed by
Voiculescu (see, for example, \cite{VOIC85} and the monograph \cite{VDN}).
Such asymptotic freeness results are known for broad families of ensembles%
\footnote{In the case of unitary matrices, we analyze expressions with negative
exponents because $(U_{i}^{(n)})^{-1}=(U_{i}^{(n)})^{*}$. In the
general case, one does not allow negative exponents $\e_{j}$.%
}, including general Gaussian random matrices (due to Voiculescu in
the same paper \cite[Theorem 2.2]{VOIC}). In later works \eqref{eq:firstorder}
is strengthened to $\trw\left(n\right)=O\left(\frac{1}{n}\right)$
whenever $w\ne1$ \cite{MSS07,Radulescu06}.

Although our results are more general, we first describe them in the
special case of the expected trace $\trw\left(n\right)$, and defer
the discussion of the general results to Section \ref{sub:Expected-product-of}.
The starting point for this paper is the intriguing observation that
the $w$-measure on any compact group, and in particular, the $w$-measure
on $\U\left(n\right)$ and the quantity $\trw\left(n\right)$, are
invariant under $w\mapsto\theta\left(w\right)$ for any $\theta\in\Aut\left(\F_{r}\right)$
(see Section \ref{sub:Word-measures}). It follows that this quantity
is determined by some algebraic, $\mathrm{Aut}\left(\F_{r}\right)$-invariant,
properties of the word $w$. 

The first step in our analysis of $\trw\left(n\right)$ builds on
results of Xu and of Collins and \'{S}niady \cite{xu1997random,CS}.
In Section \ref{sec:A-Rational-Expression} we explain how it follows
readily from these results that $\trw\left(n\right)$ is a rational
function of $n$ with coefficients in $\mathbb{Q}$ (which can be
algorithmically computed)%
\footnote{Such a formula, in a slightly more restricted version, appears also
in \cite{Radulescu06}.%
}. For example, this function is $\frac{-4}{n^{3}-n}$ for $w=\left[x_{1},x_{2}\right]^{2}$
-- see \eqref{eq:rational-expression-for-[a,b]^2} below. This function
can hence be written as a Laurent series in $n^{-1}$ with rational
coefficients. By \eqref{eq:firstorder}, whenever $w\ne1$ we may
write $\trw\left(n\right)$ as a power series: 
\[
\trw\left(n\right)\in\mathbb{Q}\left[\frac{1}{n}\right].
\]
Unlike previous works, we are not only interested in the limit $\lim_{n\to\infty}\trw\left(n\right)$.
Rather, in this paper our aim is to explain the leading term of $\trw\left(n\right)$.
That is, we give algebraic interpretation for the following two quantities: 
\begin{description}
\item [{Leading exponent}] The exponent of the leading order term of $\trw(n)$
\item [{Leading coefficient}] The coefficient of the leading order term
of $\trw(n)$
\end{description}
The second of these two quantities is the more subtle%
\footnote{\label{fn:leading-coef-vanishes}To be precise, there are degenerate
cases where the coefficient we explain vanishes --- see Example \ref{example:[x,y][x,z] leading term 0}
and Section \ref{sec:Examples}. In these cases we lose track of the
leading coefficient and only obtain a lower bound for the leading
exponent.%
}.

In fact, an easy observation is that unless $w$ is in the commutator
subgroup $[\F_{r},\F_{r}]$, the expected trace $\trw\left(n\right)$
vanishes for every $n$ (Claim \ref{claim: tr=00003D0 for non-balanced words}
below). The interesting case is, therefore, when $w\in[\F_{r},\F_{r}]$.
Every word in this subgroup is a product of commutators, and the \textbf{\textit{\emph{commutator
length}}} $\cl(w)$\marginpar{$\cl(w)$}\label{cl} of the word $w$
is the smallest $g$ such that $w$ is a product of $g$ commutators.
Namely, the smallest $g$ for which 
\begin{equation}
w=[u_{1},v_{1}][u_{2},v_{2}]\ldots[u_{g},v_{g}]\label{eq:commutators}
\end{equation}
for some $u_{i},v_{i}\in\F_{r}$. The theory of commutator length
suffices to explain the leading exponent of $\trw(n)$ (modulo the
exceptional event mentioned in Footnote \ref{fn:leading-coef-vanishes}):
\begin{thm}
\label{thm:leading-exponent}Let $w\in\left[\F_{r},\F_{r}\right]$
and denote $g=\cl(w)$. Then,
\[
\trw\left(n\right)=O\left(\frac{1}{n^{2g-1}}\right).
\]
(The big $O$ notation is in the regime $n\to\infty$.)
\end{thm}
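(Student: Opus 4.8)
The plan is to reduce to the case where $w$ itself is a product of exactly $g=\cl(w)$ commutators and then exploit the Weingarten calculus / Collins--\'{S}niady expansion that was invoked in Section \ref{sec:A-Rational-Expression} to show $\trw(n)$ is a Laurent series in $n^{-1}$. First I would recall that, by the discussion following \eqref{eq:firstorder}, the quantity $\trw(n)$ is an $\Aut(\F_r)$-invariant of $w$; in particular it depends only on the conjugacy class and, more usefully here, it is insensitive to passing between free groups of different ranks (if $w\in\F_r\subseteq\F_{r'}$ then the $w$-measures on $\U(n)$ agree, since the extra generators are substituted by independent Haar unitaries that do not appear in $w$). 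Writing $w=[u_1,v_1]\cdots[u_g,v_g]$ for elements $u_i,v_i$ in a possibly larger free group $\F_{r'}$, we may therefore compute $\trw(n)$ inside $\F_{2g}$ with the word $w_0=[x_1,x_2]\cdots[x_{2g-1},x_{2g}]$, using the homomorphism $\F_{2g}\to\F_{r'}$ sending the generators to the $u_i,v_i$. The key point is that $\trw(n)$ for the original $w$ equals the expectation of $\mathrm{tr}$ of $w_0$ evaluated not at independent Haar unitaries but at the (dependent) tuple $(U_{u_1},U_{v_1},\ldots)$; these are still unitary, but the bound I want must come from the combinatorial structure, so instead I would work directly with $w$ as a product of $g$ commutators and track the total length.

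The main mechanism is the following. Expand $\trw(n)=\E[\mathrm{tr}(w(U_1,\ldots,U_r))]$ by writing each $U_i,U_i^{*}$ in terms of its matrix entries and applying the Weingarten formula for integration over $\U(n)$: the expectation becomes a sum, over pairs of permutations $(\sigma,\tau)$ matching the "positive" occurrences of each letter with the "negative" occurrences (i.e.\ matchings compatible with the partition of the $|w|$ syllables into letters), of products of Kronecker deltas times the Weingarten function $\mathrm{Wg}(n,\sigma\tau^{-1})$. Each such term contributes $n^{c}\cdot\mathrm{Wg}(n,\sigma\tau^{-1})$ where $c$ is the number of cycles of a certain permutation on the syllable-endpoints determined by the word structure and the matching, and $\mathrm{Wg}(n,\pi)=O(n^{-|\pi|-\#\text{cycles}(\pi)+\cdots})$ has a well-understood leading order (the Weingarten function of a permutation $\pi\in S_k$ behaves like $n^{-2k+\#\mathrm{cycles}(\pi)}$ times a constant, with lower-order corrections). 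The upshot, which is exactly the content of the rational-function computation already cited, is that $\trw(n)$ is a finite sum of terms $n^{e(\sigma,\tau,w)}\cdot(1+O(1/n))$, and one must show that every exponent $e(\sigma,\tau,w)$ occurring is $\le 1-2g$.

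This last inequality is where the theory of commutator length enters, and it is the step I expect to be the main obstacle. The combinatorial data $(\sigma,\tau)$ together with the cyclic word $w$ assembles into a cell complex — a "graph on a surface" or a ribbon-graph-type object — whose Euler characteristic controls the exponent $e(\sigma,\tau,w)$, in the familiar way that genus bounds appear in matrix-integral / topological-recursion arguments (cf.\ 't Hooft, or the maps-from-surfaces circle of ideas advertised in the abstract). Concretely, each term corresponds to a way of expressing $w$ (or its image under some substitution) as a product of commutators read off from the boundary of the associated surface-with-boundary, and the exponent is $1-2h$ where $h$ is the genus of that surface; since any such expression exhibits $w$ as a product of $h$ commutators up to conjugation, minimality of $g=\cl(w)$ forces $h\ge g$, hence $e\le 1-2g$. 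The work is in setting up the dictionary between Weingarten-expansion terms and (conjugacy classes of) commutator expressions of $w$ cleanly enough that the Euler-characteristic bookkeeping is rigorous — in particular, verifying that the orientable surface one builds genuinely realizes a factorization of $w$ into $h$ commutators, so that no term can beat the $\cl(w)$ barrier. Once that correspondence is in place, summing the $O(n^{1-2g})$ bounds over the finitely many terms gives $\trw(n)=O(n^{1-2g})$, and (anticipating the companion result on the leading coefficient) the terms achieving $h=g$ are precisely those counted by the integer $\lim_{n\to\infty} n^{2g-1}\trw(n)$.
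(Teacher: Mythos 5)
Your proposal is correct and follows essentially the same route as the paper: expand $\trw(n)$ via the Collins--\'{S}niady/Weingarten formula into a sum over pairs of matchings of the letters of $w$, attach to each pair an orientable surface with one boundary component whose Euler characteristic $1-2h$ gives the order of that term, and observe that this surface together with its map to the wedge realizes $w$ as a product of $h$ commutators, so minimality forces $h\ge\cl(w)$ (the reduction attempt in your first paragraph, which you rightly abandon, is not needed). The step you flag as the main obstacle --- making the dictionary between Weingarten terms and commutator expressions rigorous --- is exactly what Section \ref{sec:surface-from-matchings} carries out, via the CW-complex $\Sigma_{(\sigma,\tau)}$, Proposition \ref{prop:order-of-contribution-of-(sigma,tau)}, Corollary \ref{cor:(Sigma,f) of (sigma,tau) is admissible}, and Culler's correspondence (Proposition \ref{prop:genus(w)}).
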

The analysis of the leading coefficient necessitates a subtler study,
not only of the commutator length of $w$, but also of the set of
products of commutators of length $\cl\left(w\right)$ giving $w$.
To formalize this, consider the following. Let $a_{1},b_{1},\ldots,a_{g},b_{g}$\marginpar{${\scriptstyle a_{1},b_{1},\ldots,a_{g},b_{g}}$}
be generators of $\F_{2g}$, where $g=\cl\left(w\right)$ as above,
and let \marginpar{$\delta_{g}$}$\delta_{g}=[a_{1},b_{1}]\ldots[a_{g},b_{g}]$.
Solutions to \eqref{eq:commutators} correspond to elements $\phi\in\Hom(\F_{2g},\F_{r})$
such that 
\begin{equation}
\phi(\delta_{g})=w.\label{eq:deltaimage}
\end{equation}
We write $\Hom_{w}(\F_{2g},\F_{r})$\marginpar{${\scriptstyle \Hom_{w}\left(\F_{2g},\F_{r}\right)}$}
for the set of homomorphisms $\F_{2g}\to\F_{r}$ satisfying \eqref{eq:deltaimage}.
The group $\Aut(\F_{2g})$ acts on $\Hom(\F_{2g},\F_{r})$ by precomposition.
We define $\Aut_{\delta}(\F_{2g})$\marginpar{$\Aut_{\delta}(\F_{2g})$}
to be the stabilizer in $\Aut(\F_{2g})$ of $\delta_{g}$. For example,
the automorphism $a_{1}\mapsto a_{1}b_{1}$ (leaving all other generators
unchanged) is in%
\footnote{Our convention is that $\left[x,y\right]=xyx^{-1}y^{-1}$.%
} $\mathrm{Aut}_{\delta}\left(\F_{2g}\right)$ while $a_{1}\longleftrightarrow b_{1}$
is not. 

Clearly, $\Aut_{\d}(\F_{2g})$ acts on $\Hom_{w}(\F_{2g},\F_{r})$,
the solution space to \eqref{eq:commutators}, for every $w$. We
think of the orbits $\Aut_{\d}(\F_{2g})\backslash\Hom_{w}(\F_{2g},\F_{r})$
as equivalence classes of solutions. So the elements of $\mathrm{Aut}_{\delta}\left(\F_{2g}\right)$
permute the solutions inside the same equivalence class. For instance,
the automorphism $a_{1}\mapsto a_{1}b_{1}$ mentioned above shows
that the solutions $\left[x_{1},x_{2}\right]$ and $\left[x_{1}x_{2},x_{2}\right]$
belong to the same class. Occasionally, elements of $\mathrm{Aut}_{\delta}\left(\F_{2g}\right)$
stabilize a solution. For example, consider the word $w=\left[x_{1},x_{2}\right]^{2}$.
It can be shown that its commutator length is $g=2$, and that it
has a single class of solutions. The solution $\left[x_{1,}x_{2}\right]\left[x_{1},x_{2}\right]$
is stabilized by the automorphism%
\footnote{We often use the handy convention that capital letters mark inverses.
For example, $A_{1}$ is $a_{1}^{-1}$, the inverse of $a_{1}$.%
} 
\[
a_{1}\mapsto a_{1}a_{2}a_{1}A_{2}A_{1}\,\,\,\,\, b_{1}\mapsto a_{1}a_{2}A_{1}A_{2}b_{1}a_{1}^{\,2}A_{2}A_{1}\,\,\,\,\, a_{2}\mapsto a_{1}a_{2}A_{1}\,\,\,\,\, b_{2}\mapsto b_{2}a_{2}A_{1},
\]
which belongs to $\mathrm{Aut}_{\delta}\left(\F_{4}\right)$. For
every class $[\phi]\in\Aut_{\d}(\F_{2g})\backslash\Hom_{w}(\F_{2g},\F_{r})$,
the stabilizer of any representative $\phi$ belongs to a well-defined
conjugacy class of subgroups of $\Aut_{\delta}(\F_{2g})$.

As we show below, the leading coefficient of $\trw\left(n\right)$
is controlled by the set of equivalence classes of solutions to \eqref{eq:commutators},
and by the isomorphism type of the stabilizer in every class. The
important invariant of the stabilizers is their \emph{Euler characteristic}.

The Euler characteristic of a group is defined for a large class of
groups of certain finiteness conditions (see \cite[Chapter IX]{BROWN}).
The simplest case is when a group $\Gamma$ admits a finite CW-complex
as Eilenberg-MacLane space of type%
\footnote{\label{fn:An-Eilenberg-MacLane-space}An Eilenberg-MacLane space of
type $\mathrm{K}\left(\Gamma,1\right)$, or simply a $\mathrm{K\left(\Gamma,1\right)}$-space,
is a path-connected topological space with fundamental group isomorphic
to $\Gamma$ and with a contractible universal cover (e.g.~\cite[Section I.4]{BROWN}).%
} $\mathrm{K}\left(\Gamma,1\right)$. In this case, the Euler characteristic\label{The-Euler-characteristic}
$\chi\left(\Gamma\right)$ coincides with the topological Euler characteristic
of the $\mathrm{K}\left(\Gamma,1\right)$-space, and, in particular,
is an integer.

We can now state our main theorem regarding $\trw\left(n\right)$,
which is a more detailed version of Theorem \ref{thm:leading-exponent}:
\begin{thm}
\label{thm:main}Let $w\in\left[\F_{r},\F_{r}\right]$ and denote
$g=\cl(w)$. Then, 
\begin{align*}
\trw(n)=\frac{1}{n^{2g-1}}\left[\sum_{[\phi]\in\Aut_{\d}(\F_{2g})\backslash\Hom_{w}(\F_{2g},\F_{r})}\chi\left(\Stab_{\Aut_{\d}(\F_{2g})}\left(\phi\right)\right)\right]+O\left(\frac{1}{n^{2g+1}}\right).
\end{align*}
(Again, the big $O$ notation is in the regime $n\to\infty$.)\end{thm}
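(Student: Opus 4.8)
The plan is to compute $\trw(n)$ via the Collins--\'Sniady Weingarten calculus and then organize the resulting sum geometrically. Writing $w$ as in \eqref{eq:word}, we have $\mathrm{tr}(w(U_1,\dots,U_r)) = \sum U^{(1)}_{k_0 k_1}\cdots$ where each letter $x_i^{\pm1}$ contributes a matrix entry of $U_i$ or $\bar U_i$. Taking the expectation over Haar measure and using independence of the $U_i$, the expectation factors over the letters: for each generator $i$, the letters spelling $x_i$ contribute a product of entries of $U_i$ and the letters spelling $x_i^{-1}$ contribute entries of $\bar U_i$, and $\E[\prod (U_i)_{ab}\prod \overline{(U_i)_{cd}}]$ is given by a sum over pairs of permutations $\sigma_i,\tau_i$ of the set of $x_i$-occurrences, weighted by the Weingarten function $\mathrm{Wg}(\sigma_i\tau_i^{-1},n)$. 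The upshot, which I would record as the starting formula (and which the paper presumably establishes in Section~\ref{sec:A-Rational-Expression}), is that $\trw(n)$ is a sum over tuples of permutations $(\sigma_i,\tau_i)_{i\in[r]}$, where $\tau_i$ matches positive occurrences of $x_i$ to negative ones, of a product of Weingarten weights times $n^{(\text{number of index cycles})}$. The number of free indices is the number of cycles of the permutation obtained by composing, along the cyclic word $w$, the $\sigma_i$'s with the "successor" permutation that reads off which letter follows which in $w$.

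The key reorganization is to interpret each such surviving term as a topological surface. This is the standard "combinatorial map / ribbon graph" translation: the cyclic word $w$ is a single boundary circle; the data $(\sigma_i,\tau_i)$ glues bands labelled by the generators; and the resulting CW-complex $S$ has one $2$-cell (from $w$), edges from the generator-bands, and vertices from the index cycles. Its Euler characteristic is exactly the exponent of $n$ in that term (up to the normalization by the trace's $\mathrm{tr}=\frac1n\mathrm{Tr}$), and $S$ has one boundary component. A surface with one boundary component and Euler characteristic $\chi$ has genus $h$ with $\chi = 1-2h$, so the power of $n$ is $n^{1-2h}$ with $h\ge g=\cl(w)$ precisely because a genus-$h$ surface with one boundary exhibits $w$ as a product of $h$ commutators (this is the classical correspondence between genus of a bounding surface and commutator length — the same fact underlying Theorem~\ref{thm:leading-exponent}). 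This already gives the $O(n^{-(2g-1)})$ bound. For the leading term I then restrict to $h=g$: these are the minimal-genus surfaces bounding $w$, i.e.\ maps $\phi\in\Hom_w(\F_{2g},\F_r)$ decorated with the combinatorial data realizing the genus-$g$ surface. The contribution of each such term is the leading Weingarten asymptotics, and one checks the product of leading Weingarten coefficients over the generators equals $(-1)^{(\text{something})}$ times a ratio that, after summing over all combinatorial fillings of a fixed $\phi$, collapses to $\chi(\Stab_{\Aut_\delta(\F_{2g})}(\phi))$ — an orbifold Euler characteristic computation counting the automorphisms of the genus-$g$ surface rel the boundary word, grouped by the conjugacy class of stabilizer. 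Since $\Aut_\delta(\F_{2g})$ acts on $\Hom_w(\F_{2g},\F_r)$ and the leading Weingarten weights are $\Aut_\delta$-invariant, the total leading coefficient is $\sum_{[\phi]} \chi(\Stab(\phi))$ as claimed, and the next correction is $O(n^{-(2g+1)})$ because $\chi$ of a one-boundary surface changes by $2$.

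Concretely the steps are: (1) derive the Weingarten sum formula for $\trw(n)$ and identify the power of $n$ in each term as $\chi$ of an associated surface-with-boundary; (2) prove the genus lower bound, getting Theorem~\ref{thm:leading-exponent}; (3) isolate the minimal-genus terms and identify them with "tiled" genus-$g$ surfaces bounding $w$, hence with pairs (a homomorphism $\phi\in\Hom_w(\F_{2g},\F_r)$, a combinatorial filling); (4) compute the leading Weingarten coefficient of each such term and, summing over fillings compatible with a fixed $\phi$, show the total equals $\chi(\Stab_{\Aut_\delta}(\phi))$; (5) assemble over $\Aut_\delta$-orbits and bound the remainder. The main obstacle I anticipate is step (4): matching the alternating sum of products of leading Weingarten coefficients (which are, up to sign, products of moments of the Wigner/free-probability kind indexed by the cycle structure of $\sigma_i\tau_i^{-1}$) to the Euler characteristic of the stabilizer group. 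This requires building an explicit $\mathrm K(\Stab,1)$-complex — or a contractible complex on which $\Stab$ acts with computable cell stabilizers — out of the combinatorial fillings of a fixed genus-$g$ surface, so that the Weingarten alternating sum is literally the Euler-characteristic count $\sum (-1)^{\dim}\frac{1}{|\text{cell stabilizer}|}$. Keeping careful track of signs (the $(-1)$'s in the Weingarten asymptotics versus orientation/dimension parities in the complex) and of the degenerate cases where this count vanishes (Footnote~\ref{fn:leading-coef-vanishes}) is where the real work lies; everything else is the now-standard dictionary between Weingarten calculus and maps on surfaces.
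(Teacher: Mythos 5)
Your roadmap does track the paper's actual strategy --- Weingarten/Collins--\'{S}niady expansion, reinterpretation of each pair of matchings as a surface whose Euler characteristic gives the power of $n$, the genus bound via Culler's correspondence, and then grouping the maximal--$\chi$ terms by the solution class $\phi$ they realize --- but the proof has a genuine gap exactly at your step (4), which is where essentially all of the paper's work lies. Concretely, after restricting to pairs $(\sigma,\tau)$ with $\chi(\sigma,\tau)=1-2g$, the leading coefficient attached to a fixed class is $\sum\moeb\left(\sigma^{-1}\tau\right)$ over all pairs of matchings inducing that class, and you assert (but do not show) that this alternating sum ``collapses to'' $\chi\left(\Stab_{\Aut_{\d}(\F_{2g})}(\phi)\right)$. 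Two separate non-routine facts are needed. First, one must identify the Möbius sum with the Euler characteristic of an actual finite complex: the paper does this by putting a partial order on the pairs of matchings in a fixed class (the poset $\pmp\left(\Sigma,f\right)$, related to non-crossing partitions), proving it is downward closed, and using the identity $\sum_{(\sigma,\tau)\preceq(\sigma_{0},\tau_{0})}\moeb\left(\sigma^{-1}\tau\right)=1$ to get $\sum\moeb\left(\sigma^{-1}\tau\right)=\chi\left(\left|\pmp\left(\Sigma,f\right)\right|\right)$; nothing in your sketch produces this complex or this identity. Second, one must prove that this finite complex is aspherical with fundamental group the stabilizer, i.e.\ a $\mathrm{K}(G,1)$ for $G=\Stab_{\MCG(\Sigma)}(\tilde f)\cong\Stab_{\Aut_{\d}(\F_{2g})}(\phi)$. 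The paper achieves this by constructing the arc poset $\ap\left(\Sigma,f\right)$, showing $\left|\ap\left(\Sigma,f\right)\right|$ is a covering space of $\left|\pmp\left(\Sigma,f\right)\right|$ on which $\MCG(\Sigma)$ acts freely, that its components correspond to homotopy classes of admissible maps, and --- the most technical part --- that each component is contractible (via guide-arcs and an infinite sequence of poset deformation retracts). You flag this as ``where the real work lies,'' which is accurate, but flagging it does not supply it.

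A smaller but substantive misdirection: your proposed count $\sum(-1)^{\dim}\frac{1}{|\text{cell stabilizer}|}$ is an orbifold-Euler-characteristic formula presupposing finite cell stabilizers, whereas the relevant stabilizers in $\Aut_{\d}(\F_{2g})\cong\MCG\left(\surface\right)$ are torsion-free and typically infinite; the correct mechanism is a \emph{free} action on a contractible complex with finite quotient, so the cell stabilizers are trivial and one needs instead the finiteness and asphericity of the quotient (this is exactly Theorem \ref{thm:stabilizers K(G,1)}, which your argument would also have to establish for the statement $\chi(\Stab)$ to even make sense as an integer). You would also need the paper's Lemma \ref{lem:every admissible and incompressible obtained from matchings} to know that every maximal--$\chi$ (hence incompressible) admissible surface is realized by some pair of matchings, so that the grouping of terms by solution classes is exhaustive and well defined.
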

\begin{rem}
\label{remark: free solutions}Note that when $\phi\in\Hom_{w}(F_{2g},F_{r})$
is injective, $\Stab_{\Aut_{\d}(\F_{2g})}\left(\phi\right)$ is trivial
(indeed, even $\mathrm{Stab}_{\mathrm{Aut}\left(\F_{2g}\right)}\left(\phi\right)$
is trivial), and so its Euler characteristic is 1. This is the case
precisely when $\left\{ \phi\left(a_{1}\right),\phi\left(b_{1}\right),\ldots,\phi\left(a_{g}\right),\phi\left(b_{g}\right)\right\} $
is a free set in $\F_{r}$, which is in some sense the generic case.
Therefore, one could say

{}``The leading coefficient of $\trw(n)$ counts the number of equivalence
classes of solutions to \eqref{eq:commutators}, up to corrections
for the existence of non-trivial stabilizers.''

For instance, when $g=1$, namely, when $w$ is a commutator, $\phi\left(a_{1}\right)$
and $\phi\left(b_{1}\right)$ are necessarily free (otherwise they
commute and $w=1$). Hence, if $\cl\left(w\right)=1$ and $K$ marks
the number of equivalence classes of solutions to $\left[u,v\right]=w$,
then $\trw\left(n\right)=\frac{K}{n}+O\left(\frac{1}{n^{3}}\right)$.
As an example%
\footnote{In fact, in this particular case, $\tr_{\left[x_{1}^{\, k},x_{2}\right]}\left(n\right)=\frac{k}{n}$
with no further terms.%
}, $\tr_{\left[x_{1}^{k},x_{2}\right]}\left(n\right)=\frac{k}{n}+O\left(\frac{1}{n^{3}}\right)$,
the different solution classes represented by $\left[x_{1}^{\, k},x_{2}x_{1}^{\, j}\right]$,
$0\le j\le k-1$.

\medskip{}

\end{rem}
The fact that $\mathrm{Stab}_{\mathrm{Aut}_{\delta}\left(\F_{2g}\right)}\left(\phi\right)$
has a well-defined Euler characteristic, which is moreover an integer,
follows from the following:
\begin{thm}
\label{thm:stabilizers K(G,1)}Let $w\in\left[\F_{r},\F_{r}\right]$
and denote $g=\cl\left(w\right)$. For every $\phi\in\mathrm{Hom}_{w}\left(\F_{2g},\F_{r}\right)$,
the stabilizer 
\[
G\overset{\mathrm{def}}{=}\Stab_{\Aut_{\delta}(\F_{2g})}\left(\phi\right)\leq\Aut_{\delta}(\F_{2g})
\]
admits a finite simplicial complex as a $\mathrm{K}\left(G,1\right)$-space.
\end{thm}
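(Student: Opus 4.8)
The plan is to realize the stabilizer $G = \Stab_{\Aut_\delta(\F_{2g})}(\phi)$ as the fundamental group of a space built from mapping class group data, exploiting the classical identification of $\Aut_\delta(\F_{2g})$ with a surface mapping class group. Let $\Sigma_{g,1}$ be the compact orientable surface of genus $g$ with one boundary component; by the Dehn--Nielsen--Baer theorem in its version for surfaces with boundary, $\MCG(\Sigma_{g,1})$ is isomorphic to the subgroup of $\Aut(\F_{2g})$ fixing the boundary word $\delta_g = [a_1,b_1]\cdots[a_g,b_g]$, i.e.\ to $\Aut_\delta(\F_{2g})$. Under this identification, a homomorphism $\phi \in \Hom_w(\F_{2g},\F_r)$ amounts to a based map from $\Sigma_{g,1}$ to the wedge $\bigvee^r S^1$ sending the boundary to the loop $w$, and the stabilizer $G$ becomes the subgroup of the mapping class group fixing the (based) homotopy class of this map.

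First I would make precise the target: fix a $\mathrm{K}(\F_r,1)$, namely $X = \bigvee^r S^1$, and let $f\colon \Sigma_{g,1} \to X$ realize $\phi$ with $f|_{\partial}$ the loop $w$. The group $G$ acts on the contractible Teichm\"uller space $\mathcal{T}(\Sigma_{g,1})$ (Teichm\"uller space of the bordered surface, with the marked point / boundary framing fixed), and this action is properly discontinuous; moreover, because a mapping class fixing a point and acting trivially on $\pi_1$ is trivial (the surface is aspherical, hyperbolic type since $g \ge 1$), the action of $\MCG(\Sigma_{g,1})$ — and hence of $G$ — is free. Therefore $\mathcal{T}(\Sigma_{g,1})/G$ is a $\mathrm{K}(G,1)$. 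The remaining issue is finiteness: $G$ need not have finite index in $\MCG(\Sigma_{g,1})$, so I cannot simply quote that the moduli space has a finite model. Instead I would argue that $G$ is \emph{commensurated-by} or, more directly, finitely presented with good geometric dimension, and build a finite $\mathrm{K}(G,1)$ by hand.

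The cleanest route to finiteness, which I would pursue, is to show $G$ acts cocompactly on a contractible \emph{finite-dimensional} complex and that $G$ has a torsion-free finite-index-free structure forcing a finite classifying space. Concretely: realize $G$ as the vertex group / stabilizer in the action of $\MCG(\Sigma_{g,1})$ on a suitable contractible complex with cocompact quotient — for instance, a thickened version of the arc complex or the Harer--Penner ribbon-graph / arc-system spine of $\mathcal{T}(\Sigma_{g,1})$, which is a finite-dimensional contractible complex on which $\MCG(\Sigma_{g,1})$ acts cocompactly. Then $G$ acts on this spine freely (torsion-freeness as above) but possibly not cocompactly. To fix cocompactness, I would pass to the subcomplex of systems of arcs whose images under $f$ are ``taut'' in a fixed spine of $X$ (pulling back the combinatorial structure of $X$), showing that $G$-orbits of such tautened arc systems are finite because the word $\phi(a_i), \phi(b_i)$ have bounded length data — this is where the algebraic input that $\phi$ is a fixed homomorphism, not a varying one, is essential. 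This yields a finite $G$-CW complex $E$ with contractible components on which $G$ acts freely and cocompactly, so $E/G$ is the desired finite simplicial $\mathrm{K}(G,1)$ (subdividing to make it simplicial).

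The main obstacle I expect is precisely the cocompactness/finiteness step: $G$ is an infinite-index subgroup of the mapping class group in general, so finiteness of a classifying space does not come for free from the standard finiteness of $\MCG(\Sigma_{g,1})$, and one must genuinely use the rigidity of the fixed map $f$ to cut the contractible model down to a cocompact piece. A secondary technical point is choosing the contractible model so that $G$ acts \emph{simplicially} and freely simultaneously — one typically must barycentrically subdivide and check that no mapping class in $G$ inverts a simplex, which follows from torsion-freeness of $G$ (a consequence of $G \le \MCG(\Sigma_{g,1})$ acting freely on Teichm\"uller space) but should be stated carefully. Once a finite free cocompact $G$-CW model is in hand, passing to the quotient and invoking that a free action of $G$ on a contractible complex gives a $\mathrm{K}(G,1)$ completes the proof, and in particular $\chi(G)$ is a well-defined integer, justifying the statement of Theorem~\ref{thm:main}.
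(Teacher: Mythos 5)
Your skeleton matches the paper's strategy in outline: translate via Dehn--Nielsen--Baer to $\MCG\left(\Sigma_{g,1}\right)$ acting on homotopy classes of maps to $\wedger$, and produce the finite $\mathrm{K}\left(G,1\right)$ as the quotient of a contractible complex of $f$-constrained arc systems by a free, simplicial, cocompact $G$-action. The genuine gap is at the step you pass over in one clause: you assert that the subcomplex of arc systems ``taut with respect to $f$'' has contractible components, but this does not follow from the classical contractibility of the arc complex or of the Harer--Penner spine of Teichm\"uller space that you invoke. Once you restrict to arc systems compatible with the fixed homotopy class of $f$ (with endpoints at the marked points spelling out $w$, cutting $\Sigma$ into discs, inducing admissible matchings), you are looking at a genuinely different complex, and its contractibility has to be established from scratch. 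In the paper this is exactly the hardest part of the whole argument: the arc poset $\ap\left(\Sigma,f\right)$ is introduced, its components are shown to correspond to homotopy classes rel $\partial\Sigma$ of maps equivalent to $f$, and each component is contracted by an infinite sequence of explicit poset deformation retractions organized by guide-arcs and a depth function on crossing words read in the biregular tree (Section \ref{sub:Proof-of-contractability}). The paper even remarks that although this poset resembles Hatcher's arc complex at the level of objects, the topological claims are different, so Harer's or Hatcher's contractibility results cannot simply be quoted. Without this step your construction yields a finite complex, but with no control on its fundamental group or higher homotopy, so the finite $\mathrm{K}\left(G,1\right)$ claim is not established.

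Two further points would need to be supplied even granting contractibility. First, the identification of $\pi_{0}$ of the constrained complex with the set of homotopy classes of maps equivalent to $f$ is what guarantees that $G=\Stab_{\MCG\left(\Sigma\right)}(\tilde{f})$ preserves a single contractible component and that the quotient of that component has fundamental group exactly $G$ (and not some larger subgroup); this is the first half of the paper's Theorem \ref{thm:components-of-AP(Sigma,f)} and is not automatic. Second, your cocompactness heuristic (``bounded length data'') is essentially right but should be made concrete: in the paper the quotient is the finite pairs-of-matchings complex $\left|\pmp\left(\Sigma,f\right)\right|$, finiteness being immediate since there are only finitely many matchings of the letters of $w$, and one must also check that the quotient of the geometric realization agrees with the realization of the quotient poset (regularity of the action, Theorem \ref{thm:MCG-action-on-AP}), which fails for general simplicial actions. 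Finally, incompressibility of the map corresponding to $\phi$ (automatic at $g=\cl\left(w\right)$, but worth stating) is what makes the arc systems cut $\Sigma$ into discs and underlies all of these combinatorial arguments.
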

In particular, the stabilizer is finitely presented. The particular
finite simplicial complex we construct as a $\mathrm{K}\left(G,1\right)$-space
for the stabilizer yields further properties such as solvability of
the word problem. We elaborate more in Section \ref{sec:More-Consequences}.

\subsection{Expected product of traces\label{sub:Expected-product-of}}

For every finite set of words $w_{1},\ldots,w_{\ell}\in\F_{r}$ consider
the expected product of traces\marginpar{$\tr_{w_{1},\ldots,w_{\ell}}\left(n\right)$}
\[
\tr_{w_{1},\ldots,w_{\ell}}\left(n\right)\overset{\mathrm{def}}{=}\mathbb{E}\left[\mathrm{tr}\left(w_{1}\left(U_{1}^{\left(n\right)},\ldots,U_{r}^{\left(n\right)}\right)\right)\cdot\ldots\cdot\mathrm{tr}\left(w_{\ell}\left(U_{1}^{\left(n\right)},\ldots,U_{r}^{\left(n\right)}\right)\right)\right].
\]
The results we described in Section \ref{sub:The-expected-trace}
for single words generalize to finite sets of words.

The numbers $\tr_{w_{1},\ldots,w_{\ell}}\left(n\right)$ were studied
before. Diaconis and Shahshahani \cite{diaconis1994eigenvalues} consider
the joint distribution of $\mathrm{tr}\left(U^{\left(n\right)}\right),\mathrm{tr}((U^{\left(n\right)})^{2}),\mathrm{tr}((U^{\left(n\right)})^{3}),\ldots$
(here $U^{\left(n\right)}\in{\cal U}\left(n\right)$ is Haar random).
They show that these random variables converge in distribution to
independent variables, and as $n\to\infty$, $\mathrm{tr}((U^{\left(n\right)})^{j})$
converges to $\sqrt{j}Z$, where $Z$ is a standard complex normal
variable. This work can be interpreted as the study of (limits of)
word measures when the words are in $F_{1}\cong\mathbb{Z}$. Later,
Mingo, \'{S}niady and Speicher \cite{MSS07}, and independently R\v{a}dulesco
\cite{Radulescu06}, generalized this result to words in $\F_{r}$,
$r\ge2$. (The main goal of \cite{MSS07} is to establish {}``second
order freeness'' of random unitary matrices.) Namely, given $w_{1},\ldots,w_{k}\in\F_{r}$,
they consider the random variables 
\[
\mathrm{tr}\left(w_{1}\left(U_{1}^{\left(n\right)},\ldots,U_{r}^{\left(n\right)}\right)\right)\,,\,\ldots\,,\,\mathrm{tr}\left(w_{k}\left(U_{1}^{\left(n\right)},\ldots,U_{r}^{\left(n\right)}\right)\right),
\]
and study their joint distribution in the limit as $n\to\infty$.
All of \cite{diaconis1994eigenvalues}, \cite{MSS07} and \cite{Radulescu06}
use the method of moments which translates the study of the joint
limit distribution to the study of (limits as $n\to\infty$ of) expected
products of traces, namely, of $\tr_{w_{1},\ldots,w_{\ell}}\left(n\right)$
for all possible finite subsets $\left\{ w_{1},\ldots,w_{\ell}\right\} $
of $\F_{r}\setminus\left\{ 1\right\} $.

As in the case of $\trw\left(n\right)$ -- the expected trace of a
single word -- $\tr_{w_{1},\ldots,w_{\ell}}\left(n\right)$ can also
be written as a rational expression in $n$ (see Theorem \ref{thm:trw-rational}).
As in \eqref{eq:firstorder}, the main interest of \cite{MSS07} and
\cite{Radulescu06} is in $\lim_{n\to\infty}\tr_{w_{1},\ldots,w_{\ell}}\left(n\right)$,
namely, in the free coefficient of $\tr_{w_{1},\ldots,w_{\ell}}\left(n\right)$
as a Laurent series in $\frac{1}{n}$. We explain their result in
Example \ref{example:special case of radulescu and MSS} below. Our
goal is to explain the leading term (exponent and coefficient) of
this rational expression, even when $\tr_{w_{1},\ldots,w_{\ell}}\left(n\right)=O\left(\frac{1}{n}\right)$.

Indeed, we establish parallels to Theorems \ref{thm:leading-exponent},
\ref{thm:main} and \ref{thm:stabilizers K(G,1)} for the more general
object $\tr_{w_{1},\ldots,w_{\ell}}\left(n\right)$. We introduce
these general results in geometric terms rather than algebraic: the
geometric language here is more natural both in terms of the statements
of the results and in terms of the proofs.

\medskip{}

The geometric interpretation of commutator length of words goes back
to Culler \cite{CULLER} and explains why $\cl\left(w\right)$ is
often called {}``the genus of $w$''. In the geometric approach,
solutions to the commutator equation \eqref{eq:commutators} are given
in terms of maps%
\footnote{All maps in this paper are assumed to be continuous.%
} from surfaces with boundary to a wedge of circles. More concretely,
we think of the free group $\F_{r}$ as the fundamental group of a
bouquet of $r$ cycles, denoted $\wedger$\marginpar{$\wedger$},
pointed at the wedge point $o$\marginpar{$o$}. For the free group
$\F_{2g}=\F\left(a_{1},b_{1},\ldots,a_{g},b_{g}\right)$ we consider
a different topological space: the oriented surface of genus $g$
with one boundary component, which we denote by $\surface$\marginpar{$\surface$}.
Let $v_{1}$\marginpar{$v_{1}$} be a basepoint of $\surface$ at
the boundary. Let $\left(S^{1},1\right)$ be a cycle pointed at $1$,
and let \marginpar{$\partial_{1}$}$\partial_{1}:\left(S^{1},1\right)\to\left(\surface,v_{1}\right)$
be a fixed map which identifies the boundary of $\surface$ with $S^{1}$.
Identify $a_{1},b_{1},\ldots,a_{g},b_{g}$ with a suitable basis of
$\pi_{1}\left(\surface,v_{1}\right)$ so that $\delta_{g}=\left[a_{1},b_{1}\right]\ldots\left[a_{g},b_{g}\right]$
is represented by $\left[\partial_{1}\right]$. It is shown in \cite{CULLER}
that every solution to \eqref{eq:commutators} can be given by a map
$f\colon\left(\surface,v_{1}\right)\to\left(\wedger,o\right)$ with
$f_{*}\left(\left[\partial_{1}\right]\right)=w$. In fact, there is
a one-to-one correspondence between the solutions in $\mathrm{Hom}_{w}\left(\F_{2g,}\F_{r}\right)$
and homotopy classes of such maps $\left(\surface,v_{1}\right)\to\left(\wedger,o\right)$
(see Proposition \ref{prop:genus(w)}).

We now describe the geometric analogue of $\mathrm{Aut}_{\delta}\left(\F_{2g}\right)$.
For this sake, we first fix the map from the boundary of $\surface$
to the wedge. Formally, for every $w\in\F_{r}$ fix\marginpar{$f_{w}$}\label{f_w}
\[
f_{w}:\left(S^{1},1\right)\to\left(\wedger,o\right)
\]
a map which describes a fixed loop in $\wedger$ representing $w$,
namely, $\left[f_{w}\right]=w\in\pi_{1}\left(\wedger,o\right)$, and
consider the set of maps 
\begin{equation}
\left\{ f:\left(\surface,v_{1}\right)\to\left(\wedger,o\right)\,\middle|\, f\circ\partial_{1}=f_{w}\right\} .\label{eq:maps-with-prescribed-boundary}
\end{equation}
Let $\mathrm{Homeo}_{\delta}\left(\surface\right)$ denote the group
of homeomorphisms of $\surface$ that fix the boundary pointwise,
and write $\mathrm{Homeo}_{0}\left(\surface\right)$ for the normal
subgroup of $\mathrm{Homeo}_{\delta}\left(\surface\right)$ consisting
of homeomorphisms isotopic to the identity. While $\mathrm{Homeo}_{\delta}\left(\surface\right)$
acts on the set of maps in \eqref{eq:maps-with-prescribed-boundary}
by precomposition, the quotient by $\mathrm{Homeo}_{0}\left(\surface\right)$
acts on homotopy classes of these maps. This quotient is precisely
the \textbf{mapping class group} of $\surface$:\marginpar{$\mathrm{MCG}\left(\surface\right)$}
\[
\MCG\left(\surface\right)\overset{\mathrm{def}}{=}\mathrm{Homeo}_{\delta}\left(\surface\right)/\mathrm{Homeo}_{0}\left(\surface\right).
\]
The Dehn-Nielsen-Baer theorem (Theorem \ref{thm:Dehn-Nielsen-Baer}
below) states there is a natural isomorphism $\mathrm{Aut}_{\delta}\left(\F_{2g}\right)\cong\mathrm{MCG}\left(\surface\right)$.
Through this isomorphism, the action of $\mathrm{Aut}_{\delta}\left(\F_{2g}\right)$
on $\mathrm{Hom}_{w}\left(\F_{2g},\F_{r}\right)$ is identical to
the action of $\mathrm{MCG}\left(\surface\right)$ on the homotopy
classes of maps in \eqref{eq:maps-with-prescribed-boundary}. We summarize
this algebra-geometry dictionary in Table \ref{tab:Algebra-geometry-dictionary}.
We give more details and further explanations in Section \ref{sub:cl-of-word}. 

\begin{table}
\begin{tabular}{|>{\centering}p{7cm}|>{\centering}p{9cm}|}
\hline 
$\F_{r}$ & $\pi_{1}\left(\wedger,o\right)$\tabularnewline
\hline 
$\F_{2g}=\F\left(a_{1},b_{1},\ldots,a_{g},b_{g}\right)$\\
~

$\delta_{g}=\left[a_{1},b_{1}\right]\ldots\left[a_{g},b_{g}\right]$ & $\pi_{1}\left(\surface,v_{1}\right)$\\
with fixed loops representing $a_{1},b_{1},\ldots,a_{g},b_{g}$ so
that \\
$\left[\partial_{1}\right]=\delta_{g}$\tabularnewline
\hline 
$\mathrm{Hom}_{w}\left(\F_{2g},\F_{r}\right)$ & homotopy classes of $\left\{ f\colon\left(\surface,v_{1}\right)\to\left(\wedger,o\right)\,\middle|\, f\circ\partial_{1}=f_{w}\right\} $\tabularnewline
\hline 
$\cl\left(w\right)$ & $\min\left\{ g\,\middle|\,\exists f\colon\surface\to\wedger\,\mathrm{with}\, f\circ\partial_{1}=f_{w}\right\} $\tabularnewline
\hline 
$\mathrm{Aut}_{\delta}\left(\F_{2g}\right)$ & $\mathrm{MCG}\left(\surface\right)$\tabularnewline
\hline 
equivalence classes of solutions: $\mathrm{Aut}_{\delta}\left(\F_{2g}\right)\backslash\mathrm{Hom}_{w}\left(\F_{2g},\F_{r}\right)$ & $\mathrm{MCG}\left(\surface\right)\backslash\left\{ \mathrm{homotopy\, classes\, of\, maps\, as\, above}\right\} $\tabularnewline
\hline 
\end{tabular}

\caption{\label{tab:Algebra-geometry-dictionary}Algebra-geometry dictionary.}
\end{table}

\medskip{}

We can now describe our general results. To deal with multiple words,
we need surfaces with multiple boundary components. More concretely, 
\begin{defn}
\label{def: admissible maps}Let $\Sigma$ be a surface and $f\colon\Sigma\to\wedger$.
We say that $\left(\Sigma,f\right)$ is \textbf{admissible}\marginpar{$\left(\Sigma,f\right)$ admissible for $w_{1},\ldots,w_{\ell}$}
for $w_{1},\ldots,w_{\ell}\in\F_{r}$ if the following three conditions
hold:
\begin{enumerate}
\item $\Sigma$ is compact, oriented, with $\ell$ boundary components,
and contains no closed connected components (but is not necessarily
connected).
\item $\Sigma$ has $\ell$ marked points $v_{1},\ldots,v_{\ell}$ on $\Sigma$,
one point in every boundary component, and fixed identifications of
the boundaries with $S^{1}$ with common orientation given by\marginpar{$\partial_{1},\ldots,\partial_{\ell}$}
\[
\partial_{1}:\left(S^{1},1\right)\to\left(\Sigma,v_{1}\right)\,\,\,\,\,\,\,\,\ldots\,\,\,\,\,\,\,\,\partial_{\ell}:\left(S^{1},1\right)\to\left(\Sigma,v_{\ell}\right)
\]

\item $f$ maps the boundary components to $w_{1},\ldots,w_{\ell}$, namely,
\[
f\circ\partial_{1}=f_{w_{1}}\,\,\,\,\,\,\,\,\ldots\,\,\,\,\,\,\,\, f\circ\partial_{\ell}=f_{w_{\ell}}.
\]

\end{enumerate}
\end{defn}
In particular, every admissible map sends the marked points $v_{1},\ldots,v_{\ell}$
to $o$. The next definition captures the maximal possible Euler characteristic
of an admissible surface:
\begin{defn}
\label{def:chi(words)}For $w_{1},\ldots,w_{\ell}\in\F_{r}$ define\marginpar{${\scriptstyle \ch\left(w_{1},\ldots,w_{\ell}\right)}$}
\[
\ch\left(w_{1},\ldots,w_{\ell}\right)\overset{\mathrm{def}}{=}\max\left\{ \chi\left(\Sigma\right)\,\middle|\,\left(\Sigma,f\right)\,\mathrm{is\, admissible\, for}\, w_{1},\ldots,w_{\ell}\right\} ,
\]
where $\chi\left(\Sigma\right)$ is the Euler characteristic of $\Sigma$.
If no such surface exists, define $\ch\left(w_{1},\ldots,w_{\ell}\right)=-\infty$.
\end{defn}
As we explain below, $\ch\left(w_{1},\ldots,w_{\ell}\right)\ne-\infty$
(i.e.~there exists an admissible map for $w_{1},\ldots,w_{\ell}$),
if and only if the product $w_{1}w_{2}\cdots w_{\ell}\in\left[\F_{r},\F_{r}\right]$.
Equivalently, this holds if and only if the sum of exponents of the
letter $x_{i}$ across $w_{1},\ldots,w_{\ell}$ is zero for every
$1\le i\le r$. As for a single word, if $w_{1}\cdots w_{\ell}\notin\left[\F_{r},\F_{r}\right]$
then $\tr_{w_{1},\ldots,w_{\ell}}\left(n\right)\equiv0$ vanishes
for every $n$ (Claim \ref{claim: tr=00003D0 for non-balanced words}). 
\begin{rem}
For a single word, $\ch\left(w\right)=1-2\cdot\cl\left(w\right)$.
More generally, the commutator length of a finite set of words $w_{1},\ldots,w_{\ell}\in\F_{r}$,
introduced by Calegari (e.g.~\cite[Definition 2.71]{calegari2009scl}),
is defined as the smallest number of commutators whose product is
equal to an expression of the form 
\[
w_{1}t_{1}w_{2}t_{1}^{-1}\ldots t_{\ell}w_{\ell}t_{\ell}^{-1}
\]
with $t_{2},\ldots,t_{\ell}\in\F_{r}$. This number, which can be
denoted $\cl\left(w_{1},\ldots,w_{\ell}\right)$, relates to $\ch\left(w_{1},\ldots,w_{\ell}\right)$
by 
\[
\ch\left(w_{1},\ldots,w_{\ell}\right)=2-\ell-2\cdot\cl\left(w_{1},\ldots,w_{\ell}\right)
\]
(when $w_{1},\ldots,w_{\ell}\ne1$). However, $\ch\left(\right)$
is more natural then $\cl\left(\right)$ in this general case: it
simplifies the statement of our results below, and appears more directly
in the proofs%
\footnote{Another advantage of $\ch\left(\right)$ compared with $\cl\left(\right)$
is that with $\ch\left(\right)$, the statements of our results remain
valid when some of the words are the identity element $1\in\F_{r}$.
(Observe that $\ch\left(w_{1},\ldots,w_{\ell},1\right)=\ch\left(w_{1},\ldots,w_{\ell}\right)+1$.)%
}. In fact, Calegari himself also mostly uses the geometric definition
in his works.
\end{rem}
With this definition, the leading exponent from Theorem \ref{thm:leading-exponent}
is simply $n^{\ch\left(w\right)}$. This generalizes to
\begin{thm}
\label{thm:leading-exponent-general}For $w_{1},\ldots,w_{\ell}\in\F_{r}$
we have 
\[
\tr_{w_{1},\ldots,w_{\ell}}\left(n\right)=O\left(n^{\ch\left(w_{1},\ldots,w_{\ell}\right)}\right).
\]

\end{thm}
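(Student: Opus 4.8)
The plan is to express $\tr_{w_1,\ldots,w_\ell}(n)$ as a sum over combinatorial data indexed by what amounts to admissible surfaces, and then check that each term is bounded by $n^{\chi(\Sigma)}$ with the maximal contribution coming from surfaces of Euler characteristic $\ch(w_1,\ldots,w_\ell)$. Concretely, I would start from the Weingarten calculus expansion for expected products of matrix entries of Haar-random unitaries (the results of Xu and of Collins--\'Sniady cited in the excerpt). Writing each $\mathrm{tr}(w_j(U_1,\ldots,U_r))$ as a sum of products of entries of the $U_i$ and their adjoints, and integrating, one obtains a sum indexed by tuples of permutations $(\sigma_i,\tau_i)$ matching the occurrences of $x_i$ and $x_i^{-1}$ in $w_1,\ldots,w_\ell$, weighted by Weingarten functions $\mathrm{Wg}(n,\sigma_i\tau_i^{-1})$. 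The key analytic input is the known asymptotics of the Weingarten function: $\mathrm{Wg}(n,\pi) = O\!\left(n^{-|\pi|-2c(\pi)}\right)$ (or the sharper $\mathrm{Wg}(n,\pi)\sim \mu(\pi)\, n^{-(2k - \#\mathrm{cycles})}$ where $k$ is the size), so each Weingarten factor contributes a negative power of $n$ governed by the distance of $\pi$ from the identity, while the contraction of indices along cycles contributes positive powers of $n$ (one factor of $n$ per free index loop).

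The combinatorial heart of the argument is to package the data $(\sigma_i,\tau_i)_{i\in[r]}$, together with the fixed cyclic structure coming from the words $w_1,\ldots,w_\ell$, as a cell structure on a compact oriented surface $\Sigma$: the edges correspond to letters of the $w_j$, the boundary cycles to the words themselves, the way $\sigma_i$ pairs up positive and negative occurrences of $x_i$ tells you how to glue edges, and the face structure is read off from the index loops in the trace contraction. This is the standard "ribbon graph / fat graph" or "branched cover" dictionary, and it realizes each term of the Weingarten sum as contributing exactly $n^{\chi(\Sigma)+(\text{error from Weingarten estimate})}$, where $\Sigma$ is the associated surface and the error is non-positive (it is zero precisely when each $\sigma_i\tau_i^{-1}$ lies on a geodesic, i.e.\ the leading Weingarten term). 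One then observes that the surface $\Sigma$ produced carries a natural map $f\colon\Sigma\to\wedger$ sending edges labelled $x_i$ to the $i$-th circle, and that $(\Sigma,f)$ is admissible for $w_1,\ldots,w_\ell$ in the sense of Definition \ref{def: admissible maps} — possibly after discarding closed components, which only decrease the relevant exponent. Hence every term is $O(n^{\chi(\Sigma)}) = O(n^{\ch(w_1,\ldots,w_\ell)})$, and since there are only finitely many terms (the permutations range over finite symmetric groups determined by $|w_1|,\ldots,|w_\ell|$) the bound survives summation.

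The main obstacle, I expect, is the bookkeeping that matches the Weingarten-sum exponents to Euler characteristics and verifies that the surface one builds is genuinely admissible — in particular that closed components can be removed and that components with a boundary component mapping to a null-homotopic loop (or the degenerate case $\ell=0$ pieces) do not spoil the count. One must be careful that the identification $\chi(\Sigma) = (\text{number of faces}) - (\text{number of edges}) + (\text{number of vertices})$ is exactly the power of $n$ obtained, including the correct handling of the fixed basepoints $v_1,\ldots,v_\ell$ and of the adjoint entries $(U_i)^*$ (which is where the two permutations $\sigma_i,\tau_i$ per generator, rather than one, enter). A secondary subtlety is purely formal: when $w_1\cdots w_\ell\notin[\F_r,\F_r]$ there is no admissible surface at all, $\ch=-\infty$, and one must separately invoke Claim \ref{claim: tr=00003D0 for non-balanced words} to see $\tr_{w_1,\ldots,w_\ell}(n)\equiv 0$ so the statement holds vacuously. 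Once the dictionary between Weingarten terms and admissible maps is set up cleanly — which is also what is needed for the sharper Theorems \ref{thm:main} and its general analogue — the bound $O(n^{\ch(w_1,\ldots,w_\ell)})$ follows immediately by taking the maximum exponent over the finitely many terms.
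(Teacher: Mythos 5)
Your proposal follows essentially the same route as the paper: expand $\tr_{w_1,\ldots,w_\ell}(n)$ via the Collins--\'Sniady Weingarten formula into a finite sum over pairs of matchings, build from each pair a ribbon-graph surface whose Euler characteristic gives the order $n^{\chi(\Sigma_{(\sigma,\tau)})}$ of its contribution (the Weingarten error being non-positive), note that the resulting $(\Sigma_{(\sigma,\tau)},f_{(\sigma,\tau)})$ is admissible so $\chi(\sigma,\tau)\le\ch(w_1,\ldots,w_\ell)$, and handle the non-balanced case by the vanishing claim. The only cosmetic differences are your slightly garbled first statement of the Weingarten asymptotics (the second, $\mathrm{Wg}(\pi)\sim\moeb(\pi)n^{-(L+\left\Vert \pi\right\Vert )}$, is the one used) and your worry about closed components, which never arise since every $2$-cell in the construction borders the marked boundary circles.
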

Next, in order to state our result for the leading coefficient of
$\tr_{w_{1},\ldots,w_{\ell}}\left(n\right)$, we define equivalence
classes of {}``solutions'', namely, of admissible maps of maximal
Euler characteristic, for the words $w_{1},\ldots,w_{\ell}$. We say
that two admissible maps $\left(\Sigma,f\right)$ and $\left(\Sigma',f'\right)$
are equivalent, and denote $\left(\Sigma,f\right)\sim\left(\Sigma',f'\right)$\marginpar{${\scriptstyle \left(\Sigma,f\right)\sim\left(\Sigma',f'\right)}$}\label{(Sigma,f) sim (Sigma',f')},
if there is an homeomorphism $\rho\colon\Sigma\to\Sigma'$ so that
$f\simeq f'\circ\rho$ are homotopic relative $\partial\Sigma$ while
the boundary components are identified pointwise, that is, $\partial_{i}'=\rho\circ\partial_{i}$
for $1\le i\le\ell$.
\begin{defn}
\label{def:sol(words)}For $w_{1},\ldots,w_{\ell}\in\F_{r}$ let $\sol\left(w_{1},\ldots,w_{\ell}\right)$\marginpar{${\scriptstyle \sol\left(w_{1},\ldots,w_{\ell}\right)}$}
denote the set of equivalence classes of {}``solutions'', or admissible
maps of maximal Euler characteristic, for $w_{1},\ldots,w_{\ell}$.
Namely,
\[
\sol\left(w_{1},\ldots,w_{\ell}\right)\overset{\mathrm{def}}{=}\left\{ \left(\Sigma,f\right)\,\middle|\,\begin{gathered}\left(\Sigma,f\right)\,\mathrm{is\, admissible\, for\:}w_{1},\ldots,w_{\ell},\,\mathrm{and}\\
\chi\left(\Sigma\right)=\ch\left(w_{1},\ldots,w_{\ell}\right)
\end{gathered}
\right\} /\left(\Sigma,f\right)\sim\left(\Sigma',f'\right).
\]
We denote by \marginpar{$\left[\left(\Sigma,f\right)\right]$}$\left[\left(\Sigma,f\right)\right]$
the equivalence class of the admissible map $\left(\Sigma,f\right)$.
\end{defn}
We can now state the more detailed version of Theorem \ref{thm:leading-exponent-general}
which generalizes Theorem \ref{thm:main}:
\begin{thm}
\label{thm:main - general}Let $w_{1},\ldots,w_{\ell}\in\F_{r}$.
Then, 
\[
\tr_{w_{1},\ldots,w_{\ell}}\left(n\right)=n^{\ch\left(w_{1},\ldots,w_{\ell}\right)}\left[\sum_{\left[\left(\Sigma,f\right)\right]\in\sol\left(w_{1},\ldots,w_{\ell}\right)}\chi\left(\mathrm{Stab}_{\mathrm{MCG}\left(\Sigma\right)}\left(\tilde{f}\right)\right)\right]+O\left(n^{\ch\left(w_{1},\ldots,w_{\ell}\right)-2}\right),
\]
where $\tilde{f}$\marginpar{$\tilde{f}$} is the homotopy class of
$f$ (relative the boundary of $\Sigma$).

As above, $\mathrm{MCG}\left(\Sigma\right)$\label{MCG(Sigma)}\marginpar{$\mathrm{MCG}\left(\Sigma\right)$}
is the mapping class group of the surface $\Sigma$, consisting of
mapping classes which fix the boundary pointwise. It acts on homotopy
classes of maps from the surface by precomposition.
\end{thm}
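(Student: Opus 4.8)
The plan is to pass from the expected product of traces to a sum over combinatorial "surface-like" objects via the Weingarten calculus, and then organize this sum according to which topological/homotopy type of admissible map each term contributes to. The first step is to expand each $\mathrm{tr}\left(w_{i}\left(U_{1},\ldots,U_{r}\right)\right)$ as a sum of products of entries of the matrices $U_{j}$ and $U_{j}^{*}$, and then integrate over $\U(n)^{r}$ using the Weingarten function for each generator separately (the $U_{j}$ being independent). Following Collins--\'{S}niady and Xu, this produces a sum indexed by tuples of permutations $(\sigma_{j},\tau_{j})$ — one matching of the $x_{j}$-occurrences with the $x_{j}^{-1}$-occurrences and one "Weingarten permutation" per generator — each contributing a monomial in $n$ times a Weingarten weight $\mathrm{Wg}$ whose leading behaviour in $1/n$ is controlled by the cycle structure. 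The key bookkeeping fact, already available to us from Section \ref{sec:A-Rational-Expression}, is that each such tuple determines a compact oriented surface with $\ell$ boundary components glued from the polygons cut out by the words $w_{1},\ldots,w_{\ell}$ along the matchings, and the power of $n$ attached to that tuple is exactly $\chi$ of this surface, up to lower-order corrections coming from the Weingarten expansion. Hence every term in the expansion is "charged" to an admissible pair $(\Sigma,f)$, and $\tr_{w_{1},\ldots,w_{\ell}}(n) = \sum_{(\Sigma,f)} n^{\chi(\Sigma)}\cdot(\text{stuff}) + (\text{lower order})$, so only the $(\Sigma,f)$ with $\chi(\Sigma)=\ch(w_{1},\ldots,w_{\ell})$ contribute to the leading term. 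This already gives Theorem \ref{thm:leading-exponent-general}.

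The heart of the argument is then to identify the leading coefficient attached to a fixed equivalence class $[(\Sigma,f)]\in\sol(w_{1},\ldots,w_{\ell})$ with $\chi\!\left(\mathrm{Stab}_{\mathrm{MCG}(\Sigma)}(\tilde f)\right)$. I would proceed as follows. First, group the combinatorial tuples $(\sigma_{j},\tau_{j})$ that build a surface homeomorphic to $\Sigma$ with $f$-data in the class $\tilde f$: these are precisely the "tessellations" or cellulations of $\Sigma$ refining the map to $\wedger$. Two such tuples give genuinely the same point of $\sol$ exactly when related by a homeomorphism of $\Sigma$ fixing the boundary, i.e.\ by an element of $\mathrm{MCG}(\Sigma)$ acting on cellulations, so the set of tuples charged to $[(\Sigma,f)]$ is (asymptotically, keeping only the leading $\mathrm{Wg}$-term, where $\mathrm{Wg}$ of an identity-type permutation contributes $1$ and each extra transposition costs a power of $n^{-2}$) in bijection with a set on which $\mathrm{MCG}(\Sigma)$ acts, with stabilizers conjugate to $G=\mathrm{Stab}_{\mathrm{MCG}(\Sigma)}(\tilde f)$. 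The leading coefficient is then a signed/weighted count of these cellulations modulo $\mathrm{MCG}(\Sigma)$. Here I would invoke the theory developed around Theorem \ref{thm:stabilizers K(G,1)}: the space of such cellulations, or rather a suitable deformation retract of the relevant configuration space, is a contractible (or homotopy-discrete over each class) complex on which $\mathrm{MCG}(\Sigma)$ acts with finite stabilizers realizing a $\mathrm{K}(G,1)$ for $G$; summing the Weingarten weights over a fundamental domain and recognizing the result as an orbifold Euler characteristic then yields exactly $\sum_{[(\Sigma,f)]}\chi(G)$. This is the step where the combinatorial/Weingarten side and the mapping-class-group side are matched, and it is the main obstacle: one must check that the leading-order Weingarten weights, summed over all cellulations in a single $\mathrm{MCG}(\Sigma)$-orbit of $\tilde f$, assemble precisely into $\chi$ of the stabilizer and not merely into its order or some other invariant — in other words, that the alternating contributions of higher-genus/higher-transposition Weingarten corrections within a fixed leading-$n$ stratum reproduce the alternating sum defining the Euler characteristic of the $\mathrm{K}(G,1)$-complex.

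Finally, I would handle the error term: any tuple whose surface has $\chi(\Sigma) < \ch(w_{1},\ldots,w_{\ell})$, or whose Weingarten weight is not of top order, contributes $O\!\left(n^{\ch-2}\right)$ since Euler characteristics of admissible surfaces and Weingarten degree drops both move in steps of $2$ (orientability and the structure of $\mathrm{Wg}$); and one must check finiteness — that there are only finitely many equivalence classes in $\sol(w_{1},\ldots,w_{\ell})$ so the leading sum is a finite integer — which follows from the fact that a maximal-$\chi$ admissible surface has bounded complexity (bounded number of cells, since the words $w_{i}$ have bounded length) together with Theorem \ref{thm:stabilizers K(G,1)} guaranteeing each stabilizer has a well-defined integer Euler characteristic. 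Assembling the leading contributions over the finitely many classes gives the stated formula; the single-word case $\ell=1$, $\ch(w)=1-2\cl(w)$ recovers Theorem \ref{thm:main} and, dropping everything but the exponent, Theorem \ref{thm:leading-exponent}.
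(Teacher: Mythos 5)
Your first stage (the Weingarten expansion, the surface attached to each pair of matchings, and the observation that a pair contributes at order $n^{\chi}$ of its surface, so only maximal-$\chi$ admissible maps survive in the leading term) matches the paper's Sections \ref{sec:A-Rational-Expression}--\ref{sec:surface-from-matchings}. The genuine gap is exactly the step you yourself flag as ``the main obstacle'': you never prove that the signed leading Weingarten coefficients, summed over all pairs of matchings charged to a single class $\left[\left(\Sigma,f\right)\right]$, equal $\chi\left(\Stab_{\MCG\left(\Sigma\right)}(\tilde{f})\right)$. This needs two separate nontrivial inputs, neither of which your sketch supplies. The first is a purely combinatorial identity: the pairs of matchings attached to $\left[\left(\Sigma,f\right)\right]$ carry a partial order (geodesics in the Cayley graph of $S_{L}$ with respect to transpositions, equivalently rewiring along colored non-crossing partitions inside discs), the set $\pmp\left(\Sigma,f\right)$ is downward closed for this order, and the defining relation of the M\"obius function then gives
\[
\sum_{\left(\sigma,\tau\right)\in\pmp\left(\Sigma,f\right)}\moeb\left(\sigma^{-1}\tau\right)=\chi\left(\left|\pmp\left(\Sigma,f\right)\right|\right),
\]
the Euler characteristic of the order complex of this finite poset. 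Your remark that ``alternating Weingarten corrections reproduce the alternating sum'' gestures at this but gives no mechanism; note in particular that the relevant signs are the M\"obius values $\moeb\left(\sigma^{-1}\tau\right)$ (signs times Catalan numbers) of the \emph{leading} Weingarten terms of pairs which all share the same power of $n$ — they are not lower-order corrections within a stratum, and without the poset structure and its downward closure there is no reason such a signed sum should be an Euler characteristic of anything.

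The second missing input is that this specific complex $\left|\pmp\left(\Sigma,f\right)\right|$ is a $\mathrm{K}\left(G,1\right)$ for $G=\Stab_{\MCG\left(\Sigma\right)}(\tilde{f})$; only then does its Euler characteristic equal $\chi\left(G\right)$. Theorem \ref{thm:stabilizers K(G,1)}, which you invoke, only asserts that \emph{some} finite $\mathrm{K}\left(G,1\right)$ complex exists, and in the paper it is itself a corollary of this very step, so it cannot close the loop. The actual argument introduces the arc poset $\ap\left(\Sigma,f\right)$ of geometric realizations of the matchings, shows that $\MCG\left(\Sigma\right)$ acts on it \emph{freely} — not ``with finite stabilizers'': the groups $G$ are torsion-free and typically infinite (e.g.\ $\F_{5}$ for $\left[x,y\right]^{2}$ and $\mathbb{Z}$ for $\left[x,y\right]\left[x,z\right]$) — that the quotient is $\pmp\left(\Sigma,f\right)$, and that each connected component of $\left|\ap\left(\Sigma,f\right)\right|$ is contractible and corresponds to one homotopy class of maps; a component is then the universal cover of $\left|\pmp\left(\Sigma,f\right)\right|$ with deck group $G$. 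Your ``orbifold Euler characteristic over a fundamental domain'' framing does not apply to a free action of an infinite group, and without the contractibility statement (the most technical part of the paper) the identification of the leading coefficient with $\chi\left(G\right)$ remains unsupported.
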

Indeed, $\sol\left(\wl\right)$ is always a finite set (see Corollary
\ref{cor:Finitely many incompressible maps}). Finally, we need to
justify our usage of the Euler characteristic%
\footnote{Note that our results use two different instances of Euler characteristics.
On the one hand, they use Euler characteristics of compact surfaces,
and on the other hand the Euler characteristic of stabilizer subgroups,
or of the corresponding $K\left(G,1\right)$-spaces. We try to ease
the confusion by using the notation $\ch\left(w_{1},\ldots,w_{\ell}\right)$
for the former (instead of, say, the more natural $\chi\left(w_{1},\ldots,w_{\ell}\right)$).%
} of the stabilizers of the maps in $\sol\left(w_{1},\ldots,w_{\ell}\right)$,
namely, to give a generalized version of Theorem \ref{thm:stabilizers K(G,1)}.
It turns out that the crucial property of these maps is their being
incompressible:
\begin{defn}
\label{def:incompressible}A map $f$ from a surface $\Sigma$ to
a topological space is called \textbf{compressible} if there is a
non-nullhomotopic simple closed curve $\gamma$ in $\Sigma$ such
that $f\left(\gamma\right)$ is (freely) nullhomotopic. Otherwise,
$f$ is called \textbf{incompressible.}
\end{defn}
This term is standard (see, e.g., \cite{calegari2009scl}). It incorporates
maps solving the commutator equation \eqref{eq:commutators}, and
more generally, maps in $\sol\left(w_{1},\ldots,w_{\ell}\right)$:
if $\left(\Sigma,f\right)$ is admissible for $w_{1},\ldots,w_{\ell}$
and $f$ is compressible, one can cut $\Sigma$ along the compressing
simple closed curve $\gamma$, cap with two discs to obtain a new
surface $\Sigma'$ and extend $f$ to a map $f'$ from $\Sigma'$.
But then $\left(\Sigma',f'\right)$ is also admissible for $w_{1},\ldots,w_{\ell}$
with $\chi\left(\Sigma'\right)=\chi\left(\Sigma\right)+2$. So $\Sigma$
cannot be of maximal Euler characteristic. 
\begin{thm}
\label{thm:K(G,1) for incompressible}Let $\tilde{f}\colon\Sigma\to\wedger$
be a homotopy class (relative $\partial\Sigma$) of incompressible
maps from a compact oriented surface to the wedge. Then the stabilizer
\[
G=\mathrm{Stab}_{\mathrm{MCG}\left(\Sigma\right)}\left(\tilde{f}\right)
\]
admits a finite simplicial complex as $K\left(G,1\right)$-space.
\end{thm}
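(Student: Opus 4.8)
\emph{Proof proposal.} The plan is to build a contractible simplicial complex on which $G$ acts freely, simplicially, and cocompactly; its quotient is then the desired finite $K\left(G,1\right)$. Throughout I use the Dehn--Nielsen--Baer dictionary of the previous subsection freely, and that the maps in $\sol\left(\wl\right)$ (and the minimal-genus solutions of \eqref{eq:commutators}) are incompressible, so this statement contains Theorems \ref{thm:stabilizers K(G,1)} and the finiteness needed in Theorem \ref{thm:main - general}.

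\textbf{Reductions.} I would first reduce to $\Sigma$ connected: if $\Sigma=\Sigma_{1}\sqcup\cdots\sqcup\Sigma_{k}$ then $\MCG\left(\Sigma\right)=\prod_{j}\MCG\left(\Sigma_{j}\right)$ and $\tilde f$ splits accordingly, so $G=\prod_{j}G_{j}$ and a finite $K\left(G,1\right)$ is the product of finite $K\left(G_{j},1\right)$'s; components on which $\MCG$ is trivial (discs) or cyclic are handled directly. Since we fix $\partial\Sigma$ pointwise, a finite-order mapping class is realized by a finite-order homeomorphism fixing a collar pointwise, hence is the identity; thus $\MCG\left(\Sigma\right)$, and a fortiori $G$, is torsion-free -- this will upgrade ``finite stabilizer'' to ``trivial stabilizer'' below.

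\textbf{A normal form from incompressibility.} Homotope $f$ rel $\partial\Sigma$ to be transverse to the set $M=\left\{m_{1},\dots,m_{r}\right\}$ of edge-midpoints of $\wedger$ and in ``efficient position'' (no innermost discs, no bigons between consecutive preimages, no removable trivial components). A \emph{closed} component of $P:=f^{-1}\left(M\right)$ maps to a single point of $M$, hence is nullhomotopic in $\wedger$; if it were essential in $\Sigma$ it would be a compressing curve, contradicting incompressibility, so efficiency removes all closed components and $P$ is a disjoint union of properly embedded arcs with endpoints on $\partial\Sigma$, each carrying a label in $\left\{x_{1}^{\pm1},\dots,x_{r}^{\pm1}\right\}$, with the cyclic word along $\partial_{i}$ equal to the chosen reduced representative of $w_{i}$. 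The same argument applied to an essential simple closed curve inside a complementary region shows every complementary region is a disc (the boundary-parallel annuli that arise only at boundary components with $w_{i}=1$ are split off as trivial disc factors). So $P$ is a \emph{filling} labeled arc system --- a ``$\left(\wl\right)$-labeled fatgraph spine'' of $\Sigma$ --- and conversely such a labeled system determines $f$ up to homotopy rel $\partial\Sigma$, since the complementary discs map into contractible stars of the wedge point. Efficiency bounds the number of arcs of $P$ by a constant depending only on $\Sigma$ and $\sum_{i}\left|w_{i}\right|$.

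\textbf{The complex and the action.} Let $\mathcal{Y}_{f}$ be the subcomplex of the Harer--Penner arc complex of $\Sigma$ (with its marked points on $\partial\Sigma$) spanned by arc systems that extend to a labeled filling system compatible with $f$ as above, simplices recording simultaneous realizability. Then $G$ acts simplicially on $\mathcal{Y}_{f}$ (after barycentric subdivision, without inversions), and $\mathcal{Y}_{f}\ne\varnothing$ by the normal form. The action is \emph{cocompact}: if $P_{0}$ is an $f$-compatible labeled filling system and $\phi\in\MCG\left(\Sigma\right)$ carries $P_{0}$ to another $f$-compatible labeled system, then (since the labeled system determines the map rel $\partial\Sigma$) $f\circ\phi^{-1}\simeq f$, i.e.\ $\phi\in G$; hence $G$ acts transitively on the $f$-compatible systems inside each $\MCG\left(\Sigma\right)$-orbit, and there are only finitely many $\MCG\left(\Sigma\right)$-orbits of filling systems of bounded complexity. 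The action is \emph{free}: the $\MCG\left(\Sigma\right)$-stabilizer of a labeled filling arc system injects into the (finite) symmetry group of the fatgraph obtained by cutting along it --- there being no complementary annuli in which to twist --- so every simplex stabilizer in $G$ is finite, hence trivial. It therefore remains only to prove that $\mathcal{Y}_{f}$ is contractible, for then $\mathcal{Y}_{f}/G$ is a finite simplicial complex and a $K\left(G,1\right)$.

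\textbf{Contractibility of $\mathcal{Y}_{f}$ --- the crux.} The full arc complex of $\Sigma$ is contractible (Harer, Hatcher, Penner), and the ``filling'' part is a spine of Teichm\"uller space, again contractible. I expect to deduce contractibility of $\mathcal{Y}_{f}$ either by deformation-retracting the contractible arc complex onto $\mathcal{Y}_{f}$ via the standard surgery flow, verifying that incompressibility of $f$ is exactly what keeps each intermediate arc system $f$-compatible, or by exhibiting $\mathcal{Y}_{f}$ as a spine of the space of ``taut geodesic realizations of $f$'', which fibers over the contractible Teichm\"uller space of $\Sigma$. This step --- showing that imposing compatibility with the incompressible map $f$ does not destroy contractibility --- is the main obstacle; the rest is bookkeeping with efficient position and with the (standard) combinatorics of filling arc systems.
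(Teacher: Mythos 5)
Your overall route --- dualizing the incompressible map $f$ to a labeled filling arc system, letting $G$ act on a complex built from such systems, and taking the quotient --- is essentially the paper's route, but the step you yourself flag as ``the crux'' is exactly where all the work lies, and your proposal does not supply it. Contractibility of the complex of $f$-compatible arc systems is \emph{not} a consequence of Harer--Hatcher--Penner contractibility of arc complexes: Hatcher's surgery flow slides arcs across one another, and there is no reason the surgered system remains the preimage system of a map in the fixed homotopy class $\tilde{f}$ (compatibility with $\tilde{f}$ is a global constraint, recorded by the words the arcs cut out); likewise ``spine of taut geodesic realizations of $f$ fibering over Teichm\"uller space'' is a hope, not an argument. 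The paper proves precisely this contractibility in its most technical part (Theorem \ref{thm:components-of-AP(Sigma,f)} and Section \ref{sub:Proof-of-contractability}): it works with the arc poset $\ap\left(\Sigma,f\right)$ whose \emph{vertices are entire} $2L$-arc filling systems, ordered by rewiring inside type-$o$ discs (not by arc deletion --- the paper explicitly contrasts this with Hatcher's complex), then fixes a finite set of guide-arcs, measures the depth of each crossing word in the biregular tree $\mathbb{T}_{2r,2}$, and contracts each connected component by a countable sequence of poset retractions $h_{m,n}$ pruning depth-$n$ leaves, each inducing a deformation retract via the poset-homotopy lemma of the appendix; combined with Theorem \ref{thm:MCG-action-on-AP} this identifies the component of $\tilde{f}$ as the universal cover of the finite complex $\left|\pmp\left(\Sigma,f\right)\right|$ with deck group $G$. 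Without an argument at this level, your proof is incomplete at its central point.

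There is also a secondary flaw in the setup. Defining $\mathcal{Y}_{f}$ as the subcomplex of the arc complex spanned by arcs extendable to $f$-compatible filling systems, your freeness argument only treats top simplices: the $\MCG\left(\Sigma\right)$-stabilizer of a \emph{filling} labeled system is trivial (Alexander lemma on the complementary discs), but the stabilizer in $G$ of a single arc, or of a non-filling subsystem, contains mapping classes supported in the complementary subsurface, and you give no reason its intersection with $G$ is trivial (note $G$ is typically infinite, e.g.\ $\F_{5}$ for $w=\left[x,y\right]^{2}$). So the action need not be free on lower-dimensional simplices and the quotient need not be aspherical as claimed. The paper sidesteps this by making every vertex a full filling system, so freeness holds on all of $\left|\ap\left(\Sigma,f\right)\right|$; your reductions to connected $\Sigma$, torsion-freeness, cocompactness, and the normal-form discussion are fine, but they are the easy part.
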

We remark the statement is void when $\Sigma$ has a closed connected
component of positive genus: there are no incompressible maps from
a closed surface to the wedge%
\footnote{For example, this can be seen using the proof of Theorem 1.4 in \cite{CULLER},
by turning a map from a closed surface to a {}``tight'' map.%
}. 

The following special case of Theorem \ref{thm:main - general} is
due to \cite{MSS07} and \cite{Radulescu06}:
\begin{example}
\label{example:special case of radulescu and MSS}Consider the limit
\begin{equation}
\lim_{n\to\infty}\tr_{w_{1},\ldots,w_{\ell}}\left(n\right)\label{eq:limit-of-tr_w1...wl}
\end{equation}
for $w_{1},\ldots,w_{\ell}\ne1$. Assume $\left(\Sigma,f\right)$
is admissible for $w_{1},\ldots,w_{\ell}$. By definition, every connected
component of $\Sigma$ has non-empty boundary, so its Euler characteristic
is negative unless it is a disc or an annulus. But a disc is impossible
as we assume $w_{1},\ldots,w_{\ell}\ne1$. Thus, the only case in
which \eqref{eq:limit-of-tr_w1...wl} is non-zero is when there is
an admissible pair $\left(\Sigma,f\right)$ with $\Sigma$ is a disjoint
union of (one or more) annuli. In every annulus $A$, if $w$ and
$w'$ are the two words on the boundary components, then necessarily
$w^{-1}$ is conjugate to $w'$. Moreover, write $w=u^{d}$ with $u\in\F_{r}$
a non-power and $d\ge1$, then the number of equivalence classes of
maps $h$ such that $\left(A,h\right)$ is admissible for $w,w'$
is exactly $d$. Since the mapping class group of the annulus is simple
to analyze (isomorphic to $\mathbb{Z},$ generated by a Dehn twist),
it is not hard to see the stabilizers $\mathrm{Stab}_{\mathrm{MCG}\left(A\right)}\left(\tilde{h}\right)$
are always trivial. 

These considerations yield Theorem 4.1 in \cite{Radulescu06}%
\footnote{The same theorem is an immediate consequence of Theorem 2 in \cite{MSS07}.
In \cite{Radulescu06} the theorem is shown, for simplicity, only
for $\F_{2}$ (in our analysis there is no saving in restricting to
$\F_{2}$). %
}: \eqref{eq:limit-of-tr_w1...wl} is non-zero if and only if $w_{1},\ldots,w_{\ell}$
can be matched in pairs in which each word is conjugate to the inverse
of its mate. In this case, the limit in \eqref{eq:limit-of-tr_w1...wl}
is equal to the number of such matchings, times the product of exponents
of the words (one exponent for every pair).
\end{example}
Because of the degenerate case described in Footnote \ref{fn:leading-coef-vanishes},
it is not clear whether the commutator length $\cl\left(w\right)$
is determined by the $w$-measures on $\left\{ {\cal U}\left(n\right)\right\} _{n\in\mathbb{N}}$,
or, more generally, if $\ch\left(\wl\right)$ is determined by the
joint measures of $\wl$ on unitary groups. However, the measures
do determine a related number, the \emph{stable commutator length
}of $w$. This algebraic quantity is defined by 
\begin{equation}
\mathrm{scl}(w)\equiv\lim_{m\to\infty}\frac{\cl(w^{m})}{m}.\label{eq:scl}
\end{equation}
(There is an analogous definition for finite set of words.) There
is a deep theory behind this invariant, and for background we refer
to the short survey \cite{CALWHATIS} and long one \cite{calegari2009scl}
by Calegari. Relying on the rationality result of Calegari \cite{CALRATIONAL}
that shows, in particular, that $\mathrm{scl}$ takes on rational
values in $\F_{r}$, we are able to show the following:
\begin{cor}
\label{cor:can hear scl}The stable commutator length of a word $w\in\F_{r}$
can be {}``read'' from the measures it induces on unitary groups
in the following way:

\begin{equation}
\mathrm{scl}\left(w\right)=\inf_{\ell>0;\, j_{1},\ldots,j_{\ell}>0}\frac{-\lim_{n\to\infty}\log_{n}\tr_{w^{j_{1}},\ldots,w^{j_{\ell}}}\left(n\right)}{2\left(j_{1}+\ldots+j_{\ell}\right)}.\label{eq:read scl}
\end{equation}

\end{cor}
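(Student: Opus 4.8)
The plan is to derive Corollary~\ref{cor:can hear scl} by combining Theorem~\ref{thm:leading-exponent-general} (which identifies the leading exponent of $\tr_{w_1,\ldots,w_\ell}(n)$ with $\ch(w_1,\ldots,w_\ell)$, provided the leading coefficient in Theorem~\ref{thm:main - general} does not vanish) with the geometric relation $\ch(w_1,\ldots,w_\ell)=2-\ell-2\cl(w_1,\ldots,w_\ell)$ recorded in the Remark following Definition~\ref{def:chi(words)}, together with Calegari's theory of $\mathrm{scl}$. First I would observe that for any word $w\ne 1$ and any $\ell>0$, $j_1,\ldots,j_\ell>0$, the product $w^{j_1}\cdots w^{j_\ell}=w^{j_1+\cdots+j_\ell}$ lies in $[\F_r,\F_r]$ (since $w$ itself, being nontrivial, may or may not lie in $[\F_r,\F_r]$ — so actually one must first reduce to the case $w\in[\F_r,\F_r]$, else both sides are treated separately: if $w\notin[\F_r,\F_r]$ then $\mathrm{scl}(w)=\infty$ and every $\tr_{w^{j_1},\ldots,w^{j_\ell}}(n)\equiv 0$, so the infimum of $-\log_n 0 = +\infty$ is $+\infty$, matching). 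Assuming then $w\in[\F_r,\F_r]$, the quantity $\ch(w^{j_1},\ldots,w^{j_\ell})$ is finite, and $\lim_{n\to\infty}\log_n\tr_{w^{j_1},\ldots,w^{j_\ell}}(n)=\ch(w^{j_1},\ldots,w^{j_\ell})$ whenever the leading coefficient is nonzero, and is $\le \ch(w^{j_1},\ldots,w^{j_\ell})$ in general (by Theorem~\ref{thm:leading-exponent-general}, which gives an upper bound with no nonvanishing hypothesis).

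Next I would translate. Using $\ch(w^{j_1},\ldots,w^{j_\ell})=2-\ell-2\cl(w^{j_1},\ldots,w^{j_\ell})$, the ratio in \eqref{eq:read scl} whose infimum we take becomes
\[
\frac{-\ch(w^{j_1},\ldots,w^{j_\ell})}{2(j_1+\cdots+j_\ell)}=\frac{\ell-2+2\cl(w^{j_1},\ldots,w^{j_\ell})}{2(j_1+\cdots+j_\ell)}=\frac{\cl(w^{j_1},\ldots,w^{j_\ell})}{j_1+\cdots+j_\ell}+\frac{\ell-2}{2(j_1+\cdots+j_\ell)}
\]
(at least when the leading coefficient does not vanish; otherwise the left side is only $\ge$ the middle expression, since $\tr$ decays faster). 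Now Calegari's geometric/homogeneity properties of $\mathrm{scl}$ for a finite tuple give $\cl(w^{j_1},\ldots,w^{j_\ell})\ge \mathrm{scl}(w^{j_1},\ldots,w^{j_\ell})+ (\text{const})$; more usefully, $\mathrm{scl}(w^{j_1},\ldots,w^{j_\ell})=\mathrm{scl}(w^{j_1+\cdots+j_\ell})=(j_1+\cdots+j_\ell)\,\mathrm{scl}(w)$ by homogeneity of $\mathrm{scl}$ under taking powers and the fact that $\mathrm{scl}$ of a tuple only depends on the product up to conjugacy of the factors. Dividing by $j_1+\cdots+j_\ell$, the first term tends to $\mathrm{scl}(w)$ as the $j_i$ grow, and the second term $\frac{\ell-2}{2(j_1+\cdots+j_\ell)}$ can be made arbitrarily small (e.g. take $\ell=1$, or take the $j_i$ large), so the infimum is $\ge \mathrm{scl}(w)$ and can be approached. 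The delicate direction is the reverse inequality: I want to choose $\ell$ and $j_1,\ldots,j_\ell$ so that $\frac{-\log_n\tr}{2\sum j_i}$ is close to $\mathrm{scl}(w)$, and for this I need, first, that $\cl(w^{j_1},\ldots,w^{j_\ell})$ can be pushed down close to $(\sum j_i)\mathrm{scl}(w)$, and second, that for such a choice the leading coefficient in Theorem~\ref{thm:main - general} does not vanish (so that $\lim\log_n\tr$ actually equals $\ch$ rather than being strictly smaller, which would only help the infimum but I must be sure it is not $-\infty$, i.e. that $\tr$ is not identically zero).

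Here is where Calegari's rationality theorem \cite{CALRATIONAL} enters and where the real work is. By rationality of $\mathrm{scl}$ in $\F_r$, there is a single integer $j$ and an extremal surface: $\cl(w^{j})$ is "efficient," meaning $\frac{\cl(w^j)- \frac12}{j}$ (or the appropriate normalization for a single boundary word, $\frac{2\cl(w^j)-1}{2j}$) equals $\mathrm{scl}(w)$ exactly, realized by an extremal (hence incompressible) admissible surface $\Sigma$ for $w^j$ with $\chi(\Sigma)=\ch(w^j)$. The key point I must verify is that for this extremal pair, the corresponding sum $\sum_{[(\Sigma,f)]}\chi(\mathrm{Stab}_{\mathrm{MCG}(\Sigma)}(\tilde f))$ in Theorem~\ref{thm:main - general} is nonzero — or, failing that, that one can always enlarge the tuple ($\ell=2$, splitting $w^j$ as $w^{j_1}, w^{j_2}$, or passing to a multiple) to arrange nonvanishing while changing the normalized exponent by an arbitrarily small amount. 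I expect the cleanest route is: (i) show the infimum in \eqref{eq:read scl} is $\ge\mathrm{scl}(w)$ using only the upper bound Theorem~\ref{thm:leading-exponent-general} and $\ch=2-\ell-2\cl$ plus $\cl(w^{j_1},\ldots,w^{j_\ell})\ge (\sum j_i)\,\mathrm{scl}(w) - O(1)$ — wait, one needs $\cl \ge \mathrm{scl}$-type bounds, precisely $\cl(w^{j_1},\dots,w^{j_\ell}) \ge \mathrm{scl}(w^{j_1},\dots,w^{j_\ell}) = (\sum j_i)\mathrm{scl}(w)$, which holds since $\mathrm{scl}\le\cl$ always; (ii) show it is $\le\mathrm{scl}(w)$ by exhibiting, via Calegari's extremal surfaces, a sequence of tuples $(w^{j_1^{(k)}},\ldots,w^{j_{\ell_k}^{(k)}})$ along which the normalized $\ch$ converges to $-2\,\mathrm{scl}(w)\cdot(\ldots)$ and, crucially, $\tr$ does not decay faster than $n^{\ch}$. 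The main obstacle is precisely this nonvanishing of the leading coefficient along a well-chosen sequence: I anticipate resolving it by the same device as in Example~\ref{example:[x,y][x,z] leading term 0} and Section~\ref{sec:Examples}, namely noting that for a tuple consisting of the \emph{same} word $w$ repeated (or powers thereof) one can always find an admissible incompressible surface whose stabilizer is trivial — for instance, by taking $\Sigma$ with all components having at least two boundary circles or free $\pi_1$-image, giving Euler characteristic $+1$ per stabilizer and hence a strictly positive sum — so that $\lim_n\log_n\tr = \ch$ genuinely holds. Once both inequalities are in place, \eqref{eq:read scl} follows, and I would close by remarking that the $\inf$ is in fact attained (by rationality) though the statement only claims the infimum.
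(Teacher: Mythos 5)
Your overall strategy is the same as the paper's: get the inequality $\ge$ from Theorem \ref{thm:leading-exponent-general} plus Calegari's characterization of $\mathrm{scl}$, and the inequality $\le$ from Calegari's rationality theorem via an extremal surface admissible for positive powers of $w$. One small slip in your step (i): ``$\mathrm{scl}\le\cl$'' is not quite enough. For $\ell=1$ the ratio is $\frac{2\cl(w^{j_{1}})-1}{2j_{1}}$, and $\mathrm{scl}(w^{j_{1}})\le\cl(w^{j_{1}})$ only yields the lower bound $\mathrm{scl}(w)-\frac{1}{2j_{1}}$. What you need is $\mathrm{scl}$ of the chain $\le-\ch/2$ (equivalently $\mathrm{scl}(u)\le\cl(u)-\tfrac{1}{2}$ for a single word), which is exactly Calegari's description of $\mathrm{scl}(w)$ as the infimum of $\frac{-\chi(\Sigma)}{2(j_{1}+\ldots+j_{\ell})}$ over admissible surfaces (\cite[Lemma 2.6]{CALRATIONAL}); the paper invokes this directly and never needs to pass through $\cl$ of the tuple at all.

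The genuine gap is in step (ii), precisely at the point you flag as the main obstacle: the nonvanishing of the leading coefficient. Your proposed fix --- exhibit one admissible incompressible surface with trivial stabilizer, ``hence a strictly positive sum'' --- does not work, because the coefficient in Theorem \ref{thm:main - general} is a sum of $\chi\left(\Stab_{\MCG\left(\Sigma\right)}(\tilde{f})\right)$ over \emph{all} classes in $\sol\left(w^{j_{1}},\ldots,w^{j_{\ell}}\right)$, and other classes can contribute zero or negative Euler characteristics and cancel your $+1$; this is exactly the phenomenon of Example \ref{example:[x,y][x,z] leading term 0}. The paper closes this by controlling every class at once: choosing the tuple supplied by \cite[Lemma 2.7]{CALRATIONAL} (an extremal surface admissible for $w^{j_{1}},\ldots,w^{j_{\ell}}$ with all $j_{i}>0$), every admissible surface of maximal Euler characteristic for that tuple is itself extremal, hence $\pi_{1}$-injective by \cite[Lemma 2.9]{CALRATIONAL} --- a condition strictly stronger than incompressibility --- and the paper proves separately (Lemma \ref{lem:pi1-injective means trivial stab}) that $\pi_{1}$-injectivity forces $\Stab_{\MCG\left(\Sigma\right)}(\tilde{f})$ to be trivial. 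Consequently every class contributes exactly $+1$, the leading coefficient equals $\left|\sol\left(w^{j_{1}},\ldots,w^{j_{\ell}}\right)\right|\ge1$, so $\lim_{n}\log_{n}\tr_{w^{j_{1}},\ldots,w^{j_{\ell}}}\left(n\right)=\ch\left(w^{j_{1}},\ldots,w^{j_{\ell}}\right)$ and the ratio equals $\mathrm{scl}(w)$ exactly, giving both the remaining inequality and attainment of the infimum. Without an argument of this kind, applying simultaneously to all maximal-$\chi$ classes, your step (ii) does not go through.
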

A similar result is true for the stable commutator length of several
words. We explain how Corollary \ref{cor:can hear scl} follows from
Theorem \ref{thm:main - general} and Calegari's rationality theorem
in Section \ref{sub:Stable-commutator-length}.

\subsection{More related work and further motivation\label{sub:More-related-work}}

Our work is inspired by that of the second author and Parzanchevski
\cite{PP15}, where word measures on finite symmetric groups are considered.
An element of a free group $\F$ is called \textit{primitive} if it
belongs to some free generating set of $\F$. The following estimate
from \cite[Theorem 1.8]{PP15} is analogous to Theorem \ref{thm:main}:
\begin{thm}[Puder-Parzanchevski]
\label{thm:pp15} Let $S_{n}$ be the symmetric group on $n$ elements.
For $w\in\F_{r}$ given as in \eqref{eq:word-expression}, let $w$
be the word map 
\[
w:S_{n}^{r}\to S_{n},\quad w\left(\sigma_{1},\ldots,\sigma_{r}\right)\equiv\prod_{1\leq j\leq\left|w\right|}\sigma_{i_{j}}^{\e_{j}},
\]
just as in \eqref{eq:word}. Let $\sigma_{1}^{(n)},\ldots,\sigma_{r}^{(n)}$
be $r$ independent random permutations in $S_{n}$ taken with respect
to the uniform measure, viewed as $0$-$1$ $n\times n$ matrices.
Then 
\[
\E\left[\mathrm{tr}\left(w\left(\sigma_{1}^{(n)},\ldots,\sigma_{r}^{(n)}\right)\right)\right]=1+\frac{\left|\mathrm{Crit}\left(w\right)\right|}{n^{\pi\left(w\right)-1}}+O\left(\frac{1}{n^{\pi(w)}}\right),
\]
where $|\mathrm{Crit}(w)|$ and $\pi(w)$ are invariants of $w$.
The \emph{primitivity rank} $\pi(w)$ is the minimal rank of a subgroup
in 
\[
\{\: J\:\lvert\: w\in J\leq\F_{r}\text{ and \ensuremath{w}\ is \textbf{not} primitive in \ensuremath{J}}\:\}.
\]
$\mathrm{Crit}(w)$ is the set of subgroups attaining this minimum
rank. 
\end{thm}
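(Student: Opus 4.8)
The trace of the $0$--$1$ matrix representing a permutation $\sigma\in S_{n}$ equals the number of its fixed points, so, writing $\mathrm{fix}$ for the number of fixed points, we must estimate $\E\left[\mathrm{fix}\left(w\left(\sigma_{1}^{(n)},\ldots,\sigma_{r}^{(n)}\right)\right)\right]$. The plan is to expand this quantity as a finite sum over the algebraic extensions of $\langle w\rangle$ inside $\F_{r}$ and read off its first two terms. By symmetry among the $n$ points, $\E\left[\mathrm{fix}\left(w\left(\sigma\right)\right)\right]=n\cdot\Pr\left[w\left(\sigma\right)\text{ fixes the point }1\right]$. Taking $w$ cyclically reduced (harmless, as the word map depends only on the group element), I would trace the orbit $1=q_{0},q_{1},\ldots,q_{|w|}=q_{0}$ obtained by applying the letters of $w$ one at a time, and condition on the coincidence pattern among the $q_{j}$; after Stallings folding each pattern becomes a finite connected pointed graph, with edges labelled by $x_{1},\ldots,x_{r}$, through which the cyclic word $w$ factors as a closed path, and such graphs are precisely the core graphs $\Gamma_{X}(H)$ of finitely generated subgroups $H\le\F_{r}$ with $w\in H$. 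A standard falling-factorial estimate --- the probability that $\sigma_{i}$ matches a prescribed set of $e_{i}$ edges of colour $i$ is $1/(n)_{e_{i}}$ with $(n)_{k}=n(n-1)\cdots(n-k+1)$, and the number of injective vertex-labellings into $[n]$ is $(1+o(1))\,n^{|V|-1}$ while non-injective ones are negligible --- gives that the contribution of $\Gamma_{X}(H)$ is of order $n^{\chi(\Gamma_{X}(H))}=n^{1-\mathrm{rk}(H)}$. Applying the M\"obius (and Kurosh) bookkeeping underlying Puder's analysis, this raw expansion collapses to a finite sum $\E\left[\mathrm{fix}\left(w\left(\sigma\right)\right)\right]=\sum_{[N]}L_{N}(n)$ indexed by the algebraic extensions $N$ of $\langle w\rangle$ in $\F_{r}$, a finite index set by Takahasi's theorem, with each $L_{N}$ a Laurent polynomial in $n$.

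Now read off the first two terms. First, the total contribution $\sum_{N:\,w\text{ primitive in }N}L_{N}(n)$ of those algebraic extensions in which $w$ is primitive equals the constant $1$ identically in $n$; this is the quantitative content of the Puder--Parzanchevski theorem \cite{PP15} that primitive words are measure preserving. Hence $\E\left[\mathrm{fix}\left(w\left(\sigma\right)\right)\right]-1=\sum_{N:\,w\text{ not primitive in }N}L_{N}(n)$. Every such $N$ has $\mathrm{rk}(N)\ge\pi(w)$ by the definition of the primitivity rank, and conversely the minimizers are exactly $\mathrm{Crit}(w)$: any subgroup $J\ni w$ in which $w$ is nonprimitive contains an algebraic extension of $\langle w\rangle$ --- the smallest free factor of $J$ containing $\langle w\rangle$ --- of rank $\le\mathrm{rk}(J)$ in which $w$ remains nonprimitive, so a rank-minimal one is itself an algebraic extension, and $\mathrm{Crit}(w)$ is finite. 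Each $L_{N}$ with $w$ nonprimitive is $O(n^{1-\mathrm{rk}(N)})$, and for $N\in\mathrm{Crit}(w)$ --- the algebraic extensions of minimal rank $\pi(w)$ in which $w$ is nonprimitive --- the leading coefficient of $L_{N}$ is exactly $1$, because $\langle w\rangle$ embeds in $N$ as an algebraic, hence core-graph-detectable, subgroup with an essentially unique basepointed closed-path realization of $w$, so no combinatorial multiplicities and no surviving M\"obius signs arise at the top order. Summing, $\E\left[\mathrm{fix}\left(w\left(\sigma\right)\right)\right]=1+|\mathrm{Crit}(w)|\,n^{1-\pi(w)}+O(n^{-\pi(w)})$, which is the claim (with the convention $n^{-\infty}=0$ and $\mathrm{Crit}(w)=\emptyset$ when $w$ is primitive, in which case the word map is measure preserving and the expectation is identically $1$).

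I expect the main obstacle to lie entirely in these two leading \emph{coefficients}, the orders of magnitude being routine bookkeeping. The first --- that the primitive algebraic extensions contribute exactly the constant $1$ --- is the substantive half of the primitive/measure-preserving equivalence and draws on the full core-graph and algebraic-extension apparatus, including Takahasi finiteness. The second --- that each critical subgroup contributes leading coefficient exactly $1$, with no multiplicities and no surviving signs --- is a delicate matter of tracking the M\"obius and Kurosh cancellations in the expansion and of knowing that $\langle w\rangle$ sits inside a critical subgroup in essentially one way; this is precisely where the bookkeeping must be controlled, and it is the technical heart of the argument.
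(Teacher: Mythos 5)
First, a point of order: the paper does not prove this statement. Theorem \ref{thm:pp15} is quoted verbatim from \cite[Theorem 1.8]{PP15} as background and motivation in Section \ref{sub:More-related-work}, so there is no in-paper proof to measure your attempt against; the relevant comparison is with the argument of \cite{PP15} itself.

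Judged as a standalone proof, your sketch reproduces the correct architecture of that argument (fixed-point count $=$ trace, expansion over coincidence patterns of the orbit of a point, folding to core graphs, collapse via M\"obius inversion to a finite sum $\sum_{N}L_{N}(n)$ over the algebraic extensions of $\langle w\rangle$, Takahasi finiteness), and the purely algebraic reductions you do carry out are sound: every critical subgroup is an algebraic extension of $\langle w\rangle$, and the rank-minimal proper algebraic extensions are exactly $\mathrm{Crit}(w)$. But the two quantitative inputs on which the conclusion rests --- that $L_{\langle w\rangle}(n)\equiv 1$ and that $L_{N}(n)=n^{1-\mathrm{rk}(N)}+O(n^{-\mathrm{rk}(N)})$ for every proper algebraic extension $N$, with leading coefficient \emph{exactly} $1$ --- are asserted, not proved. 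Your justification for the second (``essentially unique basepointed closed-path realization \ldots\ no surviving M\"obius signs at top order'') is a heuristic; establishing it is the main technical theorem of \cite{PP15} and requires the full analysis of the M\"obius function of the poset of quotients of the core graph, so the proposal circularly invokes the machinery whose construction constitutes the proof. You also mislocate part of the difficulty: since a proper algebraic extension of $\langle w\rangle$ cannot contain $\langle w\rangle$ as a free factor, the only algebraic extension in which $w$ is primitive is $\langle w\rangle$ itself, so the claim that ``the primitive algebraic extensions contribute exactly $1$'' is a one-term base case of the inversion, not the substantive half of the primitivity/measure-preservation equivalence --- that equivalence is a \emph{consequence} of the asymptotic expansion, not an input to it.
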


The study leading to Theorem \ref{thm:pp15} had two main motivations,
both of which are also relevant to the main result of the current
paper. The first motivation is related to questions about word measures
on finite, or more generally compact, groups. As mentioned above,
the measure induced by $w\in\F_{r}$ on some compact group $G$ is
identical to the measure induced by $\theta\left(w\right)$ for any
$\theta\in\mathrm{Aut}\left(\F_{r}\right)$. In particular, since
the $x_{1}$-measure on $G$ (the measure induced by the single letter
word {}``$x_{1}$'') is the Haar measure, or simply the uniform
measure for finite groups, the same holds for the $w$-measure of
every word $w$ in the $\mathrm{Aut}\left(\F_{r}\right)$-orbit of
$x_{1}$. This orbit consists precisely of the \emph{primitive} words
in $\F_{r}$. Several mathematicians have asked whether primitive
words are the only words inducing the uniform (Haar) measure on every
finite (compact, respectively) group (see \cite{PP15} and the references
therein). Theorem \ref{thm:pp15} answered this question to the positive,
showing that every non-primitive word induces a non-uniform measure
on $S_{n}$ for $n$ large enough. However, many conjectures revolving
around word measures on groups remain open, and we see the current
paper as a step towards their resolution. More details are given in
Section \ref{sub:Word-measures}.

The second motivation for Theorem \ref{thm:pp15} lies in the field
of random graphs, and more precisely that of spectra of random graphs.
A strengthened version of the asymptotic formula in Theorem \ref{thm:pp15}
appears in \cite{PUDEREXP}, where it is used in an approach to \textit{Alon's
second eigenvalue conjecture} from \cite{ALON} that says 
\begin{quote}
`Almost all $d$-regular graphs are weakly Ramanujan.' 
\end{quote}
This conjecture was proved by Friedman in \cite{FRIEDMAN} and a new
proof has been given recently by Bordenave \cite{BORDENAVE}. While
an approach using asymptotics of word maps has not yet proved the
full strength of Alon's conjecture, the approach in \cite{PUDEREXP}
comes very close (up to a small additive constant) while keeping the
proof manageable. This approach has also given the best result to
date regarding a natural generalization of Alon's conjecture to families
of irregular graphs (see \cite{PUDEREXP}).

One can ask analogous questions about the spectrum of sums of Haar
distributed unitary matrices in the large $n$ limit. Consider, for
example, the sum 
\begin{equation}
\sum_{i=1}^{r}U_{i}^{(n)}+(U_{i}^{(n)})^{*}.\label{eq:unitarysum}
\end{equation}
The connection to word measures on $\U\left(n\right)$ is that the
$N$\textsuperscript{th} power of \eqref{eq:unitarysum} is equal
to the sum, over all not-necessarily-reduced words $w$ of length
$N$, of $w(U_{1}^{\left(n\right)},\ldots,U_{r}^{\left(n\right)})$. 

When one replaces unitaries in \eqref{eq:unitarysum} with random
permutation matrices, one gets the adjacency matrix of a graph sampled
from the \textit{permutation model} of random regular graphs. Hence
the analogy with spectral graph theory. Heuristically, questions about
the spectra of sums of unitary matrices should be much easier than
the corresponding questions about sums of 0-1 permutation matrices%
\footnote{We thank Peter Sarnak for an illuminating conversation about this
subject.%
}, owing to the random unitary matrices being denser, and thus having
more variables to average over.

Nevertheless, interesting analytic problems about random unitary matrices
remain. In \cite{HT2005} Haagerup and Thorbj{ø}rnsen proved that
a certain operator-theoretic semigroup $\mathrm{Ext}(\F_{r})$ is
not a group for $r\geq2$, which had been an open problem for about
25 years. Their approach uses an observation of Voiculescu from \cite{VOICAROUND}
that reduces the question to one about the existence of unitary representations
of $\F_{r}$ with certain spectral features%
\footnote{Voiculescu in \cite{VOICAROUND} also relates these questions to the
existence of Ramanujan graphs, pleasantly completing a circle of ideas.%
}. Building on the work of \cite{HT2005}, Collins and Male \cite{CM}
proved the \textit{strong asymptotic freeness} of Haar unitary matrices
from which they obtain: 
\begin{thm}[Collins-Male]
\label{thm:cm} Almost surely 
\[
\left\Vert \sum_{i=1}^{r}U_{i}^{(n)}+(U_{i}^{(n)})^{*}\right\Vert \xrightarrow{n\to\infty}2\sqrt{2r-1}.
\]

\end{thm}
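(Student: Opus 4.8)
The plan is to split the statement into an easy lower bound --- a consequence of Voiculescu's asymptotic freeness \eqref{eq:firstorder} together with concentration of measure --- and a genuinely hard upper bound, which is the \emph{strong} convergence statement underlying the Haagerup--Thorbj{\o}rnsen/Collins--Male circle of ideas. Write $H_n=\sum_{i=1}^{r}U_i^{(n)}+(U_i^{(n)})^{*}$ and let $h=\sum_{i=1}^{r}v_i+v_i^{*}$, where $v_1,\dots,v_r$ is a free family of Haar unitaries in a faithful tracial W*-probability space $(\mathcal M,\tau)$ (e.g.\ the canonical generators of $\F_r$ acting on $\ell^2(\F_r)$). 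Since $h=h^{*}$, its operator norm equals its spectral radius, and Kesten's computation of the spectral measure of $h$ (the $\ell^2$-spectral radius of the simple random walk on $\F_r$) gives $\|h\|=2\sqrt{2r-1}$ and $\mathrm{supp}\big(\mathrm{law}_\tau(h)\big)=[-2\sqrt{2r-1},\,2\sqrt{2r-1}]$; this is the target constant, and it is the same for \emph{any} faithful realization of a free family of Haar unitaries. For the lower bound: by \eqref{eq:firstorder}, for every $*$-polynomial $P$ one has $\E[\mathrm{tr}_n(P(U^{(n)}))]\to\tau(P(v))$; a standard Poincar{\'e}/log-Sobolev concentration inequality for Haar measure on $\U(n)^{r}$ together with Borel--Cantelli upgrades this to $\mathrm{tr}_n(P(U^{(n)}))\to\tau(P(v))$ almost surely, simultaneously for all $P$. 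Applied to the power moments of $H_n$, this shows the empirical spectral distribution of $H_n$ converges weakly a.s.\ to $\mathrm{law}_\tau(h)$; since $2\sqrt{2r-1}$ lies in its support, for every $\e>0$ eventually $H_n$ has an eigenvalue above $2\sqrt{2r-1}-\e$, whence $\liminf_n\|H_n\|\ge 2\sqrt{2r-1}$ a.s.

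\textbf{The upper bound, first ingredient: strong convergence for GUE.} It remains to prove $\limsup_n\|H_n\|\le 2\sqrt{2r-1}$ a.s., i.e.\ that $H_n$ develops no outlying eigenvalue escaping the limiting support. This does \emph{not} follow from weak asymptotic freeness and is the hard ``strong'' part. One first invokes Haagerup--Thorbj{\o}rnsen's theorem for GUE: if $X_1^{(n)},\dots,X_p^{(n)}$ are independent GUE matrices and $s_1,\dots,s_p$ are free semicircular elements, then $\limsup_n\|Q(X^{(n)})\|\le\|Q(s)\|$ a.s.\ for every self-adjoint noncommutative polynomial $Q$ with matrix coefficients (and, in the extension used below, this persists after adjoining a deterministic matrix family converging in $*$-moments). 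The proof proceeds by the \emph{linearization trick} --- replacing $Q-z$ by an $M_k$-valued affine pencil $L_z=a_0(z)\otimes 1+\sum_i a_i\otimes X_i^{(n)}$ whose invertibility detects that of $Q(X^{(n)})-z$ --- together with the \emph{master inequality}
\[
\bigl\|(\mathrm{tr}_n\otimes\mathrm{id}_k)\,\E\big[(L_z)^{-1}\big]-(\tau\otimes\mathrm{id}_k)\big[(\mathcal L_z)^{-1}\big]\bigr\|\le\frac{C(\mathrm{Im}\,z)}{n^{2}},
\]
the decisive feature being the $n^{-2}$ (not $n^{-1}$) error: a single eigenvalue of $Q(X^{(n)})$ outside $\mathrm{supp}\,\mathrm{law}(Q(s))$ would force a $\Theta(n^{-1})$ bump in the trace of the resolvent, which this inequality rules out. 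Gaussian concentration of the Lipschitz map (GUE entries) $\mapsto\|Q(X^{(n)})\|$ then turns the expectation statement into an almost-sure one.

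\textbf{The upper bound, second ingredient: transfer from GUE to Haar unitaries.} Following Collins--Male, one realizes the Haar unitaries $U_i^{(n)}$, up to operator-norm errors tending to $0$, in terms of $*$-polynomials in GUE matrices, so that the GUE-limit of the approximants is a free family of Haar unitaries; then feeding these into the first ingredient and using the norm formula $\|\sum_i v_i+v_i^{*}\|=2\sqrt{2r-1}$ for free Haar unitaries gives $\limsup_n\|H_n\|\le 2\sqrt{2r-1}$. The mechanism is a polar-decomposition / Gram--Schmidt construction: from independent complex Gaussian blocks $G_j^{(n)}=\tfrac1{\sqrt2}(X^{(n)}_{2j-1}+iX^{(n)}_{2j})$ one forms a suitable (rectangular, or matrix-dilated) Gaussian array whose Gram matrix $M^{(n)}$ is, a.s.\ and eventually --- by the first ingredient applied to the polynomial $M^{(n)}$ --- confined to a fixed compact subinterval of $(0,\infty)$ bounded away from $0$; then $(M^{(n)})^{-1/2}$ is uniformly approximated there by a polynomial $q$ (Weierstrass), the Gram--Schmidt isometry becomes $G^{(n)}q(M^{(n)})+o(1)$, a $*$-polynomial in GUE matrices, and one identifies its free limit with a free Haar unitary via the Haagerup--Larsen polar decomposition of $R$-diagonal elements. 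Doing this independently for $i=1,\dots,r$ and controlling the $o(1)$ errors in operator norm yields the claim.

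\textbf{Main obstacle.} The crux is the transfer of the previous paragraph --- concretely, that a square Haar unitary cannot be represented as a bounded function of a single square Ginibre matrix: the naive polar part $Y(Y^{*}Y)^{-1/2}$ is unitary but \emph{not} polynomially approximable in $Y$, because the smallest singular value of $Y^{(n)}$ is $\Theta(1/n)$, so $\mathrm{spec}(Y^{*}Y)$ reaches $0$ and $(Y^{*}Y)^{-1/2}$ blows up there; any attempt to discard the ``bad'' spectral part of $Y^{*}Y$ near $0$ loses an $O(1)$ amount of operator norm. Circumventing this is precisely what forces the detour through rectangular Gaussians / matrix dilations so that the relevant Gram matrix stays bounded below, and it then takes further care to pass from the rectangular isometry back to the genuine square Haar unitary (with $o(1)$ control) and to verify that the limiting object really is a \emph{free} family of Haar unitaries. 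The derivation of the master inequality with the sharp $n^{-2}$ error in the first ingredient is also technically heavy, but it is available off the shelf from \cite{HT2005}.
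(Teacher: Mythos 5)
First, a point of order: the paper does not prove Theorem \ref{thm:cm}. It is quoted from Collins--Male \cite{CM} purely as motivation in Section \ref{sub:More-related-work}, so there is no internal proof to compare against; what you have written is a reconstruction of the argument from the random-matrix literature, not an alternative to anything in this paper.

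Judged on its own terms, your architecture is the standard and correct one, and the easy half is essentially complete: the identification of $2\sqrt{2r-1}$ as Kesten's norm of $\sum_i v_i+v_i^{*}$ for a free family of Haar unitaries is right, and the lower bound via almost-sure weak convergence of the empirical spectral distribution (asymptotic freeness, concentration, Borel--Cantelli) is sound. The upper bound, however, is a roadmap rather than a proof: both of its pillars --- the Haagerup--Thorbj{\o}rnsen linearization/master-inequality theorem with the $n^{-2}$ error, and the transfer from GUE to Haar unitaries --- are invoked rather than established, and the second is exactly where the content of \cite{CM} lives. Your sketch of that transfer also has a concrete gap as written: the polar/Gram--Schmidt part of a rectangular Gaussian array is a Haar-distributed \emph{isometry} $\mathbb{C}^{n}\to\mathbb{C}^{N}$, not an element of $\U\left(n\right)$, so even granting that it is a $*$-polynomial in GUE matrices up to $o\left(1\right)$ in operator norm, you still owe (i) an argument converting strong convergence of polynomials in independent random isometries into strong convergence of polynomials in genuine $n\times n$ Haar unitaries, and (ii) a verification that the $r$ limiting polar parts form a \emph{jointly} $*$-free family of Haar unitaries, not merely $r$ individual ones. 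You flag both as ``further care,'' but they are the crux rather than bookkeeping. As it stands, the proposal proves the $\liminf$ inequality and reduces the $\limsup$ inequality to the main theorems of \cite{HT2005} and \cite{CM}; that is a fair account of how the result is known, but it is not a self-contained proof.
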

We expect that our Theorems \ref{thm:main} and \ref{thm:main - general},
made suitably uniform in $w$ or $w_{1},\ldots,w_{\ell}$, should
give an alternative approach to bounds such as in Theorem \ref{thm:cm},
as well as to the related questions of strong asymptotic freeness
and properties of $\mathrm{Ext}(\F_{r})$. Going further with these
questions, one expects the following {}``folklore'' conjecture: 
\begin{quote}
`The largest eigenvalue of \eqref{eq:unitarysum} should be governed
by a suitably normalized Tracy-Widom law, in the limit $n\to\infty$.'
\end{quote}
\medskip{}

The set of solutions to \eqref{eq:commutators} along with its $\Aut_{\delta}(\F_{r})$-action
is interesting even considered apart from the connection with Random
Matrix Theory made in Theorem \ref{thm:main}. In fact, it is the
content of quite a few research papers. 

Algorithms to compute commutator lengths of words in free groups were
found independently by\label{Algorithms-to-compute-cl} \cite{edmunds1975endomorphism},
\cite{goldstein1979applications} and \cite{CULLER}. The latter work,
by Culler, is the most relevant to ours. His geometric approach to
$\cl\left(w\right)$ which we mentioned above (and see Proposition
\ref{prop:genus(w)} below), is further developed in the current paper
and stands in the core of our methods. Culler also introduces an algorithm
to obtain a representative of every equivalence class of solutions
to \eqref{eq:commutators}, namely of every orbit of $\mathrm{Aut}_{\delta}\left(\F_{2g}\right)\backslash\mathrm{Hom}_{w}\left(\F_{2g},\F_{r}\right)$
where $g=\cl\left(w\right)$. Although similar in spirit, our analysis
yields a clearer description of the set of classes of solutions and,
in particular, a more direct way to distinguish them from each other.
See Remark \ref{remark: algo-for-cl} and Section \ref{sec:More-Consequences}
for comparison between Culler's approach and ours.

In addition, Culler proves that for every $w\in\left[\F_{r},\F_{r}\right]$
there are only finitely many equivalence classes of solutions to \eqref{eq:commutators}.
This extends an older result regarding words $w$ with $\cl\left(w\right)=1$
\cite{hmelevskiui1971systems}, and we extend it further to equivalence
classes of admissible incompressible maps of $\wl$ -- see Corollary
\ref{cor:Finitely many incompressible maps}. We remark that some
researchers have looked at a larger group $\widehat{\Aut}_{\d}(\F_{2g})\supset\Aut_{\d}(\F_{2g})$
acting on the solution space to \eqref{eq:commutators}. In geometric
terms, one allows not only ordinary Dehn twists, but also {}``fractional''
ones -- see \cite{BF}. Bestvina and Feighn \cite{BF} study the problem
of counting the number of $\widehat{\Aut}_{\d}(\F_{2g})$-orbits of
solutions to \eqref{eq:commutators}. They prove that for all $g\geq1$
there is a word $w$ with $\cl\left(w\right)=g$ which has at least
$2^{g}$ distinct $\widehat{\Aut}_{\d}(\F_{2g})$-orbits of solutions
to \eqref{eq:commutators}. When $g=1$, this is a result of Lyndon
and Wicks \cite{LYNDONWICKS}. The motivation for \cite{BF} comes
for questions raised by Sela, who has introduced a very general framework
for studying the solutions to systems of equations such as \eqref{eq:commutators}
in free groups (e.g.~\cite{SELA1}).\medskip{}

Before giving an overview of our proofs in Section \ref{sub:Overview-of-proof}
below, we trace the history of the ideas of this paper. A \emph{ribbon
graph}, also called a \emph{fat graph}, is a graph where each vertex
comes with a cyclic ordering of its incident edges. Ribbon graphs
commonly serve as a combinatorial way to describe orientable surfaces
with boundary: every vertex is magnified to a disc, and every edge
widened to a strip. A standard reference is \cite[Section 1]{PENNER}.
With some extra information ribbon graphs appear as the {}``dessins
d'enfants'' of Grothendieck \cite{GROT}. The book of Lando and Zvonkin
\cite{LZ} gives an encyclopedic overview of subjects related to ribbon
graphs. 

There are two central themes in the current paper:
\begin{description}
\item [{A}] Certain integrals over random matrices can be computed by a
sum of terms encoded by {}``ribbon graphs''. Moreover, the order
of contribution of each term corresponds to the \emph{genus} or to
the \emph{degree sequence} of the corresponding ribbon graph\textit{.}
\item [{B}] Certain contributions from the sum in \textbf{A} coincide with
homotopy invariants of some topological spaces. 
\end{description}
One early synthesis of these ideas is the following seminal result
of Harer and Zagier \cite{HARERZAGIER}, independently discovered
by Penner \cite{PENNER}.
\begin{thm}[Harer-Zagier, Penner]
\label{thm:mcgzeta} Assume $g\ge1$. Let $\Sigma_{g}^{1}$ be the
closed genus $g$ surface with one point removed and let $\mathrm{MCG}\left(\Sigma_{g}^{1}\right)$
be the mapping class group of isotopy classes of orientation preserving
homeomorphisms $\Sigma_{g}^{1}\to\Sigma_{g}^{1}$. Then 
\begin{equation}
\chi\left(\mathrm{MCG\left(\Sigma_{g}^{1}\right)}\right)=\zeta\left(1-2g\right),
\end{equation}
where $\zeta$ is Riemann's zeta function. 
\end{thm}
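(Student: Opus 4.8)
The plan is to compute $\chi\left(\mathrm{MCG}\left(\Sigma_{g}^{1}\right)\right)$ as the rational (orbifold) Euler characteristic of a moduli space which carries a combinatorial cell decomposition indexed by ribbon graphs, and then to evaluate the resulting alternating sum over ribbon graphs by a generating-function / matrix-integral computation. This is precisely the interplay between themes \textbf{A} and \textbf{B}: the second step below is an instance of \textbf{B}, and the last step is an instance of \textbf{A}.

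First I would recall that $\mathrm{MCG}\left(\Sigma_{g}^{1}\right)$ is virtually torsion-free of finite virtual cohomological dimension, so it has a well-defined rational (Wall) Euler characteristic, computed as $\chi\left(\Gamma'\right)/\left[\mathrm{MCG}:\Gamma'\right]$ for any torsion-free finite-index subgroup $\Gamma'$; this is the sense of $\chi$ in the statement, and it is genuinely rational (for instance $-\tfrac{1}{12}=\zeta(-1)$ when $g=1$, where $\mathrm{MCG}\left(\Sigma_{1}^{1}\right)\cong\mathrm{SL}_{2}(\mathbb{Z})$). Since $\mathrm{MCG}\left(\Sigma_{g}^{1}\right)$ acts properly discontinuously with finite stabilizers on the contractible Teichmüller space $\mathcal{T}_{g}^{1}$ of the once-punctured genus $g$ surface, this rational Euler characteristic equals the orbifold Euler characteristic of the moduli orbifold $\mathcal{M}_{g}^{1}=\mathcal{T}_{g}^{1}/\mathrm{MCG}\left(\Sigma_{g}^{1}\right)$, so it suffices to compute $\chi_{\mathrm{orb}}\left(\mathcal{M}_{g}^{1}\right)$.

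Next I would invoke the ribbon-graph (fat-graph) cell decomposition of decorated moduli space, via Strebel differentials (Harer--Mumford--Thurston) or Penner's decorated Teichmüller theory. Decorating the puncture by a positive real parameter makes $\mathcal{M}_{g}^{1}\times\mathbb{R}_{>0}$ orbifold-homeomorphic to the space of connected metric ribbon graphs of genus $g$ with a single boundary cycle; this is an orbifold CW-complex with one open cell $\mathbb{R}_{>0}^{E(\Gamma)}/\mathrm{Aut}(\Gamma)$ for each isomorphism class of such ribbon graph $\Gamma$ all of whose vertices have valence $\geq3$, where $E(\Gamma)-V(\Gamma)=2g-1$. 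Since the $\mathbb{R}_{>0}$-factor is irrelevant up to homotopy, taking orbifold Euler characteristics gives
\[
\chi\left(\mathrm{MCG}\left(\Sigma_{g}^{1}\right)\right)=\sum_{\Gamma}\frac{(-1)^{E(\Gamma)}}{\left|\mathrm{Aut}(\Gamma)\right|},
\]
the sum running over all such ribbon graphs. Finally I would evaluate this sum. The classical route (Harer--Zagier) reduces it, by an inclusion--exclusion over edge-contractions that collapses the count onto one-vertex ribbon graphs, to the explicit count $\varepsilon_{g}(n)$ of gluings of a $2n$-gon into a genus $g$ surface, whose generating function is evaluated in closed form; equivalently (Penner) the alternating sum is the coefficient of $N^{2-2g}$ in the $1/N$-expansion of the free energy of the Gaussian-type Hermitian matrix integral built from a $\log$-potential, which is an explicit product of Gamma functions. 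Either way, extracting the genus-$g$ term via Stirling's asymptotic series produces $-B_{2g}/(2g)$, and the classical identity $\zeta(1-2g)=-B_{2g}/(2g)$ finishes the proof.

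The main obstacle is this last step: the careful combinatorial bookkeeping that the relevant expansion (edge-contraction inclusion--exclusion, or the perturbative Feynman expansion of the matrix model) reproduces the alternating ribbon-graph sum with exactly the right signs and $1/\left|\mathrm{Aut}(\Gamma)\right|$ weights on each genus/valence stratum, together with the analytic justification of the asymptotic expansion and the Stirling-series manipulation that isolates the Bernoulli numbers. The cell-decomposition input in the second step is also substantial, but by now entirely standard and safely quotable.
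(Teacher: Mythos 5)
The paper itself does not prove this theorem: it is quoted as background (with references to Harer--Zagier, Penner, and Kontsevich's short proof), and the introduction only sketches Penner's strategy via the matrix integral and the fat-graph complex. Your proposal follows exactly that standard strategy, so the real comparison is with the cited proofs; your first step (Wall's rational Euler characteristic, identified with the orbifold Euler characteristic of $\mathcal{M}_{g}^{1}$ via the proper action on contractible Teichm\"uller space) and your closing steps are the standard ones.

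There is, however, a genuine flaw in the central counting step as you wrote it. From the ribbon-graph orbi-cell decomposition of $\mathcal{M}_{g}^{1}\times\mathbb{R}_{>0}$ you conclude
\[
\chi\left(\mathrm{MCG}\left(\Sigma_{g}^{1}\right)\right)=\sum_{\Gamma}\frac{(-1)^{E(\Gamma)}}{\left|\mathrm{Aut}(\Gamma)\right|},
\]
justified by ``the $\mathbb{R}_{>0}$-factor is irrelevant up to homotopy''. A signed count of \emph{open} orbi-cells computes the Euler characteristic with compact supports, not the homotopy-invariant orbifold Euler characteristic --- already for $\mathbb{R}_{>0}$ itself the open-cell count gives $-1$ while $\chi=1$ --- so the homotopy argument does not legitimize this identity, and in fact it is off by a sign: each cell has dimension $E(\Gamma)$ and $E(\Gamma)-V(\Gamma)=2g-1$ is odd, so your right-hand side equals $-\chi$. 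Concretely, for $g=1$ there are exactly two admissible ribbon graphs (the one-vertex four-valent graph, $\left|\mathrm{Aut}\right|=4$, and the genus-one theta, $\left|\mathrm{Aut}\right|=6$), and your sum gives $\tfrac{1}{4}-\tfrac{1}{6}=+\tfrac{1}{12}$, whereas $\chi\left(\mathrm{MCG}\left(\Sigma_{1}^{1}\right)\right)=\chi\left(\mathrm{SL}_{2}(\mathbb{Z})\right)=-\tfrac{1}{12}$; carried through literally, your chain would produce $-\zeta(1-2g)$. The repair is standard but must be made explicit: either (a) argue as Penner does, replacing the open-cell decomposition by the spine (the fat-graph complex $\mathcal{G}_{g}^{1}$), an honest contractible simplicial complex on which $\mathrm{MCG}\left(\Sigma_{g}^{1}\right)$ acts simplicially with finite stabilizers, and compute the equivariant Euler characteristic as an alternating sum over orbits of simplices weighted by the reciprocal of the stabilizer order; or (b) keep your decomposition but work throughout with $\chi_{c}$, using $\chi_{c}(\mathbb{R}_{>0})=-1$ and $\chi_{c}=\chi$ for the quasi-projective complex orbifold $\mathcal{M}_{g}^{1}$, which turns your display into $\chi=\sum_{\Gamma}(-1)^{V(\Gamma)}/\left|\mathrm{Aut}(\Gamma)\right|$. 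With the sign corrected, the rest of your outline (Harer--Zagier's polygon-gluing count $\varepsilon_{g}(n)$ or Penner's matrix model, the Stirling-series extraction of $-B_{2g}/(2g)$, and $\zeta(1-2g)=-B_{2g}/(2g)$) is the standard conclusion, though, as you acknowledge, that is where the bulk of the quantitative work lies.
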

Penner's approach in \cite{PENNER} clarifies our discussion so we
give a brief outline. Penner begins with the apriori unrelated%
\footnote{As Penner puts it, {}``It is also noteworthy that the technique of
perturbative series from particle physics so effectively captures
the combinatorics of the bundle over Teichmüller space {[}...{]}.''.%
} matrix integral 
\begin{equation}
P_{v_{3},\ldots,v_{K}}(n)=\frac{1}{\mu_{n}\prod_{j=1}^{K}v_{j}!}\int\prod_{j=1}^{K}\left(\frac{\mathrm{tr}H^{j}}{j}\right)^{v_{j}}\exp\left(\frac{-\mathrm{tr}H^{2}}{2}\right)dH,\label{eq:pennerint}
\end{equation}
where the integral is taken over the probability space of GUE $n\times n$
Hermitian matrices, $v_{k}$ are non-negative integers and $\mu_{n}$
is a normalization factor. He proves that $P_{v_{3},\ldots,v_{K}}$
is a polynomial in $n$ that can be expressed as a sum over ribbon
graphs with exactly $v_{j}$ vertices of degree $j$ for every $3\le j\le K$
(and no vertices of degree $1$, $2$ or larger than $K$).

The general idea of equating matrix integrals with sum of terms encoded
by diagrams goes back to the celebrated {}``Feynman diagrams'' of
\cite{FEYNMAN}, and the first encoding by ribbon graphs seems to
be due to by 't Hooft \cite{HOOFT}. In \cite{BIZ}, Bessis, Itzykson
and Zuber consider a matrix integral roughly similar to \eqref{eq:pennerint},
with an extra generating parameter $\lambda$, and show that in the
sum they obtain over ribbon graphs, the exponent of $\lambda$ in
every term coincides with the genus of the corresponding ribbon graph.\medskip{}

\textcolor{black}{As for Theme }\textbf{\textcolor{black}{B}}\textcolor{black}{,
the key topological object related to Theorem \ref{thm:mcgzeta} is
the }\textcolor{black}{\emph{fat graph complex }}\textcolor{black}{$\mathcal{\mathcal{G}}_{g}^{1}$
of Penner, defined in \cite[Page 41]{PENNER}}%
\footnote{\textcolor{black}{Penner defines arc complexes $\mathcal{G}_{g}^{s}$
for surfaces of genus $g$ with $s$ punctures, and everything we
say about Penner's work naturally extends to general $g$ and $s$.}%
}\textcolor{black}{. An equivalent definition, and one more clearly
related to our setting, is that $\mathcal{\mathcal{G}}_{g}^{1}$ is
a simplicial complex with one simplex of dimension $k$ for each isotopy
class of $k$ disjoint embedded arcs in $\Sigma_{g}^{1}$ with the
following properties. The arcs begin and end at the puncture, must
be pairwise non parallel, individually not homotopic into the puncture,
and must cut $\Sigma_{g}^{1}$ into discs. Each of these discs must
be bounded by at least 3 arcs. One simplex is a face of another if
it can be obtained by deleting some arcs. Thus $\mathcal{\mathcal{G}}_{g}^{1}$
carries the obvious action of the mapping class group by change of
markings.}

\textcolor{black}{This $\mathcal{\mathcal{G}}_{g}^{1}$ arises naturally
from the Teichm}ü\textcolor{black}{ller space of $\Sigma_{g}^{1}$
and furthermore inherits its homotopy type}%
\footnote{\textcolor{black}{Following \cite[Page 41]{PENNER}, $\mathcal{\mathcal{G}}_{g}^{1}$
is $\MCG$-equivariantly homotopy equivalent to a $\MCG$-invariant
spine of some decorated Teichm}ü\textcolor{black}{ller space. This
decorated version is homeomorphic to the Cartesian product of the
usual Teichm}ü\textcolor{black}{ller space and $\mathbf{R}_{+}$.}%
}\textcolor{black}{. By the well known work of Fenchel and Nielsen
\cite{FENCHELNIELSEN}, the Teichm}ü\textcolor{black}{ller space of
$\Sigma_{g}^{1}$ is contractible and thus so is $\mathcal{\mathcal{G}}_{g}^{1}$.
This is the fact that allows one to obtain an Euler characteristic
in Theorem \ref{thm:mcgzeta}. Indeed this Euler characteristic can
be obtained by counting $\MCG$-orbits of simplices of $\mathcal{\mathcal{G}}_{g}^{1}$,
and after translation to fat/ribbon graphs this is exactly what shows
up in the Feynman diagram expansion of Theme }\textbf{\textcolor{black}{A}}\textcolor{black}{. }

\textcolor{black}{A similar combinatorial model of the moduli space
of curves was given by Kontsevich in \cite[Theorem 2.2]{KONTSEVICH}
by means of Jenkins-Strebel quadratic differentials, and in Appendix
D of }\textcolor{black}{\emph{loc.~cit.~}}\textcolor{black}{Kontsevich
gives a short proof of Theorem \ref{thm:mcgzeta}. These results appear
in the context of the proof of a conjecture of Witten from \cite{WITTEN1}
asserting that two models of quantum gravity are equal.}

\subsection{\label{sub:Overview-of-proof}Overview of the proof and paper organization}

We now sketch the outline of the proofs of our main results.

\subsubsection*{Section \ref{sec:A-Rational-Expression}: Formula for $\tr$ using
pairs of matchings of letters }

In the first stage of our analysis, a crucial role is played by a
formula developed in \cite{xu1997random} and extended in \cite{collins2003moments}
and \cite{CS} in the aim of giving a new proof to the asymptotic
freeness of Haar Unitary matrices, namely, to \eqref{eq:firstorder}.
This is an integration formula for polynomials in the entries of a
Haar unitary matrix and their conjugates, appearing as Theorem \ref{thm:collins-sniady}
below. For example, it allows one to compute 
\begin{equation}
\int_{u\in\U\left(n\right)}u_{1,2}u_{3,4}\overline{u_{1,4}}\overline{u_{3,2}}d\mu_{n}.\label{eq:integration-example}
\end{equation}
This formula is parallel to a moment formula for Gaussian variables
that appears in the corresponding GUE analysis, a formula which usually
goes under the name {}``Wick formula''.

As shown in \cite{CS}, the evaluation of every such polynomial is
a rational function in $n$. For example, the integral in \eqref{eq:integration-example}
is equal to $\frac{-1}{n^{3}-n}$ for every $n\ge4$. A key feature
of this formula is that the leading term (exponent and coefficient)
has combinatorial significance, and is related to the Möbius function
of the poset (partially ordered set) of non-crossing partitions. 

In the current paper, we fully expand out the product\linebreak{}
$\mathrm{tr}(w_{1}(U_{1}^{(n)},\ldots,U_{r}^{(n)}))\cdots\mathrm{tr}(w_{\ell}(U_{1}^{(n)},\ldots,U_{r}^{(n)}))$
as a sum over indices of rows and columns of the matrices $U_{1}^{(n)},\ldots,U_{r}^{(n)}$,
and, using the integration formula mentioned above, show its expected
value $\tr_{w_{1},\ldots,w_{\ell}}\left(n\right)$ is indeed a rational
function in $n$, which can be computed explicitly. This is the content
of Theorem \ref{thm:trw-rational} below.

The formula we obtain for $\tr_{w_{1},\ldots,w_{\ell}}\left(n\right)$
can be viewed as a sum over pairs $\left(\sigma,\tau\right)$ of matchings
of the letters of $w_{1},\ldots,w_{\ell}$, where every letter $x_{i}^{\varepsilon}$
is matched with some $x_{i}^{-\varepsilon}$. Indeed, by Claim \ref{claim: tr=00003D0 for non-balanced words}
below, $\trwl\left(n\right)$ vanishes unless the total number of
instances of $x_{i}^{-1}$ in $w_{1},\ldots,w_{\ell}$ is equal to
the total number of $x_{i}^{+1}$, for every $i\in\left[r\right]$.
The latter holds if and only if $w_{1}w_{2}\cdots w_{\ell}\in\left[\F_{r},\F_{r}\right]$,
and we sometimes say that in this case $w_{1},\ldots,w_{\ell}$ form
a \textbf{}\marginpar{balanced set of words}\textbf{balanced} set
of words. The set of matchings associated with $\wl$ is denoted $\match\left(\wl\right)$
and is formally described in Definition \ref{def:match(w) and B(sigma,tau)}.

\subsubsection*{Section \ref{sec:surface-from-matchings}: Constructing surfaces
from pairs of matchings and Theorems \ref{thm:leading-exponent} and
\ref{thm:leading-exponent-general}}

In Section \ref{sec:surface-from-matchings} we explain how to associate
an orientable surface $\Sigma_{(\sigma,\tau)}$ with every pair of
matchings $(\sigma,\tau)\in\match\left(\wl\right)\times\match\left(\wl\right)$.
The surface $\Sigma_{\left(\sigma,\tau\right)}$, which is basically
given in the form of a ribbon graph, has $\ell$ boundary components
and its Euler characteristic is denoted $\chi(\sigma,\tau)$. This
extends a construction of Culler \cite{CULLER} that deals with a
single matching $\sigma\in\match\left(w\right)$ of a single word:
his construction is the special case $\sigma=\tau$ in ours. The extension
to pairs of matchings (and to multiple words) seems to be new here. 

It so happens that in the formula for $\trwl(n)$ given by a sum over
pairs of matchings, the contribution of every pair $(\sigma,\tau)$
is of order $n^{\chi\left(\sigma,\tau\right)}$ (Proposition \ref{prop:order-of-contribution-of-(sigma,tau)}).
Hence the contributions to $\trwl\left(n\right)$ of largest order
come from pairs $\left(\sigma,\tau\right)$ of largest Euler characteristic. 

In addition, we associate with the pair $\left(\sigma,\tau\right)$
a map (defined up to homotopy) 
\[
f_{\left(\sigma,\tau\right)}\colon\Sigma_{\left(\sigma,\tau\right)}\to\wedger,
\]
and show that $\left(\Sigma_{\left(\sigma,\tau\right)},f_{\left(\sigma,\tau\right)}\right)$
is admissible for $\wl$. Moreover, in Lemma \ref{lem:every admissible and incompressible obtained from matchings}
we explain that for every $\left(\Sigma,f\right)$ admissible for
$\wl$ with $f$ incompressible, there is a pair of matchings $\left(\sigma,\tau\right)$
such that $\left(\Sigma_{\left(\sigma,\tau\right)},f_{\left(\sigma,\tau\right)}\right)\sim\left(\Sigma,f\right)$.
This gives a procedure for computing $\ch\left(\wl\right)$ which,
again, generalizes a procedure suggested in \cite{CULLER} to compute
$\cl\left(w\right)$.

Since every admissible $\left(\Sigma,f\right)$ with $\chi\left(\Sigma\right)=\ch\left(\wl\right)$
is incompressible, we deduce in Corollary \ref{cor:trw(n) leading exponent}
the content of Theorems \ref{thm:leading-exponent-general} and \ref{thm:leading-exponent},
namely, that $\trwl\left(n\right)=O\left(n^{\ch\left(\wl\right)}\right)$.
This result roughly summarizes the role in the current work of Theme
\textbf{A} from above (although we have not used the fine details
of the ribbon graphs so far, only the Euler characteristics of the
underlying surfaces).

\subsubsection*{Section \ref{sec:The-pairs-of-matchings-Poset}: Poset of pairs of
matchings for incompressible maps}

Our next goal is to study the leading coefficient of $\trwl\left(n\right)$,
namely, the coefficient of $n^{\ch\left(\wl\right)}$. For this sake,
we gather all pairs of matchings $\left(\sigma,\tau\right)$ that
are associated with the same class $\left[\left(\Sigma,f\right)\right]$
of admissible surfaces and maps, and denote the set $\pmp\left(\Sigma,f\right)$:
\[
\pmp\left(\Sigma,f\right)\overset{\mathrm{def}}{=}\left\{ \left(\sigma,\tau\right)\in\match\left(\wl\right)^{2}\,\middle|\,\left(\Sigma_{\left(\sigma,\tau\right)},f_{\left(\sigma,\tau\right)}\right)\sim\left(\Sigma,f\right)\right\} .
\]
We show that whenever $f$ is incompressible, there is a natural partial
order on the set $\pmp\left(\Sigma,f\right)$, which turns it into
a poset we call the Pairs of Matchings Poset of $\left(\Sigma,f\right)$
(Definition \ref{def:perm-poset}). This partial order is closely
related to the aforementioned partial order on non-crossing partitions
(e.g.~Proposition \ref{prop:rewiring-in-PP}). 

The pairs of matchings poset is important mainly because of the role
of its associated simplicial complex. This finite complex, the simplices
of which corresponding to chains in $\pmp\left(\Sigma,f\right)$,
is denoted $\left|\pmp\left(\Sigma,f\right)\right|$ -- see Definition
\ref{def:SC-of-poset}. Theorem \ref{thm:Euler-char-of-poset} shows
that the contributions to $\trwl\left(n\right)$ of all pairs of matchings
in $\pmp\left(\Sigma,f\right)$ sum to
\begin{equation}
\chi\left(\left|\pmp\left(\Sigma,f\right)\right|\right)\cdot n^{\chi\left(\Sigma\right)}+O\left(n^{\chi\left(\Sigma\right)-2}\right).\label{eq:total contrib of PMP}
\end{equation}
We remark again that the two instances of $\chi\left(\right)$ in
\eqref{eq:total contrib of PMP} are applied to very different topological
objects: on the one hand an orientable compact surface $\Sigma$,
and on the other hand a simplicial complex obtained from a poset whose
elements are related to $\Sigma$.

\subsubsection*{$\left|\pmp\left(\Sigma,f\right)\right|$ as $K\left(G,1\right)$-space}

Finally, we prove that the simplicial complex $\left|\pmp\left(\Sigma,f\right)\right|$
is a $K\left(G,1\right)$-space for\linebreak{}
$G=\mathrm{Stab}_{\mathrm{MCG}\left(\Sigma\right)}\left(\tilde{f}\right)$
whenever $f$ is incompressible. This is the content of Theorem \ref{thm:pmp is K(G,1)}
below, and it obviously yields Theorems \ref{thm:K(G,1) for incompressible}
and \ref{thm:stabilizers K(G,1)}, and together with \eqref{eq:total contrib of PMP}
implies our main result: Theorem \ref{thm:main - general} and its
special case, Theorem \ref{thm:main}. 

Proving that $\left|\pmp\left(\Sigma,f\right)\right|$ is a $K\left(G,1\right)$-space
boils down to showing the following three facts (see Footnote \ref{fn:An-Eilenberg-MacLane-space}):\\
$\left(i\right)$~~~$\left|\pmp\left(\Sigma,f\right)\right|$ is
path-connected.\\
$\left(ii\right)$~~The fundamental group of $\left|\pmp\left(\Sigma,f\right)\right|$
is isomorphic to $\mathrm{Stab}_{\mathrm{MCG}\left(\Sigma\right)}\left(\tilde{f}\right)$.
And,\\
$\left(iii\right)$~The universal cover of $\left|\pmp\left(\Sigma,f\right)\right|$
is contractible. \\

Establishing this result requires the most involved part of this work
and the introduction of yet another poset: the arc poset of $\left(\Sigma,f\right)$.

\subsubsection*{Section \ref{sec:core-of-result}: The arc poset of $\left(\Sigma,f\right)$}

Let $\Sigma$ be a compact surface and $f\colon\Sigma\to\wedger$
incompressible so that $\left(\Sigma,f\right)$ is admissible for
$\wl$. The arc poset of $\left(\Sigma,f\right)$, denoted $\ap\left(\Sigma,f\right)$,
is an infinite poset composed of {}``arc systems''. An arc system
consists of $\left|w_{1}\right|+\ldots+\left|w_{\ell}\right|$ disjoint
arcs in $\Sigma$ (defined up to isotopy). The boundary components
of $\Sigma$ are marked in a way that {}``spells out'' $\wl$ (via
the functions $f_{w_{i}}\circ\partial_{i}^{-1}$ for $i\in\left[\ell\right]$),
and the arcs represent a pair of matchings in $\match\left(\wl\right)$.
Thus, every arc system in $\ap\left(\Sigma,f\right)$ is a specific
geometric realization of a pair $\left(\sigma,\tau\right)$ in $\pmp\left(\Sigma,f\right)$.
However, every pair $\left(\sigma,\tau\right)$ has (infinitely) many
different geometric realizations. We endow the set of arc systems
with a partial ordering, analogous to the one we defined on $\pmp\left(\Sigma,f\right)$.
This order is too related to the order on non-crossing partitions.
The construction of the arc poset $\ap\left(\Sigma,f\right)$ is detailed
in Definition \ref{def:arc-poset}.

A major part of this work is devoted to the analysis of the arc poset.
As in the case of $\pmp\left(\Sigma,f\right)$, we can associate a
simplicial complex to $\ap\left(\Sigma,f\right)$, which we denote
$\left|\ap\left(\Sigma,f\right)\right|$. It is clear that the mapping
class group $\mathrm{MCG\left(\Sigma\right)}$ acts on arc systems,
and we show it preserves the order we defined, so we obtain an action
on the poset $\ap\left(\Sigma,f\right)$ (part of Theorem \ref{thm:MCG-action-on-AP}).
To establish our results we show the following properties of $\ap\left(\Sigma,f\right)$
and of the action of $\mathrm{MCG}\left(\Sigma\right)$ on it:
\begin{enumerate}
\item Theorem \ref{thm:MCG-action-on-AP}: the infinite simplicial complex
$\left|\ap\left(\Sigma,f\right)\right|$ is a topological covering
space of $\left|\pmp\left(\Sigma,f\right)\right|$. Moreover, the
action $\MCG\left(\Sigma\right)\curvearrowright\ap\left(\Sigma,f\right)$
extends to a covering space action $\MCG\left(\Sigma\right)\curvearrowright\left|\ap\left(\Sigma,f\right)\right|$
and 
\[
\nicefrac{\left|\ap\left(\Sigma,f\right)\right|}{\MCG\left(\Sigma\right)}\cong\left|\pmp\left(\Sigma,f\right)\right|
\]
is an isomorphism of simplicial complexes. 
\item Theorem \ref{thm:components-of-AP(Sigma,f)} (first part): there is
a one-to-one correspondence between connected components in $\left|\ap\left(\Sigma,f\right)\right|$
and homotopy classes of functions in $\left[\left(\Sigma,f\right)\right]$.
\item Theorem \ref{thm:components-of-AP(Sigma,f)} (second part): every
connected component of $\left|\ap\left(\Sigma,f\right)\right|$ is
contractible.
\end{enumerate}
The first and last item show that every connected component of $\left|\ap\left(\Sigma,f\right)\right|$
is a universal covering space for $\left|\pmp\left(\Sigma,f\right)\right|$.
The second item then shows that the fundamental group of $\left|\pmp\left(\Sigma,f\right)\right|$
is isomorphic to $\mathrm{Stab}_{\mathrm{MCG}\left(\Sigma\right)}\left(\tilde{f}\right)$.

The proof of contractability of the connected components of $\ap\left(\Sigma,f\right)$,
the content of Theorem \ref{thm:components-of-AP(Sigma,f)}, requires
the most technical proof of this paper, and we devote to it Section
\ref{sub:Proof-of-contractability}. The proof consists of a series
of (countably many) deformation retracts which we define for each
component of $\left|\ap\left(\Sigma,f\right)\right|$. This eventually
shows that every component contracts to a point. Each step is described
by a poset morphism which, by the content of Appendix \ref{sub:Homotopy-of-poset},
corresponds to a deformation retract on the associated simplicial
complex. 
\begin{rem}
\textcolor{black}{There is another }\textcolor{black}{\emph{arc complex
}}\textcolor{black}{that is similar to Penner's fat graph complex
but with fewer constraints on the arcs: in particular, without the
constraint that the arcs cut the surface into discs. In \cite{HATCHER},
Hatcher extends earlier work of Harer \cite{HARER} to prove under
certain conditions that this arc complex is contractible by a direct
combinatorial argument, in contrast to the proof of the contractability
of the fat graph complex via Teichmüller theory. This direct argument,
while less involved than our argument, is similar in flavor. We also
point out that while at the level of objects our arc poset is related
to Hatcher's arc complex from \cite{HATCHER}, the topological claims
we make are quite different}\textcolor{black}{\emph{. }}\textcolor{black}{Indeed,
a $k$-simplex in $\left|\ap(\Sigma,f)\right|$ is a chain of arc
systems all with the same number of arcs, whereas in Hatcher's arc
complex a $k$-simplex is a series of arc systems which are obtained
by a series of arc deletions.}
\end{rem}

\subsubsection*{Remaining sections in the paper}

Except for the sections mentioned above, the rest of the paper is
organized as follows. In Section \ref{sec:Background} we give some
background for the ideas and tools in this paper: some basic facts
about commutator length of words, Culler's construction and the correspondence
between algebraic and geometric objects (Section \ref{sub:cl-of-word});
some comments and open questions regarding word measures on groups
(Section \ref{sub:Word-measures}), and some words about stable commutator
length and the proof of Corollary \ref{cor:can hear scl} (Section
\ref{sub:Stable-commutator-length}). 

After the core of the paper in Sections \ref{sec:A-Rational-Expression}-\ref{sec:core-of-result},
Section \ref{sec:More-Consequences} elaborates some further results
derived from our analysis, especially regarding properties of the
stabilizers from Theorems \ref{thm:main} and \ref{thm:main - general},
and Section \ref{sec:Examples} contains some detailed examples. These
are followed by some related open questions in Section \ref{sec:Some-Open-Problems}
and a glossary of notation. The appendix contains some technical,
mostly known, lemmas regarding posets and complexes. These are used
in the proofs of Theorems \ref{thm:MCG-action-on-AP} and \ref{thm:components-of-AP(Sigma,f)}.

\subsection{Notations }

For the convenience of the readers, there is a glossary on Page \pageref{sec:Glossary}
listing most of the notations we use and where each one is defined.
We also mention here some of the notation we will use. We use $\partial\Sigma$
to denote the boundary of the surface $\Sigma$. The word measures
are coming from words in $\F_{r}$, and we denote the generators by
$x_{1},\ldots,x_{r}$. However, in examples we sometimes use $x,y,z,t$
instead. We may use capital letters for inverses and occasionally
enumerate the letters by their location in $w$. For example, we may
write $w=\left[x,y\right]^{2}$ as $x_{1}y_{2}X_{3}Y_{4}x_{5}y_{6}X_{7}Y_{8}$.
We use $a_{1},b_{1},\ldots,a_{g},b_{g}$ and their capital forms to
write elements in the fundamental groups of surfaces.

Standard asymptotic notation is used to describe some of our results.
This includes the big $O$ notation {}``$f\left(n\right)=O\left(g\left(n\right)\right)$''
meaning that the functions $f$ and $g$ satisfy that for large enough
$n$, $f\left(n\right)\le C\cdot g\left(n\right)$ for some constant
$C>0$. Likewise, {}``$f\left(n\right)=o\left(g\left(n\right)\right)$''
means that for large enough $n$, $g\left(n\right)\ne0$ and that
$\frac{f\left(n\right)}{g\left(n\right)}\underset{n\to\infty}{\to}0$.
Finally, {}``$f\left(n\right)=\theta\left(g\left(n\right)\right)$''
means that for large enough $n$, $C_{1}\cdot g\left(n\right)\le f\left(n\right)\le C_{2}\cdot g\left(n\right)$
for some constants $C_{1},C_{2}>0$.

\section{Background\label{sec:Background}}

\subsection{The geometric approach to commutator length\label{sub:cl-of-word}}

In this subsection we explain Culler's geometric interpretation of
commutator length which yields that Theorem \ref{thm:leading-exponent}
is indeed a special case of Theorem \ref{thm:leading-exponent-general}.
We also explain the other parallels mentioned in Section \ref{sub:Expected-product-of}
and Table \ref{tab:Algebra-geometry-dictionary} between algebraic
notions and geometric ones. In particular, we formulate the Dehn-Nielsen-Baer
Theorem showing that $\mathrm{Aut}_{\delta}\left(\F_{2g}\right)$
is isomorphic to the mapping class group of $\surface$, the genus
$g$ one boundary component orientable surface. This yields that Theorem
\ref{thm:main} is a special case of Theorem \ref{thm:main - general}.\\

We begin with an easy but useful characterization of maps from surfaces
that are homotopic relative the boundary. 
\begin{lem}
\label{lem:homotopy=00003Dsame_induced_map}Let $\Sigma$ be any orientable
surface with $\ell$ boundary components and $\ell$ marked points
as in Definition \ref{def: admissible maps}, and let $\gamma_{1},\ldots,\gamma_{t}$
be a set of disjoint oriented arcs with endpoints in $v_{1},\ldots,v_{\ell}$
which {}``fill $\Sigma$'', i.e., which cut $\Sigma$ into discs.
Then two maps $f_{1},f_{2}\colon\Sigma\to\wedger$ which coincide
on $\partial\Sigma$ and send all marked points $v_{1},\ldots,v_{\ell}$
to the basepoint $o$ are homotopic relative the boundary if and only
if $\left[f_{1}\left(\gamma_{j}\right)\right]=\left[f_{2}\left(\gamma_{j}\right)\right]$
for all $j\in\left[t\right]$.
\end{lem}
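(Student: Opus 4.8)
The ``only if'' direction is immediate: if $H\colon \Sigma \times [0,1] \to \wedger$ is a homotopy rel $\partial\Sigma$ from $f_1$ to $f_2$, then restricting $H$ to $\gamma_j \times [0,1]$ gives a homotopy from $f_1(\gamma_j)$ to $f_2(\gamma_j)$ fixing the endpoints (these lie in $\partial\Sigma$ and map to $o$ throughout), so $[f_1(\gamma_j)] = [f_2(\gamma_j)]$ in $\pi_1(\wedger,o)$ for every $j$. For the converse the plan is a standard obstruction-theory argument that exploits the asphericity of $\wedger$: being a graph, its universal cover is a tree, so $\wedger$ is a $K(\F_r,1)$ and $\pi_k(\wedger)=0$ for all $k\ge 2$. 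The hypothesis that the $\gamma_j$ cut $\Sigma$ into discs endows $\Sigma$ with a CW structure whose $1$-skeleton is $X:=\partial\Sigma\cup\gamma_1\cup\cdots\cup\gamma_t$ (the $0$-cells are $v_1,\ldots,v_\ell$; the $1$-cells are the $\ell$ boundary circles together with the arcs $\gamma_j$) and with one $2$-cell for each complementary disc.

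First I would homotope $f_1$, rel $\partial\Sigma$, to a map agreeing with $f_2$ on $X$. On $\partial\Sigma$ the two maps already agree. On each $\gamma_j$, the equality $[f_1(\gamma_j)]=[f_2(\gamma_j)]$ says precisely that $f_1|_{\gamma_j}$ and $f_2|_{\gamma_j}$ --- viewed as based loops at $o$, since the endpoints of $\gamma_j$ are marked points --- are homotopic rel endpoints; choosing such homotopies and combining them with the constant homotopy on $\partial\Sigma$ (these agree at the shared vertices $v_i$, all being constant at $o$) yields a homotopy $G\colon X\times[0,1]\to\wedger$ from $f_1|_X$ to $f_2|_X$ which is constant on $\partial\Sigma\times[0,1]$. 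Since $(\Sigma,X)$ is a CW pair, the inclusion $X\hookrightarrow\Sigma$ is a cofibration, so the map on $(\Sigma\times\{0\})\cup(X\times[0,1])$ defined by $f_1$ on $\Sigma\times\{0\}$ and $G$ on $X\times[0,1]$ extends to $\widetilde G\colon\Sigma\times[0,1]\to\wedger$; as $\widetilde G$ restricts to $G$ on $X\times[0,1]$ it is constant on $\partial\Sigma\times[0,1]$. Setting $f_1':=\widetilde G_1$ gives $f_1\simeq f_1'$ rel $\partial\Sigma$ with $f_1'|_X=f_2|_X$.

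Next I would homotope $f_1'$ to $f_2$ rel $X$. On each closed $2$-cell $\overline{D}$, the maps $f_1'$ and $f_2$ agree on $\partial D$, so they glue to a map $\overline{D}\cup_{\partial D}\overline{D}=S^2\to\wedger$, which is nullhomotopic because $\pi_2(\wedger)=0$; by the standard fact relating nullhomotopy of such a glued sphere to homotopy rel boundary, this gives $f_1'|_{\overline{D}}\simeq f_2|_{\overline{D}}$ rel $\partial D$. All of these cellwise homotopies restrict to the constant homotopy on $X$, hence glue to a homotopy $f_1'\simeq f_2$ rel $X$, in particular rel $\partial\Sigma$. Concatenating with the previous step yields $f_1\simeq f_2$ rel $\partial\Sigma$, as desired.

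The argument is essentially routine; there is no deep obstacle, only two points that need care. The first is propagating the ``rel $\partial\Sigma$'' constraint through the homotopy extension in the second paragraph --- which is exactly why one should extend via the cofibration $(\Sigma,X)$ with $\partial\Sigma\subseteq X$, rather than by an ad hoc construction. The second is verifying that the cellwise homotopies in the last step match along shared cells, which holds because every homotopy produced along the way is rel the $1$-skeleton $X$. The essential input throughout is the asphericity of $\wedger$.
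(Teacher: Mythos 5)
Your proof is correct and follows essentially the same route as the paper: first make the two maps agree on the $1$-skeleton formed by $\partial\Sigma$ and the arcs (the paper does this by a perturbation of $f_{2}$, you via the homotopy extension property), and then settle each complementary disc using the asphericity of $\wedger$. The only cosmetic difference is in the disc step, where the paper lifts to the universal cover $\mathbb{T}_{2r}$ and uses the geodesic straight-line homotopy in the tree, whereas you invoke $\pi_{2}\left(\wedger\right)=0$ together with the standard rel-boundary difference-class criterion (which is best phrased through the characteristic maps of the $2$-cells, since the closed cells need not be embedded in $\Sigma$).
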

This is also equivalent to $f_{1}$ and $f_{2}$ inducing the same
map from the {}``fundamental groupoid'' of $\Sigma$ as a space
with several marked points to $\pi_{1}\left(\wedger,o\right)$.
\begin{proof}
It $f_{1}$ and $f_{2}$ are homotopic and $\gamma$ is any oriented
arc from $v_{i}$ to $v_{j}$, one can push forward this homotopy
to show homotopy between $f_{1}\left(\gamma\right)$ and $f_{2}\left(\gamma\right)$,
hence $\left[f_{1}\left(\gamma\right)\right]=\left[f_{2}\left(\gamma\right)\right]$.
Conversely, assume that $f_{1}$ and $f_{2}$ satisfy the property
with the arcs. We can then perturb $f_{2}$ so that it agrees with
$f_{1}$ on these arcs (without changing the homotopy class of $f_{2}$).
Then, on every disc $D$, $f_{1}$ and $f_{2}$ agree on the boundary,
and it is enough to show that $f_{1}\Big|_{D}\simeq f_{2}\Big|_{D}$
are homotopic relative $\partial D$. Now $f_{1}$ and $f_{2}$ can
be lifted to maps $\hat{f_{1}},\hat{f_{2}}\colon D\to\mathbb{T}_{2r}$
which coincide on $\partial D$, where the $2r$-regular tree $\mathbb{T}_{2r}$
is the universal covering space of $\wedger$. It is easy to see that
$\hat{f_{1}}$ and $\hat{f_{2}}$ are homotopic: for every $x\in D$,
let the homotopy move in a constant pace from $\hat{f_{1}}\left(x\right)$
to $\hat{f_{2}}\left(x\right)$ in $\mathbb{T}_{2r}$ along the sole
geodesic between them. This homotopy can then be projected to a homotopy
between $f_{1}$ and $f_{2}$.
\end{proof}
To give a precise formulation of the geometric analogue for $\mathrm{Hom}_{w}\left(\F_{2g},\F_{r}\right)$,
we first fix some more notation. Identify each circle in the wedge
$\wedger$ with a distinct generator $x_{i}$ of $\F_{r}$, orient
each of the circles, and use these labeling and orientation to fix
an isomorphism
\begin{equation}
\F_{r}\cong\pi_{1}\left(\wedger,o\right).\label{eq:iso F_r and wedge}
\end{equation}
Recall that for every $w\in\F_{r}$, the map $f_{w}\colon\left(S^{1},1\right)\to\left(\wedger,o\right)$
is a fixed representative of $w$. More concretely, 
\begin{defn}
\label{def:f_w}For $1\ne w\in\F_{r}$, let $f_{w}\colon\left(S^{1},1\right)\to\left(\wedger,o\right)$
be the sole non-backtracking closed path at $o$ representing $w$
(moving at arbitrary positive speed), so that $f_{w^{-1}}\left(z\right)=f_{w}\left(\overline{z}\right)$.
For $w=1$ fix $f_{1}$ to be the constant map to $o$. 
\end{defn}
To fix the isomorphism of $\F_{2g}$ with $\pi_{1}\left(\surface,v_{1}\right)$,
we need to fix $2g$ disjoint oriented arcs $\alpha_{1},\beta_{1},\ldots,\alpha_{g},\beta_{g}$
in $\surface$ with endpoints in $v_{1}$ that serve as representatives
for the basis\linebreak{}
$a_{1},b_{1},\ldots,a_{g},b_{g}$ of $\F_{2g}$. We do this using
the construction of $\surface$ from a $\left(4g+1\right)$-gon, as
in Figure \ref{fig:pentagon-example}: we identify $\alpha_{1},\ldots,\beta_{g}$
with the sides of this $\left(4g+1\right)$-gon that are being glued.
Then there is an isomorphism
\begin{equation}
\F_{2g}\cong\pi_{1}\left(\surface,v_{1}\right)\label{eq:iso F_2g and Sigma_g,1}
\end{equation}
mapping $a_{i}$ to $\left[\alpha_{i}\right]$, $b_{i}$ to $\left[\beta_{i}\right]$,
and $\delta_{g}$ to $\left[\partial_{1}\right]$.

Recall from Section \ref{sec:Introduction} that the commutator length
of a word $w\in\left[\F_{r},\F_{r}\right]$, denoted $\mathrm{cl}\left(w\right)$,
is the smallest $g$ such that there exist $u_{1},v_{1},\ldots,u_{g},v_{g}\in\F_{r}$
with $\left[u_{1},v_{1}\right]\ldots\left[u_{g},v_{g}\right]=w.$
Equivalently, $\mathrm{cl}\left(w\right)$ is the smallest $g$ for
which 
\[
\mathrm{Hom}_{w}\left(\F_{2g},\F_{r}\right)=\left\{ \phi\in\mathrm{Hom}\left(\F_{2g},\F_{r}\right)\,\middle|\,\phi\left(\delta_{g}\right)=w\right\} 
\]
is non-empty. The following proposition, basically due to \cite{CULLER},
explains why $\cl\left(w\right)$ is often called {}``the genus of
$w$'', and why Theorem \ref{thm:leading-exponent} is a special
case of Theorem \ref{thm:leading-exponent-general}.
\begin{prop}
\label{prop:genus(w)}Let $w\in\left[\F_{r},\F_{r}\right]$ be a balanced
word and $g\ge0$ a non-negative integer. With the fixed isomorphisms
\eqref{eq:iso F_r and wedge} and \eqref{eq:iso F_2g and Sigma_g,1},
there is a one-to-one correspondence
\begin{equation}
\mathrm{Hom}_{w}\left(\F_{2g},\F_{r}\right)\,\,\,\,\longleftrightarrow\,\,\,\,\left\{ \begin{gathered}\mathrm{Homotopy\,\, classes\,\,(relative\,}\,\partial\surface\mathrm{)\,\, of\,\, maps}\,\, f\colon\surface\to\wedger\\
\mathrm{such\,\, that}\,\,\left(\surface,f\right)\,\,\mathrm{is\,\, admissible\,\, for}\,\, w
\end{gathered}
\right\} .\label{eq:Hom_w correspondence with homotopy classes}
\end{equation}
In particular, $\cl\left(w\right)$ is equal to the smallest genus
$g$ of a surface $\surface$ with an admissible map for $w$.
\end{prop}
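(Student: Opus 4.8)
The plan is to construct the correspondence in both directions and check they are mutually inverse. Given a homomorphism $\phi\in\mathrm{Hom}_w(\F_{2g},\F_r)$, I would build an admissible map $f_\phi\colon(\surface,v_1)\to(\wedger,o)$ as follows. Using the $(4g+1)$-gon model of $\surface$ with distinguished arcs $\alpha_1,\beta_1,\dots,\alpha_g,\beta_g$ (the sides being glued) and the isomorphism \eqref{eq:iso F_2g and Sigma_g,1}, first define $f_\phi$ on the $1$-skeleton $\alpha_1\cup\beta_1\cup\dots\cup\alpha_g\cup\beta_g\cup\partial_1$: on $\partial_1$ set $f_\phi\circ\partial_1=f_w$, on $\alpha_i$ let $f_\phi$ traverse the non-backtracking loop representing $\phi(a_i)$, and on $\beta_i$ the one representing $\phi(b_i)$. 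The cyclic word read off the boundary of the $(4g+1)$-gon along $\alpha_1\beta_1\alpha_1^{-1}\beta_1^{-1}\cdots\alpha_g^{-1}\beta_g^{-1}\partial_1^{-1}$ is then sent to the loop $\phi(a_1)\phi(b_1)\phi(a_1)^{-1}\cdots\phi(b_g)^{-1}w^{-1}=\phi(\delta_g)w^{-1}=1$ in $\pi_1(\wedger,o)$, i.e.\ it is null-homotopic. Since $\wedger$ is an Eilenberg--MacLane space $K(\F_r,1)$, this null-homotopic loop on the polygon's boundary extends over the $2$-cell (the polygon's interior), yielding the map $f_\phi$ on all of $\surface$. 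By construction $(\surface,f_\phi)$ is admissible for $w$.

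For the reverse direction, given an admissible $f\colon\surface\to\wedger$, perturb it homotopically (rel $\partial\surface$) so that it agrees with $f_{\phi(a_i)}$-style non-backtracking paths on the arcs — more precisely, for each $i$ the loop $f\circ\alpha_i$ is based at $o$ and hence represents a well-defined element of $\pi_1(\wedger,o)\cong\F_r$; call it $\phi_f(a_i)$, and similarly $\phi_f(b_i)$ from $f\circ\beta_i$. This defines a homomorphism $\phi_f\colon\F_{2g}\to\F_r$. Since $f$ extends over the $2$-cell, the boundary word $\phi_f(\delta_g)w^{-1}$ must be trivial, so $\phi_f(\delta_g)=w$ and $\phi_f\in\mathrm{Hom}_w(\F_{2g},\F_r)$.

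That these two assignments are mutually inverse is where Lemma \ref{lem:homotopy=00003Dsame_induced_map} does the work: the arcs $\alpha_1,\beta_1,\dots,\alpha_g,\beta_g$ cut $\surface$ into a single disc, so two admissible maps agreeing on $\partial\surface$ (which they do, both restricting to $f_w$) are homotopic rel $\partial\surface$ if and only if they induce the same elements on each arc. Thus $\phi_{f_\phi}=\phi$ on the nose, and $f_{\phi_f}\simeq f$ rel $\partial\surface$ because both induce $\phi_f$ on every arc. This establishes the bijection \eqref{eq:Hom_w correspondence with homotopy classes}. The final clause then follows immediately: $\mathrm{Hom}_w(\F_{2g},\F_r)$ is non-empty exactly when an admissible map $\surface\to\wedger$ exists, and by definition $\cl(w)$ is the least $g$ with $\mathrm{Hom}_w(\F_{2g},\F_r)\ne\emptyset$, hence the least genus admitting an admissible map.

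The main obstacle — really the only non-formal point — is justifying the extension of a map over the polygon's interior once its restriction to the boundary is null-homotopic; this is exactly the statement that $\wedger$ is aspherical ($\pi_2(\wedger)=0$, as its universal cover is the tree $\mathbb{T}_{2r}$), which is the same fact already used in the proof of Lemma \ref{lem:homotopy=00003Dsame_induced_map}. Everything else is bookkeeping with the polygon presentation of $\pi_1(\surface,v_1)$ and careful use of Lemma \ref{lem:homotopy=00003Dsame_induced_map} to pass between maps and their induced data on the filling arc system. One should also note the degenerate case $g=0$: then $\surface$ is a disc, an admissible map exists iff $w=1$ in $\pi_1(\wedger,o)$, matching $\mathrm{Hom}_w(\F_0,\F_r)\ne\emptyset\iff w=1$, so the statement holds trivially there too.
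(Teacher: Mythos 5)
Your proposal is correct and follows essentially the same route as the paper's proof: define $\phi\mapsto f_\phi$ on the $1$-skeleton given by $\partial_1$ and the arcs $\alpha_i,\beta_i$, extend over the single $(4g+1)$-gon because the boundary word $\phi(\delta_g)w^{-1}$ is null-homotopic, take $f\mapsto f_*$ in the other direction, and use Lemma \ref{lem:homotopy=00003Dsame_induced_map} for well-definedness and injectivity. The only cosmetic difference is that the paper builds the extension explicitly as a cone on the null-homotopy, whereas you invoke asphericity of $\wedger$ — strictly, existence of the extension needs only the null-homotopy, while asphericity (via Lemma \ref{lem:homotopy=00003Dsame_induced_map}) is what gives uniqueness of the homotopy class, which you also use correctly.
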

We note there are correspondences of the same spirit for maps admissible
for several words.\\

\begin{proof}
It is clear that if $\left(\surface,f\right)$ is admissible for $w$,
then $f_{*}\in\mathrm{Hom}_{w}\left(\F_{2g},\F_{r}\right)$, and $f_{*}$
only depends on the homotopy class of $f$. Conversely, given $\phi\in\mathrm{Hom}_{w}\left(\F_{2g},\F_{r}\right)$,
define $f:\surface\to\wedger$ as following. First, define $f\Big|_{\partial\surface}$
so that $f\circ\partial_{1}=f_{w}$. For every $i\in\left[g\right]$
define $f\Big|_{\alpha_{i}}$ so that $f\circ\alpha_{i}=f_{\phi\left(a_{i}\right)}$
and $f\Big|_{\beta_{i}}$ so that $f\circ\beta_{i}=f_{\phi\left(b_{i}\right)}$.
The arcs $\alpha_{1},\ldots,\beta_{g}$ cut $\surface$ to a single
polygon $P$, identical to the $\left(4g+1\right)$-gon used to construct
$\surface$ - see Figure \ref{fig:pentagon-example}. It therefore
remains to define $f$ on the interior of $P$.\FigBesBeg \\
\begin{figure}[h]
\centering{}\includegraphics[bb=0bp 280bp 400bp 600bp,scale=0.5]{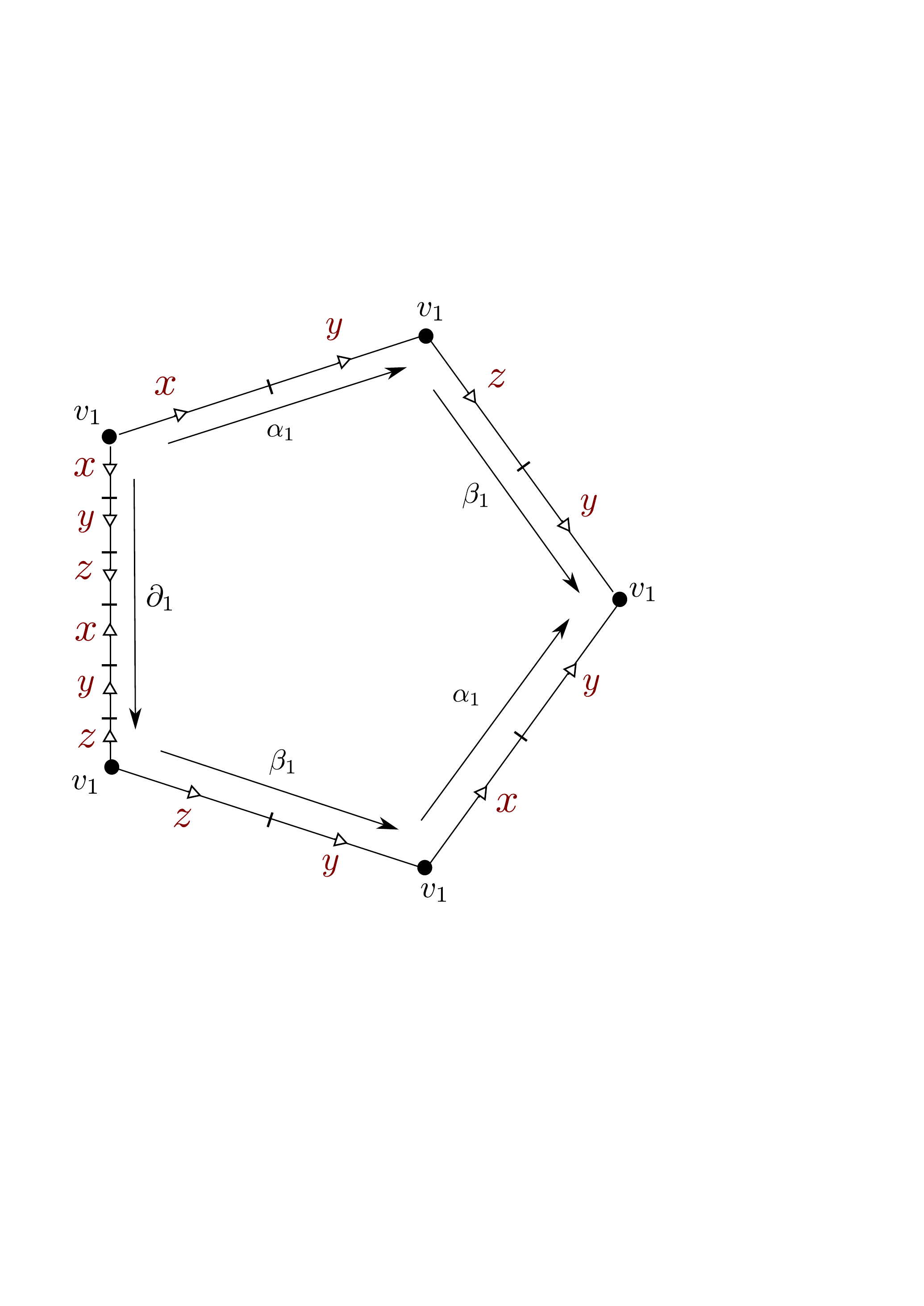}\caption{\label{fig:pentagon-example} The word $w=xyzXYZ$ has commutator
length $1$ as shown by the solution $w=\left[xy,zy\right]$. To construct
a corresponding map $f$ from $\Sigma_{1,1}$, we first define $f$
on the boundary of a $5$-gon $P$ described in the definition of
the arcs $\alpha_{1},\beta_{1},\ldots,\alpha_{g},\beta_{g}$. The
letters $x,y,z$ describe the image of $f\Big|_{\partial P}$ in $\wedger$.}
\end{figure}
\FigBesEnd \\
By the assumption on $\phi$, the boundary $\partial P$ is mapped
by $f$ to the trivial element of $\pi_{1}\left(\wedger,o\right)$.
So there is a homotopy $T\colon\partial P\times\left[0,1\right]\to\wedger$
such that $T\left(x,0\right)\equiv f\Big|_{\partial P}$ and $T\left(x,1\right)$
is constantly $o$. This map induces, therefore, a continuous map
$\overline{T}\colon\nicefrac{\partial P\times\left[0,1\right]}{\left(x,1\right)\sim\left(y,1\right)}\to\wedger$.
Since $\nicefrac{\partial P\times\left[0,1\right]}{\left(x,1\right)\sim\left(y,1\right)}$
is homeomorphic to $P$ in a way that identifies $\left(x,0\right)$
with $x$, we can use $\overline{T}$ to get the required map $f$
on all of $P$. Lemma \ref{lem:homotopy=00003Dsame_induced_map} shows
that the homotopy class of $f$ is well defined. It is also clear
that $f_{*}=\phi$. 
\end{proof}
Let us also mention a few facts about commutator length in free groups.
As mentioned in Section \ref{sub:More-related-work}, there are several
algorithms for computing the commutator length of a given word $w\in\left[\F_{r},\F_{r}\right]$.
One of this algorithms, due to Culler, follows from our discussion
in Section \ref{sec:surface-from-matchings} below --- see Remark
\ref{remark: algo-for-cl}. We also remark that the values taken by
$\cl$ on $\left[\F_{r},\F_{r}\right]$ ($r\ge2$) are all positive
integers. An illuminating example is given in \cite[Section 2.6]{CULLER}:
\[
\cl\left(\left[x,y\right]^{n}\right)=\left\lfloor \frac{n}{2}\right\rfloor +1.
\]
For instance, $\left[x,y\right]^{3}=\left[xyX,YxyX^{2}\right]\left[Yxy,y^{2}\right]$.
Moreover, in the same paper Culler shows that for every $1\ne w\in\left[\F_{r},\F_{r}\right]$,
$\cl\left(w^{n}\right)\underset{n\to\infty}{\to}\infty$. A tight
lower bound $\cl\left(w^{n}\right)>\frac{n}{2}$ is given in \cite[Theorem 4.111]{calegari2009scl}.\\

Finally, let us explain the last two lines of Table \ref{tab:Algebra-geometry-dictionary},
showing that Theorems \ref{thm:main} and \ref{thm:stabilizers K(G,1)}
are special cases of Theorems \ref{thm:main - general} and \ref{thm:K(G,1) for incompressible},
respectively. Recall that $\mathrm{Aut}_{\delta}\left(\F_{2g}\right)$
is the subgroup of $\mathrm{Aut}\left(\F_{2g}\right)$ fixing $\delta_{g}=\left[a_{1},b_{1}\right]\ldots\left[a_{g},b_{g}\right]$.
Via the isomorphism \eqref{eq:iso F_2g and Sigma_g,1}, we can view
$\mathrm{Aut}_{\delta}\left(\F_{2g}\right)$ as the group of automorphisms
of $\pi_{1}\left(\surface,v_{1}\right)$ fixing the element $\left[\partial_{1}\right]$.
\begin{thm}[Dehn-Nielsen-Baer]
\label{thm:Dehn-Nielsen-Baer} The map $\theta\colon\MCG\left(\surface\right)\to\mathrm{Aut}_{\delta}\left(\F_{2g}\right)$
defined by
\[
\left[\rho\right]\mapsto\rho_{*}
\]
is an isomorphism.
\end{thm}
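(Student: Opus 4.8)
The plan is to check that $\theta$ is a well-defined injective homomorphism, and then to prove surjectivity, which I expect to be the substantial step. Well-definedness is routine: a homeomorphism $\rho\in\mathrm{Homeo}_{\delta}(\surface)$ fixes the basepoint $v_{1}\in\partial\surface$, so it induces a genuine automorphism $\rho_{*}$ of $\pi_{1}(\surface,v_{1})\cong\F_{2g}$ with no conjugation ambiguity (precisely because $v_{1}$ is fixed); since $\rho$ fixes $\partial\surface$ pointwise, $\rho_{*}[\partial_{1}]=[\partial_{1}]=\delta_{g}$, so indeed $\rho_{*}\in\Aut_{\delta}(\F_{2g})$. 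An isotopy within $\mathrm{Homeo}_{\delta}(\surface)$ restricts to a based homotopy, so $\rho_{*}$ depends only on $[\rho]\in\MCG(\surface)$, and functoriality of $\pi_{1}$ gives $(\rho\rho')_{*}=\rho_{*}\rho'_{*}$.

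For injectivity I would show $\ker\theta$ is trivial. Suppose $\rho_{*}=\id$. I would use the arc system $\alpha_{1},\beta_{1},\ldots,\alpha_{g},\beta_{g}$ fixed in \eqref{eq:iso F_2g and Sigma_g,1}, which cuts $\surface$ into the $(4g+1)$-gon $P$ of Figure \ref{fig:pentagon-example}. Since $\rho_{*}$ fixes each $[\alpha_{i}]$ and $[\beta_{i}]$, the image $\rho(\alpha_{i})$ is a simple arc homotopic rel endpoints to $\alpha_{i}$; by the relative bigon/change-of-coordinates principle (homotopic simple arcs with fixed endpoints on a surface are ambient isotopic rel endpoints), I can isotope $\rho$, one arc at a time and keeping the arcs disjoint, until it fixes $\alpha_{1}\cup\cdots\cup\beta_{g}$ pointwise. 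The resulting homeomorphism restricts to a self-homeomorphism of the disc $P$ fixing $\partial P$, hence is isotopic rel $\partial P$ to the identity by Alexander's trick; reassembling shows $[\rho]=1$ in $\MCG(\surface)$.

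For surjectivity, given $\varphi\in\Aut_{\delta}(\F_{2g})$ set $c_{i}=\varphi(a_{i})$ and $d_{i}=\varphi(b_{i})$, a free generating set of $\pi_{1}(\surface,v_{1})$ with $[c_{1},d_{1}]\cdots[c_{g},d_{g}]=[\partial_{1}]$. The goal is to realize this tuple geometrically: to find a system $\alpha'_{1},\beta'_{1},\ldots,\alpha'_{g},\beta'_{g}$ of pairwise disjoint simple arcs based at $v_{1}$ that cuts $\surface$ into a single disc, with $[\alpha'_{i}]=c_{i}$ and $[\beta'_{i}]=d_{i}$. Granting this, both arc systems have disc complement, so there is a homeomorphism $\rho$ of $\surface$ carrying the oriented arc $\alpha_{i}$ to $\alpha'_{i}$ and $\beta_{i}$ to $\beta'_{i}$ and fixing $\partial\surface$ pointwise — build it by extending the evident boundary identification of the two cut-open discs over their interiors via Alexander's trick, then correcting near $\partial\surface$, using an isotopy supported there, so that the map on the boundary circle is the identity rather than merely an orientation-preserving self-homeomorphism. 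Since $\rho_{*}$ then agrees with $\varphi$ on the generators $[\alpha_{i}],[\beta_{i}]$, we get $\rho_{*}=\varphi$, i.e.\ $\theta([\rho])=\varphi$.

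The hard part, and the main obstacle, is this geometric realization claim: turning an abstract free basis $\{c_{i},d_{i}\}$ with $\prod[c_{i},d_{i}]=[\partial_{1}]$ into an embedded disjoint arc system cutting $\surface$ into a disc. Here the plan is the classical surgery argument: represent $c_{1},\ldots,d_{g}$ by loops at $v_{1}$ in general position, then repeatedly resolve intersections by innermost-disc surgeries — trading crossings against word complexity, in the same spirit as Culler's tightening of maps from surfaces \cite{CULLER} and the cut-and-paste reductions used later in this paper — to reach disjoint simple arcs without increasing genus; and then argue, via an Euler-characteristic count together with the fact that $\{c_{i},d_{i}\}$ freely generates $\pi_{1}(\surface)$ (so no compressing curve can appear and there is no ``wasted'' genus), that the complement is necessarily a single disc. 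This is exactly the classical geometric core of the Dehn--Nielsen--Baer theorem in the bounded case. An alternative route to surjectivity would be to invoke a known finite generating set of $\Aut_{\delta}(\F_{2g})$ by elementary Nielsen-type automorphisms and exhibit each as $\theta$ of an explicit Dehn twist or handle homeomorphism, but this only relocates the same classical input.
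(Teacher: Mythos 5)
Your well-definedness and injectivity arguments are fine. For injectivity you take a slightly different route from the paper: the paper deduces $\theta^{-1}(\mathrm{id})=\mathrm{Homeo}_{0}\left(\surface\right)$ from the fact that $\surface$ is a $\mathrm{K}\left(\F_{2g},1\right)$-complex (uniqueness up to based homotopy of maps inducing a given homomorphism) together with ``homotopy of homeomorphisms equals isotopy'' \cite[Theorem 1.12]{FM}, whereas you isotope $\rho$ to fix the arc system $\alpha_{1},\ldots,\beta_{g}$ pointwise and then apply the Alexander lemma on the complementary disc. Both are legitimate; yours needs the standard fact that homotopic (rel endpoints) disjoint simple arcs can be simultaneously ambient-isotoped, which is available, so no objection there.

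The genuine gap is in surjectivity, and it sits exactly where you say the ``hard part'' is. The claim that a free basis $c_{1},d_{1},\ldots,c_{g},d_{g}$ of $\pi_{1}\left(\surface,v_{1}\right)$ with $\left[c_{1},d_{1}\right]\cdots\left[c_{g},d_{g}\right]=\left[\partial_{1}\right]$ can be realized by disjoint embedded arcs cutting $\surface$ into a single disc is not a lemma you can dispatch by the surgery scheme you describe: it \emph{is} the classical content of the theorem, and the paper does not reprove it but quotes it as a special case of \cite[Theorem 5.7.1]{ZVC}. Concretely, innermost-disc surgery at a crossing replaces the curves passing through that crossing by different curves; it does not preserve the based homotopy classes $c_{i},d_{i}$ of the representatives, so ``resolving intersections while trading crossings against word complexity'' does not keep you inside the set of representatives of the prescribed classes, and no monotone complexity has been exhibited. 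Likewise the Euler-characteristic/``no wasted genus'' count only tells you what the complement must be \emph{once} you already have $2g$ disjoint simple arcs in the correct classes; producing such arcs is the whole difficulty (the hypothesis $\prod\left[c_{i},d_{i}\right]=\left[\partial_{1}\right]$ is essential, and an argument that never uses it beyond the final count cannot be complete). Your fallback, realizing a generating set of $\Aut_{\delta}\left(\F_{2g}\right)$ by explicit twists, is also not a repair as stated: known generating sets of this stabilizer are themselves usually obtained from the Dehn--Nielsen--Baer isomorphism together with generation of $\MCG\left(\surface\right)$ by Dehn twists, so without an independent (e.g.\ McCool-type or Zieschang-type) derivation of such a generating set the argument is circular. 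As written, then, the proposal proves injectivity but leaves surjectivity resting on an unproved claim that is equivalent to the theorem; either cite the classical realization result, as the paper does, or supply one of the genuine proofs of it (Zieschang's combinatorial argument, or the closed-case Dehn--Nielsen--Baer plus a doubling/capping argument adapted to boundary-fixing mapping classes).
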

A reference for the Dehn-Nielsen-Baer Theorem, including some historical
notes, can be found in \cite[Chapter 8]{FM}. However, the version
that appears in \cite{FM} and usually found in the literature is
slightly different and deals either with surfaces without boundary
or with homeomorphisms of surfaces with boundary that do not necessarily
fix the boundary. As we could not find any published reference for
the exact version we need here, let us say a few words about the proof
of Theorem \ref{thm:Dehn-Nielsen-Baer}.

That $\theta$ is a well-defined homomorphism of groups is trivial.
The surjectivity of $\theta$ is a special case of \cite[Theorem 5.7.1]{ZVC}.
Finally, the injectivity of $\theta$ follows from the fact that $\surface$
is a $\mathrm{K}\left(\F_{2g},1\right)$-complex: indeed, $\surface$
is a $\mathrm{K}\left(\F_{2g},1\right)$-space (for example, because
it deformation-retracts to a bouquet with $2g$ loops), which can
be given a CW-complex structure. A basic feature of every $\mathrm{K}\left(G,1\right)$-complex
$Y$ is that any homomorphism $\pi_{1}\left(Y,y_{0}\right)\to\pi_{1}\left(Y,y_{o}\right)$
is induced by some map $\left(Y,y_{0}\right)\to\left(Y,y_{0}\right)$,
which is unique up to homotopy fixing $y_{0}$ (e.g.~\cite[Theorem 1B.9]{hatcher2002algebraic}).
Since on surfaces homotopy of homeomorphisms is the same as isotopy
(\cite[Theorem 1.12]{FM}), we see that $\theta^{-1}\left(\mathrm{id}\right)$
is precisely $\mathrm{Homeo}_{0}\left(\surface\right)$. 

Another remark worth mentioning is that the group $\mathrm{Aut}_{\delta}\left(\F_{2g}\right)$
is torsion-free (e.g.,~\cite[Corollary 7.3]{FM}), and thus so are
the stabilizer subgroups in Theorem \ref{thm:stabilizers K(G,1)}.
This shows that a finite $\mathrm{K\left(G,1\right)}$-complex is
plausible.

Finally, note that if $\left[\rho\right]\in\mathrm{MCG}\left(\surface\right)$
and $f\colon\left(\surface,v_{1}\right)\to\left(\wedger,o\right)$
is admissible for $w$, then the action of $\left[\rho\right]$ on
$\tilde{f}$, the homotopy class of $f$, is given by $\widetilde{\rho\circ f}$.
On the other hand, the action of $\rho_{*}\in\mathrm{Aut}_{\delta}\left(\F_{2g}\right)$
on $f_{*}\in\mathrm{Hom}_{w}\left(\F_{2g},\F_{r}\right)$ is given
by $\rho_{*}\circ f_{*}=\left(\rho\circ f\right)_{*}$. This shows
that the action of $\mathrm{MCG}\left(\surface\right)$ on the homotopy
classes in \eqref{eq:Hom_w correspondence with homotopy classes}
is isomorphic to the action of $\mathrm{Aut}_{\delta}\left(\F_{2g}\right)$
on $\mathrm{Hom}_{w}\left(\F_{2g},\F_{r}\right)$.

\subsection{Word measures on compact groups\label{sub:Word-measures}}

Let $G$ be a compact group. As explained in Section \ref{sub:The-expected-trace},
every word $w\in\F_{r}$ induces a measure on $G$, which we call
the $w$-measure and denote in this subsection $\mu_{G}^{w}$\marginpar{$\mu_{G}^{w}$}.
This is the measure obtained by pushing forward the Haar measure on
$G^{r}=\underbrace{G\times\ldots\times G}_{r\,\mathrm{times}}$ through
the word map $w\colon G^{r}\to G$. Namely, to sample an element from
the $w$-measure on $G$, simply sample $r$ independent elements
$g_{1},\ldots,g_{r}$ according to the Haar measure on $G$, and evaluate
$w\left(g_{1},\ldots,g_{r}\right)$. An important special case is
when $G$ is finite and then the Haar measure is simply the uniform
distribution.

The following invariance of word measure motivates the theme that
$w$-measures on groups encode algebraic information about $w$:
\begin{fact}
\label{prop:Aut-F_r invariance of word measures}Word measures are
invariant under $\mathrm{Aut}\left(\F_{r}\right)$. Namely, if $w\in\F_{r}$
and $\phi\in\mathrm{Aut}\left(\F_{r}\right)$, then $w$ and $\phi\left(w\right)$
induce the same measure on every compact group.\end{fact}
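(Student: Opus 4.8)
The plan is to prove Fact \ref{prop:Aut-F_r invariance of word measures} by exhibiting, for each $\phi\in\mathrm{Aut}(\F_r)$, a measure-preserving bijection of $G^r$ that intertwines the word maps for $w$ and $\phi(w)$. Concretely, any automorphism $\phi$ of $\F_r$ is determined by where it sends the generators, so let $\phi(x_i)=u_i(x_1,\dots,x_r)$ for certain words $u_i\in\F_r$. I would first define the map $\Phi\colon G^r\to G^r$ by $\Phi(g_1,\dots,g_r)=(u_1(g_1,\dots,g_r),\dots,u_r(g_1,\dots,g_r))$, i.e.\ $\Phi$ is the word map of the $r$-tuple $(u_1,\dots,u_r)$. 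The key algebraic observation is that for any word $v\in\F_r$ one has the substitution identity $(\phi(v))(g_1,\dots,g_r)=v(\Phi(g_1,\dots,g_r))$, which follows because evaluation at a fixed tuple in a group is a homomorphism $\F_r\to G$ and $\phi(v)$ is obtained from $v$ by substituting $x_i\mapsto u_i$; applying this with $v=w$ gives $w\circ\Phi = \phi(w)$ as maps $G^r\to G$. Hence if $\Phi_*(\mathrm{Haar})=\mathrm{Haar}$ then $(\phi(w))_*(\mathrm{Haar}) = w_*\bigl(\Phi_*(\mathrm{Haar})\bigr) = w_*(\mathrm{Haar})$, which is exactly the claim.

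The remaining point — and the one I expect to be the main (in fact the only) obstacle — is to show that $\Phi$ is a measure-preserving bijection of $G^r$ with its Haar measure. I would get this by reducing to the case of a generating set of $\mathrm{Aut}(\F_r)$, namely the Nielsen transformations: the elementary automorphisms $x_i\mapsto x_i^{-1}$, the permutations $x_i\leftrightarrow x_j$, and the transvections $x_i\mapsto x_i x_j$ (all other generators fixed). Since the composition of measure-preserving bijections is again one, and since $\phi$ and $\phi^{-1}$ both decompose into such generators, it suffices to check each elementary type. For $x_i\mapsto x_i^{-1}$, the corresponding $\Phi$ is the coordinate map $g_i\mapsto g_i^{-1}$, which preserves Haar measure on $G$ (inversion pushes Haar to Haar on any compact group) and is a bijection; for a permutation of generators, $\Phi$ just permutes coordinates, obviously measure-preserving and bijective. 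The one case needing a word is the transvection $x_i\mapsto x_i x_j$: here $\Phi(g_1,\dots,g_r)$ replaces the $i$-th coordinate by $g_i g_j$ and leaves the others fixed. Fixing all coordinates except $g_i$, the map $g_i\mapsto g_i g_j$ is right-translation by the (now fixed) element $g_j$, which preserves Haar measure on $G$; by Fubini this makes $\Phi$ measure-preserving on $G^r$, and it is a bijection with inverse $g_i\mapsto g_i g_j^{-1}$ (the $\Phi$ attached to $x_i\mapsto x_i x_j^{-1}$).

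Finally I would assemble the pieces: write $\phi=\sigma_1\circ\cdots\circ\sigma_m$ as a product of elementary Nielsen automorphisms, note that the $r$-tuple of words defining $\phi$ is obtained by iterated substitution so that $\Phi_\phi = \Phi_{\sigma_1}\circ\cdots\circ\Phi_{\sigma_m}$ (here I should be mildly careful about the order of composition, which I would pin down by the substitution identity above rather than by guessing), and conclude that $\Phi_\phi$ is a measure-preserving bijection of $(G^r,\mathrm{Haar})$ as a composite of such. Combined with $w\circ\Phi_\phi=\phi(w)$ this yields $(\phi(w))_*\mathrm{Haar}=w_*\mathrm{Haar}$, i.e.\ the $w$-measure and $\phi(w)$-measure on $G$ coincide. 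I do not anticipate any genuinely hard step; the only things to be careful about are the direction of composition in the Nielsen decomposition and the (standard) fact that left/right translations and inversion preserve Haar measure on a compact group.
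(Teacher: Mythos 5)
Your proof is correct and follows essentially the same route as the paper: reduce to the elementary Nielsen generators of $\mathrm{Aut}(\F_{r})$ and use right-invariance of the Haar measure (for the transvection) and invariance under inversion and coordinate permutation. The only difference is packaging — you phrase the argument as a Haar-preserving self-map $\Phi$ of $G^{r}$ intertwining the word maps, while the paper argues directly that the transformed tuple is again an independent Haar-random tuple — but the mathematical content is identical.
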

\begin{proof}
Recall we denote the generators of $\F_{r}$ by $x_{1},\ldots,x_{r}$.
The automorphism group $\mathrm{Aut}\left(\F_{r}\right)$ is generated
by the following {}``elementary Nielsen transformations'' defined
on the generators (e.g.~\cite[Section I.4]{LyndonSchupp77}):
\begin{itemize}
\item The automorphism $\alpha_{\sigma}$ defined by a permutation $\sigma\in S_{r}$
on the generators
\item The automorphism $\beta$ defined by $x_{1}\mapsto x_{1}x_{2}$ and
$x_{i}\mapsto x_{i}$ for $i\ge2$
\item The automorphism $\gamma$ defined by $x_{1}\mapsto x_{1}^{-1}$ and
$x_{i}\mapsto x_{i}$ for $i\ge2$
\end{itemize}

Thus it is enough to show the word measures of a compact group $G$
are invariant under these transformations. This is obvious for the
automorphisms $\alpha_{\sigma}$. For $\beta$, it is enough to show
that if $g_{1},g_{2},\ldots,g_{r}\in G$ are $r$ independent Haar
random elements, then so are $g_{1}g_{2},g_{2},\ldots,g_{r}$. This
is true by right-invariance of the Haar measure on compact groups:
sample $g_{2}$ first. When sampling $g_{1}$, the measure on $g_{1}g_{2}$
is again the Haar measure. It also shows that $g_{1}g_{2}$ is independent
of $g_{2}$. As for automorphism $\gamma$, given $g_{1},\ldots,g_{r}$
as before, the independence of $g_{1}^{-1},g_{2},\ldots,g_{r}$ is
obvious. The transformation $g\mapsto g^{-1}$ turns a left Haar measure
into a right one, but these two are the same in compact groups.

\end{proof}
So two words in the same $\mathrm{Aut}\left(\F_{r}\right)$-orbit
in $\F_{r}$ induce the same measure on every compact group. But is
this the only reason for two words to have such a strong connection?
A version of the following conjecture appears, for example, in \cite[Question 2.2]{AV}
and in \cite[Conjecture 4.2]{Shalev2013}.
\begin{conjecture}
\label{conj:aner}If two words $w_{1},w_{2}\in\F_{r}$ induce the
same measure on every compact group, then there exists $\phi\in\mathrm{Aut}\left(\F_{r}\right)$
with $w_{2}=\phi\left(w_{1}\right)$.
\end{conjecture}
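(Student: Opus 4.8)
The plan is the following; since Conjecture~\ref{conj:aner} remains open, I will describe the line of attack that the results of this paper make available, and then pinpoint exactly where it stalls.

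First I would unpack the hypothesis. Saying that $w_{1}$ and $w_{2}$ induce the same measure on every compact group is far stronger than it looks: restricting to $\U\left(n\right)$ it says, for instance, that for every $\ell$ and all $j_{1},\ldots,j_{\ell}\ge 1$ the rational functions $\tr_{w_{1}^{j_{1}},\ldots,w_{1}^{j_{\ell}}}\left(n\right)$ and $\tr_{w_{2}^{j_{1}},\ldots,w_{2}^{j_{\ell}}}\left(n\right)$ coincide (and similarly for tuples involving inverses). By Theorem~\ref{thm:main - general} this forces, for every such tuple, equality of the leading exponent $\ch\left(\cdot\right)$ and of the leading coefficient $\sum_{[(\Sigma,f)]}\chi\left(\Stab_{\MCG\left(\Sigma\right)}\left(\tilde f\right)\right)$, and, comparing all coefficients of the two Laurent series, of all lower-order data as well. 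In particular, Corollary~\ref{cor:can hear scl} gives $\mathrm{scl}\left(w_{1}\right)=\mathrm{scl}\left(w_{2}\right)$; more sharply, $w_{1}$ and $w_{2}$ have identical ``genus spectra'' $\ch\left(w^{j_{1}},\ldots,w^{j_{\ell}}\right)$, hence (away from the degenerate cancellation of Footnote~\ref{fn:leading-coef-vanishes}) identical commutator lengths of all powers and products of powers, identical counts of equivalence classes of incompressible solutions $\sol\left(\cdot\right)$, and identical Euler characteristics of the mapping-class-group stabilizers. I would also feed in the finite-group side: via the permutation word measures and Theorem~\ref{thm:pp15} (and its multi-word analogues) one extracts the primitivity rank $\pi\left(w_{i}\right)$ and the set $\mathrm{Crit}\left(w_{i}\right)$, and via measures on all finite groups, in principle, the isomorphism types (with multiplicities) of the overgroups of $w$ detected by their characters.

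The heart of any proof is then a \textbf{reconstruction} step: show that this whole package --- every item of which is visibly $\Aut\left(\F_{r}\right)$-invariant --- is a \emph{complete} invariant, i.e.\ pins down the $\Aut\left(\F_{r}\right)$-orbit of $w$. The natural way to organize this is by induction on a complexity measure of $w$ (for instance $\ch\left(w\right)$, or reduced word length), peeling off a canonical ``outermost'' portion of Culler's admissible-surface picture for $w$, matching it with the corresponding portion for the other word using the equality of all the combinatorial data above, applying an element of $\Aut\left(\F_{r}\right)$ to align the two words along that portion, and recursing on the remainder. The base case $\cl\left(w\right)=1$ is already tractable: by Remark~\ref{remark: free solutions} a commutator has exactly $K$ solution classes, realized by words of the shape $[x_{1}^{k},x_{2}x_{1}^{j}]$, so one can hope to classify commutators up to $\Aut\left(\F_{r}\right)$ outright and verify the conjecture there as a warm-up.

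The main obstacle --- and precisely why the conjecture is still open --- is this reconstruction step. There is at present no mechanism converting ``equal $\mathrm{scl}$, equal solution counts, equal stabilizer Euler characteristics, equal primitivity data'' into ``same $\Aut\left(\F_{r}\right)$-orbit'': all of these are coarse invariants, two inequivalent words can share arbitrarily many of them, and it is not even known whether the full family of word measures separates $\Aut\left(\F_{r}\right)$-orbits already in $\F_{2}$. So I would expect this strategy to deliver, at best, the conjecture for restricted families --- words of small commutator length, or words whose admissible-surface set is rigid enough that the ``outermost'' reconstruction move is forced and unique --- with the general statement remaining the genuine difficulty. A complementary idea worth pursuing is to produce, for each $w$, a single compact group $G=G\left(w\right)$ whose $w$-measure already determines $w$ up to $\Aut\left(\F_{r}\right)$; the unitary-group results here are asymptotic in $n$ and do not hand one such a $G$, so finding one would require a substantial new input.
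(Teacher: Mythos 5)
You have not proved the statement, but neither does the paper: Conjecture \ref{conj:aner} is stated there as an open problem (attributed to \cite{AV} and \cite{Shalev2013}), and the paper offers no proof of it, only the invariance Fact \ref{prop:Aut-F_r invariance of word measures} in the converse direction and partial evidence such as the primitive-word case (Theorem of Puder--Parzanchevski) and the scl-separation consequence of Corollary \ref{cor:can hear scl}. So there is no ``paper's own proof'' to compare your attempt against, and your refusal to claim a proof is the correct call.

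As a survey of what the paper's machinery actually yields toward the conjecture, your account is essentially accurate: equality of the measures on all $\U\left(n\right)$ does force equality of all the joint trace expectations $\tr_{w^{j_{1}},\ldots,w^{j_{\ell}}}\left(n\right)$ as rational functions, hence equal stable commutator length via Corollary \ref{cor:can hear scl}, and equality of the leading data of Theorem \ref{thm:main - general} (with the caveat, which you note, that the degenerate vanishing of Footnote \ref{fn:leading-coef-vanishes} blocks one from reading off $\ch$ or $\cl$ itself, as the example $w=\left[x,y\right]\left[x,z\right]$ shows); the symmetric-group side similarly yields $\pi\left(w\right)$ and $\left|\mathrm{Crit}\left(w\right)\right|$. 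Your diagnosis of the missing step is also the right one: all of these are coarse $\Aut\left(\F_{r}\right)$-invariants, and no reconstruction mechanism is known that upgrades their equality to equality of orbits — this is precisely why the conjecture is open and why the paper presents its results only as ``a step towards'' it. One small correction: in your proposed base case, the representatives $\left[x_{1}^{k},x_{2}x_{1}^{j}\right]$ are the solution classes for the particular word $\left[x_{1}^{k},x_{2}\right]$, not a classification of all commutators up to $\Aut\left(\F_{r}\right)$; even the $\cl=1$ case of the conjecture is not settled by Remark \ref{remark: free solutions} alone.
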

A special case of this conjecture, which attracted the attention of
several researchers, deals with the $\mathrm{Aut}\left(\F_{r}\right)$-orbit
of the single-letter word $x_{1}$, namely, with the set of primitive
words. It was asked whether words inducing the Haar measure on every
compact group are necessarily primitive. As mentioned in Section \ref{sub:More-related-work},
this was settled in \cite[Theorem 1.1]{PP15} using word measures
on symmetric groups:
\begin{thm}[Puder-Parzanchevski]
 A word inducing uniform measure on every finite group is necessarily
primitive.
\end{thm}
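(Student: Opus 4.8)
The plan is to derive this statement as a direct corollary of Theorem \ref{thm:pp15}, which gives the asymptotic expansion
\[
\E\left[\mathrm{tr}\left(w\left(\sigma_{1}^{(n)},\ldots,\sigma_{r}^{(n)}\right)\right)\right]=1+\frac{\left|\mathrm{Crit}\left(w\right)\right|}{n^{\pi\left(w\right)-1}}+O\left(\frac{1}{n^{\pi(w)}}\right)
\]
for the expected number of fixed points of a word in random permutations. The key observation is that $\mathrm{tr}$ of a permutation matrix is exactly its number of fixed points, so the left-hand side is $\E\left[\mathrm{fix}\left(w\left(\sigma_{1}^{(n)},\ldots,\sigma_{r}^{(n)}\right)\right)\right]$, an invariant of the $w$-measure on $S_{n}$. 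If $w$ induces the uniform measure on every finite group, then in particular it induces the uniform measure on $S_{n}$ for every $n$, and the expected number of fixed points of a uniformly random permutation in $S_{n}$ is exactly $1$ for all $n\geq 1$ (each of the $n$ points is fixed with probability $1/n$). Hence the expansion above must read $1+0+O(1/n^{\pi(w)})$ with no $1/n^{\pi(w)-1}$ term, forcing $\left|\mathrm{Crit}(w)\right|=0$.

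The next step is to interpret $\left|\mathrm{Crit}(w)\right|=0$ algebraically. By definition $\pi(w)$ is the minimal rank of a subgroup $J\leq\F_{r}$ containing $w$ in which $w$ is not primitive, and $\mathrm{Crit}(w)$ is the (finite, nonempty unless vacuous) set of subgroups attaining this minimum. So $\mathrm{Crit}(w)=\emptyset$ precisely when there is \emph{no} subgroup $J\leq\F_{r}$ with $w\in J$ in which $w$ fails to be primitive --- equivalently, $w$ is primitive in every subgroup of $\F_{r}$ that contains it. Taking $J=\langle w\rangle$ itself (rank $1$, in which $w$ is always primitive) is consistent; the content is that $w$ cannot lie in any finitely generated overgroup as a non-primitive element. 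A standard fact in this circle of ideas (going back to the algebraic extensions / primitivity theory used in \cite{PP15}) is that $w$ is primitive in $\F_{r}$ if and only if it is primitive in every subgroup of $\F_{r}$ containing it; one direction is immediate, and the other uses that $\F_{r}$ itself is a subgroup containing $w$. Thus $\mathrm{Crit}(w)=\emptyset$ is equivalent to $w$ being primitive in $\F_{r}$.

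Assembling these pieces: $w$ induces uniform measure on all finite groups $\Rightarrow$ $w$ induces uniform measure on all $S_{n}$ $\Rightarrow$ $\E[\mathrm{fix}]=1$ identically $\Rightarrow$ the coefficient $\left|\mathrm{Crit}(w)\right|$ of $n^{-(\pi(w)-1)}$ in Theorem \ref{thm:pp15} vanishes $\Rightarrow$ $\mathrm{Crit}(w)=\emptyset$ $\Rightarrow$ $w$ is primitive. I expect the only genuinely subtle point to be the honest use of Theorem \ref{thm:pp15}: one must be careful that when $\mathrm{Crit}(w)=\emptyset$ the notation $\pi(w)$ is set to $\infty$ (so the stated expansion degenerates to $\E[\mathrm{fix}]=1+O(n^{-\infty})$, i.e.\ is exactly $1$ for large $n$), and conversely that a nonempty $\mathrm{Crit}(w)$ genuinely produces a nonzero leading correction --- which is exactly the non-vanishing content proved in \cite{PP15} and is what makes the contrapositive work. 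There is no real obstacle beyond quoting Theorem \ref{thm:pp15} correctly; the argument is a short deduction once that theorem is in hand.
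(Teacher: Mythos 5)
Your derivation is correct and is essentially the route the paper itself indicates: the theorem is quoted from \cite{PP15}, and the paper points out that it follows from Theorem \ref{thm:pp15} exactly as you argue, since a non-primitive word has $\pi(w)<\infty$ and $\mathrm{Crit}(w)\neq\emptyset$ (take $J=\F_{r}$ itself), so the expected number of fixed points deviates from $1$ for all large $n$, whereas the uniform measure on $S_{n}$ gives exactly $1$. Note that only this trivial direction of your ``standard fact'' is needed, so running the argument in contrapositive form lets you apply Theorem \ref{thm:pp15} only to non-primitive words and sidesteps any convention about $\pi(w)=\infty$.
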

Still, even in this special case, open problems remain: for example,
can the symmetric groups be replaced in this result by, say, solvable
groups? or compact Lie groups? Is there a single compact Lie group
which suffices? We see our work here as a step towards answering these
questions and, especially, Conjecture \ref{conj:aner}.

To the very least, we hope to be able to show that only primitive
words induce the Haar measure on ${\cal U}\left(n\right)$ for every
$n$. To date, we can use the current work to show that two words
$w_{1}$ and $w_{2}$ with $\mathrm{scl}\left(w_{1}\right)\ne\mathrm{scl}\left(w_{2}\right)$
induce different measures on $\U\left(n\right)$ for every large enough
$n$ -- see Section \ref{sub:Stable-commutator-length}.

\medskip{}

The first result in our paper deals with $\trw\left(n\right)$, the
expected trace of a random matrix in $\U\left(n\right)$ sampled by
the $w$-measure. Let us explain why this particular projection of
the $w$-measure $\mu_{G}^{w}$ is a very natural first step.
\begin{fact}
For any compact group $G$, the word measure $\mu_{G}^{w}$ is determined
by the expected values of the irreducible characters $\left\{ \intop_{g\in G}\xi\left(g\right)d\mu_{G}^{w}\left(g\right)\right\} _{\xi\in\widehat{G}}$.
\end{fact}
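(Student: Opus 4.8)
The plan is to note that $\mu_{G}^{w}$ is a \emph{class measure} on $G$, and then to invoke the standard Peter--Weyl-type fact that a class measure on a compact group is pinned down by its integrals against the irreducible characters.

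First I would record the conjugation-equivariance of word maps: for every $h\in G$,
\[
w\left(hg_{1}h^{-1},\ldots,hg_{r}h^{-1}\right)=h\,w\left(g_{1},\ldots,g_{r}\right)\,h^{-1},
\]
which is immediate from the definition \eqref{eq:word} of the word map. Since the Haar measure on $G^{r}$ is invariant under the simultaneous conjugation $\left(g_{1},\ldots,g_{r}\right)\mapsto\left(hg_{1}h^{-1},\ldots,hg_{r}h^{-1}\right)$, pushing it forward by $w$ shows that $\mu_{G}^{w}$ is invariant under $a\mapsto hah^{-1}$ for all $h\in G$; that is, $\mu_{G}^{w}$ is a class measure.

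Next I would argue that a class measure is determined by its Fourier coefficients. By the Riesz representation theorem a finite Borel measure $\mu$ on the compact Hausdorff group $G$ is determined by the functional $f\mapsto\int_{G}f\,d\mu$ on $f\in C(G)$; and if $\mu$ is a class measure then $\int_{G}f\,d\mu=\int_{G}\widehat{f}\,d\mu$ for every $f\in C(G)$, where $\widehat{f}(g)=\int_{G}f\left(hgh^{-1}\right)dh$ is the (continuous) conjugation-average of $f$ (use Fubini and the conjugation-invariance of $\mu$). Hence $\mu$ is determined by its integrals against continuous \emph{class} functions alone. Finally, the linear span of $\left\{\xi\right\}_{\xi\in\widehat{G}}$ is a unital, conjugation-closed subalgebra of $C(G)$ (a product of characters is the character of a tensor product, which decomposes into irreducibles, and $\overline{\xi}$ is the character of the dual representation) which, by Peter--Weyl, separates conjugacy classes; so by the Stone--Weierstrass theorem applied on the compact Hausdorff space of conjugacy classes of $G$ this span is sup-norm dense in the space of continuous class functions. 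Therefore $\left\{\int_{G}\xi\,d\mu_{G}^{w}\right\}_{\xi\in\widehat{G}}$ determines $\int_{G}f\,d\mu_{G}^{w}$ for every continuous class function $f$, and hence determines $\mu_{G}^{w}$.

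I do not expect any genuine obstacle here: the content is entirely standard harmonic analysis on compact groups, and the only non-formal input is the density of the span of irreducible characters in the continuous class functions, which is precisely Peter--Weyl combined with Stone--Weierstrass. (For finite $G$ the statement is elementary, the irreducible characters being a basis of the finite-dimensional space of class functions on $G$.)
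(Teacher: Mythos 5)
Your proof is correct, and its skeleton is the same as the paper's: first establish that $\mu_{G}^{w}$ is a conjugation-invariant (class) measure — you do this via the equivariance $w(hg_{1}h^{-1},\ldots,hg_{r}h^{-1})=h\,w(g_{1},\ldots,g_{r})\,h^{-1}$ together with invariance of Haar measure on $G^{r}$ under diagonal conjugation, while the paper phrases the same observation through the identity $w^{-1}(gAg^{-1})=g\,(w^{-1}(A))\,g^{-1}$ — and then prove that a conjugation-invariant measure on a compact group is determined by the expectations of the irreducible characters. Where you genuinely differ is in the second step. The paper passes to the conjugation-average $\overline{f}$ and expands it as $\sum_{\xi\in\widehat{G}}\langle\overline{f},\xi\rangle\,\xi$, i.e.\ it uses the Peter--Weyl $L^{2}$-expansion of class functions in irreducible characters and then integrates term by term against $\mu_{G}^{w}$; this is shorter, but as written it leaves implicit the interchange of the $L^{2}$-convergent sum with the integral, and it takes the average over $h$ with respect to $\mu$ itself rather than Haar measure, whereas only the Haar average is literally a class function. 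Your route — Riesz representation to reduce to continuous test functions, the Haar conjugation-average $\widehat{f}$ plus Fubini to reduce to continuous class functions, and then Stone--Weierstrass on the compact Hausdorff space of conjugacy classes (using that the span of characters is a unital, conjugation-closed algebra separating classes, by Peter--Weyl) — trades the $L^{2}$ expansion for sup-norm density, where convergence of integrals against the finite measure is immediate. Both arguments are standard harmonic analysis; yours is a bit more careful at the analytic pressure points, the paper's a bit more succinct, and for finite $G$ both collapse to the remark that the irreducible characters form a basis of the class functions.
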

Here $\widehat{G}$ marks the set of all irreducible characters of
$G$. 
\begin{proof}
The statement of the proposition holds for every conjugation-invariant
measure. First we show why $\mu_{G}^{w}$ has this property, and then
why this property yields the statement of the proposition. We ought
to show that for every $g\in G$ and every measurable set $A\subseteq G$,
we have $\mu_{G}^{w}\left(A\right)=\mu_{G}^{w}\left(gAg^{-1}\right)$.
This follows from the invariance of Haar measures under conjugation
and the equality 
\[
w^{-1}\left(gAg^{-1}\right)=g\left(w^{-1}\left(A\right)\right)g^{-1},
\]
the conjugation on the right hand side being the diagonal conjugation
on $G^{r}$.

To see that a conjugation-invariant measure $\mu$ on a compact group
$G$ is completely determined by the expectation of irreducible characters%
\footnote{For finite groups, this follows by viewing the measure as a function
and the fact that the irreducible characters form a basis for class
functions.%
}, consider any $\mu$-measurable function $f\colon G\to\mathbb{C}$
with finite expectation. Then, by conjugation-invariance, for every
$h\in G$, 
\[
\intop_{G}f\left(g\right)d\mu\left(g\right)=\intop_{G}f\left(hgh^{-1}\right)d\mu\left(g\right).
\]
Thus,

\[
\intop_{g\in G}f\left(g\right)d\mu\left(g\right)=\intop_{h\in G}\left[\intop_{g\in G}f\left(hgh^{-1}\right)d\mu\left(g\right)\right]d\mu\left(h\right)=\intop_{g\in G}\left[\intop_{h\in G}f\left(hgh^{-1}\right)d\mu\left(h\right)\right]d\mu\left(g\right),
\]
where we used Fubini's theorem. Defining the class function $\overline{f}\left(g\right)=\intop_{h\in G}f\left(hgh^{-1}\right)d\mu\left(h\right)$,
we obtain, as $\overline{f}=\sum_{\xi\in\hat{G}}\left\langle \overline{f},\xi\right\rangle \xi$,
that 
\[
\intop_{g\in G}f\left(g\right)d\mu\left(g\right)=\intop_{g\in G}\overline{f}\left(g\right)d\mu\left(g\right)=\sum_{\xi\in\hat{G}}\left\langle \overline{f},\xi\right\rangle \cdot\intop_{g\in G}\xi\left(g\right)d\mu\left(g\right).
\]

\end{proof}
Thus it makes sense to study word measures via the expectation of
irreducible characters. In this language, for example, Conjecture
\ref{conj:aner} says that if $w_{1}$ and $w_{2}$ do not belong
to the same $\mathrm{Aut}\left(\F_{r}\right)$-orbit, then there is
some compact group $G$ and some non-trivial character $1\ne\xi\in\widehat{G}$
so that $\xi$ has different expectations under $\mu_{G}^{w_{1}}$
and $\mu_{G}^{w_{2}}$. In the case of ${\cal U}\left(n\right)$,
it is fair to say the simplest irreducible character is the trace
of the standard representation, and its expected value under the $w$-measure
$\mu_{{\cal U}\left(n\right)}^{w}$ is, by definition, $\trw\left(n\right)$.
\begin{rem}
As hinted in Section \ref{sub:Expected-product-of}, our more general
results regarding $\trwl$ and finite sets of words give much more
information about word measures in ${\cal U}\left(n\right)$. In particular,
they give similar kind of control we get over $\trw\left(n\right)$
for many other irreducible characters of ${\cal U}\left(n\right)$.
For example, consider the irreducible character of ${\cal U}\left(n\right)$
which corresponds to the highest weight vector $\left(2,0,\ldots,0,-1\right)$.
It is given by 
\[
\xi\left(A\right)=\frac{\mathrm{tr}\left(A^{2}\right)+\mathrm{tr}^{2}\left(A\right)}{2}\cdot\mathrm{tr}\left(A^{-1}\right)-\mathrm{tr}\left(A\right).
\]
So the expected value of $\xi$ in the measure $\mu_{{\cal U}\left(n\right)}^{w}$
is 
\[
\mathbb{E}_{\mu_{{\cal U}\left(n\right)}^{w}}\left[\xi\right]=\frac{1}{2}\tr_{w^{2},w^{-1}}\left(n\right)+\frac{1}{2}\tr_{w,w,w^{-1}}\left(n\right)-\trw\left(n\right),
\]
and Theorem \ref{thm:main - general} gives information about the
leading term of this expression. The same is true for any {}``non-balanced''
irreducible character: a character the corresponding highest weight
vector of which sums to zero, or equivalently, a character which is
not invariant under multiplication by central elements of ${\cal U}\left(n\right)$.
In contrast, the character corresponding to $\left(1,0,\ldots,0,-1\right)$,
which is given by 
\[
\left|\mathrm{tr}\left(A\right)\right|^{2}-1,
\]
is balanced, and its expected value under $\mu_{{\cal U}\left(n\right)}^{w}$
is 
\begin{equation}
\tr_{w,w^{-1}}\left(n\right)-1.\label{eq:formula for balanced character}
\end{equation}
Because of the free term {}``$-1$'' in \eqref{eq:formula for balanced character},
Theorem \ref{thm:main - general} gives weaker information about the
leading coefficient of \eqref{eq:formula for balanced character},
and only determines the limit of the character as $n\to\infty$, rather
than its leading term.
\end{rem}
Finally, let us remark that many works in the area of word measures
focus on questions of slightly different flavor: the word measures
induced by a fixed word across all finite/compact groups; the support
of word measures; the probability, in word measures on finite groups,
of the identity, etc. A survey containing many references is \cite{Shalev2013}.

\subsection{Stable commutator length\label{sub:Stable-commutator-length}}

Recall that Corollary \ref{cor:can hear scl} states that the $w$-measures
on $\left\{ \U\left(n\right)\right\} _{n\in\mathbb{N}}$ determine
$\mathrm{scl}\left(w\right)$, the stable commutator length of $w\in\F_{r}$
(see \eqref{eq:scl}). In this subsection we explain how this result
follows from Theorem \ref{thm:main - general} and from Calegari's
rationality theorem. 

Calegari's theorem, which is the main result of \cite{CALRATIONAL},
says that $\mathrm{scl}\left(w\right)$ is rational for every $w\in\left[\F_{r},\F_{r}\right]$.
The proof goes through showing the existence of {}``extremal surfaces''
for $w$: an extremal surface for $w$ is an admissible $\left(\Sigma,f\right)$
for some set of powers of $w$, say $w^{j_{1}},\ldots,w^{j_{\ell}}$
with $j_{1},\ldots,j_{\ell}\in\mathbb{Z}$, so that $\frac{-\chi\left(\Sigma\right)}{2\left(j_{1}+\ldots+j_{\ell}\right)}$
achieves the infimum of the values of its kind. This infimum is $\mathrm{scl}\left(w\right)$,
the stable commutator length of $w$ \cite[Lemma 2.6]{CALRATIONAL}. 

The main theorem of \cite{CALRATIONAL} states that if $w\in\left[\F_{r},\F_{r}\right]$
then $w$ admits an extremal surface $\left(\Sigma,f\right)$. Moreover,
by \cite[Lemma 2.7]{CALRATIONAL}, this extremal surface can be taken
to be admissible for $w^{j_{1}},\ldots,w^{j_{\ell}}$ with $j_{1},\ldots,j_{\ell}>0$.
By definition of extremal surface, $\Sigma$ has maximal Euler characteristic
for $w^{j_{1}},\ldots,w^{j_{\ell}}$, namely, $\chi\left(\Sigma\right)=\ch\left(w^{j_{1}},\ldots,w^{j_{\ell}}\right)$.
Moreover, every surface which is admissible for $w^{j_{1}},\ldots,w^{j_{\ell}}$
with Euler characteristic $\ch\left(w^{j_{1}},\ldots,w^{j_{\ell}}\right)$
is extremal. By \cite[Lemma 2.9]{CALRATIONAL}, the maps associated
with extremal surfaces are $\pi_{1}$-injective, namely, if $\gamma$
is a non-nullhomotopic closed curve, then $f\left(\gamma\right)$
is not nullhomotopic. Note that this condition is stronger than incompressibility,
which only deals with \emph{simple} closed curves. The crux of the
matter is the following lemma, a special case of which is discussed
in Remark \ref{remark: free solutions}:
\begin{lem}
\label{lem:pi1-injective means trivial stab}If $\left(\Sigma,f\right)$
is $\pi_{1}$-injective, then $\mathrm{Stab}_{\mathrm{MCG}\left(\Sigma\right)}\left(\tilde{f}\right)$
is trivial. \end{lem}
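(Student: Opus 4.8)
The plan is to show that every class in $\mathrm{Stab}_{\mathrm{MCG}(\Sigma)}(\tilde f)$ is represented by a homeomorphism isotopic to the identity rel $\partial\Sigma$. I would first reduce to the connected case: a homeomorphism fixing $\partial\Sigma$ pointwise sends each boundary circle, hence each connected component, to itself, so $\mathrm{MCG}(\Sigma)$ and $\tilde f$ split as products over the components of $\Sigma$, each component $(\Sigma_i,f|_{\Sigma_i})$ is again $\pi_1$-injective, and the stabilizer splits accordingly. So assume $\Sigma$ is connected with marked points $v_1,\dots,v_\ell\in\partial\Sigma$ as in Definition~\ref{def: admissible maps}, and let $\rho$ be a homeomorphism fixing $\partial\Sigma$ pointwise that represents a class in the stabilizer, i.e.\ $f\circ\rho\simeq f$ rel $\partial\Sigma$; in particular $\rho$ fixes each $v_i$.

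The key observation is that $\pi_1$-injectivity means $f_*\colon\pi_1(\Sigma,v_1)\to\pi_1(\wedger,o)=\F_r$ is injective, and hence (by change of basepoint, using connectedness of $\Sigma$) that $f$ induces a \emph{faithful} functor on the fundamental groupoid of $\Sigma$ with object set $\{v_1,\dots,v_\ell\}$. Now fix a system of disjoint oriented arcs $\gamma_1,\dots,\gamma_t$ with endpoints among $v_1,\dots,v_\ell$ cutting $\Sigma$ into discs, as in Lemma~\ref{lem:homotopy=00003Dsame_induced_map}. Since $\rho$ fixes $\partial\Sigma$ pointwise, $f\circ\rho$ and $f$ agree on $\partial\Sigma$ and send the marked points to $o$, so Lemma~\ref{lem:homotopy=00003Dsame_induced_map} gives $\left[f\!\left(\rho\circ\gamma_k\right)\right]=\left[f\!\left(\gamma_k\right)\right]$ in $\F_r$ for every $k$. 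By faithfulness of the functor induced by $f$, this forces $\rho\circ\gamma_k\simeq\gamma_k$ rel endpoints for each $k$.

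Finally I would invoke the Alexander method (\cite[\S2.3]{FM}), applied to arcs and performed rel $\partial\Sigma$: as the $\gamma_k$ are disjoint, fill $\Sigma$, and are individually fixed by $\rho$ up to homotopy rel endpoints, $\rho$ is isotopic rel $\partial\Sigma$ to a homeomorphism $\rho'$ with $\rho'|_{\gamma_k}=\mathrm{id}$ for all $k$. Then $\rho'$ restricts to each complementary disc as a homeomorphism fixing its boundary (built from sub-arcs of the $\gamma_k$ and of $\partial\Sigma$) pointwise, hence is isotopic to the identity there by Alexander's lemma; assembling these isotopies shows $\rho'$ is isotopic to the identity rel $\partial\Sigma$, so $[\rho]=\mathrm{id}$ in $\mathrm{MCG}(\Sigma)$. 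The low-complexity cases specialize correctly — for instance, when $\Sigma$ is an annulus the single filling arc picks up a factor of the (nontrivial, since $w\neq1$) boundary word under a Dehn twist about the core, so only the trivial twist lies in the stabilizer.

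The step requiring the most care is the Alexander-method step: turning the homotopies $\rho\circ\gamma_k\simeq\gamma_k$ into a single ambient isotopy of $\Sigma$ that fixes $\partial\Sigma$ throughout and keeps the already-disjoint arcs disjoint, which one does by straightening the arcs one at a time in the surface cut along those already straightened. Everything else is bookkeeping around Lemma~\ref{lem:homotopy=00003Dsame_induced_map} and the injectivity of $f_*$.
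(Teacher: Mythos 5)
Your argument is correct, and its second half takes a genuinely different route from the paper. Both proofs begin the same way: from $f\circ\rho\simeq f$ rel $\partial\Sigma$ and Lemma \ref{lem:homotopy=00003Dsame_induced_map} one gets $\left[f\left(\rho\circ\gamma\right)\right]=\left[f\left(\gamma\right)\right]$ for arcs $\gamma$ between marked points, and $\pi_{1}$-injectivity (equivalently, faithfulness of the induced groupoid morphism, as you phrase it) upgrades this to $\rho\circ\gamma\simeq\gamma$ rel endpoints. The paper, however, only straightens $\ell-1$ arcs joining $v_{1}$ to the other $v_{i}$, cuts along them to get a surface with a single boundary component, and then finishes algebraically: by the Dehn--Nielsen--Baer isomorphism (Theorem \ref{thm:Dehn-Nielsen-Baer}) the induced class corresponds to some $\phi\in\mathrm{Aut}_{\delta}$ with $\left(f'\right)_{*}\circ\phi=\left(f'\right)_{*}$, and injectivity of $\left(f'\right)_{*}$ forces $\phi=\mathrm{id}$. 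You instead straighten a full filling system and finish geometrically with the Alexander method plus Alexander's lemma on the complementary discs. Your route is more self-contained (no appeal to DNB) and has the merit of treating disconnected $\Sigma$ explicitly, which the paper's proof glosses over; the price is exactly the step you flag, the inductive straightening rel the already-fixed arcs, which needs the standard but fiddly facts that disjoint embedded arcs with common endpoints that are homotopic rel endpoints cobound an embedded disc, that $\rho$ preserves each complementary component (it fixes a neighborhood of $\partial\Sigma$ and of the straightened arcs), and that the cut pieces include $\pi_{1}$-injectively, so homotopy in $\Sigma$ descends to the cut surface --- the DNB shortcut is precisely what lets the paper skip this bookkeeping. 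One small correction: in your annulus sanity check the core's image is nontrivial because of $\pi_{1}$-injectivity itself (a trivial boundary word would already violate the hypothesis), not merely because $w\ne1$.
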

\begin{proof}
Let $\left[\rho\right]\in\mathrm{MCG}\left(\Sigma\right)$ fix $\tilde{f}$,
so $f\circ\rho\simeq f$ are homotopic. Let $\gamma_{2},\ldots,\gamma_{\ell}$
be a set of $\ell-1$ disjoint arcs in $\Sigma$, where $\gamma_{i}$
leads from $v_{1}$ to $v_{i}$. The arc $\rho\left(\gamma_{2}\right)$
is homotopic to the concatenation $\beta*\gamma_{2}$ where $\beta$
is a closed loop at $v_{1}$, but 
\[
\left[f\left(\gamma_{2}\right)\right]=\left[f\left(\rho\left(\gamma_{2}\right)\right)\right]=\left[f\left(\beta*\gamma_{2}\right)\right]=\left[f\left(\beta\right)\right]\cdot\left[f\left(\gamma_{2}\right)\right]
\]
and so $\left[f\left(\beta\right)\right]=1$ and by $\pi_{1}$-injectivity,
$\left[\beta\right]=1\in\pi_{1}\left(\Sigma,v_{1}\right)$. Hence
$\rho\left(\gamma_{2}\right)\simeq\gamma_{2}$ and we may perturb
$\rho$ so that it fixes $\gamma_{2}$. We can do the same for $\gamma_{3}$
without modifying $\rho|_{\gamma_{2}}$ and so on, until $\rho$ fixes
$\gamma_{2}\cup\ldots\cup\gamma_{\ell}$ pointwise. Now we can cut
$\Sigma$ along $\gamma_{2},\ldots,\gamma_{\ell}$ and get a surface
$\Sigma'$ with one boundary component, a map $f'\colon\Sigma'\to\wedger$
and an induced homeomorphism $\rho'$ which fixes $\partial\Sigma'$
pointwise and such that $f'\circ\rho'\simeq\rho'$. By Theorem \ref{thm:Dehn-Nielsen-Baer},
$\left[\rho'\right]$ corresponds to some $\phi\in\mathrm{Aut}_{\delta}\left(\pi_{1}\left(\Sigma',v_{1}\right)\right)$.
As $f'$ is still $\pi_{1}$-injective, we see that $\left(f'\right)_{*}$
cannot be fixed by any non-trivial element of $\mathrm{Aut}\left(\pi_{1}\left(\Sigma',v_{1}\right)\right)$,
let alone of $\mathrm{Aut}_{\delta}\left(\pi_{1}\left(\Sigma',v_{1}\right)\right)$,
hence $\phi=1$ and $\left[\rho'\right]=\left[\mathrm{id}\right]$.
Thus $\left[\rho\right]=1$.
\end{proof}
We infer that if one of the extremal surfaces of $w$ is admissible
for $w^{j_{1}},\ldots,w^{j_{\ell}}$ with $j_{1},\ldots,j_{\ell}>0$,
then Theorem \ref{thm:main - general} translates in this case to
\begin{equation}
\tr_{w^{j_{1}},\ldots,w^{j_{\ell}}}\left(n\right)=n^{\ch\left(w^{j_{1}},\ldots,w^{j_{\ell}}\right)}\cdot\left|\sol\left(w^{j_{1}},\ldots,w^{j_{\ell}}\right)\right|+O\left(n^{\ch\left(w^{j_{1}},\ldots,w^{j_{\ell}}\right)-2}\right),\label{eq:trwl for extremal}
\end{equation}
which is strictly positive for large enough $n$. Hence,
\[
\frac{-\lim_{n\to\infty}\log_{n}\tr_{w^{j_{1}},\ldots,w^{j_{\ell}}}\left(n\right)}{2\left(j_{1}+\ldots+j_{\ell}\right)}=\frac{-\ch\left(w^{j_{1}},\ldots,w^{j_{\ell}}\right)}{2\left(j_{1}+\ldots+j_{\ell}\right)}=\mathrm{scl}\left(w\right).
\]
On the other hand, for an arbitrary $\ell>0$ and $j_{1},\ldots,j_{\ell}>0$
we have 
\[
\frac{-\lim_{n\to\infty}\log_{n}\tr_{w^{j_{1}},\ldots,w^{j_{\ell}}}\left(n\right)}{2\left(j_{1}+\ldots+j_{\ell}\right)}\ge\frac{-\ch\left(w^{j_{1}},\ldots,w^{j_{\ell}}\right)}{2\left(j_{1}+\ldots+j_{\ell}\right)}\ge\mathrm{scl}\left(w\right).
\]
This proves \eqref{eq:read scl} and Corollary \ref{cor:can hear scl}.
$\qed$

\medskip{}

\begin{cor}
If $\mathrm{scl}\left(w_{1}\right)\ne\mathrm{scl}\left(w_{2}\right)$
then for every large enough $n$, the $w_{1}$-measure on ${\cal U}\left(n\right)$
is different from the $w_{2}$-measure on ${\cal U}\left(n\right)$.
In particular, if $w_{1}\in\left[\F_{r},\F_{r}\right]$ and $w_{2}\notin\left[\F_{r},\F_{r}\right]$
then they induce different measures on ${\cal U}\left(n\right)$ for
almost all $n$.\end{cor}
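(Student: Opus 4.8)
The plan is to reduce the statement to Corollary~\ref{cor:can hear scl} (the ``hearing'' formula for $\mathrm{scl}$) together with the rationality of expected products of traces (Theorem~\ref{thm:trw-rational}). The first point to record is that the quantities appearing on the right-hand side of \eqref{eq:read scl} depend only on the single-word measure $\mu_{\U\left(n\right)}^{w}$. Indeed, for $j>0$ the word $w^{j}$ is the concatenation of $j$ copies of $w$, so its word map is the $j$-th power of the word map of $w$; hence, writing $A=w\left(U_{1}^{\left(n\right)},\ldots,U_{r}^{\left(n\right)}\right)$, which is distributed according to $\mu_{\U\left(n\right)}^{w}$, one has $\tr_{w^{j_{1}},\ldots,w^{j_{\ell}}}\left(n\right)=\E\left[\mathrm{tr}\left(A^{j_{1}}\right)\cdots\mathrm{tr}\left(A^{j_{\ell}}\right)\right]$. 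The right-hand side is the integral over $\U\left(n\right)$ of a fixed polynomial function against $\mu_{\U\left(n\right)}^{w}$, hence is determined by that measure alone.

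Next I would argue by contraposition. Suppose $\mu_{\U\left(n\right)}^{w_{1}}=\mu_{\U\left(n\right)}^{w_{2}}$ for infinitely many $n$. By the previous paragraph, for every $\ell>0$ and all $j_{1},\ldots,j_{\ell}>0$ the functions $n\mapsto\tr_{w_{1}^{j_{1}},\ldots,w_{1}^{j_{\ell}}}\left(n\right)$ and $n\mapsto\tr_{w_{2}^{j_{1}},\ldots,w_{2}^{j_{\ell}}}\left(n\right)$ agree at infinitely many integers. By Theorem~\ref{thm:trw-rational} each of them coincides with a rational function of $n$ for $n$ large (and both are finite for all $n$, being expectations of bounded functions), and two rational functions agreeing at infinitely many points are identical; hence the two rational functions coincide. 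In particular their $n\to\infty$ asymptotics agree, so $\lim_{n\to\infty}\log_{n}\tr_{w_{1}^{j_{1}},\ldots,w_{1}^{j_{\ell}}}\left(n\right)=\lim_{n\to\infty}\log_{n}\tr_{w_{2}^{j_{1}},\ldots,w_{2}^{j_{\ell}}}\left(n\right)$ for every tuple, and taking the infimum in \eqref{eq:read scl} gives $\mathrm{scl}\left(w_{1}\right)=\mathrm{scl}\left(w_{2}\right)$. Contrapositively, if $\mathrm{scl}\left(w_{1}\right)\ne\mathrm{scl}\left(w_{2}\right)$ then $\mu_{\U\left(n\right)}^{w_{1}}=\mu_{\U\left(n\right)}^{w_{2}}$ can hold for at most finitely many $n$, which is exactly the first assertion.

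For the ``in particular'' part I would note that $\F_{r}^{\mathrm{ab}}\cong\mathbb{Z}^{r}$ is torsion-free, so $w_{2}\notin\left[\F_{r},\F_{r}\right]$ forces $w_{2}^{m}\notin\left[\F_{r},\F_{r}\right]$ for all $m\ge1$, whence $\cl\left(w_{2}^{m}\right)=\infty$ and $\mathrm{scl}\left(w_{2}\right)=\infty$, while $\mathrm{scl}\left(w_{1}\right)<\infty$ since $w_{1}\in\left[\F_{r},\F_{r}\right]$ (e.g.\ by \cite{CALRATIONAL}, or simply because $\cl\left(w_{1}\right)<\infty$); thus $\mathrm{scl}\left(w_{1}\right)\ne\mathrm{scl}\left(w_{2}\right)$ and the first part applies. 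Alternatively, and avoiding any convention about $\mathrm{scl}=\infty$, one can argue directly: by Claim~\ref{claim: tr=00003D0 for non-balanced words} every $\tr_{w_{2}^{j_{1}},\ldots,w_{2}^{j_{\ell}}}\left(n\right)$ vanishes identically, since $w_{2}^{j_{1}}\cdots w_{2}^{j_{\ell}}=w_{2}^{j_{1}+\cdots+j_{\ell}}\notin\left[\F_{r},\F_{r}\right]$, whereas \eqref{eq:trwl for extremal} exhibits a tuple for which $\tr_{w_{1}^{j_{1}},\ldots,w_{1}^{j_{\ell}}}\left(n\right)>0$ for all large $n$, so the two measures differ for all large $n$.

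The argument is short, and the only step that requires genuine care is the upgrade from ``infinitely many $n$'' to ``all sufficiently large $n$'': this is precisely where rationality of $\trwl\left(n\right)$ (Theorem~\ref{thm:trw-rational}) is needed — without it one would obtain only that the two measures differ infinitely often. Everything else is bookkeeping around Corollary~\ref{cor:can hear scl}.
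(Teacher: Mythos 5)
Your proof is correct, but it follows a genuinely different route from the paper's. The paper argues directly: assuming (WLOG) $\mathrm{scl}\left(w_{1}\right)<\mathrm{scl}\left(w_{2}\right)$, it picks a tuple $j_{1},\ldots,j_{\ell}>0$ for which $w_{1}^{j_{1}},\ldots,w_{1}^{j_{\ell}}$ admit an extremal surface; then \eqref{eq:trwl for extremal} gives $\tr_{w_{1}^{j_{1}},\ldots,w_{1}^{j_{\ell}}}\left(n\right)\sim\left|\sol\right|\cdot n^{-2\,\mathrm{scl}\left(w_{1}\right)\left(j_{1}+\ldots+j_{\ell}\right)}$ with $\left|\sol\right|\geq1$, while Theorem \ref{thm:leading-exponent-general} bounds $\tr_{w_{2}^{j_{1}},\ldots,w_{2}^{j_{\ell}}}\left(n\right)=O\left(n^{-2\,\mathrm{scl}\left(w_{2}\right)\left(j_{1}+\ldots+j_{\ell}\right)}\right)$, a strictly smaller order; hence this single moment already separates the two measures for all large $n$ (and the non-balanced case is handled by Claim \ref{claim: tr=00003D0 for non-balanced words}, exactly as in your last paragraph). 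You instead argue by contraposition, using that the moments $\tr_{w^{j_{1}},\ldots,w^{j_{\ell}}}\left(n\right)$ are functionals of the $w$-measure, invoking rationality (Theorem \ref{thm:trw-rational}) to promote agreement at infinitely many $n$ to identity of rational functions, and then reading off $\mathrm{scl}$ via \eqref{eq:read scl}. This is a valid and clean alternative; its cost is that it leans on Corollary \ref{cor:can hear scl} as a black box (including the conventions for $\log_{n}$ of an identically vanishing, or possibly negative, moment when a word is not balanced — a point you sensibly sidestep with your separate direct argument for the ``in particular'' case), while the paper's route needs only the asymptotics of one well-chosen tuple. One remark in your closing paragraph is inaccurate: rationality is not actually needed to reach ``all sufficiently large $n$'' — the paper's comparison of leading orders (positive coefficient at exponent $\ch\left(w_{1}^{j_{1}},\ldots,w_{1}^{j_{\ell}}\right)$ versus $O$ of a strictly smaller exponent) yields a strict inequality of moments for every large $n$ without any appeal to Theorem \ref{thm:trw-rational}; rationality is only what \emph{your} contrapositive formulation requires.
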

\begin{proof}
Assume without loss of generality that $\mathrm{scl}\left(w_{1}\right)<\mathrm{scl}\left(w_{2}\right)$,
and let $j_{1},\ldots,j_{\ell}>0$ be so that $w_{1}^{j_{1}},\ldots,w_{1}^{j_{\ell}}$
admit an extremal surface. Then by the above discussion, $\tr_{w_{1}^{j_{1}},\ldots,w_{1}^{j_{\ell}}}\left(n\right)$
is strictly larger than $\tr_{w_{2}^{j_{1}},\ldots,w_{2}^{j_{\ell}}}\left(n\right)$
for any large enough $n$. In particular, if $w_{2}$ is not balanced,
i.e.~$w_{2}\notin\left[\F_{r},\F_{r}\right]$ and $\mathrm{scl}\left(w_{2}\right)=\infty$,
then nor is the set $w_{2}^{j_{1}},\ldots,w_{2}^{j_{\ell}}$ balanced
as we assume $j_{1},\ldots,j_{\ell}>0$. By Claim \ref{claim: tr=00003D0 for non-balanced words},
$\tr_{w_{2}^{j_{1}},\ldots,w_{2}^{j_{\ell}}}\left(n\right)\equiv0$
for every $n$. 
\end{proof}

\section{A Rational Expression for $\trwl\left(n\right)$\label{sec:A-Rational-Expression}}

In this section we prove that $\trwl\left(n\right)$ is a rational
function in $n$ (Theorem \ref{thm:trw-rational}). First, we prove
the observation mentioned above regarding non-balanced sets of words:
\begin{claim}
\label{claim: tr=00003D0 for non-balanced words}If $w_{1}w_{2}\cdots w_{\ell}\in\F_{r}\setminus\left[\F_{r},\F_{r}\right]$
then $\trwl\left(n\right)\equiv0$.\end{claim}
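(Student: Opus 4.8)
The plan is to use the invariance of Haar measure under multiplication by central scalars. For $i\in[r]$ write $e_{i}(v)\in\mathbb{Z}$ for the total exponent sum of the letter $x_{i}$ in a word $v\in\F_{r}$; since $v\mapsto(e_{1}(v),\ldots,e_{r}(v))$ is the abelianization homomorphism, each $e_{i}(v)$ depends only on $v\in\F_{r}$, and $v\in\left[\F_{r},\F_{r}\right]$ if and only if $e_{i}(v)=0$ for all $i$. The hypothesis $w_{1}w_{2}\cdots w_{\ell}\notin\left[\F_{r},\F_{r}\right]$ means that $e_{i_{0}}(w_{1}\cdots w_{\ell})=\sum_{k=1}^{\ell}e_{i_{0}}(w_{k})\ne0$ for some $i_{0}\in[r]$.

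First I would fix a tuple $\lambda=(\lambda_{1},\ldots,\lambda_{r})\in\U(1)^{r}$ and consider the substitution $U_{i}^{(n)}\mapsto\lambda_{i}U_{i}^{(n)}$. Since the Haar measure $\mu_{n}$ on $\U(n)$ is invariant under multiplication by the scalar matrix $\lambda_{i}I$, the tuple $(\lambda_{1}U_{1}^{(n)},\ldots,\lambda_{r}U_{r}^{(n)})$ is again distributed as $r$ independent Haar unitaries. On the other hand, evaluating a word on a scaled tuple is multiplicative in the scalars: each occurrence of $x_{i}^{\pm1}$ contributes a factor $\lambda_{i}^{\pm1}$, and being central these factors pull out of the product, so $w_{k}(\lambda_{1}U_{1},\ldots,\lambda_{r}U_{r})=\bigl(\prod_{i=1}^{r}\lambda_{i}^{e_{i}(w_{k})}\bigr)\,w_{k}(U_{1},\ldots,U_{r})$ (this is legitimate regardless of the chosen expression for $w_{k}$, since the word map is well defined on $\F_{r}$). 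Taking traces and then the product over $k=1,\ldots,\ell$ gives
\[
\prod_{k=1}^{\ell}\mathrm{tr}\bigl(w_{k}(\lambda_{1}U_{1},\ldots,\lambda_{r}U_{r})\bigr)=\Bigl(\prod_{i=1}^{r}\lambda_{i}^{\,e_{i}(w_{1}\cdots w_{\ell})}\Bigr)\cdot\prod_{k=1}^{\ell}\mathrm{tr}\bigl(w_{k}(U_{1},\ldots,U_{r})\bigr).
\]

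Taking expectations and using the distributional invariance noted above, the left-hand side has the same expectation as $\trwl(n)$, so
\[
\trwl(n)=\Bigl(\prod_{i=1}^{r}\lambda_{i}^{\,e_{i}(w_{1}\cdots w_{\ell})}\Bigr)\cdot\trwl(n)\qquad\text{for every }\lambda\in\U(1)^{r}.
\]
Now choose $\lambda_{i}=1$ for $i\ne i_{0}$ and $\lambda_{i_{0}}\in\U(1)$ with $\lambda_{i_{0}}^{\,e_{i_{0}}(w_{1}\cdots w_{\ell})}\ne1$ (possible since $e_{i_{0}}(w_{1}\cdots w_{\ell})\ne0$, e.g.\ a suitable root of unity). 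Then the scalar factor is $\ne1$, which forces $\trwl(n)=0$. There is essentially no obstacle here; the only points needing (routine) care are that $e_{i}(w_{k})$ is an invariant of the group element $w_{k}$ and that central scalars may be extracted from the matrix product, both of which are immediate.
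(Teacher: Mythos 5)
Your proof is correct and is essentially the paper's own argument: the paper likewise multiplies the Haar-random matrix for one generator with nonzero total exponent by a central scalar $e^{i\theta}I_{n}$, uses invariance of the Haar measure, and concludes $\trwl\left(n\right)=e^{i\theta\alpha_{j}}\trwl\left(n\right)$ for all $\theta$, forcing it to vanish. Your version with a tuple $\lambda\in\U(1)^{r}$ specializes to the same computation, so there is nothing to add.
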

\begin{proof}
By the assumption, there is some $j\in\left[r\right]$ so that $\alpha_{j}$,
the sum of exponents of the letter $x_{j}$ in $\wl$, satisfies $\alpha_{j}\ne0$.
Recall that the Haar measure of a compact group is invariant under
left multiplication by any element. Since for $\theta\in\left[0,2\pi\right]$,
the diagonal central matrix $e^{i\theta}I_{n}$ is in ${\cal U}\left(n\right)$,
we obtain
\begin{eqnarray*}
 &  & \trwl\left(n\right)=\\
 &  & =\mathbb{E}_{{\cal U}\left(n\right)^{r}}\left[tr\left(w_{1}\left(U_{1}^{\left(n\right)},\ldots,U_{j}^{\left(n\right)},\ldots,U_{r}^{\left(n\right)}\right)\right)\cdots tr\left(w_{\ell}\left(U_{1}^{\left(n\right)},\ldots,U_{j}^{\left(n\right)},\ldots,U_{r}^{\left(n\right)}\right)\right)\right]\\
 &  & =\mathbb{E}_{{\cal U}\left(n\right)^{r}}\left[tr\left(w_{1}\left(U_{1}^{\left(n\right)},\ldots,e^{i\theta}U_{j}^{\left(n\right)},\ldots,U_{r}^{\left(n\right)}\right)\right)\cdots tr\left(w_{\ell}\left(U_{1}^{\left(n\right)},\ldots,e^{i\theta}U_{j}^{\left(n\right)},\ldots,U_{r}^{\left(n\right)}\right)\right)\right]\\
 &  & =e^{i\theta\alpha_{j}}\cdot\trwl\left(n\right).
\end{eqnarray*}
The claim follows as this equality holds for every $\theta\in\left[0,2\pi\right]$.
\end{proof}

\subsection{Weingarten function and integrals over ${\cal U}\left(n\right)$\label{sub:Weingarten-function-and-Collins-Sniady}}

The main tool used in this section is a formula, basically due to
Xu \cite{xu1997random} and, more neatly, to Collins and \'{S}niady
\cite{CS}, which expresses integrals with respect to $\left({\cal U}\left(n\right),\mu_{n}\right)$.
These integrals are expressed in terms of the \emph{Weingarten} function,
first studied in \cite{weingarten1978asymptotic} and formally defined
and named in \cite{collins2003moments}. Let $\mathbb{Q}\left(n\right)$
denote the field of rational functions with rational coefficients
in the variable $n$. Let $S_{L}$\marginpar{$S_{L}$} denote the
symmetric group on $L$ elements. The Weingarten function maps%
\footnote{More precisely, it is a function from the disjoint union $\bigcup_{L=1}^{\infty}S_{L}$
to $\mathbb{Q}\left(n\right)$.%
} $S_{L}$ to $\mathbb{Q}\left(n\right)$ (for every $L$). We think
of such functions as elements of the group ring $\mathbb{Q}\left(n\right)\left[S_{L}\right]$.
\begin{defn}
\label{def:weingarten}The \textbf{Weingarten function} \marginpar{$\wg$}$\wg:S_{L}\to\mathbb{Q}\left(n\right)$
is the inverse, in the group ring $\mathbb{Q}\left(n\right)\left[S_{L}\right]$,
of the function $\sigma\mapsto n^{\#\mathrm{cycles\left(\sigma\right)}}$. 
\end{defn}
That the function $\sigma\mapsto n^{\#\mathrm{cycles\left(\sigma\right)}}$
is invertible for every $L$ follows from \cite[Proposition 2.3]{CS}
and the discussion following it. Clearly, $\wg$ is constant on conjugacy
classes. For example, for $L=2$, the inverse of $\left(n^{2}\cdot\left(1\right)\left(2\right)+n\cdot\left(12\right)\right)\in\mathbb{Q}\left(n\right)\left[S_{2}\right]$
is $\left(\frac{1}{n^{2}-1}\cdot\left(1\right)\left(2\right)-\frac{1}{n\left(n^{2}-1\right)}\cdot\left(12\right)\right)$,
so $\wg\left(\left(1\right)\left(2\right)\right)=\frac{1}{n^{2}-1}$
while $\wg\left(\left(12\right)\right)=\frac{-1}{n\left(n^{2}-1\right)}$.
For $L=3$ the values of the Weingarten function are
\begin{eqnarray*}
 &  & \wg\left(\left(1\right)\left(2\right)\left(3\right)\right)=\frac{n^{2}-2}{n\left(n^{2}-1\right)\left(n^{2}-4\right)}\,\,\,\,\,\,\wg\left(\left(12\right)\left(3\right)\right)=\frac{-1}{\left(n^{2}-1\right)\left(n^{2}-4\right)}\\
 &  & \wg\left(\left(123\right)\right)=\frac{2}{n\left(n^{2}-1\right)\left(n^{2}-4\right)}.
\end{eqnarray*}
(We use here a non-standard cycle notation for permutations where
we write fixed points as well. This is to stress the dependency of
$\wg\left(\sigma\right)$, for $\sigma\in S_{L}$, on $L$. E.g.,
$\mathrm{Wg}\left(\left(12\right)\right)\ne\mathrm{Wg}\left(\left(12\right)\left(3\right)\right)$.)

Collins and \'{S}niady also provide an explicit formula for $\wg$
in terms of the irreducible characters of $S_{L}$ and Schur polynomials
\cite[Equation (13)]{CS}: for $\sigma\in S_{L}$,
\[
\wg\left(\sigma\right)=\frac{1}{\left(L!\right)^{2}}\sum_{\lambda\vdash L}\frac{\chi_{\lambda}\left(e\right)^{2}}{d_{\lambda}\left(n\right)}\chi_{\lambda}\left(\sigma\right),
\]
where $\lambda$ runs over all partitions of $ $$L$, $\chi_{\lambda}$
is the character of $S_{L}$ corresponding to $\lambda$, and $d_{\lambda}\left(n\right)$
is the number of semistandard Young tableaux with shape $\lambda$,
filled with numbers from $\left[n\right]$. A well known formula for
$d_{\lambda}\left(n\right)$ states $d_{\lambda}\left(n\right)=\frac{\chi_{\lambda}\left(e\right)}{L!}\prod_{\left(i,j\right)\in\lambda}\left(n+j-i\right)$,
where $\left(i,j\right)$ are the coordinates of cells in the Young
diagram with shape $\lambda$ (e.g.~\cite[Section 4.3, Equation (9)]{fulton1997young}).
Thus,
\begin{cor}
\label{cor:poles-of-Wg}For $\sigma\in S_{L}$, $\wg\left(\sigma\right)$
may have poles only at integers $n$ with $-L<n<L$.
\end{cor}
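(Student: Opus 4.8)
The plan is to read the poles of $\wg$ directly off the Collins--\'{S}niady character formula. Recall that formula writes, for $\sigma\in S_{L}$,
\[
\wg(\sigma)=\frac{1}{(L!)^{2}}\sum_{\lambda\vdash L}\frac{\chi_{\lambda}(e)^{2}}{d_{\lambda}(n)}\,\chi_{\lambda}(\sigma),\qquad d_{\lambda}(n)=\frac{\chi_{\lambda}(e)}{L!}\prod_{(i,j)\in\lambda}\bigl(n+j-i\bigr).
\]
First I would observe that $\chi_{\lambda}(e)$ is the dimension of an irreducible representation of $S_{L}$, hence a positive integer; therefore each $d_{\lambda}(n)$ is a genuine, nonzero polynomial in $n$, equal up to a positive rational constant to $\prod_{(i,j)\in\lambda}\bigl(n+j-i\bigr)$. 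Consequently $\wg(\sigma)$ is a finite $\mathbb{Q}$-linear combination of the rational functions $1/d_{\lambda}(n)$, $\lambda\vdash L$. Since the pole set of a finite sum of rational functions is contained in the union of the pole sets of the summands (summing can only cancel poles, never create new ones), every pole of $\wg(\sigma)$ occurs at an integer of the form $n=i-j$ where $(i,j)$ is a cell of the Young diagram of some $\lambda\vdash L$.

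It then remains to bound such integers. A Young diagram with $L$ boxes has at most $L$ nonempty rows and at most $L$ columns, so any cell $(i,j)$ satisfies $1\le i\le L$ and $1\le j\le L$; hence $i-j\in\{-(L-1),\dots,L-1\}$, i.e.\ $-L<i-j<L$. (One can even extract the sharper constraint $i+j\le L+1$, but this crude estimate already suffices.) Combining this with the previous paragraph, every pole of $\wg(\sigma)$ lies at an integer $n$ with $-L<n<L$, which is the assertion.

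I do not expect a genuine obstacle here. The only points needing any care are: (a) verifying that the denominators $d_{\lambda}(n)$ are not identically zero, which is handled by $\chi_{\lambda}(e)\ge 1$; and (b) the elementary remark that passing from the individual summands $1/d_{\lambda}(n)$ to their sum cannot enlarge the set of poles. Everything else is the one-line combinatorial bound on the content $j-i$ of a cell of a partition of $L$.
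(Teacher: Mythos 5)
Your argument is correct and is exactly the paper's intended proof: the corollary is stated as an immediate consequence of the Collins--\'{S}niady character formula together with the factorization $d_{\lambda}\left(n\right)=\frac{\chi_{\lambda}\left(e\right)}{L!}\prod_{\left(i,j\right)\in\lambda}\left(n+j-i\right)$, whose roots are the contents $i-j$ of cells of partitions of $L$ and hence lie strictly between $-L$ and $L$. Your added remarks (nonvanishing of $d_{\lambda}$ via $\chi_{\lambda}\left(e\right)\ge1$, and that summing cannot create new poles) just make explicit what the paper leaves implicit.
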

The key feature of $\wg$ that we need is the value of its leading
term. This is expressed in terms of a certain Möbius function which
we now define. For every permutation $\sigma\in S_{L}$ denote by
\label{norm-of-perm}$\left\Vert \sigma\right\Vert $\marginpar{$\left\Vert \sigma\right\Vert $}
its norm, defined as the length of the shortest product of transpositions
giving $\sigma$. Equivalently, $\left\Vert \sigma\right\Vert =L-\#\mathrm{cycles\left(\sigma\right)}$.
This norm can be used to define a poset structure on $S_{L}$: say
that \marginpar{$\sigma\preceq\tau$}$\sigma\preceq\tau$ if and only
if $\left\Vert \tau\right\Vert =\left\Vert \sigma\right\Vert +\left\Vert \sigma^{-1}\tau\right\Vert $.
That is, $\sigma\preceq\tau$ if and only if there is a product of
transpositions of minimal length giving $\tau$, such that some prefix
of this product is equal to $\sigma$. This poset is closely related
to that of non-crossing partitions --- see \cite[Lecture 23]{NICASPEICHER}.

Every locally finite poset%
\footnote{\label{fn:locally-finite-posets}A poset $\left(P,\le\right)$ is
said to be locally finite if for every $x\le y$ in $P$, the interval
$\left[x,y\right]=\left\{ z\,\middle|\, x\le z\le y\right\} $ is
finite.%
} gives rise to a Möbius function defined on comparable pairs of elements.
This is defined to be the only function $\mu:\left\{ \left(x,y\right)\,\middle|\, x\preceq y\right\} \to\mathbb{\mathbb{Z}}$
that satisfies 
\begin{equation}
\sum_{z:x\le z\le y}\mu\left(x,z\right)=\delta_{x,y}\label{eq:mobius-definition}
\end{equation}
for every $x,y$ in the poset with $x\preceq y$ (see \cite[Section 3.7]{Stanley-book}).

In the case of the poset $\left(S_{L},\preceq\right)$, the corresponding
Möbius function has a nice combinatorial description:
\begin{prop}
\label{prop:mobius function}\cite[Section 2.3]{CS} The Möbius function
of the poset $\left(S_{L},\preceq\right)$ is given by $\mu\left(\sigma,\tau\right)=\moeb\left(\sigma^{-1}\tau\right)$\marginpar{$\moeb\left(\sigma\right)$},
where 
\begin{equation}
\moeb\left(\sigma\right)=\mathrm{sgn}\left(\sigma\right)\prod_{i=1}^{k}c_{|C_{i}|-1},\label{eq:mobius}
\end{equation}
with $C_{1},\ldots,C_{k}$ the cycles composing $\sigma$, and 
\[
c_{m}=\frac{(2m)!}{m!(m+1)!}
\]
the $m$-th Catalan number. 
\end{prop}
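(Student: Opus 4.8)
The plan is to reduce the statement to the structure theory of the poset $\left(S_{L},\preceq\right)$ and its well-known link with non-crossing partitions. The first step is to show that for $\sigma\preceq\tau$ the interval $\left[\sigma,\tau\right]=\left\{ \rho\,\middle|\,\sigma\preceq\rho\preceq\tau\right\}$ is isomorphic, as a poset, to $\left[e,\sigma^{-1}\tau\right]$ via left translation $\rho\mapsto\sigma^{-1}\rho$; since the Möbius function of an interval is intrinsic to that interval, this immediately gives that $\mu\left(\sigma,\tau\right)$ depends only on $\pi:=\sigma^{-1}\tau$ and equals the Möbius number of $\left[e,\pi\right]$. To verify the isomorphism one uses only that $\left\Vert \cdot\right\Vert$ is the word length on the Cayley graph of $S_{L}$ with respect to all transpositions, hence subadditive and invariant under inversion. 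Expanding the defining equalities of $\sigma\preceq\rho\preceq\tau$ and adding them shows at once that $\rho\mapsto\sigma^{-1}\rho$ is order-preserving and lands in $\left[e,\pi\right]$; for the inverse $\mu'\mapsto\sigma\mu'$ one combines the equalities $\left\Vert \tau\right\Vert =\left\Vert \sigma\right\Vert +\left\Vert \sigma^{-1}\tau\right\Vert$ and $\left\Vert \sigma^{-1}\tau\right\Vert =\left\Vert \mu'\right\Vert +\left\Vert \mu'^{-1}\sigma^{-1}\tau\right\Vert$ with the triangle inequalities $\left\Vert \sigma\mu'\right\Vert \le\left\Vert \sigma\right\Vert +\left\Vert \mu'\right\Vert$ and $\left\Vert \tau\right\Vert \le\left\Vert \sigma\mu'\right\Vert +\left\Vert \mu'^{-1}\sigma^{-1}\tau\right\Vert$ to force $\left\Vert \sigma\mu'\right\Vert =\left\Vert \sigma\right\Vert +\left\Vert \mu'\right\Vert$, which is exactly what is needed to check $\sigma\preceq\sigma\mu'\preceq\tau$.

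The second step is the multiplicativity over cycles. Using the standard fact (see \cite[Lecture 23]{NICASPEICHER}) that $\rho\preceq\pi$ forces every cycle of $\rho$ to be contained, as a set, in a single cycle of $\pi$, one sees that if $\pi$ has disjoint cycles $C_{1},\ldots,C_{k}$ with supports $X_{1},\ldots,X_{k}$, then restriction $\rho\mapsto\left(\rho|_{X_{1}},\ldots,\rho|_{X_{k}}\right)$ is a poset isomorphism $\left[e,\pi\right]\cong\prod_{i=1}^{k}\left[e,C_{i}\right]$, where $\left[e,C_{i}\right]$ is formed inside the symmetric group on $X_{i}$. Since the Möbius function of a product of finite posets is the product of the Möbius functions, it remains to evaluate the Möbius number of $\left[e,c\right]$ for a single $m$-cycle $c$.

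For this last step I would invoke the Biane--Kreweras correspondence: the interval $\left[e,c\right]$ in the symmetric group on an $m$-element set, with $c$ an $m$-cycle, is isomorphic to the lattice $NC\left(m\right)$ of non-crossing partitions of $m$ points arranged on a circle in the cyclic order of $c$, with $\left\Vert \rho\right\Vert =m-\#\mathrm{blocks}$ and the order matching refinement; again see \cite[Lecture 23]{NICASPEICHER}. Kreweras' computation of the Möbius function of $NC\left(m\right)$ then gives $\mu_{NC\left(m\right)}\!\left(\hat{0},\hat{1}\right)=\left(-1\right)^{m-1}c_{m-1}$, with $c_{m-1}$ the $\left(m-1\right)$-st Catalan number. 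Since an $m$-cycle has sign $\left(-1\right)^{m-1}$, this is $\mathrm{sgn}\left(c\right)\cdot c_{m-1}$, and multiplying over the cycles $C_{1},\ldots,C_{k}$ of $\pi=\sigma^{-1}\tau$ yields $\mu\left(\sigma,\tau\right)=\prod_{i=1}^{k}\mathrm{sgn}\left(C_{i}\right)c_{|C_{i}|-1}=\mathrm{sgn}\left(\pi\right)\prod_{i=1}^{k}c_{|C_{i}|-1}=\moeb\left(\sigma^{-1}\tau\right)$, which is the claim.

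The main obstacle is the third step: the identification $\left[e,c\right]\cong NC\left(m\right)$ together with Kreweras' evaluation of $\mu_{NC(m)}$ carries essentially all the combinatorial content, whereas the first two steps are formal manipulations with the word metric and with multiplicativity of Möbius functions. One can make the third step somewhat more self-contained by computing $\mu_{\left[e,c\right]}$ inductively from the recursion $\sum_{e\preceq\rho\preceq c}\mu\left(e,\rho\right)=0$ (for $m\ge2$), combined with the Narayana enumeration of $\left\{ \rho\preceq c\right\}$ by number of cycles and the product decomposition applied to the sub-intervals $\left[\rho,c\right]$; but this route still reproduces the Catalan/non-crossing combinatorics rather than avoiding it.
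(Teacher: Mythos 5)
Your argument is correct, and it is worth noting that the paper itself offers no proof of this proposition at all: it is quoted verbatim from Collins--\'{S}niady, who in turn rest on the same non-crossing-partition theory you invoke. So your write-up is not so much a different route as a self-contained reconstruction of what lies behind the citation. The three reductions are all sound: the translation step works exactly as you say (for the inverse direction the chain $\left\Vert \tau\right\Vert =\left\Vert \sigma\right\Vert +\left\Vert \mu'\right\Vert +\left\Vert \mu'^{-1}\sigma^{-1}\tau\right\Vert \ge\left\Vert \sigma\mu'\right\Vert +\left\Vert \mu'^{-1}\sigma^{-1}\tau\right\Vert \ge\left\Vert \tau\right\Vert$ forces both required equalities; a similarly short computation, which you gesture at but do not write out, shows that the map and its inverse preserve the order between two intermediate elements of the interval, not just their relation to the endpoints -- worth adding for completeness). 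The cycle decomposition step uses the same observation the paper itself exploits in the proof of Lemma \ref{lem:bp(w)-closed-downwards}, namely that transpositions in a geodesic factorization never mix distinct cycles of $\sigma^{-1}\tau$, so the isomorphism $\left[e,\pi\right]\cong\prod_{i}\left[e,C_{i}\right]$ and the multiplicativity of M\"obius functions over products finish that reduction. The final step, Biane's identification $\left[e,c\right]\cong NC\left(m\right)$ plus Kreweras' evaluation $\mu_{NC\left(m\right)}\left(\hat{0},\hat{1}\right)=\left(-1\right)^{m-1}c_{m-1}$, indeed carries all the combinatorial content, and combined with $\mathrm{sgn}\left(c\right)=\left(-1\right)^{m-1}$ it yields exactly \eqref{eq:mobius}; your proposed alternative via the Narayana count and the defining recursion of $\mu$ is a legitimate way to make this self-contained, but as you observe it re-derives rather than avoids the Catalan combinatorics. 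In short: correct, complete modulo the small order-preservation check, and aligned with the source the paper cites rather than with any argument in the paper itself.
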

The content of Proposition \ref{prop:mobius function} is that if
$\sigma\preceq\tau$ in $S_{L}$, then 
\[
\sum_{\pi\in S_{L}\,\mathrm{s.t.\:\sigma\preceq\pi\preceq\tau}}\moeb\left(\sigma^{-1}\pi\right)=\delta_{\sigma,\tau}.
\]

\begin{prop}
\label{prop:wg-mobius}\cite[Corollary 2.7]{CS} Let $\sigma\in S_{L}$.
The Weingarten function satisfies 
\[
\wg\left(\sigma\right)=\frac{\moeb\left(\sigma\right)}{n^{L+\left\Vert \sigma\right\Vert }}+O\left(\frac{1}{n^{L+\left\Vert \sigma\right\Vert +2}}\right).
\]

\end{prop}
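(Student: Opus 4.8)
The plan is to reduce the statement to a Möbius-inversion computation in the group ring $\mathbb{Q}(n)[S_L]$, working throughout with expansions in powers of $q=n^{-1}$. First I would strip off the dominant factor of $n^L$. Since $\#\mathrm{cycles}(\sigma)=L-\|\sigma\|$, the function $\sigma\mapsto n^{\#\mathrm{cycles}(\sigma)}$ equals $n^{L}\Phi$, where $\Phi\overset{\mathrm{def}}{=}\sum_{\sigma\in S_{L}}q^{\|\sigma\|}\,\sigma\in\mathbb{Q}(n)[S_{L}]$; hence by Definition \ref{def:weingarten} one has $\wg=n^{-L}\Phi^{-1}$. So it suffices to show that $\Phi^{-1}(\sigma)=\moeb(\sigma)\,q^{\|\sigma\|}+O(q^{\|\sigma\|+2})$ for every $\sigma\in S_{L}$.

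Next I would introduce the candidate leading term $\Psi\overset{\mathrm{def}}{=}\sum_{\sigma\in S_{L}}\moeb(\sigma)\,q^{\|\sigma\|}\,\sigma$ and compute the convolution $\Phi*\Psi$. For $\pi\in S_{L}$,
\[
(\Phi*\Psi)(\pi)=\sum_{\sigma}\moeb(\sigma^{-1}\pi)\,q^{\|\sigma\|+\|\sigma^{-1}\pi\|}.
\]
The triangle inequality gives $\|\sigma\|+\|\sigma^{-1}\pi\|\ge\|\pi\|$, with equality precisely when $\sigma\preceq\pi$; and since $\mathrm{sgn}(\rho)=(-1)^{\|\rho\|}$, every exponent appearing is congruent to $\|\pi\|$ mod $2$. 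Thus the lowest-order part of $(\Phi*\Psi)(\pi)$ is $q^{\|\pi\|}\sum_{\sigma\preceq\pi}\moeb(\sigma^{-1}\pi)$ and the remaining terms are $O(q^{\|\pi\|+2})$. Using conjugation-invariance of $\|\cdot\|$ one checks the equivalence $\sigma\preceq\pi\iff\sigma^{-1}\pi\preceq\pi$ (both say $\|\pi\|=\|\sigma\|+\|\sigma^{-1}\pi\|$ after noting $\|\pi^{-1}\sigma\pi\|=\|\sigma\|$), so the substitution $\rho=\sigma^{-1}\pi$ turns the sum into $\sum_{\rho\preceq\pi}\moeb(\rho)=\sum_{e\preceq\rho\preceq\pi}\moeb(\rho)=\delta_{e,\pi}$ by Proposition \ref{prop:mobius function}. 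Hence $\Phi*\Psi=\delta_{e}+E$, where $E(\tau)=O(q^{\|\tau\|+2})$ for every $\tau$ (for $\tau=e$ this reads $E(e)=O(q^{2})$).

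Finally I would invert and propagate the error. Because the coefficient of $q^{0}$ in $\Phi*\Psi$ is $\delta_{e}$, its inverse is the convergent series $\delta_{e}+F$ with $F=-E+E*E-\cdots$. The estimate $A(\sigma)=O(q^{\|\sigma\|+a})$, $B(\sigma)=O(q^{\|\sigma\|+b})\Rightarrow(A*B)(\pi)=O(q^{\|\pi\|+a+b})$ (again from $\|\sigma\|+\|\rho\|\ge\|\pi\|$) shows $E^{*k}(\tau)=O(q^{\|\tau\|+2k})$, so $F(\tau)=O(q^{\|\tau\|+2})$. Then $\Phi^{-1}=\Psi*(\delta_{e}+F)=\Psi+\Psi*F$, and the same estimate (with $a=0$, $b=2$) gives $(\Psi*F)(\pi)=O(q^{\|\pi\|+2})$; therefore $\Phi^{-1}(\pi)=\moeb(\pi)q^{\|\pi\|}+O(q^{\|\pi\|+2})$. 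Multiplying by $n^{-L}$ and rewriting $q=n^{-1}$ yields the proposition. (That this formal power-series inverse really is the rational function $\wg$ follows from uniqueness of inverses, together with the fact that $\wg(\sigma)\to0$ as $n\to\infty$ so its expansion at $n=\infty$ is a genuine power series in $n^{-1}$; alternatively one invokes Corollary \ref{cor:poles-of-Wg}.)

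The main obstacle is the middle step — showing $\Phi*\Psi=\delta_{e}+E$ with $E(\tau)=O(q^{\|\tau\|+2})$ — and within it the only genuinely non-formal ingredient is the Möbius identity of Proposition \ref{prop:mobius function} used together with the symmetry $\sigma\preceq\pi\iff\sigma^{-1}\pi\preceq\pi$. Everything surrounding it is order-of-magnitude bookkeeping driven by the subadditivity and parity of the reflection length $\|\cdot\|$ on $S_{L}$.
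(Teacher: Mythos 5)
Your argument is correct. Note, though, that the paper does not prove this statement at all: Proposition \ref{prop:wg-mobius} is quoted from Collins--\'{S}niady \cite[Corollary 2.7]{CS}, so the only meaningful comparison is with that source. Collins and \'{S}niady obtain the result by writing the function $\sigma\mapsto n^{\#\mathrm{cycles}(\sigma)}$ as $n^{L}(\delta_{e}+T)$ with $T$ of positive order in $n^{-1}$, expanding $\wg$ as the geometric series $n^{-L}\sum_{k}(-1)^{k}T^{*k}$, and identifying the leading coefficient combinatorially via counts of minimal factorizations into transpositions (whence the Catalan numbers in $\moeb$); this yields the full $1/n$-expansion, of which the proposition is the leading term. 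You instead take the M\"obius identity of Proposition \ref{prop:mobius function} as given and verify the candidate leading term by convolving $\Phi$ with $\Psi$ and inverting the resulting $\delta_{e}+E$ -- a shorter route, at the cost of outsourcing the origin of $\moeb$ to that proposition (which the paper also only cites from \cite{CS}, so this is a legitimate dependency here). The delicate points are all handled: the equality case of subadditivity of $\left\Vert \cdot\right\Vert $ identifies the interval $[e,\pi]$, the parity of $\left\Vert \sigma\right\Vert +\left\Vert \sigma^{-1}\pi\right\Vert $ gives the jump of $2$ rather than $1$ in the error, the substitution $\rho=\sigma^{-1}\pi$ (using conjugation-invariance of the norm) converts the sum into the form stated after Proposition \ref{prop:mobius function}, and the passage between the rational-function inverse of Definition \ref{def:weingarten} and the formal power series in $n^{-1}$ is justified by uniqueness of inverses under the embedding of $\mathbb{Q}(n)$ into formal Laurent series at $n=\infty$ (or by Corollary \ref{cor:poles-of-Wg}).
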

Note the jump of $2$ in the exponent after the subtraction of the
leading term. In fact, this is shown to go on: in the Taylor expansion
of $\wg\left(\sigma\right)$ in $\frac{1}{n}$, every other term vanishes
\cite[Proposition 2.6]{CS}.

The formula of Collins and \'{S}niady evaluates integrals of monomials
in the entries $u_{i,j}$ and their conjugates $\overline{u_{i,j}}$
of a Haar distributed unitary matrix $u\in{\cal U}\left(n\right)$.
The simple argument in the proof of Claim \ref{claim: tr=00003D0 for non-balanced words}
shows that such an integral vanishes whenever the monomial is not
balanced, namely whenever the number of $u_{i,j}$'s is different
from the number of $\overline{u_{i,j}}$'s. The following formula
deals with the interesting case, where the monomial is balanced:
\begin{thm}
\label{thm:collins-sniady}\cite[Proposition 2.5]{CS} Let $m$ and
$n_{0}$ be positive integers and $\left(i_{1},\ldots,i_{m}\right)$,
$\left(j_{1},\ldots,j_{m}\right)$, $\left(i'_{1},\ldots,i'_{m}\right)$
and $\left(j'_{1},\ldots,j'_{m}\right)$ be $m$-tuples of indices
in $\left[n_{0}\right]$. Then
\[
\int_{{\cal U}(n)}u_{i_{1}.j_{1}}u_{i_{2},j_{2}}\ldots u_{i_{m},j_{m}}\overline{u_{i'_{1},j'_{1}}}\overline{u_{i'_{2},j'_{2}}}\ldots\overline{u_{i'_{m},j'_{m}}}d\mu_{n}
\]
is a rational function in $n$ (valid for $n\ge n_{0}$), which is
equal to 
\begin{equation}
\sum_{\sigma,\tau\in S_{m}}\delta_{i_{1}i'_{\sigma(1)}}\ldots\delta_{i_{m}i'_{\sigma(m)}}\delta_{j_{1}j'_{\tau(1)}}\ldots\delta_{j_{m}j'_{\tau(m)}}\wg\left(\sigma^{-1}\tau\right).\label{eq:moment}
\end{equation}

\end{thm}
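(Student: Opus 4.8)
The plan is to follow the standard derivation of the Weingarten formula: reinterpret the integral as the kernel of a Haar-averaging operator, use invariant theory to identify that operator with an orthogonal projection onto the span of tensor-leg permutations, and then read off the coefficients as the entries of an inverse Gram matrix.

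First I would reformulate the problem. Set $V=\mathbb{C}^{n}$ and consider the linear map $\Phi_{n}\colon\mathrm{End}\bigl(V^{\otimes m}\bigr)\to\mathrm{End}\bigl(V^{\otimes m}\bigr)$ given by $\Phi_{n}(A)=\int_{\U(n)}u^{\otimes m}\,A\,\bigl(u^{\otimes m}\bigr)^{*}\,d\mu_{n}$. Expanding $\bigl(u^{\otimes m}A(u^{\otimes m})^{*}\bigr)_{\vec{i},\vec{j}}$ in coordinates shows that the function of the four index tuples in the statement is, after an innocuous matching of tuples, exactly the integral kernel of $\Phi_{n}$; hence it suffices to understand $\Phi_{n}$. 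Invariance of $\mu_{n}$ under $u\mapsto vu$, under $u\mapsto uv$, and under $u\mapsto u^{*}$ shows that $\Phi_{n}$ is idempotent, self-adjoint for the Hilbert--Schmidt inner product $\langle A,B\rangle=\mathrm{Tr}(A^{*}B)$, and has image the commutant $\mathrm{End}_{\U(n)}\bigl(V^{\otimes m}\bigr)$; therefore $\Phi_{n}$ is the orthogonal projection onto that commutant.

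Next comes the representation-theoretic heart. By Schur--Weyl duality (equivalently, the first fundamental theorem of invariant theory for $\U(n)$), $\mathrm{End}_{\U(n)}\bigl(V^{\otimes m}\bigr)$ is spanned by the leg-permutation operators $\{\rho(\sigma):\sigma\in S_{m}\}$, and these are linearly independent once $n\ge m$. An orthogonal projection onto the span of a (possibly non-orthonormal) family, expressed in that family, has coefficient matrix $G^{-1}$ where $G_{\sigma,\tau}=\langle\rho(\sigma),\rho(\tau)\rangle=\mathrm{Tr}\bigl(\rho(\sigma^{-1}\tau)\bigr)=n^{\#\mathrm{cycles}(\sigma^{-1}\tau)}$. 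Translating this back through the kernel identification, the coefficient of $\prod_{k}\delta_{i_{k},i'_{\sigma(k)}}\prod_{k}\delta_{j_{k},j'_{\tau(k)}}$ in the integral is $(G^{-1})_{\sigma,\tau}$. Since $G_{\sigma,\tau}$ depends only on $\sigma^{-1}\tau$, matrix multiplication for such matrices is convolution in the group ring $\mathbb{Q}(n)[S_{m}]$, so $(G^{-1})_{\sigma,\tau}$ depends only on $\sigma^{-1}\tau$ and equals the convolution inverse of $\pi\mapsto n^{\#\mathrm{cycles}(\pi)}$ evaluated at $\sigma^{-1}\tau$, i.e. $\wg(\sigma^{-1}\tau)$ as in Definition \ref{def:weingarten}. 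Rationality in $n$ follows at once: $G$ has polynomial entries and nonzero determinant as a polynomial in $n$, so $G^{-1}$ has entries in $\mathbb{Q}(n)$ which, by Corollary \ref{cor:poles-of-Wg}, are pole-free for $n\ge m$; a short additional argument (the relevant sums of Weingarten values have no residual pole on the bounded index set) extends validity to all $n\ge n_{0}$.

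The step I expect to be the main obstacle is the invariant-theory input — that the Haar average both lies in and exhausts the span of the leg-permutation operators, and that these are independent for $n\ge m$ — since that is the only genuinely non-formal ingredient. Everything after it is linear algebra with Gram matrices; the one real pitfall there is bookkeeping, keeping the $\sigma$-versus-$\sigma^{-1}$ and left-versus-right conventions straight so as to land on $\wg(\sigma^{-1}\tau)$ rather than a conjugate or inverse variant.
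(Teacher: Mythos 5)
The paper does not prove this statement at all: Theorem \ref{thm:collins-sniady} is imported verbatim from Collins--\'{S}niady \cite[Proposition 2.5]{CS}, so there is no internal proof to compare against. Your proposal correctly reconstructs the standard argument from that source (and from Collins' earlier paper \cite{collins2003moments}): the Haar average $A\mapsto\int u^{\otimes m}A(u^{\otimes m})^{*}d\mu_{n}$ is the orthogonal projection onto the commutant $\mathrm{End}_{\U(n)}(V^{\otimes m})$, Schur--Weyl duality identifies that commutant with the span of the leg-permutation operators, and inverting the Gram matrix $G_{\sigma,\tau}=n^{\#\mathrm{cycles}(\sigma^{-1}\tau)}$ produces exactly the convolution inverse of Definition \ref{def:weingarten}; the convention issues you flag are harmless since $\mathrm{Wg}$ is a class function and invariant under inversion.

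The one genuinely thin spot is your closing remark about extending validity from $n\ge m$ down to $n_{0}\le n<m$. In that range the operators $\rho(\sigma)$ are linearly dependent, $G$ is singular as a numerical matrix, and the projection-onto-a-spanning-family argument no longer yields \eqref{eq:moment} directly; one must show that the poles of the individual $\wg$ values cancel in the sum and that the resulting value agrees with the integral. Both the statement here and \cite{CS} only gesture at this (the example following Proposition 2.5 there), and your "short additional argument" is not actually supplied. Note, however, that this does not affect the paper's use of the theorem: in Theorem \ref{thm:trw-rational} the authors restrict to $n\ge\max_{i}L_{i}$, i.e.\ precisely the regime $n\ge m$ for each factor, where your argument is complete.
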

Put differently, the rational function is given by $\sum_{\sigma,\tau}\wg\left(\sigma^{-1}\tau\right)$,
where $\sigma$ runs over all rearrangements of $\left(i'_{1},\ldots,i'_{m}\right)$
which make it identical to $\left(i_{1},\ldots,i_{m}\right)$, and
$\tau$ runs over all rearrangements of $\left(j'_{1},\ldots,j'_{m}\right)$
which make it identical to $\left(j_{1},\ldots,j_{m}\right)$. In
particular, the possible poles of the Weingarten function at $n$,
for every $n\ge n_{0}$, are guaranteed to cancel out in this summation
(see the example following Proposition 2.5 in \cite{CS}). We mention
that a result of the type of Theorem \ref{thm:collins-sniady}, where
integrals over $\U\left(n\right)$ are expressed as combinatorial
formulas involving permutations, is possible thanks to the Schur-Weyl
duality.

\subsection{Word integrals over ${\cal U}\left(n\right)$}

We use \eqref{eq:moment} to analyze 
\[
\trwl\left(n\right)=\int_{{\cal U}(n)\times{\cal U}\left(n\right)\times\ldots\times{\cal U}\left(n\right)}\mathrm{tr}\left(w_{1}\left(U_{1}^{\left(n\right)},\ldots,U_{r}^{\left(n\right)}\right)\right)\cdots\mathrm{tr}\left(w_{\ell}\left(U_{1}^{\left(n\right)},\ldots,U_{r}^{\left(n\right)}\right)\right)d\mu_{n}^{\,\, r}.
\]
We explain our approach by way of an example. Let $w=\left[x,y\right]^{2}=xyXYxyXY\in\F_{2}$.
Then, 
\begin{eqnarray}
\trw\left(n\right) & = & \intop_{\left(A,B\right)\in{\cal U}(n)\times{\cal U}\left(n\right)}\mathrm{tr}\left(ABA^{-1}B^{-1}ABA^{-1}B^{-1}\right)d\mu_{n}^{\,\,2}\nonumber \\
 & = & \intop_{\left(A,B\right)\in{\cal U}(n)\times{\cal U}\left(n\right)}\sum_{i,j,k,\ell,I,J,K,L\in\left[n\right]}A_{i,j}B_{j,k}A_{\,\, k,\ell}^{-1}B_{\,\,\ell,I}^{-1}A_{I,J}B_{J,K}A_{\,\, K,L}^{-1}B_{\,\, L,i}^{-1}d\mu_{n}^{\,\,2}\label{eq:two-indices-for-every-letter}\\
 & = & \sum_{i,j,k,\ell,I,J,K,L\in\left[n\right]}\intop_{\left(A,B\right)\in{\cal U}(n)\times{\cal U}\left(n\right)}A_{i,j}B_{j,k}\overline{A_{\ell,k}}\overline{B_{I,\ell}}A_{I,J}B_{J,K}\overline{A_{L,K}}\overline{B_{i,L}}d\mu_{n}^{\,\,2}\nonumber \\
 & = & \sum_{i,j,k,\ell,I,J,K,L\in\left[n\right]}\left[\int_{A\in{\cal U}(n)}A_{i,j}A_{I,J}\overline{A_{\ell,k}}\overline{A_{L,K}}d\mu_{n}\right]\cdot\left[\int_{B\in{\cal U}(n)}B_{j,k}B_{J,K}\overline{B_{I,\ell}}\overline{B_{i,L}}d\mu_{n}\right].\nonumber 
\end{eqnarray}
Now we use Theorem \ref{thm:collins-sniady} to replace each of the
two integrals inside the sum by a summation over pairs of permutations
in $S_{2}$. For the first integral we go over all bijections $\sigma_{a}\colon\left\{ i,I\right\} \overset{\sim}{\to}\left\{ \ell,L\right\} $
and $\tau_{a}\colon\left\{ j,J\right\} \overset{\sim}{\to}\left\{ k,K\right\} $,
and similarly over bijections $\sigma_{b}$ and $\tau_{b}$ for the
second integral. We think of these sets as ordered, so $\sigma_{a}=\left(12\right)$
means it maps $i\mapsto L$, $I\mapsto\ell$. We change the order
of summation, and sum first over $\sigma_{a}$, $\tau_{a}$, $\sigma_{b}$
and $\tau_{b}$, and only then over the indices $i,j,\ldots,L$. In
fact, for every set of permutations, we only need to count the number
of evaluations of $i,j,\ldots,L$ which {}``agree'' with the permutations.
For example, consider the case where 
\[
\begin{gathered}\underline{\sigma_{a}=\id}\\
i\mapsto\ell\\
I\mapsto L
\end{gathered}
\,\,\,\,\begin{gathered}\underline{\tau_{a}=\left(12\right)}\\
j\mapsto K\\
J\mapsto k
\end{gathered}
\,\,\,\,\begin{gathered}\underline{\sigma_{b}=\left(12\right)}\\
j\mapsto i\\
J\mapsto I
\end{gathered}
\,\,\,\,\begin{gathered}\underline{\tau_{b}=\left(12\right)}\\
k\mapsto L\\
K\mapsto\ell
\end{gathered}
.
\]
The summand corresponding to these permutations is
\[
\wg\left(\left(12\right)\right)\cdot\wg\left(\left(1\right)\left(2\right)\right)\cdot\sum_{i,j,k,\ell,I,J,K,L\in\left[n\right]}\delta_{i\ell}\delta_{IL}\delta_{jK}\delta_{Jk}\delta_{ji}\delta_{JI}\delta_{kL}\delta_{K\ell},
\]
and the product inside the last sum is 1 (and not 0) if and only if
$i=\ell=K=j$ and $I=L=k=J$. So there are exactly $n^{2}$ such sets
of indices and the total contribution of these particular 4 permutations
is 
\[
\wg\left(\left(12\right)\right)\cdot\wg\left(\left(1\right)\left(2\right)\right)\cdot n^{2}=\frac{-1}{n\left(n^{2}-1\right)}\cdot\frac{1}{n^{2}-1}\cdot n^{2}=\frac{-n}{\left(n^{2}-1\right)^{2}}.
\]
If we perform the same calculation for all 16 possible sets of permutations
and sum the contributions, we obtain that
\begin{equation}
\tr_{\left[x,y\right]^{2}}\left(n\right)=\frac{-4}{n^{3}-n}.\label{eq:rational-expression-for-[a,b]^2}
\end{equation}

Of course, similar analysis works for any word $w\in\left[\F_{r},\F_{r}\right]$
and any (balanced) finite sets $\wl\in\F_{r}$. As $\left\{ \wl\right\} $
is balanced, the total length of the words is even, and we denote
it by $2L\overset{\mathrm{def}}{=}\left|w_{1}\right|+\ldots+\left|w_{\ell}\right|$\marginpar{$L,L_{i}$}.
Let $L_{i}$ denote the number of appearances of $x_{i}$ in $\wl$
(appearances with positive exponent $+1$), so $\sum_{i=1}^{r}L_{i}=L$.
Let $\mathrm{BIJ}_{i}\left(\wl\right)$\marginpar{$\mathrm{BIJ}_{i}{\scriptstyle \left(\wl\right)}$}
denote the set of bijections from the appearances of $x_{i}^{+1}$
to those of $x_{i}^{-1}$, so $\left|\mathrm{BIJ}_{i}\left(\wl\right)\right|=L_{i}!$.
To compute $\trwl\left(n\right)$, we go over all $\left(2r\right)$-tuples
of bijections $\left(\sigma_{1},\tau_{1},\ldots,\sigma_{r},\tau_{r}\right)$,
with $\sigma_{i},\tau_{i}\in\mathrm{BIJ}_{i}\left(\wl\right)$. Note
that $\sigma_{i}^{-1}\tau_{i}$ can be thought of as a permutation
of the appearances of $x_{i}^{+1}$, and so $\sigma_{i}^{-1}\tau_{i}$
belongs to a well-defined conjugacy class in $S_{L_{i}}$.

As in the example, each tuple induces a partition on a set of $\left|w_{1}\right|+\ldots+\left|w_{\ell}\right|=2L$
indices, and we denote the number of blocks in this partition by $B\left(\sigma_{1},\tau_{1},\ldots,\sigma_{r},\tau_{r}\right)$\marginpar{${\scriptstyle B\left(\sigma_{1},\ldots,\tau_{r}\right)}$}.
The number of evaluations of the indices which agree with these bijections
is $n^{B\left(\sigma_{1},\ldots,\tau_{r}\right)}$. Hence,
\begin{thm}
\label{thm:trw-rational}In the notations of the previous paragraph,
for every $n\ge\max_{i}L_{i}$, 
\begin{equation}
\trwl\left(n\right)=\sum_{\sigma_{1},\tau_{1}\in\mathrm{BIJ}_{1}\left(\wl\right),\,\ldots\,,\sigma_{r},\tau_{r}\in\mathrm{BIJ}_{r}\left(\wl\right)}\wg\left(\sigma_{1}^{-1}\tau_{1}\right)\ldots\wg\left(\sigma_{r}^{-1}\tau_{r}\right)\cdot n^{B\left(\sigma_{1},\tau_{1},\ldots,\sigma_{r},\tau_{r}\right)}.\label{eq:first_rational_function-for-tr(w)}
\end{equation}
In particular, for $n\ge\max_{i}L_{i}$, $\trwl\left(n\right)$ is
given by a rational function in $n$.
\end{thm}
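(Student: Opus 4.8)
The plan is to carry out, for an arbitrary balanced tuple $\wl$, exactly the computation illustrated above for $w=\left[x,y\right]^{2}$. We may assume $w_{1}\cdots w_{\ell}\in\left[\F_{r},\F_{r}\right]$, since otherwise $\trwl\left(n\right)\equiv0$ by Claim \ref{claim: tr=00003D0 for non-balanced words} and the asserted formula is an empty sum.

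First I would fully expand the integrand. Writing each $w_{k}$ as a product of $\left|w_{k}\right|$ generators and inverses and expanding the trace of the corresponding product of matrices, $\mathrm{tr}\left(w_{k}\left(U_{1}^{\left(n\right)},\ldots,U_{r}^{\left(n\right)}\right)\right)$ becomes a finite sum, over $\left|w_{k}\right|$ index variables attached to the gaps between consecutive letters (read cyclically around $w_{k}$), of a product of matrix entries, where a letter $x_{i}^{+1}$ contributes an entry $\left(U_{i}^{\left(n\right)}\right)_{a,b}$ and a letter $x_{i}^{-1}$ contributes $\overline{\left(U_{i}^{\left(n\right)}\right)_{b,a}}$. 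Multiplying over $k\in\left[\ell\right]$, interchanging the finite sum over the resulting $2L$ index variables with the expectation, and using the independence of $U_{1}^{\left(n\right)},\ldots,U_{r}^{\left(n\right)}$, the expectation factors as a product over $i\in\left[r\right]$ of single-matrix integrals $\int_{\U\left(n\right)}\left(\text{monomial in the entries of }U_{i}\right)d\mu_{n}$. Each such monomial has exactly $L_{i}$ unconjugated and $L_{i}$ conjugated factors, because $\wl$ is balanced, so it is of the form treated by Theorem \ref{thm:collins-sniady} with $m=L_{i}$.

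Next I would apply Theorem \ref{thm:collins-sniady} to each of the $r$ factors (legitimate for $n\ge L_{i}$, realizing the finitely many index values inside $\left[n\right]$): the $i$-th integral becomes $\sum_{\sigma_{i},\tau_{i}}\left(\text{product of Kronecker deltas in the index variables}\right)\cdot\wg\left(\sigma_{i}^{-1}\tau_{i}\right)$, where $\sigma_{i}$ ranges over bijections matching the row indices of the $x_{i}^{+1}$-letters to those of the $x_{i}^{-1}$-letters, and $\tau_{i}$ does the same for the column indices; identifying the positive (resp.\ negative) appearances of $x_{i}$ with these two index sets makes $\sigma_{i},\tau_{i}\in\mathrm{BIJ}_{i}\left(\wl\right)$ and $\sigma_{i}^{-1}\tau_{i}$ a permutation in $S_{L_{i}}$. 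I would then interchange the order of summation, summing first over the $2r$-tuple $\left(\sigma_{1},\tau_{1},\ldots,\sigma_{r},\tau_{r}\right)$ and only afterward over the $2L$ index variables. For a fixed tuple, the collected Kronecker deltas impose only equalities among the index variables --- never inequalities --- and the Weingarten weights do not depend on the actual values; hence the inner sum over assignments in $\left[n\right]^{2L}$ compatible with those deltas is exactly $n^{B\left(\sigma_{1},\tau_{1},\ldots,\sigma_{r},\tau_{r}\right)}$, $B$ being the number of blocks of the induced partition. This is precisely \eqref{eq:first_rational_function-for-tr(w)}, and rationality follows at once: for $n\ge\max_{i}L_{i}$ each $\wg\left(\sigma_{i}^{-1}\tau_{i}\right)$ is a pole-free value of an element of $\mathbb{Q}\left(n\right)$ by Corollary \ref{cor:poles-of-Wg}, so the right-hand side is a finite sum of products of rational functions of $n$.

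The only subtle points are bookkeeping. One must verify that the total number of index variables is $\sum_{k}\left|w_{k}\right|=2L$ --- the cyclic adjacency of letters inside each word being absorbed into which variables occur in which letter --- and that the row/column matchings produced by Theorem \ref{thm:collins-sniady} are exactly the elements of $\mathrm{BIJ}_{i}\left(\wl\right)$, with $\sigma_{i}^{-1}\tau_{i}$ landing in $S_{L_{i}}$ and thus making $B$ and the Weingarten factors well defined. The hypothesis $n\ge\max_{i}L_{i}$ is exactly what guarantees, simultaneously for all $i$, that $\wg$ has no pole at $n$ on the relevant symmetric group $S_{L_{i}}$; it is the only constraint inherited from Theorem \ref{thm:collins-sniady} and Corollary \ref{cor:poles-of-Wg}.
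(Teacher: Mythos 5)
Your proposal is correct and follows essentially the same route as the paper: expand the product of traces over $2L$ index variables, factor the expectation by independence of $U_{1}^{\left(n\right)},\ldots,U_{r}^{\left(n\right)}$, apply Theorem \ref{thm:collins-sniady} to each factor, interchange the summations, and count the index assignments compatible with the Kronecker deltas as $n^{B\left(\sigma_{1},\tau_{1},\ldots,\sigma_{r},\tau_{r}\right)}$, with $n\ge\max_{i}L_{i}$ ruling out poles of $\wg$ via Corollary \ref{cor:poles-of-Wg}. The paper presents this computation through the worked example $w=\left[x,y\right]^{2}$ and then states the general case, which is exactly what you carry out in full generality.
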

We have to restrict to $n\ge\max_{i}L_{i}$ because of possible poles
of the Weingarten function%
\footnote{Interestingly, very similar constraints on $n$ appear in a formula
for the trace of $w$ in $r$ uniform \emph{permutation }matrices
--- see \cite[Section 5]{PUDER14}.%
} (Corollary \ref{cor:poles-of-Wg}). When this function has no poles,
Theorem \ref{thm:collins-sniady} guarantees that the expression we
get gives the right answer.

\section{Constructing Surfaces from Pairs of Matchings\label{sec:surface-from-matchings}}

In this section we associate a surface for every $2r$-tuple of bijections
$\left(\sigma_{1},\ldots,\tau_{r}\right)$ appearing in Theorem \ref{thm:trw-rational}.
This allows a better understanding of the summation \eqref{eq:first_rational_function-for-tr(w)}
and the order of its terms, and leads to Theorem \ref{thm:leading-exponent}
about the leading exponent of $\trwl\left(n\right)$ (Corollary \ref{cor:trw(n) leading exponent}
below). Together with a suitable map, the surface we construct will
be admissible for $\wl$. As shown in Proposition \ref{prop:order-of-contribution-of-(sigma,tau)}
below, the order of the contribution of a $2r$-tuple of bijections
in \eqref{eq:first_rational_function-for-tr(w)} is given by the Euler
characteristic of its associated surface.

Notation-wise, instead of keeping track of $2r$ different bijections,
it is more convenient to regard them as a pair of matchings between
the letter with positive exponent in $\wl$ and the letters with negative
exponents. To formalize this, let $L$ and $L_{i}$ $\left(i\in\left[r\right]\right)$
be as in Section \ref{sec:A-Rational-Expression} above (we keep restricting
to the interesting case where $\wl$ is a balanced set). Let \marginpar{$E_{i}^{+},E_{i}^{-}$}$E_{i}^{+}$
be the set of appearances of $x_{i}^{+1}$ and $E_{i}^{-}$ be the
set of appearances of $x_{i}^{-1}$, so that $\left|E_{i}^{+}\right|=\left|E_{i}^{-}\right|=L_{i}$.
We also let $E^{+}=\bigcup_{i}E_{i}^{+}$ \marginpar{$E^{+},E^{-}$}and
$E^{-}=\bigcup_{i}E_{i}^{-}$, so $\left|E^{+}\right|=\left|E^{-}\right|=L$.
We then consider the set $\{\sigma_{i}:E_{i}^{+}\overset{\sim}{\to}E_{i}^{-}\}_{i\in\left[r\right]}$
encoded in a single bijection $\sigma:E^{+}\overset{\sim}{\to}E^{-}$.
Likewise, we encode $\{\tau_{i}:E_{i}^{+}\overset{\sim}{\to}E_{i}^{-}\}_{i\in\left[r\right]}$
in a single $\tau:E^{+}\overset{\sim}{\to}E^{-}$.
\begin{defn}
\label{def:match(w) and B(sigma,tau)}Denote by $\mathrm{\match}\left(\wl\right)$\marginpar{${\scriptscriptstyle {\scriptstyle \match\left(\wl\right)}}$}
the set of bijections $\sigma:E^{+}\overset{\sim}{\to}E^{-}$ which
are compatible with the colors of the edges. Namely,
\[
\match\left(\wl\right)=\left\{ \sigma\colon E^{+}\overset{\sim}{\to}E^{-}\,\middle|\,\,\sigma\left(E_{i}^{+}\right)=E_{i}^{-}\,\,\forall i\in\left[r\right]\right\} .
\]
For a pair of matchings $\left(\sigma,\tau\right)\in\match\left(\wl\right)^{2}$
we let\marginpar{$B_{\left(\sigma,\tau\right)}$}
\begin{align*}
B_{\left(\sigma,\tau\right)} & =B(\sigma\Big|_{E_{1}^{+}},\tau\Big|_{E_{1}^{+}},\ldots,\sigma\Big|_{E_{r}^{+}},\tau\Big|_{E_{r}^{+}})
\end{align*}
denote the number of blocks in the partition of $2L$ indices induced
by $\sigma$ and $\tau$.
\end{defn}
Clearly, for $\sigma,\tau\in\match\left(\wl\right)$, $\sigma^{-1}\tau$
is a permutation of $E^{+}$ which only mixes edges with the same
color, and belongs to a well-defined conjugacy class in $S_{L}$.\\

For every pair of matchings $\left(\sigma,\tau\right)$ we construct
a surface as a CW-complex. We begin by the $\ell$ boundary components
of the surface. These are, of course, merely $\ell$ pointed $1$-spheres,
but we want to mark some additional points on each of them. For this
sake, we first mark points on the wedge $\wedger$ (in addition to
the basepoint $o$): on the circle corresponding to the generator
$x_{i}$, we mark, in the order of the circle's orientation, distinct
points \marginpar{$p_{i},q_{i},z_{i}$}$p_{i}$, $z_{i}$ and $q_{i}$
that%
\footnote{Our immediate aim requires only the points $p_{i}$ and $q_{i}$.
The role of $z_{i}$ is explained in Claim \ref{claim:properties-of-perms-surface}
below.%
} are also distinct from $o$ -- this is illustrated in the right hand
side of Figure \ref{fig:S^1(w) and marked wedge}. 

Now, for every $w\in\F_{r}$, define $S^{1}\left(w\right)$\marginpar{$S^{1}\left(w\right)$}
to be the pointed $1$-sphere $\left(S^{1},1\right)$ with additional
$2\left|w\right|$ marked points with set of colors $\left\{ p_{i}^{+},p_{i}^{-},q_{i}^{+},q_{i}^{-}\,\middle|\, i\in\left[r\right]\right\} $\marginpar{$p_{i}^{+},p_{i}^{-},q_{i}^{+},q_{i}^{-}$}.
The marking is induced by the maps $f_{w}\colon\left(S^{1},1\right)\to\left(\wedger,o\right)$
from Definition \ref{def:f_w}: the marked points are $f_{w}^{-1}\left(\left\{ p_{1},q_{1},\ldots,p_{r},q_{r}\right\} \right)$.
The color of a marked point $p$ is determined by $f_{w}\left(p\right)$
and the orientation. For example, if $f_{w}\left(p\right)=q_{i}$
and $f_{w}$ advances at $p$ against the orientation of the circle
corresponding to $x_{i}$, then $p$ gets the color $q_{i}^{-}$.
This is illustrated in Figure \ref{fig:S^1(w) and marked wedge}.

\begin{figure}
\includegraphics[bb=0bp 35bp 448bp 270bp]{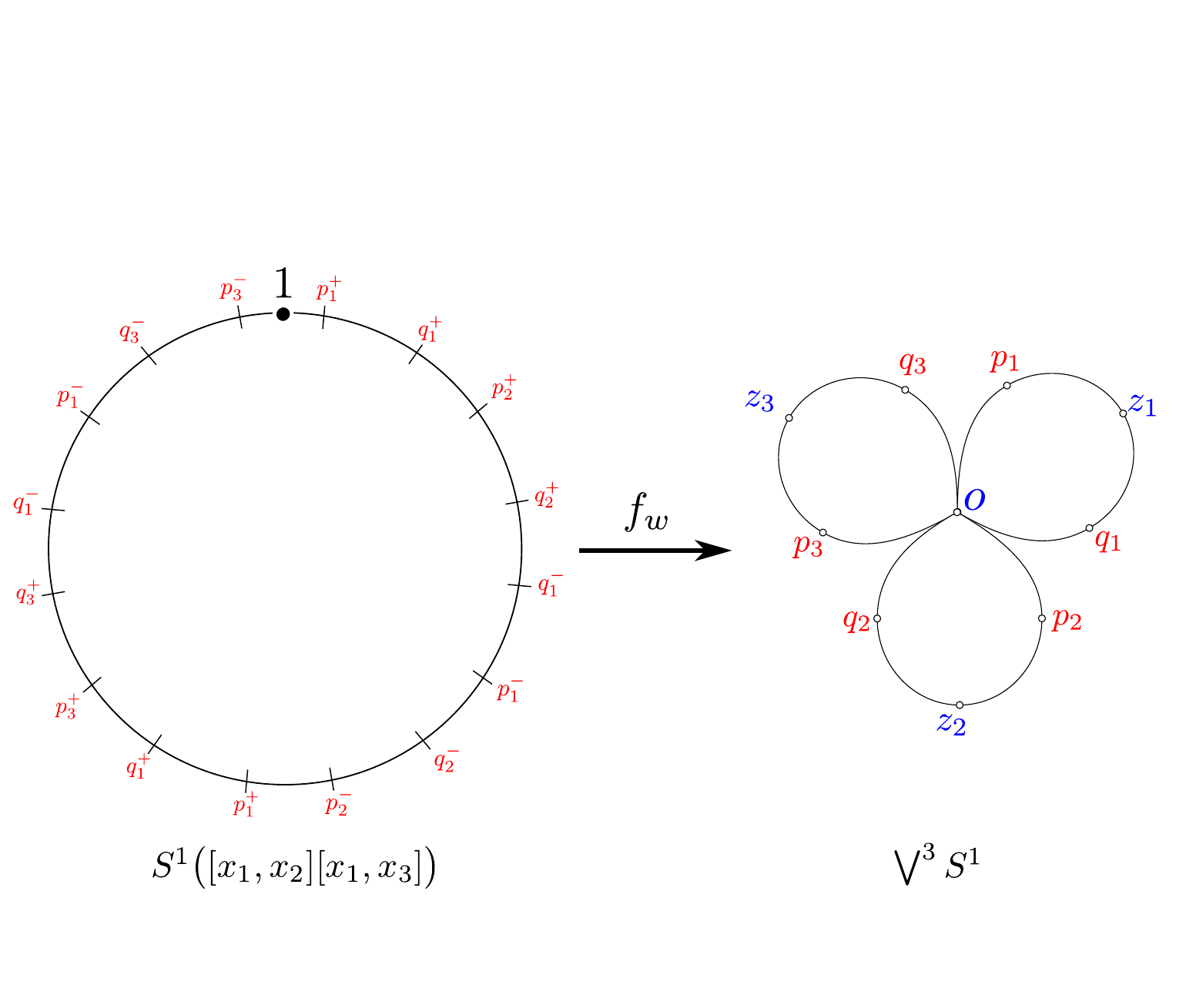}\caption{\label{fig:S^1(w) and marked wedge}The marked $1$-sphere $S^{1}\left(w\right)$
for $w=\left[x_{1},x_{2}\right]\left[x_{1},x_{3}\right]\in\F_{3}$
together with the marked wedge ${\textstyle \bigvee^{3}S^{1}}$.}
\end{figure}

We think of the points $p_{i}^{\pm}$ and $q_{i}^{\pm}$ in $S^{1}\left(w_{1}\right),\ldots,S^{1}\left(w_{\ell}\right)$
as representing the $2L$ indices associated with the different letters
of $\wl$ in the computation of $\trwl\left(n\right)$, as in \eqref{eq:two-indices-for-every-letter}.
By definition, the second index of every letter of $w_{i}$ must be
identical to the first index of the cyclically subsequent letter of
$w_{i}$. The other type of identifications of indices comes from
the fixed bijections $\sigma,\tau\in\match\left(\wl\right)$. Every
$p_{i}^{+}$-point is matched by $\sigma$ to a $p_{i}^{-}$-point.
Similarly, every $q_{i}^{+}$-point is matched by $\tau$ with a $q_{i}^{-}$-point. 
\begin{defn}
\label{def:surface-from-perms}Let $\wl$ be a balanced set of words
and let $\sigma,\tau\in\match\left(\wl\right)$. We associate with\textbf{
}the pair\textbf{ $\left(\sigma,\tau\right)$} a 2-dimensional CW-complex,
denoted $\Sigma_{\left(\sigma,\tau\right)}$\marginpar{$\Sigma_{\left(\sigma,\tau\right)}$}.
Its $1$-dimensional skeleton consists of $S^{1}\left(w_{1}\right),\ldots,S^{1}\left(w_{\ell}\right)$
together with edges (1-dimensional cells) depicting the matchings
$\sigma$ and $\tau$ as above. Namely, for every $i\in\left[r\right]$,
there is an edge connecting every $p_{i}^{+}$-point with its $\sigma$-image,
and an edge connecting every $q_{i}^{+}$-point with its $\tau$-image.
We call these edges \textbf{matching-edges}\emph{}\marginpar{matching-edges}\emph{. }

To define the $2$-dimensional cells, consider cycles in the $1$-skeleton
which are obtained by starting in some marked point on $S^{1}\left(w_{i}\right)$,
moving orientably along $S^{1}\left(w_{i}\right)$ until the next
marked point, then following the matching-edge emanating from this
point and arriving at some marked point in $S^{1}\left(w_{j}\right)$,
then moving orientably along $S^{1}\left(w_{j}\right)$ to the next
marked point, following a matching-edge and so forth, until a cycle
has been completed. A $2$-cell (a disc) is glued along every such
cycle.

Finally, we denote by $v_{1},\ldots,v_{\ell}$ the basepoints of $S^{1}\left(w_{1}\right),\ldots,S^{1}\left(w_{\ell}\right)$,
respectively, and for $i\in\left[\ell\right]$ define $\partial_{i}:\left(S^{1},1\right)\to\left(\Sigma_{\left(\st\right)},v_{i}\right)$
by the identification of $\left(S^{1},1\right)$ with $\left(S^{1}\left(w_{i}\right),1\right)\subset\partial\Sigma_{\left(\st\right)}$.
\end{defn}
We think of the $4L$ marked points as the vertices, or $0$-skeleton
of $\Sigma_{\left(\sigma,\tau\right)}$. Note the description of cycles
we gave in the definition does indeed yield cycles because the walks
on the 1-skeleton are invertible: to get the inverse walks use the
same instructions only with reversed orientation on $S^{1}\left(w_{1}\right),\ldots,S^{1}\left(w_{\ell}\right)$.
In Figures \ref{fig:1-skeleton} and \ref{fig:surface} we illustrate
the 1-skeleton and surface associated with a particular pair of matchings
for the word $w=\left[x,y\right]\left[x,z\right]$. \FigBesBeg \\
\begin{figure}[h]
\centering{}\includegraphics[bb=25bp 0bp 375bp 350bp,scale=0.7]{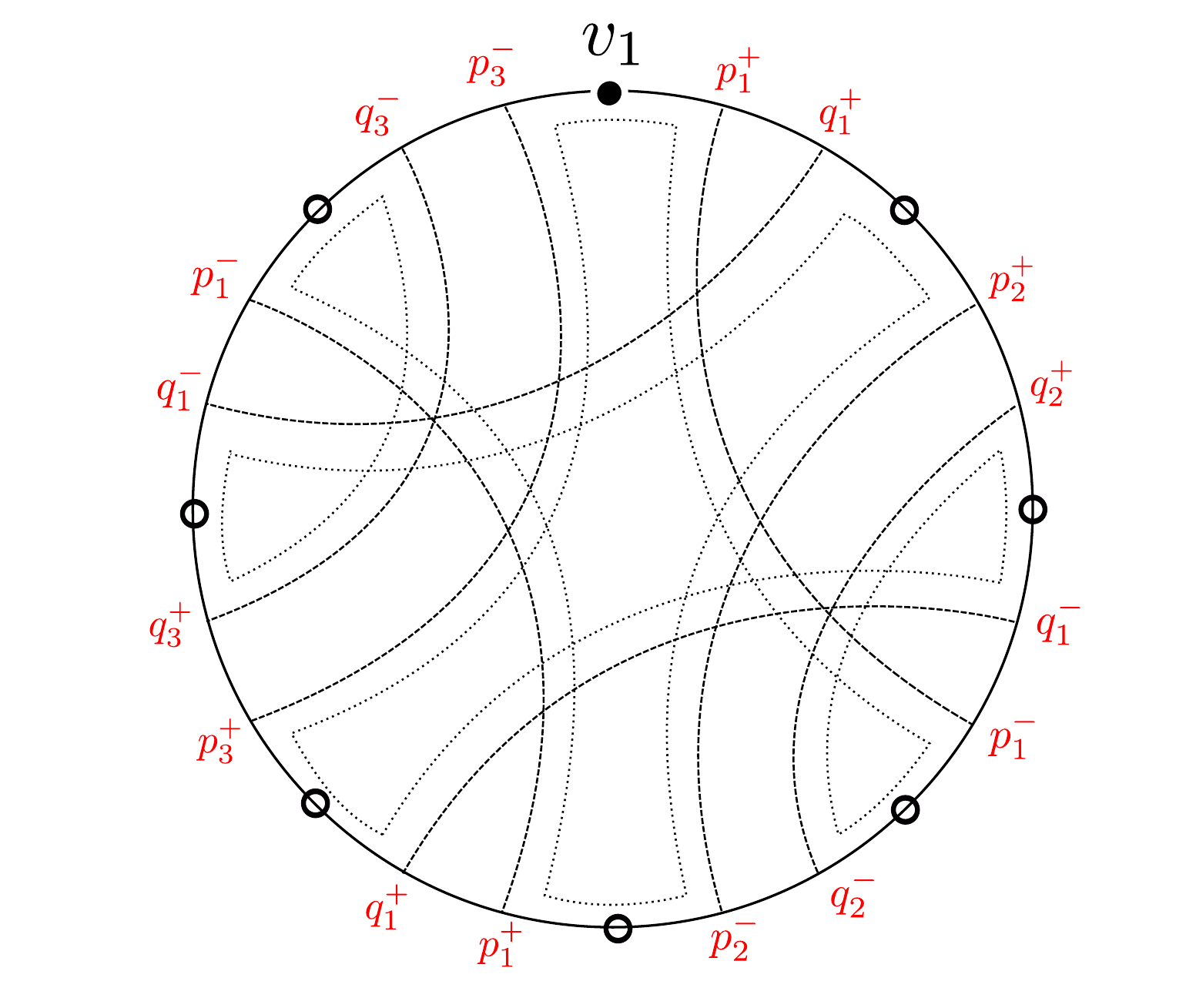}\caption{\label{fig:1-skeleton} The 1-skeleton of $\Sigma_{\left(\sigma,\tau\right)}$
for $w=\left[x_{1},x_{2}\right]\left[x_{1},x_{3}\right]=\left[x,y\right]\left[x,z\right]=x_{1}y_{2}X_{3}Y_{4}x_{5}z_{6}X_{7}Z_{8}$
and the matchings $\sigma=\left(\protect\begin{array}{cccc}
x_{1} & y_{2} & x_{5} & z_{6}\protect\\
X_{3} & Y_{4} & X_{7} & Z_{8}
\protect\end{array}\right)$ and $\tau=\left(\protect\begin{array}{cccc}
x_{1} & y_{2} & x_{5} & z_{6}\protect\\
X_{7} & Y_{4} & X_{3} & Z_{8}
\protect\end{array}\right)$. Dashed lines are matching-edges. The dotted lines trace the boundaries
of the two type-$o$ disc to be glued in (see Claim \ref{claim:properties-of-perms-surface}).
Three additional discs, one of type-$z_{1}$, one of type-$z_{2}$
and one of type-$z_{3}$, are glued in inside the other types of cycles
one can follow (unmarked). For convenience, we also mark here the
additional seven points of $f_{w}^{-1}\left(o\right)$ in $S^{1}\left(w\right)$,
along $v_{1}$, by black circles.}
\end{figure}
\FigBesEnd 
\begin{rem}
For completeness we need also describe what happens when some of\linebreak{}
$S^{1}\left(w_{1}\right),\ldots,S^{1}\left(w_{\ell}\right)$ have
no marked points, namely, when some of $w_{1},\ldots,w_{\ell}$ are
the empty word 1. In this case, whenever $w_{i}=1$, we simply glue
a disc along $S^{1}\left(w_{i}\right)$. To formally make it a CW-complex
we also need to specify a vertex at the boundary of such disc, say,
the basepoint $1$ of $S^{1}\left(w_{i}\right)$. All the results
below work just as well with this extension to trivial words, and
the adjustments required in the proofs are trivial. However, to keep
the writing slightly simpler, we ignore this case in what follows.
\end{rem}
\begin{figure}[h]
\centering{}\includegraphics[bb=50bp 0bp 400bp 320bp,scale=0.8]{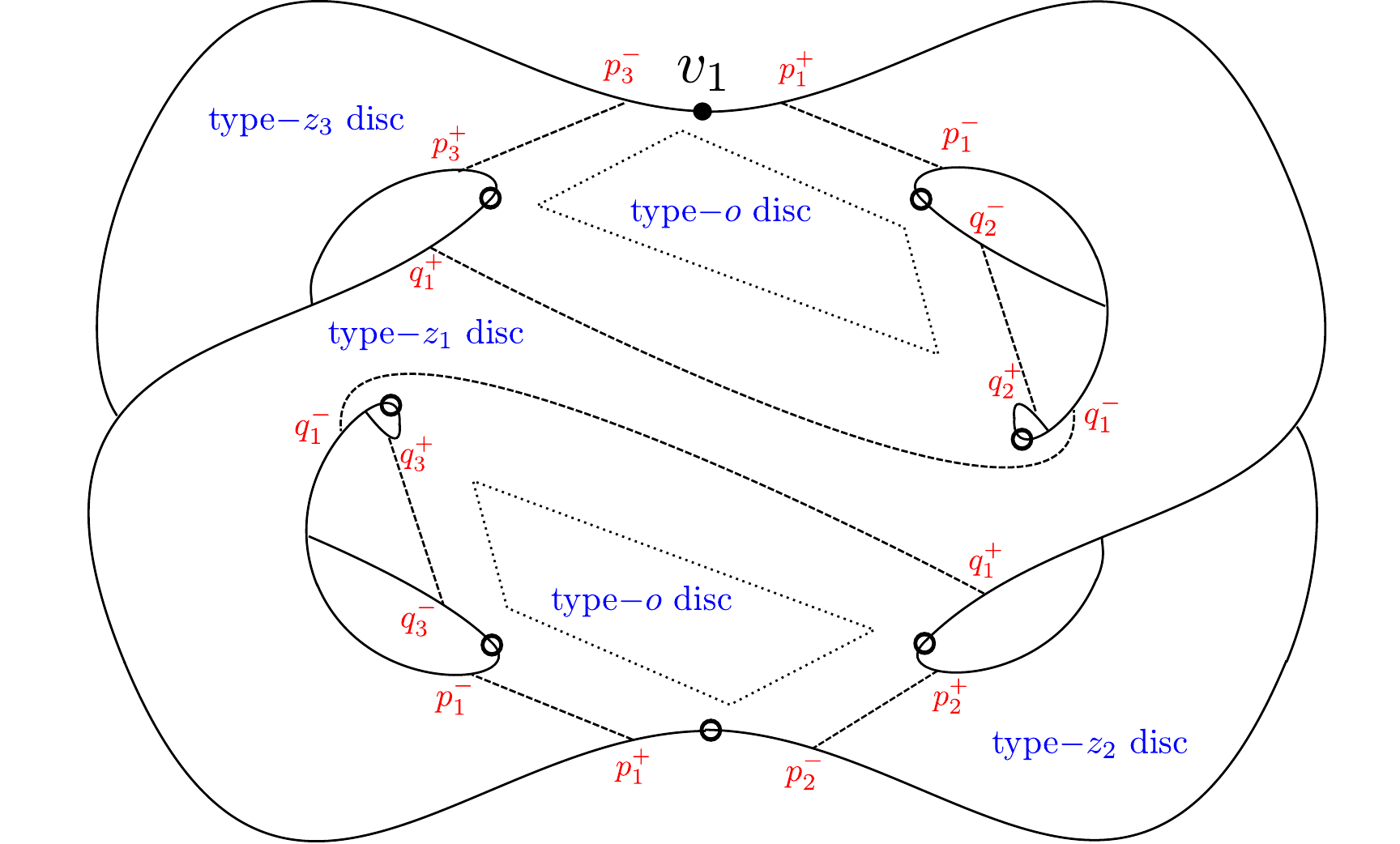}\caption{\label{fig:surface}The CW-complex $\Sigma_{\left(\sigma,\tau\right)}$
corresponding to the word and matchings from Figure \ref{fig:1-skeleton}.
Dashed and dotted lines correspond to those of Figure \ref{fig:1-skeleton}}
\end{figure}

\begin{claim}
\label{claim:properties-of-perms-surface}The CW-complex $\Sigma_{\left(\sigma,\tau\right)}$
has the following properties:
\begin{enumerate}
\item Topologically, it is an orientable surface with $\ell$ boundary components.
\item Each 2-cell $D$ is of one of two types: 

\begin{enumerate}
\item Either $\partial D\cap\left(S^{1}\left(w_{1}\right)\cup\ldots\cup S^{1}\left(w_{\ell}\right)\right)$
contains $o$-points (points from $f_{w_{1}}^{-1}\left(o\right)\cup\ldots\cup f_{w_{\ell}}^{-1}\left(o\right)$),
in which case we call it a \marginpar{type-$o$ disc}\textbf{type-$o$
disc},
\item Or $\partial D\cap\left(S^{1}\left(w_{1}\right)\cup\ldots\cup S^{1}\left(w_{\ell}\right)\right)$
contains $z_{i}$-points (points from $f_{w_{1}}^{-1}\left(z_{i}\right)\cup\ldots\cup f_{w_{\ell}}^{-1}\left(z_{i}\right)$)
for some unique $i$, in which case we call it a \marginpar{type-$z_{i}$ disc}\textbf{type-$z_{i}$
disc}.
\end{enumerate}
\item Every type-$o$ disc corresponds to a block of indices in the partition
induced by $\sigma$ and $\tau$, so that $B_{\left(\sigma,\tau\right)}$
is the number of type-$o$ discs.
\item \label{enu:type-zi-is-a-cycle}Every type-$z_{i}$ disc corresponds
to a cycle of the permutation $\left(\sigma^{-1}\tau\right)\Big|_{E_{i}^{+}}$.
\item Every matching-edge is contained in the boundaries of exactly one
type-$o$ disc and exactly one type-$z_{i}$ disc.
\end{enumerate}
\end{claim}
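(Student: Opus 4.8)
The plan is to extract all five statements from the local combinatorics of the marked $1$-spheres $S^{1}\left(w_{j}\right)$ together with the ``continuation rule'' that is built into the definition of the gluing cycles. First I would set up the bookkeeping on each $S^{1}\left(w_{j}\right)$. Since $f_{w_{j}}$ is the non-backtracking loop spelling $w_{j}$, and the marked points $p_{i},z_{i},q_{i}$ of $\wedger$ occur along the $x_{i}$-circle in that orientation order, every letter of $w_{j}$ contributes exactly two marked points to $S^{1}\left(w_{j}\right)$, and the arcs between consecutive marked points split into two kinds: a \emph{$z_{i}$-arc} lying inside the $x_{i}$-circle and containing one preimage of $z_{i}$ (joining the two marked points of a single occurrence of $x_{i}^{\pm1}$), and an \emph{$o$-arc} containing one preimage of the wedge point $o$ (joining the last marked point of one letter to the first marked point of the next). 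Concretely, a positive occurrence $x_{i}^{+1}$ contributes marked points $p_{i}^{+}$ then $q_{i}^{+}$ with the $z_{i}$-arc in between, a negative occurrence $x_{i}^{-1}$ contributes $q_{i}^{-}$ then $p_{i}^{-}$ with the $z_{i}$-arc in between, and in both cases the following $o$-arc is attached to the \emph{last} of these two marked points. Each marked point has degree $3$: its two incident circle-arcs and exactly one matching-edge (because $\sigma$-edges join $p_{i}^{+}$ to $p_{i}^{-}$, $\tau$-edges join $q_{i}^{+}$ to $q_{i}^{-}$, and $\sigma,\tau\in\match\left(\wl\right)$ are bijections).

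Next I would prove properties (2) and (5). A gluing cycle alternates ``marked point $\to$ orientable circle-arc $\to$ marked point $\to$ matching-edge'' forever, and the key observation is the \emph{purity} of such a cycle: if it traverses an $o$-arc it arrives at the first marked point of some occurrence, leaves along the matching-edge (a $\sigma$- or $\tau$-edge), which lands it at the last marked point of an occurrence of the same letter with the opposite sign, from which the unique outgoing orientable arc is again an $o$-arc; symmetrically, a cycle that traverses a $z_{i}$-arc continues along $z_{i}$-arcs only, the intervening matching-edges being alternately $\tau$- and $\sigma$-edges. Hence every $2$-cell is either type-$o$ or type-$z_{i}$ for a single $i$, which is (2). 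For (5), a corner count at each marked point $m$ shows there are exactly two corners of $\Sigma_{\left(\sigma,\tau\right)}$ at $m$, one pairing the incoming circle-arc with the matching-edge and one pairing the matching-edge with the outgoing circle-arc; consequently each matching-edge $e$ is traversed exactly twice by the gluing cycles, once ``entering'' each of its two endpoints, and the incoming arcs at those two endpoints are one $o$-arc and one $z_{i}$-arc by the description above, so $e$ borders exactly one type-$o$ disc and one type-$z_{i}$ disc.

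With (5) and the corner count in hand, property (1) is a routine link computation: interiors of $2$-cells are discs; a matching-edge lies in exactly two $2$-cells and so is interior; every circle-arc lies in exactly one $2$-cell and so is on the boundary; and the link of a marked point (or basepoint) is a single arc, so it is a boundary point. Thus $\Sigma_{\left(\sigma,\tau\right)}$ is a compact surface whose boundary is exactly the disjoint union $S^{1}\left(w_{1}\right)\sqcup\dots\sqcup S^{1}\left(w_{\ell}\right)$, giving $\ell$ boundary components. For orientability I would orient each $2$-cell by the direction of its defining cycle; the two traversals of each matching-edge run in opposite directions (one ``enters'' $u$, the other ``enters'' $v$), so these orientations agree across every interior edge.

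Finally, properties (3) and (4) come from tracing the two kinds of boundary cycles. A type-$z_{i}$ disc alternates $z_{i}$-arcs (one per occurrence of $x_{i}^{\pm1}$ it visits) with $\tau$- and $\sigma$-edges; starting from a positive occurrence $e\in E_{i}^{+}$, the $\tau$-edge sends $e$ to the negative occurrence $\tau\left(e\right)$, its $z_{i}$-arc moves to the other marked point of $\tau\left(e\right)$, and the $\sigma$-edge returns to the positive occurrence $\sigma^{-1}\left(\tau\left(e\right)\right)=\left(\sigma^{-1}\tau\right)\left(e\right)$; hence the positive occurrences met by the disc form exactly one cycle of $\left(\sigma^{-1}\tau\right)\big|_{E_{i}^{+}}$, and since every occurrence's $z_{i}$-arc lies in a unique $2$-cell this is a bijection, which is (4). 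A type-$o$ disc alternates $o$-arcs with matching-edges; each $o$-arc records one of the $2L$ indices appearing in the computation of $\trwl\left(n\right)$, and each matching-edge along the cycle is precisely one of the identifications of indices made by the Collins--\'Sniady formula, so the $o$-arcs in a single type-$o$ disc form exactly one block of the $\left(\sigma,\tau\right)$-partition, and conversely; hence the number of type-$o$ discs equals $B_{\left(\sigma,\tau\right)}$, which is (3). The one place that demands care is this last identification: it requires matching the row/column conventions of Section \ref{sec:A-Rational-Expression} (for both signs of a letter) against the orientation conventions on $S^{1}\left(w_{j}\right)$ — a finite bookkeeping check rather than a genuine obstacle. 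The real content throughout is the purity-of-cycles observation underlying (2) and (5); everything else is a formal consequence.
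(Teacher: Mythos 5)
Your proposal is correct and takes essentially the same route as the paper: the purity-of-cycles observation is exactly the paper's argument for item (2), and your corner/link counts, orientation of each disc by its defining cycle, and explicit cycle-tracings for items (1) and (3)--(5) are careful elaborations of what the paper declares evident from the construction (including the row/column bookkeeping for item (3), which indeed checks out). No gaps to report.
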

\begin{proof}
Every segment in $S^{1}\left(w_{j_{1}}\right)$ between two marked
points contains either an $o$-point or a $z_{i}$-point for some
unique $i$. If the boundary $\partial D$ of a $2$-cell $D$ follows
a segment containing an $o$-point, then $\partial D$ goes on to
follow a matching-edge emanating at the first marked point of a letter
in $E^{+}\cup E^{-}$, which, by construction, arrives at a second
marked point of some other letter in $E^{-}\cup E^{+}$. So it then
follows, again, a segment of $S^{1}\left(w_{j_{2}}\right)$ containing
an $o$-point. A similar argument shows that if $\partial D$ contains
a segment of $S^{1}\left(w_{j_{1}}\right)$ with a $z_{i}$-point,
then all the segments of $S^{1}\left(w_{1}\right)\cup\ldots\cup S^{1}\left(w_{\ell}\right)$
it contains have the same property. This shows item $\left(2\right)$.

Items $\left(3\right)$, $\left(4\right)$ and $\left(5\right)$ are
evident from the construction. Every segment of $S^{1}\left(w_{1}\right)\cup\ldots\cup S^{1}\left(w_{\ell}\right)$
between two adjacent marked points is contained in the boundary of
exactly one disc. This and item $\left(5\right)$ show that $\Sigma_{\left(\sigma,\tau\right)}$
is a surface with $S^{1}\left(w_{1}\right)\cup\ldots\cup S^{1}\left(w_{\ell}\right)$
its boundary, hence $\ell$ boundary components. We can orient every
disc according to the orientation of the $\left(S^{1}\left(w_{1}\right)\cup\ldots\cup S^{1}\left(w_{\ell}\right)\right)$-segments
at its boundary, which shows the global orientability and item $\left(1\right)$.
\end{proof}
We can now rewrite \eqref{eq:first_rational_function-for-tr(w)} as
\begin{eqnarray}
 &  & \trwl\left(n\right)=\label{eq:better-formula-for-tr(w)}\\
 &  & \sum_{\begin{gathered}\left(\sigma,\tau\right)\in\\
\match\left(\wl\right)^{2}
\end{gathered}
}\wg\left(\left(\sigma^{-1}\tau\right)\Big|_{E_{1}^{+}}\right)\cdot\ldots\cdot\wg\left(\left(\sigma^{-1}\tau\right)\Big|_{E_{r}^{+}}\right)\cdot n^{\#\left\{ \mathrm{type}-o\,\,\mathrm{discs\, in}\,\Sigma_{\left(\sigma,\tau\right)}\right\} }.\nonumber 
\end{eqnarray}
 
\begin{defn}
\label{def:genus(sigma,tau)}For $\left(\sigma,\tau\right)\in\match\left(\wl\right)^{2}$
denote by $\chi\left(\sigma,\tau\right)$\marginpar{$\chi\left(\sigma,\tau\right)$}
the Euler characteristic of $\Sigma_{\left(\sigma,\tau\right)}$.\end{defn}
\begin{prop}
\label{prop:order-of-contribution-of-(sigma,tau)}The contribution
of $\left(\sigma,\tau\right)\in\match\left(\wl\right)^{2}$ to the
summation \eqref{eq:better-formula-for-tr(w)} giving $\trwl\left(n\right)$
is
\[
\moeb\left(\sigma^{-1}\tau\right)\cdot n^{\chi\left(\sigma,\tau\right)}+O\left(n^{\chi\left(\sigma,\tau\right)-2}\right).
\]
\end{prop}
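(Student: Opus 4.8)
The plan is to combine the exact formula \eqref{eq:better-formula-for-tr(w)} for $\trwl(n)$ with the asymptotic expansion of the Weingarten function from Proposition \ref{prop:wg-mobius}, and then to identify the resulting exponent with the Euler characteristic $\chi(\sigma,\tau)$ of the surface $\Sigma_{(\sigma,\tau)}$ via a count of cells. First I would isolate the contribution of a single pair $(\sigma,\tau)$ to \eqref{eq:better-formula-for-tr(w)}: it is
\[
\prod_{i=1}^{r}\wg\!\left((\sigma^{-1}\tau)\big|_{E_i^{+}}\right)\cdot n^{\#\{\text{type-}o\text{ discs in }\Sigma_{(\sigma,\tau)}\}}.
\]
By Proposition \ref{prop:wg-mobius}, for each $i$ we have $\wg\big((\sigma^{-1}\tau)|_{E_i^{+}}\big)=\dfrac{\moeb((\sigma^{-1}\tau)|_{E_i^{+}})}{n^{L_i+\|(\sigma^{-1}\tau)|_{E_i^{+}}\|}}+O\!\left(n^{-L_i-\|(\sigma^{-1}\tau)|_{E_i^{+}}\|-2}\right)$. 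Multiplying the $r$ expansions, the leading term of the product is $\dfrac{\prod_i\moeb((\sigma^{-1}\tau)|_{E_i^{+}})}{n^{\sum_i(L_i+\|(\sigma^{-1}\tau)|_{E_i^{+}}\|)}}$, with an error term suppressed by a factor $n^{-2}$ (the $r$-fold product of series each of which skips every other power still skips every other power overall — but for the statement as given, I only need the single $n^{-2}$ gap). Since $\moeb$ is multiplicative over cycles and the cycles of $\sigma^{-1}\tau$ are partitioned according to the colors $i$, we have $\prod_i\moeb((\sigma^{-1}\tau)|_{E_i^{+}})=\moeb(\sigma^{-1}\tau)$. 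Likewise $\sum_i L_i=L$ and $\sum_i\|(\sigma^{-1}\tau)|_{E_i^{+}}\|=\|\sigma^{-1}\tau\|=L-\#\mathrm{cycles}(\sigma^{-1}\tau)$ (the norm is additive over the color-blocks since $\sigma^{-1}\tau$ preserves them). Hence the exponent of $n$ in the leading term of the whole summand is
\[
\#\{\text{type-}o\text{ discs}\}-\Big(2L-\#\mathrm{cycles}(\sigma^{-1}\tau)\Big).
\]

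The second step is to recognize this quantity as $\chi(\sigma,\tau)$. I would count the cells of the CW-complex $\Sigma_{(\sigma,\tau)}$ from Definition \ref{def:surface-from-perms} and Claim \ref{claim:properties-of-perms-surface}. The $0$-cells are the $4L$ marked points (two per letter, of colors $p_i^{\pm},q_i^{\pm}$); the $1$-cells are the $4L$ arcs of $S^1(w_1)\cup\dots\cup S^1(w_\ell)$ between consecutive marked points together with the $2L$ matching-edges ($L$ of type $\sigma$, $L$ of type $\tau$), for a total of $6L$ edges; and the $2$-cells, by Claim \ref{claim:properties-of-perms-surface}(2), split into type-$o$ discs and type-$z_i$ discs. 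By Claim \ref{claim:properties-of-perms-surface}(4), the type-$z_i$ discs are in bijection with the cycles of $(\sigma^{-1}\tau)|_{E_i^{+}}$, so the total number of type-$z$ discs over all $i$ is $\#\mathrm{cycles}(\sigma^{-1}\tau)$. Therefore
\[
\chi(\Sigma_{(\sigma,\tau)})=4L-6L+\#\{\text{type-}o\text{ discs}\}+\#\mathrm{cycles}(\sigma^{-1}\tau)=\#\{\text{type-}o\text{ discs}\}-2L+\#\mathrm{cycles}(\sigma^{-1}\tau),
\]
which is exactly the exponent computed above. Combining the two steps gives that the contribution of $(\sigma,\tau)$ is $\moeb(\sigma^{-1}\tau)\cdot n^{\chi(\sigma,\tau)}+O(n^{\chi(\sigma,\tau)-2})$, as claimed.

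The only genuinely delicate points are bookkeeping rather than conceptual. The first is making sure the cell counts are right — in particular that each marked point is a genuine $0$-cell (no coincidences) and that every boundary arc of an $S^1(w_i)$ is a distinct $1$-cell lying on exactly one disc of each type, which is precisely the content of Claim \ref{claim:properties-of-perms-surface}(5); this is what guarantees the CW-structure is the one I am counting. The second is the error term: one must check that multiplying the $r$ Weingarten expansions does not produce a cross term at order $n^{-1}$ relative to the leading term. Since each factor's expansion has the form $c_i n^{-m_i}(1+O(n^{-2}))$ (Proposition \ref{prop:wg-mobius} together with the remark that every other coefficient vanishes, \cite[Proposition 2.6]{CS}), the product is $\big(\prod c_i\big)n^{-\sum m_i}(1+O(n^{-2}))$, so the gap of $2$ is preserved; even without the "every other term vanishes" refinement, the crudest bound still gives an $O(n^{\chi(\sigma,\tau)-1})$ error, and the finer statement upgrades it to $n^{-2}$. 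I expect the cell-counting identification of the exponent with $\chi(\sigma,\tau)$ to be the conceptual heart, but it follows immediately once Claim \ref{claim:properties-of-perms-surface} is in hand, so the whole proposition should be short.
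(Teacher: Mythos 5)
Your proposal is correct and follows essentially the same route as the paper's proof: expand each color-block Weingarten factor via Proposition \ref{prop:wg-mobius}, use multiplicativity of $\moeb$ and additivity of the norm over the blocks $E_i^{+}$, identify cycles of $\sigma^{-1}\tau$ with type-$z_i$ discs via Claim \ref{claim:properties-of-perms-surface}, and recover $\chi(\sigma,\tau)$ from the cell count $4L-6L+\#\{\mathrm{discs}\}$. The error-term bookkeeping you flag is handled identically in the paper, so there is nothing to add.
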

\begin{proof}
Although the Weingarten function of a permutation is \emph{not }the
product of the Weingarten functions of its disjoint cycles, the leading
term does have this property. More generally, if 
\[
\pi=\left(\pi_{1},\ldots,\pi_{r}\right)\in S_{L_{1}}\times\ldots\times S_{L_{r}}\le S_{L},
\]
then $\left\Vert \pi\right\Vert =\left\Vert \pi_{1}\right\Vert +\ldots+\left\Vert \pi_{r}\right\Vert $
and, by \eqref{eq:mobius}, $\moeb\left(\pi\right)=\moeb\left(\pi_{1}\right)\cdot\ldots\cdot\moeb\left(\pi_{r}\right)$.
Proposition \ref{prop:wg-mobius} therefore yields that
\begin{eqnarray*}
\wg\left(\pi_{1}\right)\cdot\ldots\cdot\wg\left(\pi_{r}\right) & = & \left(\frac{\moeb\left(\pi_{1}\right)}{n^{L_{1}+\left\Vert \pi_{1}\right\Vert }}+O\left(\frac{1}{n^{L_{1}+\left\Vert \pi_{1}\right\Vert +2}}\right)\right)\cdot\ldots\cdot\left(\frac{\moeb\left(\pi_{r}\right)}{n^{L_{r}+\left\Vert \pi_{r}\right\Vert }}+O\left(\frac{1}{n^{L_{r}+\left\Vert \pi_{r}\right\Vert +2}}\right)\right)\\
 & = & \frac{\moeb\left(\pi\right)}{n^{L+\left\Vert \pi\right\Vert }}+O\left(\frac{1}{n^{L+\left\Vert \pi\right\Vert +2}}\right).
\end{eqnarray*}
Since $\left\Vert \pi_{i}\right\Vert =L_{i}-\#\mathrm{cycles}\left(\pi_{i}\right)$,
Claim \ref{claim:properties-of-perms-surface}(\ref{enu:type-zi-is-a-cycle})
yields that 
\[
\left\Vert \sigma^{-1}\tau\right\Vert =L-\sum_{i}\#\left\{ \mathrm{type-}z_{i}\,\,\mathrm{discs\, in}\,\Sigma_{\left(\sigma,\tau\right)}\right\} ,
\]
so the term corresponding to $\left(\sigma,\tau\right)$ in \eqref{eq:better-formula-for-tr(w)}
is
\begin{eqnarray*}
 &  & \frac{\moeb\left(\sigma^{-1}\tau\right)}{n^{2L-\sum_{i}\#\left\{ \mathrm{type-}z_{i}\,\,\mathrm{discs\, in}\,\Sigma_{\left(\sigma,\tau\right)}\right\} }}\cdot n^{\#\left\{ \mathrm{type-}o\,\,\mathrm{discs\, in}\,\Sigma_{\left(\sigma,\tau\right)}\right\} }\cdot\left(1+O\left(\frac{1}{n^{2}}\right)\right)\\
 &  & =\moeb\left(\sigma^{-1}\tau\right)\cdot n^{\#\left\{ \mathrm{discs\, in}\,\Sigma_{\left(\sigma,\tau\right)}\right\} -2L}\cdot\left(1+O\left(\frac{1}{n^{2}}\right)\right).
\end{eqnarray*}
The statement of the proposition follows by noting that the $1$-skeleton
of $\Sigma_{\left(\sigma,\tau\right)}$ has $4L$ $0$-cells (2 marked
points associated with every letter of $\wl$), and $6L$ $1$-cells
($4L$ of them as segments of $S^{1}\left(w_{1}\right)\cup\ldots\cup S^{1}\left(w_{\ell}\right)$
and $2L$ matching-edges), so 
\[
\#\left\{ \mathrm{discs\, in}\,\Sigma_{\left(\sigma,\tau\right)}\right\} -2L=4L-6L+\#\left\{ \mathrm{discs\, in}\,\Sigma_{\left(\sigma,\tau\right)}\right\} =\chi\left(\Sigma_{\left(\sigma,\tau\right)}\right)=\chi\left(\sigma,\tau\right).
\]

\end{proof}
Next, we define (the homotopy class of) a function $f_{\left(\sigma,\tau\right)}\colon\Sigma_{\left(\st\right)}\to\wedger$
which makes $\left(\Sigma_{\left(\st\right)},f_{\left(\st\right)}\right)$
admissible for $\wl$.
\begin{defn}
\label{def:f_(sigma,tau)}Given $\st\in\match\left(\wl\right)$, define
the homotopy class (relative $\partial\Sigma_{\left(\st\right)}$)
of a map\marginpar{$f_{\left(\st\right)}$} $f_{\left(\sigma,\tau\right)}\colon\Sigma_{\left(\st\right)}\to\wedger$
as follows:
\begin{itemize}
\item Define $f_{\left(\st\right)}$ on $\partial\Sigma_{\left(\st\right)}$
by setting $f_{\left(\sigma,\tau\right)}\Big|_{S^{1}\left(w_{i}\right)}\equiv f_{w_{i}}\circ\partial_{i}^{-1}$
for every $i\in\left[\ell\right]$.
\item Extend $f_{\left(\st\right)}$ to the entire $1$-skeleton of $\Sigma_{\left(\st\right)}$
by setting $f_{\left(\st\right)}$ to be constant on every matching-edge,
namely, $f_{\left(\st\right)}\Big|_{e}\equiv p_{i}$ for every $p_{i}$-matching-edge
$e$ etc.
\item On every disc ($2$-cell) $D$, $f_{\left(\st\right)}$ now maps its
boundary to a nullhomotopic loop in $\wedger$, so there exists a
unique way, up to homotopy, to extend $f_{\left(\st\right)}$ to the
interior of $D$ (as in Lemma \ref{lem:homotopy=00003Dsame_induced_map}).
\end{itemize}
\end{defn}
From Definitions \ref{def:surface-from-perms} and \ref{def:f_(sigma,tau)}
and Claim \ref{claim:properties-of-perms-surface} we conclude:
\begin{cor}
\label{cor:(Sigma,f) of (sigma,tau) is admissible}For every $\st\in\match\left(\wl\right)$,
the pair $\left(\Sigma_{\left(\st\right)},f_{\left(\st\right)}\right)$
is admissible for $\wl$.
\end{cor}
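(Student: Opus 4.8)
The plan is to check directly the three defining properties of an admissible pair in Definition \ref{def: admissible maps} for $\left(\Sigma_{\left(\sigma,\tau\right)},f_{\left(\sigma,\tau\right)}\right)$, reading them off from the two constructions and Claim \ref{claim:properties-of-perms-surface}.

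First I would dispose of properties $(1)$ and $(2)$, which are essentially formal. For $(1)$: $\Sigma_{\left(\sigma,\tau\right)}$ is a finite CW-complex, hence compact, and by Claim \ref{claim:properties-of-perms-surface}$(1)$ it is an orientable surface whose boundary is exactly $S^{1}\left(w_{1}\right)\cup\ldots\cup S^{1}\left(w_{\ell}\right)$, so it has $\ell$ boundary components. It has no closed connected components: by Claim \ref{claim:properties-of-perms-surface}$(2)$ every $2$-cell has in its boundary a point of some $S^{1}\left(w_{i}\right)$ (an $o$-point or a $z_{i}$-point), hence meets $\partial\Sigma_{\left(\sigma,\tau\right)}$; since every connected component is a union of $2$-cells and their boundaries, it meets the boundary. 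Property $(2)$ is immediate from the last paragraph of Definition \ref{def:surface-from-perms}: $v_{1},\ldots,v_{\ell}$ are the basepoints of $S^{1}\left(w_{1}\right),\ldots,S^{1}\left(w_{\ell}\right)$ — one per boundary component — and the $\partial_{i}$ are the chosen identifications of $\left(S^{1},1\right)$ with $\left(S^{1}\left(w_{i}\right),1\right)$, with common orientation the one used to orient the $2$-cells in the proof of Claim \ref{claim:properties-of-perms-surface}$(1)$.

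The real content is property $(3)$. The identity $f_{\left(\sigma,\tau\right)}\circ\partial_{i}=f_{w_{i}}$ for all $i$ is exactly the first bullet of Definition \ref{def:f_(sigma,tau)}. The only point requiring an argument is that Definition \ref{def:f_(sigma,tau)} actually produces a well-defined homotopy class, i.e.\ that $f_{\left(\sigma,\tau\right)}$ on the $1$-skeleton sends the boundary of every $2$-cell $D$ to a nullhomotopic loop in $\wedger$, so that the extension over $D$ exists and is unique up to homotopy by Lemma \ref{lem:homotopy=00003Dsame_induced_map}. To verify this I would subdivide each circle of $\wedger$ into its four oriented sub-arcs $o\to p_{i}\to z_{i}\to q_{i}\to o$ and trace $f_{\left(\sigma,\tau\right)}$ around $\partial D$: each matching-edge contributes a constant stretch (at some $p_{i}$ or $q_{i}$), and each $S^{1}$-segment maps to one such sub-arc or a product of two consecutive ones, forward or backward according to the sign of the relevant letter. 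When $D$ is a type-$z_{i}$ disc, Definition \ref{def:surface-from-perms} and the fact that $\sigma,\tau$ match appearances of $x_{i}^{+1}$ with appearances of $x_{i}^{-1}$ force the $z$-segments around $\partial D$ to traverse the middle portion $p_{i}\to z_{i}\to q_{i}$ of the circle $x_{i}$ in alternating directions, so the boundary word telescopes to the identity. When $D$ is a type-$o$ disc, at each matching-edge of $\partial D$ the two sub-arcs it separates form an inverse pair (an $x_{j}$-head adjacent to its reverse, or an $x_{j}$-tail adjacent to its reverse); cancelling these in turn collapses the boundary word down to one inverse pair, hence to the identity. Thus $f_{\left(\sigma,\tau\right)}$ extends over every $2$-cell and $\left(\Sigma_{\left(\sigma,\tau\right)},f_{\left(\sigma,\tau\right)}\right)$ is admissible for $\wl$.

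I expect the only mildly delicate step to be this last one: making sure the boundary word of a type-$o$ disc collapses all the way to the trivial element, and not merely to a shorter word. I would handle it by a bookkeeping argument showing that, read cyclically around the disc, the oriented sub-arcs occur in cancelling consecutive pairs, so that after telescoping a single — necessarily trivial — inverse pair is left. Everything else in the proof is bookkeeping against Definitions \ref{def:surface-from-perms} and \ref{def:f_(sigma,tau)} and Claim \ref{claim:properties-of-perms-surface}.
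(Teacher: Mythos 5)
Your proposal is correct and follows the same route as the paper, which states the corollary as an immediate consequence of Definitions \ref{def:surface-from-perms} and \ref{def:f_(sigma,tau)} and Claim \ref{claim:properties-of-perms-surface} without further argument. The only substantive point you add — that the image of the boundary of each type-$o$ and type-$z_{i}$ disc telescopes to a nullhomotopic loop, so the extension over the $2$-cells exists — is exactly the assertion the paper folds into the third bullet of Definition \ref{def:f_(sigma,tau)}, and your cancellation bookkeeping verifies it correctly.
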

It turns out that all admissible maps $\left(\Sigma,f\right)$ for
$\wl$ can be basically obtained this way, as long as $f$ is incompressible.
\begin{lem}
\label{lem:every admissible and incompressible obtained from matchings}If
$\left(\Sigma,f\right)$ is admissible for $\wl$ and $f$ is incompressible,
then there is a pair of matchings $\left(\st\right)\in\match\left(\wl\right)$
so that $\left(\Sigma,f\right)\sim\left(\Sigma_{\left(\st\right)},f_{\left(\st\right)}\right)$.\end{lem}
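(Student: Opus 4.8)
The plan is to start from an arbitrary admissible pair $\left(\Sigma,f\right)$ for $\wl$ with $f$ incompressible, and to ``tighten'' the map $f$ so that it becomes a cellular map with respect to a carefully chosen CW-structure on $\Sigma$ whose $1$-skeleton contains the preimages of the marked points $p_{i},q_{i},z_{i}$ and $o$. Concretely, first homotope $f$ (relative $\partial\Sigma$) so that it is transverse to the midpoints of the $r$ edges of $\wedger$; then $f^{-1}$ of these midpoints is a disjoint union of properly embedded arcs and circles in $\Sigma$. Since $f$ is incompressible, I would argue that after a further homotopy no such circle survives: a circle mapping to the midpoint of the $i$-th edge either bounds a disc in $\Sigma$ (in which case, by incompressibility of $f$, its image is nullhomotopic and we can push $f$ across the disc to remove the circle, as in the discussion after Definition \ref{def:incompressible}) or is parallel to a boundary component, and parallel circles can be collapsed towards the boundary. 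This is essentially Culler's tightening argument from \cite{CULLER}, extended to several boundary components and to the ``doubled'' combinatorics with two marked points per letter.

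Once $f$ is tight, the preimages of the three marked points $p_i,q_i,z_i$ on the $i$-th circle, together with the preimages of $o$, cut $\Sigma$ into pieces, and I would show these pieces are exactly discs whose boundaries alternate between arcs mapping into the edges of $\wedger$ and arcs on $\partial\Sigma$; this is where incompressibility is used again, to rule out non-disc complementary regions. The $p_i$-preimages and $q_i$-preimages give rise to two perfect matchings between the occurrences of $x_i^{+1}$ and $x_i^{-1}$ along $\partial\Sigma$: indeed each arc in $f^{-1}(p_i)$ runs from a $p_i^{+}$-point to a $p_i^{-}$-point on $\partial\Sigma$ (the signs being forced by orientation and by $f\circ\partial_j = f_{w_j}$), and collecting these over all $j$ and $i$ yields an element $\sigma\in\match\left(\wl\right)$; the $q_i$-arcs similarly yield $\tau\in\match\left(\wl\right)$. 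I would then observe that the CW-structure just obtained on $\Sigma$ — boundary circles with the marked points, $p_i$- and $q_i$-arcs as the matching-edges, and the complementary discs as $2$-cells — is precisely the CW-structure of $\Sigma_{\left(\sigma,\tau\right)}$ from Definition \ref{def:surface-from-perms}, with the type-$o$ and type-$z_i$ discs matching up via Claim \ref{claim:properties-of-perms-surface}. The identification of the two discs along each matching-edge is forced, so the identification of $\Sigma$ with $\Sigma_{\left(\sigma,\tau\right)}$ as CW-complexes, hence the homeomorphism $\rho\colon\Sigma\to\Sigma_{(\sigma,\tau)}$, is canonical and carries $\partial_i$ to $\partial_i$.

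Finally, I would check that $f\simeq f_{(\sigma,\tau)}\circ\rho$ relative $\partial\Sigma$. Both maps agree on $\partial\Sigma$ by construction, and both are constant on each matching-edge (for $f_{(\sigma,\tau)}$ this is Definition \ref{def:f_(sigma,tau)}; for $f$, after the tightening, each $p_i$-arc maps to the single point $p_i$ and each $q_i$-arc to $q_i$, so up to a homotopy supported near these arcs we may assume $f$ is likewise constant there). The arcs fill $\Sigma$, i.e.\ cut it into discs, so by Lemma \ref{lem:homotopy=00003Dsame_induced_map} it suffices that $f$ and $f_{(\sigma,\tau)}\circ\rho$ induce the same element of $\pi_1(\wedger,o)$ on each complementary disc's boundary arcs — but both induce the nullhomotopic loop there, since the boundary of each $2$-cell is already mapped to a contractible loop by tightness. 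Hence $f$ and $f_{(\sigma,\tau)}\circ\rho$ are homotopic rel $\partial\Sigma$, and $\left(\Sigma,f\right)\sim\left(\Sigma_{\left(\sigma,\tau\right)},f_{\left(\sigma,\tau\right)}\right)$ as required.

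\textbf{Main obstacle.} The delicate point is the tightening step: making precise, with full rigor, that incompressibility of $f$ lets one remove \emph{all} closed curves from the preimages of edge-midpoints and guarantees that the complementary regions of the arc system are discs. One must be careful that the moves used (pushing $f$ across a disc, collapsing boundary-parallel annuli, isotoping to remove bigons between an arc and the boundary) do not change the homotopy class of $f$ rel $\partial\Sigma$, and that the process terminates — e.g.\ by tracking the total number of arc components, which is non-increasing and drops at each non-trivial move. I expect to import this almost verbatim from Culler's proof of Theorem 1.4 in \cite{CULLER}, with the only new bookkeeping being the presence of $\ell$ boundary components and of the three distinguished marked points $p_i,z_i,q_i$ per generator rather than a single one.
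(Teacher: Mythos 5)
Your argument is correct in outline, but it follows a genuinely different route from the paper's. You work with preimages: after homotoping $f$ rel $\partial\Sigma$ into a transverse/regular position, you read the matchings $\sigma$ and $\tau$ off the arc components of $f^{-1}\left(p_{i}\right)$ and $f^{-1}\left(q_{i}\right)$, using incompressibility twice (once to kill circle components, once to force the complementary regions to be discs). This is exactly the alternative that the authors themselves flag in the footnote to their proof ("a straight-forward argument is available when $f$ is smooth outside $f^{-1}\left(o\right)$ and $p_{i}$ and $q_{i}$ are regular points..."), but it is not the proof they give. The paper instead avoids all transversality: it picks arcs $\gamma_{1},\ldots,\gamma_{m}$ between the basepoints $v_{1},\ldots,v_{\ell}$ cutting $\Sigma$ into polygons, marks each $\gamma_{j}$ according to the word $\left[f\left(\gamma_{j}\right)\right]$, chooses in each polygon a formal reduction of the (nullhomotopic) boundary word and realizes it by disjoint "reduction arcs", concatenates these across polygons to define $\sigma,\tau$ (discarding closed chains), and only then invokes incompressibility once, to show the complement of the resulting arc system consists of discs; the identification with $\left(\Sigma_{\left(\st\right)},f_{\left(\st\right)}\right)$ then goes through Lemma \ref{lem:homotopy=00003Dsame_induced_map} just as in your last step. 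What each approach buys: yours is more geometric and makes the origin of the two matchings transparent, at the price of the smoothing/transversality and innermost-circle bookkeeping you correctly identify as the main burden; the paper's is purely combinatorial for continuous maps, at the price of non-canonical choices (the cutting arcs and the reduction process).

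Two small corrections to your write-up, neither fatal. First, the case of a circle component "parallel to a boundary component" does not need a separate collapsing move (which would anyway conflict with keeping $f$ fixed on $\partial\Sigma$): such a circle is non-nullhomotopic with constant image, so incompressibility excludes it outright; the only circles that occur bound discs, and those discs meet no arcs (arcs have endpoints on $\partial\Sigma$), so an innermost-circle argument applies. When you push $f$ across such a disc, redefine it into a small contractible sub-arc of the circle of $\wedger$ containing no marked points, so that no new preimage components of \emph{other} marked points are created; then the quantity that strictly decreases is the number of circle components of $\bigcup_{i}\left(f^{-1}\left(p_{i}\right)\cup f^{-1}\left(q_{i}\right)\right)$, not the number of arc components (which is unchanged throughout). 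Second, in the disc-complement step one should note that a curve essential in a complementary region but nullhomotopic in $\Sigma$ would bound a disc disjoint from $\partial\Sigma$ and hence from all arcs, forcing it to be inessential in the region after all; this is the precise point where incompressibility yields that every complementary region is a disc.
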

\begin{proof}
Let%
\footnote{A straight-forward argument is available when $f$ is smooth outside
$f^{-1}\left(o\right)$ and $p_{i}$ and $q_{i}$ are regular points
for each $i\in\left[r\right]$. In this case, the desired matchings
are obtained by the arc parts of $f^{-1}\left(p_{i}\right)$ and $f^{-1}\left(q_{i}\right)$.%
} $\left(\Sigma,f\right)$ be admissible for $\wl$ and $f$ incompressible.
As in Lemma \ref{lem:homotopy=00003Dsame_induced_map}, we find a
finite set of oriented disjoint arcs $\gamma_{1},\ldots,\gamma_{m}\colon\left[0,1\right]\to\Sigma$
with endpoints in $\left\{ v_{1},\ldots,v_{\ell}\right\} $ which
cut $\Sigma$ into discs. For every $j\in\left[m\right]$, denote
$u_{j}=\left[f\left(\gamma_{j}\right)\right]\in\F_{r}$. We now want
to mark the arc $\gamma_{j}$ with $2\left|u_{j}\right|$ points colored
with $\left\{ p_{i},q_{i}\,\middle|\, i\in\left[r\right]\right\} $
as we did in $S^{1}\left(w\right)$ in the beginning of this section,
only, for now, without the $\pm$ sign. Namely, using the function
$g\colon\left[0,1\right]\to S^{1}$ defined by $t\mapsto e^{2\pi it}$,
use $S^{1}\left(u_{j}\right)$ to mark and color points on $\gamma_{j}$. 

Now, in every disc $D$ which is cut from $\Sigma$ by the arcs $\gamma_{1},\ldots,\gamma_{m}$,
use the orientation on $D$ (induced from the one on $\Sigma$) to
orient each arc $\gamma_{j}$ at the boundary of $D$, and add accordingly
$\pm$ signs to the colors of the marked points on this arc. In particular,
every marked point on a $\gamma_{j}$ is signed {}``$+$'' for one
of the two discs it borders and signed {}``$-$'' for the other. 

Since the image of $\partial D$ through $f$ is nullhomotopic, the
sequence of marked points one reads along $\partial D$ can be reduced
to an empty sequence by successive deletions of pairs of the form
$p_{i}^{+}p_{i}^{-}$, $p_{i}^{-}p_{i}^{+}$, $q_{i}^{+}q_{i}^{-}$
or $q_{i}^{+}q_{i}^{-}$. We use one of these reduction processes
and, at each step, draw an arc inside $D$ between the two marked
points we delete at that step. A simple inductive argument shows that
at each step, the remaining unpaired points are all in the boundary
of the same disc bounded by parts of $\partial D$ and the existing
arcs (with no arcs inside the disc), so one can draw in its interior
a new arc connecting the next pair of points.

Next, use the new {}``reduction'' arcs to determine $\sigma$ and
$\tau$: for every marked point $t$ on $\partial\Sigma$, $t$ belongs
to some disc $D$, and follow the arc emanating from $t$ it to some
$t'\in\partial D$. If $t'$ is not in $\partial\Sigma$, but, say,
in $\gamma_{j}$, follow the arc from $t'$ inside the other disc
bordering $\gamma_{j}$. Continue in the same way until a point from
$\partial\Sigma$ is reached. It is easy to see that this induces
matchings $\sigma,\tau\in\match\left(\wl\right)$: for example, a
$q_{i}^{+}$-point in $\partial\Sigma$ is connected by an arc to
a $q_{i}^{-}$-point. If the latter is not on $\partial\Sigma$, it
is identified with a $q_{i}^{+}$-point on a neighboring disc, which
is then connected to another $q_{i}^{-}$-point, and so forth. Note
that some of the arcs may form cycles in the interior of $\Sigma$,
and simply disregard or delete these one. Let $A$ be the set of arcs
we used for determining $\sigma$ and $\tau$. This is illustrated
in Figure \ref{fig:pentagon2-1}.\FigBesBeg \\
\begin{figure}[h]
\centering{}\includegraphics[bb=0bp 0bp 300bp 280bp,scale=0.8]{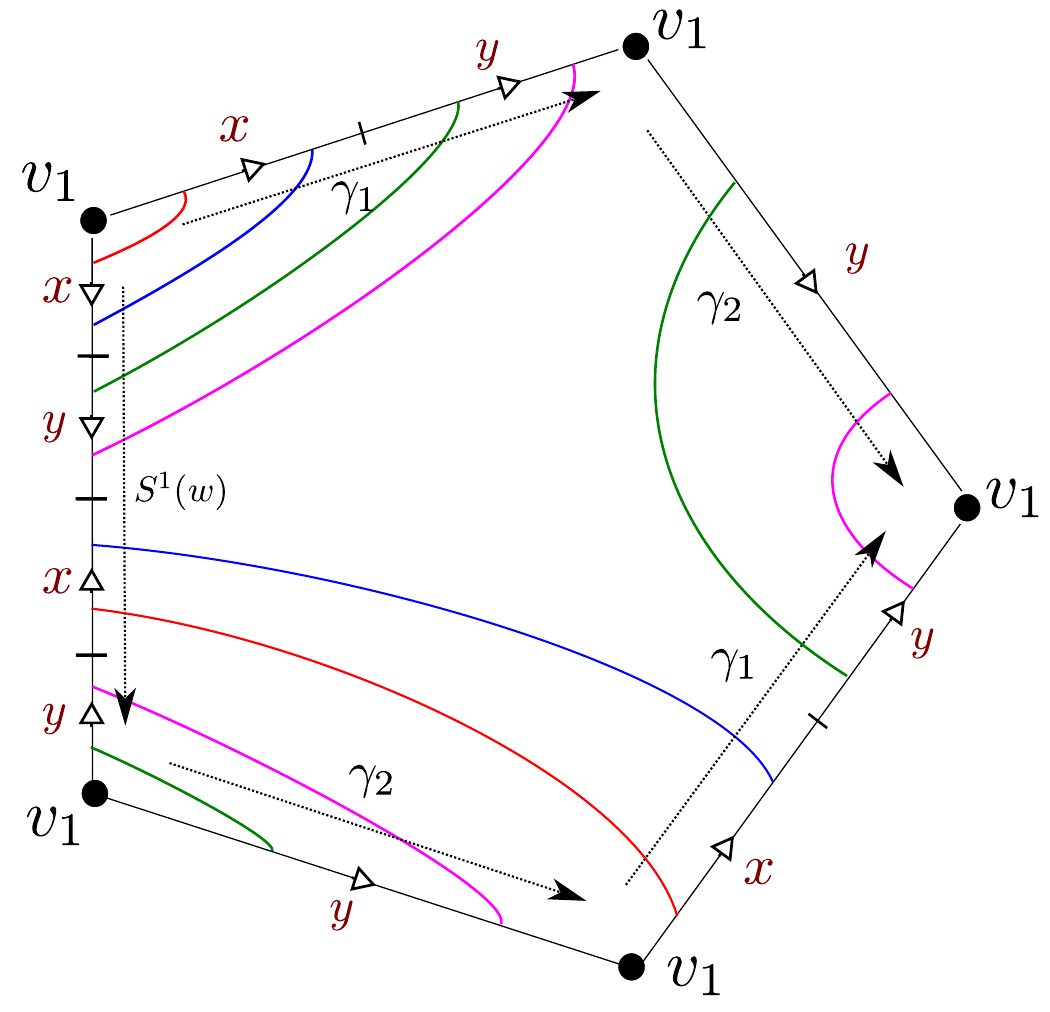}\caption{\label{fig:pentagon2-1} Let $w=\left[x,y\right]=x_{1}y_{2}X_{3}Y_{4}$
and $\left(\Sigma_{1,1},f\right)$ be admissible for $w$ and representing
the solution $w=\left[xy,y\right]$. Namely, if $\gamma_{1}$ and
$\gamma_{2}$ are arcs in $\Sigma_{1,1}$ representing two basis elements
in $\F_{2}$, then $\left[f\circ\gamma_{1}\right]=xy$ and $\left[f\circ\gamma_{2}\right]=y$.
The arcs $\gamma_{1}$ and $\gamma_{2}$ cut $\Sigma_{1,1}$ to a
sole disc $D$, and the colored arcs in the figure correspond to a
particular reduction process of the word read along $\partial D$,
as explained in the proof of Lemma \ref{lem:every admissible and incompressible obtained from matchings}.
The matchings we get here are $\sigma=\tau=\left(\protect\begin{array}{cc}
x_{1} & y_{2}\protect\\
X_{3} & Y_{4}
\protect\end{array}\right)$ (this is the only possible matching for this particular word).}
\end{figure}
\FigBesEnd 

We claim that $\Sigma\setminus\bigcup_{\alpha\in A}\alpha$ is a union
of discs. To see this, we first perturb $f$ so that it agrees with
$f_{u_{j}}$ on $\gamma_{j}$ for every $j\in\left[m\right]$. We
then perturb it so that it is constant on every arc drawn in the reduction
process: this only requires to change $f$ in the interior of every
disc $D$ which is cut from $\Sigma$ by the arcs $\gamma_{j}$. Now
regard the arcs $\alpha\in A$ as the matching-edges in Definition
\ref{def:surface-from-perms}, and follow the cycles along these arcs
and $\partial\Sigma$ described in the same definition. These are
precisely the boundaries of the connected components of $\Sigma\setminus\bigcup_{\alpha\in A}\alpha$.
As in Definition \ref{def:f_(sigma,tau)}, the image of $f$ through
each such cycle is easily seen to be nullhomotopic. But $f$ is incompressible,
hence each such circle must bound a disc. 

This shows that $\Sigma$ is homeomorphic to $\Sigma_{\left(\sigma,\tau\right)}$
with the arcs $\alpha\in A$ mapped to the matching-edges in $\Sigma_{\left(\sigma,\tau\right)}$.
Since $f$ and $f_{\left(\st\right)}$ agree on the $1$-skeleton,
they are homotopic (using, again, Lemma \ref{lem:homotopy=00003Dsame_induced_map}).
Hence $\left(\Sigma,f\right)\sim\left(\Sigma_{\left(\st\right)},f_{\left(\st\right)}\right)$.
\end{proof}
Since in every admissible $\left(\Sigma,f\right)$ for $\wl$ with
maximal Euler characteristic $f$ is incompressible, we deduce from
Corollary \ref{cor:(Sigma,f) of (sigma,tau) is admissible} and Lemma
\ref{lem:every admissible and incompressible obtained from matchings}
that,
\begin{cor}
\label{cor:max-Euler-of-matchings=00003Dchi}The highest Euler characteristic
of a pair of matchings is $\ch\left(\wl\right)$, namely, 
\[
\max_{\left(\sigma,\tau\right)\in\match\left(\wl\right)^{2}}\chi\left(\sigma,\tau\right)=\ch\left(\wl\right).
\]

\end{cor}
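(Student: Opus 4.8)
The plan is to prove the two inequalities separately, relying on the two results just established together with the observation (made in Section~\ref{sec:Introduction}, preceding Theorem~\ref{thm:K(G,1) for incompressible}) that an admissible surface of maximal Euler characteristic must carry an incompressible map.

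For the inequality $\max_{(\sigma,\tau)}\chi(\sigma,\tau)\le\ch(\wl)$, I would simply invoke Corollary~\ref{cor:(Sigma,f) of (sigma,tau) is admissible}: for each pair $(\sigma,\tau)\in\match(\wl)^2$ the pair $\bigl(\Sigma_{(\sigma,\tau)},f_{(\sigma,\tau)}\bigr)$ is admissible for $\wl$, so by Definition~\ref{def:chi(words)} its Euler characteristic $\chi(\sigma,\tau)=\chi\bigl(\Sigma_{(\sigma,\tau)}\bigr)$ is at most $\ch(\wl)$. Since $\match(\wl)$ is finite ($\wl$ being balanced, each $\mathrm{BIJ}_i(\wl)$ is finite and non-empty), the maximum on the left is attained and the inequality follows.

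For the reverse inequality I first need that $\ch(\wl)$ is actually attained by some admissible pair. This holds because, $\wl$ being balanced, the set of admissible pairs is non-empty (e.g.\ any $\bigl(\Sigma_{(\sigma,\tau)},f_{(\sigma,\tau)}\bigr)$ works), while every admissible $\Sigma$ has no closed components and exactly $\ell$ boundary circles, hence at most $\ell$ connected components, each of Euler characteristic $\le 1$; so $\chi(\Sigma)\le\ell$, and the supremum of an integer-valued set bounded above is attained. Fix, then, an admissible $(\Sigma,f)$ with $\chi(\Sigma)=\ch(\wl)$. By the cut-and-cap argument recalled before Theorem~\ref{thm:K(G,1) for incompressible} (cut along a compressing simple closed curve, cap with two discs, and discard any resulting closed component --- none of which can be a sphere, as a compressing curve is not nullhomotopic), a compressible $f$ would yield an admissible surface of strictly larger Euler characteristic, contradicting maximality; hence $f$ is incompressible. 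Now Lemma~\ref{lem:every admissible and incompressible obtained from matchings} produces $(\sigma,\tau)\in\match(\wl)^2$ with $(\Sigma,f)\sim\bigl(\Sigma_{(\sigma,\tau)},f_{(\sigma,\tau)}\bigr)$; since this equivalence is realized by a homeomorphism $\Sigma\to\Sigma_{(\sigma,\tau)}$, we get $\chi(\sigma,\tau)=\chi(\Sigma)=\ch(\wl)$, which gives $\max_{(\sigma,\tau)}\chi(\sigma,\tau)\ge\ch(\wl)$.

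There is essentially no hard part here: the only points requiring a moment's care are the edge cases of the cut-and-cap argument (already addressed in the text) and the remark that the maximum defining $\ch(\wl)$ is attained; both are routine. Combining the two inequalities yields the claimed equality.
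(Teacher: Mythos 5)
Your proof is correct and follows essentially the same route as the paper: the upper bound via Corollary \ref{cor:(Sigma,f) of (sigma,tau) is admissible}, and the lower bound by noting that an admissible pair of maximal Euler characteristic is incompressible (the cut-and-cap argument) and then invoking Lemma \ref{lem:every admissible and incompressible obtained from matchings}. The only difference is that you spell out routine details the paper leaves implicit (attainment of the maximum defining $\ch\left(\wl\right)$ and the discarding of closed components after capping), which is fine.
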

Moreover, we get an extension to a Theorem of Culler  \cite[Thm 4.1]{CULLER},
stating that the number of equivalence classes of solutions to $\left[u_{1},v_{1}\right]\cdots\left[u_{g},v_{g}\right]=w$
with $g=\cl\left(w\right)$ is finite:
\begin{cor}
\label{cor:Finitely many incompressible maps}For every $\wl\in\F_{r}$,
there are at most finitely many equivalence classes of $\left(\Sigma,f\right)$
which are admissible for $\wl$ and incompressible. In particular,
the set $\sol\left(\wl\right)$ is finite.\end{cor}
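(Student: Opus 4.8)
The plan is to bound the number of incompressible admissible maps (up to equivalence) by a direct count of the combinatorial data that, by Lemma~\ref{lem:every admissible and incompressible obtained from matchings}, produces all of them. Concretely, every equivalence class of an admissible incompressible pair $\left(\Sigma,f\right)$ for $\wl$ is realized as some $\left(\Sigma_{\left(\sigma,\tau\right)},f_{\left(\sigma,\tau\right)}\right)$ with $\left(\sigma,\tau\right)\in\match\left(\wl\right)^{2}$. But $\match\left(\wl\right)$ is a finite set: it is a subset of the bijections from $E^{+}$ to $E^{-}$, and $\left|E^{+}\right|=\left|E^{-}\right|=L$ is finite, so $\left|\match\left(\wl\right)\right|\le L!$ and hence $\left|\match\left(\wl\right)^{2}\right|\le\left(L!\right)^{2}$. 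Since the construction $\left(\sigma,\tau\right)\mapsto\left(\Sigma_{\left(\sigma,\tau\right)},f_{\left(\sigma,\tau\right)}\right)$ is a well-defined map from this finite set onto (a superset of) the set of equivalence classes of incompressible admissible maps, there are at most $\left(L!\right)^{2}$ such classes. This immediately gives the first assertion.

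For the second assertion, recall $\sol\left(\wl\right)$ consists of equivalence classes of admissible maps of \emph{maximal} Euler characteristic $\ch\left(\wl\right)$. First one must observe that if $\ch\left(\wl\right)=-\infty$, i.e.\ $w_{1}\cdots w_{\ell}\notin\left[\F_{r},\F_{r}\right]$, then $\sol\left(\wl\right)=\varnothing$ and there is nothing to prove; so assume $w_{1}\cdots w_{\ell}\in\left[\F_{r},\F_{r}\right]$. Then, as noted in the discussion preceding Definition~\ref{def:incompressible}, every admissible $\left(\Sigma,f\right)$ with $\chi\left(\Sigma\right)=\ch\left(\wl\right)$ is incompressible: a compressing simple closed curve would allow cutting and capping to produce an admissible surface of strictly larger Euler characteristic, contradicting maximality. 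Therefore $\sol\left(\wl\right)$ is a subset of the set of equivalence classes of admissible incompressible maps, which we have just shown is finite; hence $\sol\left(\wl\right)$ is finite.

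There is no serious obstacle here — the only point requiring a little care is making sure that Lemma~\ref{lem:every admissible and incompressible obtained from matchings} is being applied correctly, namely that \emph{every} equivalence class of incompressible admissible map is hit by the matching construction (which is exactly the content of that lemma, combined with Corollary~\ref{cor:(Sigma,f) of (sigma,tau) is admissible} ensuring the construction lands in the right place), and in noting that the passage from the finiteness of $\match\left(\wl\right)^{2}$ to the finiteness of the set of equivalence classes only uses that we have a surjection onto the latter. The reduction from $\sol\left(\wl\right)$ to the incompressible case via maximality of $\chi$ is the one genuine extra ingredient, and it is already spelled out in the text preceding Theorem~\ref{thm:K(G,1) for incompressible}.
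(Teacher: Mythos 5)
Your proposal is correct and matches the paper's (implicit) argument: the corollary is deduced exactly from Lemma \ref{lem:every admissible and incompressible obtained from matchings}, which exhibits the finite set $\match\left(\wl\right)^{2}$ as mapping onto the equivalence classes of admissible incompressible pairs, together with the observation that maps of maximal Euler characteristic are incompressible. Your extra care about the degenerate case $\ch\left(\wl\right)=-\infty$ and the surjectivity point is fine but adds nothing beyond what the paper already relies on.
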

\begin{rem}
\label{remark: algo-for-cl}In the proof of Lemma \ref{lem:every admissible and incompressible obtained from matchings},
we could choose in each disc $D$ a reduction process that comes from
a reduction of the word we read along $\partial D$. This would mean
that whenever we pair two $p_{i}$-points, we also match their associated
two $q_{i}$-points. In other words, the bijections we obtain satisfy
$\sigma=\tau$. Thus,
\[
\max_{\sigma\in\match\left(\wl\right)}\chi\left(\sigma,\sigma\right)=\ch\left(\wl\right).
\]
This fact, in a slightly different language and for a single word,
appears already in Culler's work, where it is used as an algorithm
to compute $\cl\left(w\right)$ \cite[Theorem 2.1]{CULLER}. More
generally, Corollary \ref{cor:max-Euler-of-matchings=00003Dchi} provides
an algorithm to compute $\ch\left(\wl\right)$ for every $\wl\in\F_{r}$.
Furthermore, Lemma \ref{lem:every admissible and incompressible obtained from matchings}
shows that by going over all matchings, we can find representatives
for all admissible $\left(\Sigma,f\right)$ for $\wl$ with $f$ incompressible.
It is still not clear at this point how to tell apart the different
equivalence classes of admissible maps, but we face this challenge
in Section \ref{sec:More-Consequences} below.
\end{rem}
Finally, using \eqref{eq:better-formula-for-tr(w)} and Proposition
\ref{prop:order-of-contribution-of-(sigma,tau)}, we can now deduce
Theorem \ref{thm:leading-exponent-general}:
\begin{cor}
\label{cor:trw(n) leading exponent}For any $\wl\in\F_{r}$,
\begin{equation}
\trwl\left(n\right)=n^{\ch\left(\wl\right)}\left[\sum_{\begin{gathered}\left(\sigma,\tau\right)\in\match\left(\wl\right)^{2}\\
\mathrm{with}\,\chi\left(\sigma,\tau\right)=\ch\left(\wl\right)
\end{gathered}
}\moeb\left(\sigma^{-1}\tau\right)\right]+O\left(n^{\ch\left(\wl\right)-2}\right).\label{eq:leading-term-of-trw(n)-as-sum-over-min-genus-perms}
\end{equation}
In particular, $\trwl\left(n\right)=O\left(n^{\ch\left(\wl\right)}\right)$.\end{cor}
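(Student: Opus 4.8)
The plan is to read off the leading term of $\trwl(n)$ directly from the expansion \eqref{eq:better-formula-for-tr(w)} combined with the term-by-term estimate of Proposition \ref{prop:order-of-contribution-of-(sigma,tau)}. First I would dispose of the degenerate case: if $\wl$ is not balanced then $\match(\wl)=\emptyset$ and $\trwl(n)\equiv 0$ by Claim \ref{claim: tr=00003D0 for non-balanced words}, so there is nothing to prove. Assume henceforth that $\wl$ is balanced; then $\match(\wl)$ is a finite set, and \eqref{eq:better-formula-for-tr(w)} writes $\trwl(n)$ as a finite sum indexed by pairs $(\sigma,\tau)\in\match(\wl)^2$.

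Next, substitute Proposition \ref{prop:order-of-contribution-of-(sigma,tau)} into \eqref{eq:better-formula-for-tr(w)}: the contribution of each pair $(\sigma,\tau)$ is $\moeb(\sigma^{-1}\tau)\,n^{\chi(\sigma,\tau)}+O(n^{\chi(\sigma,\tau)-2})$. By Corollary \ref{cor:max-Euler-of-matchings=00003Dchi} we have $\chi(\sigma,\tau)\le\ch(\wl)$ for every pair, with equality attained, so $\ch(\wl)$ is the candidate leading exponent. The one point that needs a genuine observation — and, I expect, the only non-formal step — is that \emph{no} pair realizes the intermediate value $\ch(\wl)-1$. Indeed, by Claim \ref{claim:properties-of-perms-surface} the space $\Sigma_{(\sigma,\tau)}$ is a compact orientable surface with exactly $\ell$ boundary components, so writing $\chi(\sigma,\tau)$ as the sum of $2-2g_i-b_i$ over its connected components (with $\sum b_i=\ell$) shows $\chi(\sigma,\tau)\equiv\ell\pmod 2$ for every pair; in particular $\chi(\sigma,\tau)\equiv\ch(\wl)\pmod 2$. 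Hence any pair with $\chi(\sigma,\tau)<\ch(\wl)$ actually has $\chi(\sigma,\tau)\le\ch(\wl)-2$, and its entire contribution — leading term included — is $O(n^{\ch(\wl)-2})$. Without this parity remark the error term could only be controlled as $O(n^{\ch(\wl)-1})$, since $\moeb$ of a permutation, being a signed product of Catalan numbers, is never zero and so no automatic cancellation is available.

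Finally, I would split the finite sum in \eqref{eq:better-formula-for-tr(w)} according to whether $\chi(\sigma,\tau)=\ch(\wl)$ or $\chi(\sigma,\tau)<\ch(\wl)$. Each pair in the first group contributes $\moeb(\sigma^{-1}\tau)\,n^{\ch(\wl)}+O(n^{\ch(\wl)-2})$, and the pairs in the second group contribute $O(n^{\ch(\wl)-2})$ in total; adding everything and factoring out $n^{\ch(\wl)}$ gives precisely \eqref{eq:leading-term-of-trw(n)-as-sum-over-min-genus-perms}. The bound $\trwl(n)=O(n^{\ch(\wl)})$ is then immediate, because the bracketed coefficient is a fixed rational number not depending on $n$. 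Apart from the parity argument, the proof is pure bookkeeping on the expansions already established in Sections \ref{sec:A-Rational-Expression} and \ref{sec:surface-from-matchings}.
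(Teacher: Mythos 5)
Your proof is correct and follows essentially the same route as the paper: substitute Proposition \ref{prop:order-of-contribution-of-(sigma,tau)} into \eqref{eq:better-formula-for-tr(w)} and invoke Corollary \ref{cor:max-Euler-of-matchings=00003Dchi} to identify the leading exponent. The parity observation you rightly flag as the only non-formal step (that $\chi(\sigma,\tau)\equiv\ell\pmod 2$ for every pair, by orientability, so sub-maximal pairs contribute only $O\left(n^{\ch\left(\wl\right)-2}\right)$) is left implicit in the paper's one-line deduction but is exactly the fact it records immediately afterwards in the corollary stating that every other coefficient of the Laurent series vanishes.
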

\begin{example}
\label{example:[x,y][x,z] leading term 0}As an example, consider
the word $w=\left[x,y\right]\left[x,z\right]=x_{1}y_{2}X_{3}Y_{4}x_{5}z_{6}X_{7}Z_{8}$.
The two possible matchings of $E^{+}$ and $E^{-}$ which preserve
the alphabet are $\left(\begin{array}{cccc}
x_{1} & y_{2} & x_{5} & z_{6}\\
X_{3} & Y_{4} & X_{7} & Z_{8}
\end{array}\right)$ and $\left(\begin{array}{cccc}
x_{1} & y_{2} & x_{5} & z_{6}\\
X_{7} & Y_{4} & X_{3} & Z_{8}
\end{array}\right)$, so there are exactly $4$ pairs of matchings in this case. A simple
computation shows all of them have Euler characteristic $-3$, which
shows that $\ch\left(w\right)=-3$ (and $\cl\left(w\right)=2$). For
two of the pairs, $\moeb\left(\sigma^{-1}\tau\right)=1$ and for the
other two $\moeb\left(\sigma^{-1}\tau\right)=-1$. Hence, by Corollary
\ref{cor:trw(n) leading exponent}, $\tr_{\left[x,y\right]\left[x,z\right]}\left(n\right)=n^{-3}\cdot0+O\left(n^{-5}\right)$.
In fact, the full computation in this case (by Theorem \ref{thm:trw-rational})
shows that $\tr_{\left[x,y\right]\left[x,z\right]}\left(n\right)$
is identically zero for every $n\ge2$. In particular, this example
shows that it is not true in general that $\trw\left(n\right)=\theta\left(\frac{1}{n^{2\cdot\cl\left(w\right)-1}}\right)$,
nor that $\trw\left(n\right)\not\equiv0$ for $w\in\left[\F_{r},\F_{r}\right]$.
\end{example}

\begin{example}
As another example, consider $w=\left[x,y\right]^{2}$. There are
four matchings in $\match\left(w\right)$, hence $16$ pairs. Among
them, twelve have $\chi=-3$ and four have $\chi=-5$. Of the twelve
with $\chi=-3$, four have $\moeb\left(\sigma^{-1}\tau\right)=1$
and eight have $\moeb\left(\sigma^{-1}\tau\right)=-1$. Corollary
\ref{cor:trw(n) leading exponent} thus gives $\tr_{\left[x,y\right]^{2}}=\frac{-4}{n^{3}}+O\left(\frac{1}{n^{5}}\right)$.
(Compare with the exact rational expression in \eqref{eq:rational-expression-for-[a,b]^2}).
\end{example}
We end this section with one more interesting property of $\trwl\left(n\right)$.
\begin{cor}
In the Laurent series in $n$ expressing $\trwl\left(n\right)$, the
coefficient of \emph{every other exponent }vanishes. If $\ell$ is
odd, only terms with odd exponents may not vanish, and if $\ell$
is even, only terms with even exponents may not vanish.\end{cor}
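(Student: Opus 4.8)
The plan is to show that the Laurent expansion of $\trwl(n)$ at $n=\infty$ is supported on exponents congruent to $\ell$ modulo $2$; since this expansion is infinite in general (e.g.\ $\tr_{[x,y]^2}(n)=-4/(n^3-n)=-4n^{-3}-4n^{-5}-\cdots$) we must control all of it, not only the leading term. The crucial input is the fact recorded in the paper just after Proposition~\ref{prop:wg-mobius}, and due to \cite[Proposition~2.6]{CS}, that every other coefficient of the Laurent expansion of $\wg(\pi)$ vanishes; together with Proposition~\ref{prop:wg-mobius} this says that for $\pi\in S_m$ the rational function $\wg(\pi)$ equals $n^{-(m+\|\pi\|)}$ times a power series in $n^{-2}$.

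First I would fix a pair $(\sigma,\tau)\in\match(\wl)^2$, set $\pi_i=(\sigma^{-1}\tau)|_{E_i^+}\in S_{L_i}$, and examine its term $\wg(\pi_1)\cdots\wg(\pi_r)\cdot n^{B_{(\sigma,\tau)}}$ in \eqref{eq:better-formula-for-tr(w)}. Multiplying the factorizations above, this term equals $n^{B_{(\sigma,\tau)}-\sum_i(L_i+\|\pi_i\|)}=n^{B_{(\sigma,\tau)}-L-\|\sigma^{-1}\tau\|}$ times a power series in $n^{-2}$, using $\sum_i L_i=L$ and $\|\sigma^{-1}\tau\|=\sum_i\|\pi_i\|$. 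I would then rerun the Euler-characteristic bookkeeping from the proof of Proposition~\ref{prop:order-of-contribution-of-(sigma,tau)}: by Claim~\ref{claim:properties-of-perms-surface}, $B_{(\sigma,\tau)}$ is the number of type-$o$ discs of $\Sigma_{(\sigma,\tau)}$, while $\|\sigma^{-1}\tau\|=L-\#\{\text{type-}z\text{ discs}\}$, so $B_{(\sigma,\tau)}-L-\|\sigma^{-1}\tau\|=\#\{\text{discs of }\Sigma_{(\sigma,\tau)}\}-2L=\chi(\sigma,\tau)$, the last step because the $1$-skeleton of $\Sigma_{(\sigma,\tau)}$ has $4L$ vertices and $6L$ edges. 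Hence the term of $(\sigma,\tau)$ contributes only to exponents $\chi(\sigma,\tau),\chi(\sigma,\tau)-2,\chi(\sigma,\tau)-4,\dots$.

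It remains to note that $\chi(\sigma,\tau)\equiv\ell\pmod 2$: by Claim~\ref{claim:properties-of-perms-surface}(1), $\Sigma_{(\sigma,\tau)}$ is a compact orientable surface with $\ell$ boundary components and no closed components, so writing it as a disjoint union of connected surfaces of genus $g_j$ with $b_j\ge1$ boundary components gives $\sum_j b_j=\ell$ and $\chi(\sigma,\tau)=\sum_j(2-2g_j-b_j)\equiv\sum_j b_j=\ell\pmod 2$. Summing the finitely many terms of \eqref{eq:better-formula-for-tr(w)} — a finite sum of Laurent series each supported in the residue class $\ell\bmod 2$ — yields the claim, with odd $\ell$ forcing odd exponents and even $\ell$ forcing even ones. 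I do not anticipate a real obstacle: given the $\wg$-parity property, this is just the exponent count already performed for Proposition~\ref{prop:order-of-contribution-of-(sigma,tau)}, combined with the elementary fact that a compact surface without closed components has $\chi\equiv\#(\text{boundary components})\pmod 2$; the only minor point is that finite products and finite sums of Laurent series supported in a fixed parity class of exponents remain so, which is immediate.
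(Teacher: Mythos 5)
Your proposal is correct and follows essentially the same route as the paper: it works term by term over pairs $\left(\sigma,\tau\right)\in\match\left(\wl\right)^{2}$, identifies the leading exponent of each term with $\chi\left(\sigma,\tau\right)$ as in Proposition \ref{prop:order-of-contribution-of-(sigma,tau)}, uses orientability of $\Sigma_{\left(\sigma,\tau\right)}$ to get $\chi\left(\sigma,\tau\right)\equiv\ell\pmod 2$, and invokes the vanishing of every other coefficient of the Weingarten function. Your explicit handling of possibly disconnected $\Sigma_{\left(\sigma,\tau\right)}$ (summing $2-2g_{j}-b_{j}$ over components) is a minor refinement of the paper's one-line genus formula, not a different argument.
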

\begin{proof}
Actually, this is true for the contribution of every $\left(\sigma,\tau\right)\in\match\left(\wl\right)^{2}$
separately. That the leading exponent of every contribution has the
same parity as $\ell$ follows from the orientability of the surface
$\Sigma_{\left(\sigma,\tau\right)}$: we saw that this leading exponent
is $\chi\left(\st\right)=2-2\cdot\mathrm{genus}\left(\Sigma_{\left(\sigma,\tau\right)}\right)-\ell$.
The statement now follows from the property of the Weingarten function
that the coefficient of every other exponent vanishes (see the paragraph
right after Proposition \ref{prop:wg-mobius}).
\end{proof}

\section{The Pairs of Matchings Poset\label{sec:The-pairs-of-matchings-Poset}}

Corollary \ref{cor:trw(n) leading exponent} shows that in order to
prove Theorem \ref{thm:main - general}, it is enough to restrict
attention to pairs of matchings $\left(\st\right)\in\match\left(\wl\right)^{2}$
with $\chi\left(\st\right)=\ch\left(\wl\right)$, namely, with $\left(\st\right)$
so that $\left[\left(\Sigma_{\left(\st\right)},f_{\left(\st\right)}\right)\right]\in\sol\left(\wl\right)$.
However, all our proofs below regarding these matchings only use the
fact that $f_{\left(\st\right)}$ is incompressible. Therefore, we
continue analyzing pairs $\left(\st\right)$ with $\left(\Sigma_{\left(\st\right)},f_{\left(\st\right)}\right)$
incompressible. This also allows us to prove Theorem \ref{thm:K(G,1) for incompressible}
in its full generality.

To continue our analysis, we gather all pairs $\left(\st\right)$
which correspond to the same equivalence class of an admissible, incompressible
$\left(\Sigma,f\right)$. The main result of this section is that
there is a natural poset structure on every such set of pairs, and
that the leading coefficient of the contribution of this set to $\trwl\left(n\right)$
is the Euler characteristic of (the simplicial complex associated
with) this poset.

First, we introduce an order on pairs of permutations which is related
to the partial order on $S_{L}$ defined in Section \ref{sub:Weingarten-function-and-Collins-Sniady}:
for $\sigma,\tau,\sigma',\tau'\in S_{L}$, we write%
\footnote{This paper uses the same symbol $\preceq$ to denote different partial
orders. However, two different partial orders are always defined on
different types of elements, so it should be easy to realize which
partial order is referred to at any point in the text.%
} $\left(\sigma',\tau'\right)\preceq\left(\sigma,\tau\right)$\marginpar{${\scriptstyle \left(\sigma',\tau'\right)\preceq\left(\sigma,\tau\right)}$}
if 
\[
\left\Vert \sigma^{-1}\tau\right\Vert =\left\Vert \sigma^{-1}\sigma'\right\Vert +\left\Vert \left(\sigma'\right)^{-1}\tau'\right\Vert +\left\Vert \left(\tau'\right)^{-1}\tau\right\Vert .
\]
In other words, consider the Cayley graph of $S_{L}$ with respect
to all transpositions. We say that $\left(\sigma',\tau'\right)\preceq\left(\sigma,\tau\right)$
if and only if there is a geodesic in this Cayley graph from $\sigma$
to $\tau$ which goes through $\sigma'$ and then through $\tau'$.
\[
\xymatrix{\sigma\ar@{--}[r] & \sigma'\ar@{--}[r] & \tau'\ar@{--}[r] & \tau}
\]
Clearly, this order, with the same definition, can be applied just
as well to pairs of bijections $\sigma,\tau,\sigma',\tau'\colon E^{+}\overset{\sim}{\to}E^{-}$.
In fact, we can identify the set of bijections $E^{+}\overset{\sim}{\to}E^{-}$
with $S_{L}$ by declaring an arbitrary bijection as the identity
element. We can then think of $\match\left(\wl\right)$ as a set of
permutations in $S_{L}$. We shall use both points of views interchangeably.
\begin{defn}
\label{def:perm-poset}Let $\left(\Sigma,f\right)$ be admissible
for $\wl$ and incompressible. The \textbf{pairs of matchings poset
}of $\left(\Sigma,f\right)$, denoted \marginpar{$\pmp\left(\Sigma,f\right)$}$\pmp\left(\Sigma,f\right)$,
consists of pairs of matchings in\linebreak{}
$\match\left(\wl\right)^{2}$ which are associated, up to equivalence,
with $\left(\Sigma,f\right)$. Namely, 
\[
\pmp\left(\Sigma,f\right)\overset{\mathrm{def}}{=}\left\{ \left(\sigma,\tau\right)\in\match\left(\wl\right)^{2}\,\middle|\,\left(\Sigma_{\left(\st\right)},f_{\left(\st\right)}\right)\sim\left(\Sigma,f\right)\right\} .
\]
The partial order $\preceq$ on $\pmp\left(\Sigma,f\right)$ is induced
from the partial order on pairs of bijections $E^{+}\overset{\sim}{\to}E^{-}$.
\end{defn}
\medskip{}

The following property of pairs of matching associated with an incompressible
map is important in what follows.
\begin{lem}
\label{lem:joint-boundaries-of-discs-in-surface}If $f_{\left(\st\right)}$
is incompressible for some $\left(\sigma,\tau\right)\in\match\left(\wl\right)^{2}$,
then any two neighboring discs in $\Sigma_{\left(\sigma,\tau\right)}$,
which are necessarily of type-$o$ and of type-$z_{i}$ for some $i\in\left[r\right]$,
have at most two common matching-edges at their boundaries: at most
one $p_{i}$-edge and at most one $q_{i}$-edge.\end{lem}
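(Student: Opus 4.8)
The plan is to argue by contradiction: suppose a type-$o$ disc $D_o$ and a type-$z_i$ disc $D_z$ share three or more matching-edges on their common boundary. By Claim \ref{claim:properties-of-perms-surface}(5), every matching-edge lies on the boundary of exactly one type-$o$ disc and one type-$z_i$ disc, so these shared edges are genuine double-sided incidences between $D_o$ and $D_z$. Since matching-edges of type-$z_i$ are exactly the $p_i$-edges and $q_i$-edges (those emanating from $p_i^\pm$- and $q_i^\pm$-points), a pigeonhole argument shows that among three shared edges, two are of the same kind, say two $p_i$-edges $e_1, e_2$ (the $q_i$ case is identical). First I would use these two parallel edges to produce an essential simple closed curve $\gamma$ in $\Sigma_{(\sigma,\tau)}$: run an arc through the interior of $D_o$ from a point on $e_1$ to a point on $e_2$, then close it up with an arc through the interior of $D_z$ from the point on $e_2$ back to the point on $e_1$. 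Because $e_1$ and $e_2$ are distinct edges (so the two arcs are not isotopic rel endpoints into a single edge), this concatenation is an embedded closed curve; after a small perturbation it is simple.

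The next step is to show $\gamma$ is non-nullhomotopic in $\Sigma_{(\sigma,\tau)}$ while $f_{(\sigma,\tau)}(\gamma)$ is nullhomotopic in $\wedger$ — this is exactly the failure of incompressibility (Definition \ref{def:incompressible}), contradicting the hypothesis that $f_{(\sigma,\tau)}$ is incompressible. The second half is easy: by Definition \ref{def:f_(sigma,tau)}, $f_{(\sigma,\tau)}$ is constant on each matching-edge (sending every $p_i$-edge to the point $p_i$), and it collapses the boundary of each disc to a nullhomotopic loop; tracing $f_{(\sigma,\tau)}(\gamma)$, it is a concatenation of (the image of) part of $\partial D_o$ and part of $\partial D_z$, which one checks is freely nullhomotopic in $\wedger$. (Concretely, $\gamma$ separates $D_o \cup D_z$ from the rest, and $f_{(\sigma,\tau)}$ sends the region between $e_1$ and $e_2$ inside each disc to a path whose two contributions cancel because the two edges carry the same label $p_i$ with matching orientations.)

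The main obstacle is the first half: showing $\gamma$ is not nullhomotopic in $\Sigma_{(\sigma,\tau)}$. I would handle this by examining the union $N = D_o \cup D_z$ glued along the shared edges $e_1$ and $e_2$ (and possibly a third shared edge). Since $D_o$ and $D_z$ are discs glued along at least two disjoint arcs in their boundaries, $N$ is a surface with $\chi(N) \le 1 - (\text{number of shared edges}) \le 1-2 = -1$ if we contract each shared edge, or more carefully: two discs glued along $k \ge 2$ boundary arcs form either an annulus, a Möbius band (excluded by orientability), or a surface of negative Euler characteristic, and in each case the curve $\gamma$ constructed above is a core curve that is essential in $N$. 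One must then verify that $\gamma$ remains essential in the ambient surface $\Sigma_{(\sigma,\tau)}$, i.e.\ that the inclusion $N \hookrightarrow \Sigma_{(\sigma,\tau)}$ is $\pi_1$-injective on $\gamma$. This is where incompressibility of $f_{(\sigma,\tau)}$ must be invoked a second time, or else a direct combinatorial argument: if $\gamma$ bounded a disc in $\Sigma_{(\sigma,\tau)}$, that disc together with $N$ would let one cap off and simplify the pair of matchings, reducing the number of blocks in a way incompatible with the incompressibility of $f_{(\sigma,\tau)}$. I expect the cleanest writeup runs the contradiction entirely through the single statement "$\gamma$ essential, $f_{(\sigma,\tau)}(\gamma)$ nullhomotopic $\Rightarrow$ compressible", so the real work is a careful local picture of the two discs and the three shared edges, which I would accompany by a small figure.
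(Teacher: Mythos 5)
Your overall strategy (build a simple closed curve $\gamma$ through two same-colored shared matching-edges, observe $f_{(\sigma,\tau)}(\gamma)$ is nullhomotopic, and contradict incompressibility) is the paper's strategy, but as written there are two real problems. First, your contradiction hypothesis is the wrong negation of the lemma. The statement asserts \emph{at most one} common $p_i$-edge and \emph{at most one} common $q_i$-edge; its negation is ``the two discs share two matching-edges of the same color,'' which can happen with only two shared edges in total. By starting from ``three or more shared edges'' and pigeonholing, you prove only the weaker bound ``at most two shared edges'' and never exclude the configuration of exactly two shared $p_i$-edges (or two $q_i$-edges) -- which is precisely the configuration the lemma must rule out and the property actually used later (e.g.\ in the proof of Lemma \ref{lem:bp(w)-closed-downwards}, where two same-colored edges on one type-$z_i$ disc must border \emph{distinct} type-$o$ discs). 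Since your construction only ever uses two same-colored shared edges, the fix is simply to take that as the contradiction hypothesis, as the paper does.

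Second, the step where you try to show $\gamma$ is essential is where your argument genuinely fails. The detour through $N=D_o\cup D_z$, the claim that $\gamma$ is a core curve of $N$, and the demand that $N\hookrightarrow\Sigma_{(\sigma,\tau)}$ be $\pi_1$-injective are unnecessary and unsubstantiated: incompressibility of $f_{(\sigma,\tau)}$ says nothing about $\pi_1$-injectivity of subsurfaces, and your fallback ``cap off and reduce the number of blocks, incompatible with incompressibility'' does not parse into an argument (incompressibility makes no reference to blocks, and the reasoning is circular as stated). The paper's closing step is a one-liner you are missing: incompressibility together with the nullhomotopy of $f_{(\sigma,\tau)}(\gamma)$ forces the simple closed curve $\gamma$ to be nullhomotopic, hence to bound an embedded disc in $\Sigma_{(\sigma,\tau)}$; but $\gamma$ crosses the matching-edge $e_1$ transversally in exactly one point, and $e_1$ has both endpoints on $\partial\Sigma_{(\sigma,\tau)}$, so points of $\partial\Sigma_{(\sigma,\tau)}$ lie on both sides of $\gamma$ -- whereas the open disc bounded by $\gamma$ can contain no boundary points. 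Replacing your $N$/$\pi_1$-injectivity discussion with this observation (and correcting the negation above) turns your sketch into the paper's proof.
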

\begin{proof}
Assume, to the contrary, that there are discs $D_{1}$ of type-$o$
and $D_{2}$ of type-$z_{i}$ so that $\partial D_{1}\cap\partial D_{2}$
contains two distinct matching-edges $e_{1}$ and $e_{2}$ of the
same color, say $q_{i}$. Let $\gamma$ be a simple closed curve that
traverses exactly two matching-edges -- $e_{1}$ and $e_{2}$ -- and
each one exactly once. It is easy to see that $f_{\left(\st\right)}\left(\gamma\right)$
is then nullhomotopic in $\wedger$, and so $\gamma$ bounds a disc
by the assumption. But this is impossible as there are points from
$\partial\Sigma_{\left(\st\right)}$ at both sides of $\gamma$ (e.g.~the
points at the endpoints of $e_{1}$ and $e_{2}$).
\end{proof}
The poset $\pmp\left(\Sigma,f\right)$ is a downward-closed sub-poset
of the poset of pairs of bijections. Namely,
\begin{lem}
\label{lem:bp(w)-closed-downwards}Assume that $\left(\sigma,\tau\right)\in\pmp\left(\Sigma,f\right)$,
that $\sigma',\tau'\colon E^{+}\overset{\sim}{\to}E^{-}$ are bijections
and that $\left(\sigma',\tau'\right)\preceq\left(\sigma,\tau\right)$.
Then $\left(\sigma',\tau'\right)\in\pmp\left(\Sigma,f\right)$.\end{lem}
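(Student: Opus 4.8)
The plan is to show that if $(\sigma',\tau') \preceq (\sigma,\tau)$, then the surface-with-map $(\Sigma_{(\sigma',\tau')}, f_{(\sigma',\tau')})$ is equivalent to $(\Sigma_{(\sigma,\tau)}, f_{(\sigma,\tau)})$, hence equivalent to $(\Sigma, f)$. The key is to reduce the general comparability relation to a chain of elementary moves. Recall $\|\sigma^{-1}\tau\| = \|\sigma^{-1}\sigma'\| + \|(\sigma')^{-1}\tau'\| + \|(\tau')^{-1}\tau\|$. The first step is to observe that it suffices to treat two basic cases: (a) replacing $\sigma$ by $\sigma'$ with $\tau$ fixed, where $\sigma \preceq \sigma'$ lies on a geodesic from $\sigma$ to $\tau$ (so $\|(\sigma')^{-1}\tau\| = \|(\sigma')^{-1}\sigma\| + \|\sigma^{-1}\tau\|$ — wait, the direction needs care; concretely, moving $\sigma$ one transposition ``toward'' $\tau$), and symmetrically (b) replacing $\tau$ by $\tau'$ with $\sigma$ fixed. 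Any $(\sigma',\tau') \preceq (\sigma,\tau)$ is reached by a finite sequence of such single-transposition moves, by the additivity of $\|\cdot\|$ along geodesics in the Cayley graph of $S_L$ with respect to all transpositions; so by induction it is enough to handle one transposition move, say on the $\sigma$ side.

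The heart of the argument is the single-transposition move. Suppose $\sigma' = \sigma \circ (ab)$ where $(ab)$ is a transposition of two elements $a,b \in E^+$ of the same color (both appearances of $x_i^{+1}$ for some $i$), chosen so that $\|(\sigma')^{-1}\tau\| = \|\sigma^{-1}\tau\| - 1$, i.e. the move is ``toward $\tau$.'' I would analyze what this does to the CW-complex $\Sigma_{(\sigma,\tau)}$: swapping the $\sigma$-images of $a$ and $b$ re-routes two $p_i$-matching-edges. The claim is that this is exactly a ``rewiring'' supported in a neighborhood of the two type-$z_i$ discs (or the one type-$z_i$ disc, if $a,b$ lie on the same cycle of $\sigma^{-1}\tau$) adjacent to these two $p_i$-edges, and that since we move toward $\tau$, the rewiring either splits one type-$z_i$ disc into two or is the inverse of a merge — in any case it changes $\Sigma_{(\sigma,\tau)}$ by a surgery that does not change the homeomorphism type of the surface relative to its boundary, nor the homotopy class of the map to $\wedger$ (both edges carry the constant value $p_i$, so $f$ is unchanged on the 1-skeleton). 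I expect this to be closely parallel to Proposition \ref{prop:rewiring-in-PP} referenced in the introduction, and I would cite the non-crossing-partition interpretation (Section \ref{sub:Weingarten-function-and-Collins-Sniady}) to organize the bookkeeping: the condition ``$\|(\sigma')^{-1}\tau\|$ decreases'' is precisely the condition that $(ab)$ is a ``non-crossing'' refinement move, which geometrically corresponds to cutting along an arc inside a single disc rather than a destructive move.

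The main obstacle will be verifying that the surgery preserves the \emph{homeomorphism type} of $\Sigma$ and not just its Euler characteristic — a priori, re-routing matching-edges could change the genus or disconnect the surface while keeping $\chi$ the same, or could merge boundary components. Here I would use Lemma \ref{lem:joint-boundaries-of-discs-in-surface}: because $f$ is incompressible, the two type-$z_i$ discs adjacent to the re-routed $p_i$-edges meet each adjacent type-$o$ disc in a controlled way, which pins down the local picture enough to see the move is a standard ``cut-and-reglue along an embedded arc'' inside a single disc of $\Sigma_{(\sigma,\tau)}$. Such a move is realized by an ambient homeomorphism fixing the boundary, so $\Sigma_{(\sigma',\tau')} \cong \Sigma_{(\sigma,\tau)}$ rel boundary; and since $f_{(\sigma',\tau')}$ and $f_{(\sigma,\tau)}$ agree on the 1-skeleton after this identification, Lemma \ref{lem:homotopy=00003Dsame_induced_map} gives $f_{(\sigma',\tau')} \simeq f_{(\sigma,\tau)}$ rel boundary. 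Hence $(\sigma',\tau') \in \pmp(\Sigma,f)$, and induction completes the proof.
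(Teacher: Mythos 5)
Your plan is, in outline, the paper's own proof: reduce to cover moves (a single transposition on one side of the pair), interpret such a move as a rewiring of two equally-colored matching-edges lying on the boundary of one type-$z_{i}$ disc of $\Sigma_{\left(\sigma,\tau\right)}$ (the disc splits, the two adjacent type-$o$ discs merge), use Lemma \ref{lem:joint-boundaries-of-discs-in-surface} -- i.e.\ incompressibility -- to know those two type-$o$ discs are distinct, so the surgery keeps the same underlying surface rel boundary and only changes the cell structure, and conclude with Lemma \ref{lem:homotopy=00003Dsame_induced_map} that the associated maps are homotopic. That is exactly how the paper argues.

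There is, however, one step you never justify, and your hedging shows it was not pinned down. The lemma only assumes $\sigma',\tau'$ are bijections $E^{+}\overset{\sim}{\to}E^{-}$; the fact that they lie in $\match\left(\wl\right)$, i.e.\ respect the colors, is part of the conclusion and must be proved before the surface picture makes sense. In your single move $\sigma'=\sigma\circ\left(ab\right)$ you simply declare $a,b$ to be of the same color, and later you even allow the possibility that $a,b$ lie in different cycles of $\sigma^{-1}\tau$ (``the two type-$z_{i}$ discs \ldots{} or the one type-$z_{i}$ disc''). The missing observation, which the paper proves first, is that multiplying by a transposition decreases the norm exactly when the transposition splits a cycle; hence along any geodesic from $\sigma$ to $\tau$ every transposition swaps two elements of the same cycle of $\sigma^{-1}\tau$. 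Since $\sigma^{-1}\tau$ preserves each $E_{i}^{+}$, this yields at once that every intermediate bijection (in particular $\sigma'$ and $\tau'$) is color-preserving, and that in a cover move the two re-routed edges lie on the boundary of one common type-$z_{i}$ disc. Without this, the ``different cycles'' case you leave open would be a genuinely different surgery -- merging two type-$z_{i}$ discs, where neither the same-color claim nor preservation of the homeomorphism type is guaranteed. With this observation added, your argument coincides with the paper's.
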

\begin{proof}
First note the following observation: let $\pi\in S_{L}$ satisfy
$\left\Vert \pi\right\Vert =k$ and let $t_{1}t_{2}\ldots t_{k}$
be a product of transpositions giving $\pi$. Then for every $j$,
the two elements $x,y\in\left[L\right]$ swapped by $t_{j}$ must
be two elements which sit in two different cycles in $t_{1}t_{2}\ldots t_{j-1}$
but which belong to the same cycle in $\pi$. This follows from the
identity $\left\Vert \pi\right\Vert =L-\#\mathrm{cycles}\left(\pi\right)$
and from the fact that when a permutation is multiplied by a transposition
either two of its cycles are merged together or one of its cycles
is split into two. 

We claim that from this simple observation it follows that $\sigma',\tau'\in\match\left(\wl\right)$,
i.e.~that $\sigma'$ and $\tau'$ map $E_{i}^{+}$ to $E_{i}^{-}$
for every $i\in\left[r\right]$. Indeed, this is certainly true for
$\sigma$ and $\tau$ and thus $\sigma^{-1}\tau$ maps $E_{i}^{+}$
to $E_{i}^{+}$ for every $ $$i$. By assumption, there is a product
of transpositions in $\mathrm{Sym}\left(E^{+}\right)$ of minimal
length which gives $\sigma^{-1}\tau$ such that two of its prefixes
equal $\sigma^{-1}\sigma'$ and $\sigma^{-1}\tau'$. By the observation,
no transposition in the product can mix elements of $E_{i}^{+}$ and
$E_{j}^{+}$ with $i\ne j$, and thus this is also true for $\sigma^{-1}\sigma'$
and $\sigma^{-1}\tau'$, and indeed $\sigma',\tau'\in\match\left(\wl\right)$.

It is left to show that $\left(\Sigma_{\left(\sigma',\tau'\right)},f_{\left(\sigma',\tau'\right)}\right)\sim\left(\Sigma_{\left(\st\right)},f_{\left(\st\right)}\right)$.
It is enough to show this in the case when $\left(\sigma,\tau\right)$
covers $\left(\sigma',\tau'\right)$ (see Footnote \ref{fn:graded-poset}).
In this case, either $\sigma'=\sigma$ and $\tau^{-1}\tau'$ is a
transposition, or $\tau'=\tau$ and $\sigma^{-1}\sigma'$ is a transposition.
Assume the former case, the latter having the exact same proof. So
$\left(\sigma',\tau'\right)=\left(\sigma,\tau'\right)$ is the same
as $\left(\sigma,\tau\right)$, except for two $q_{i}^{+}$-points
$j$ and $k$, for some $ $$i$, with $\tau'\left(j\right)=\tau\left(k\right)$
and $\tau'\left(k\right)=\tau\left(j\right)$. If we abuse notation
and let $j$ and $k$ denote also the corresponding letters in $E^{+}$,
then $\sigma^{-1}\tau'\cdot\left(j\, k\right)=\sigma^{-1}\tau\in\mathrm{Sym}\left(E^{+}\right)$.
Because of the equality $\left\Vert \sigma^{-1}\tau'\right\Vert =\left\Vert \sigma^{-1}\tau\right\Vert -1$,
$j$ and $k$ must belong to different cycles of $\sigma^{-1}\tau'$
and to the same cycle of $\sigma^{-1}\tau$. Namely, the $q_{i}^{+}$-points
$j$ and $k$ are at the boundary of the same type-$z_{i}$ disc of
$\Sigma_{\left(\st\right)}$. 

Consider $\Sigma_{\left(\sigma,\tau\right)}$ and the two matching-edges
$e_{j}$ and $e_{k}$ emanating from $j$ and $k$, respectively,
and let $D$ denote the type-$z_{i}$ disc they both belong to. By
Lemma \ref{lem:joint-boundaries-of-discs-in-surface}, they belong
to two different type-$o$ discs. The change in these two edges is
the only change in the $1$-skeleton of the CW-complex when moving
from $\Sigma_{\left(\sigma,\tau\right)}$ to $\Sigma_{\left(\sigma,\tau'\right)}$.
In fact, to obtain $\Sigma_{\left(\sigma,\tau'\right)}$ from $\Sigma_{\left(\st\right)}$
we can do the following: $\left(i\right)$ draw two new disjoint edges
(arcs) inside $D$: $e_{j}'$ from $j$ to $\tau\left(k\right)$ and
$e_{k}'$ from $k$ to $\tau\left(j\right)$ -- this is always possible
because all $q_{i}$-matching-edges at the boundary of a type-$z_{i}$
disc are oriented. $\left(ii\right)$ Replace $e_{j}$ and $e_{k}$
by $e_{j}'$ and $e_{k}'$. The change results in splitting the joint
type-$z_{i}$ into two discs and merging the two type-$o$ discs into
one. We illustrate this in figure \ref{fig:disc-split}.
\begin{figure}[h]
\centering{}\includegraphics[bb=0bp 50bp 350bp 300bp,scale=0.8]{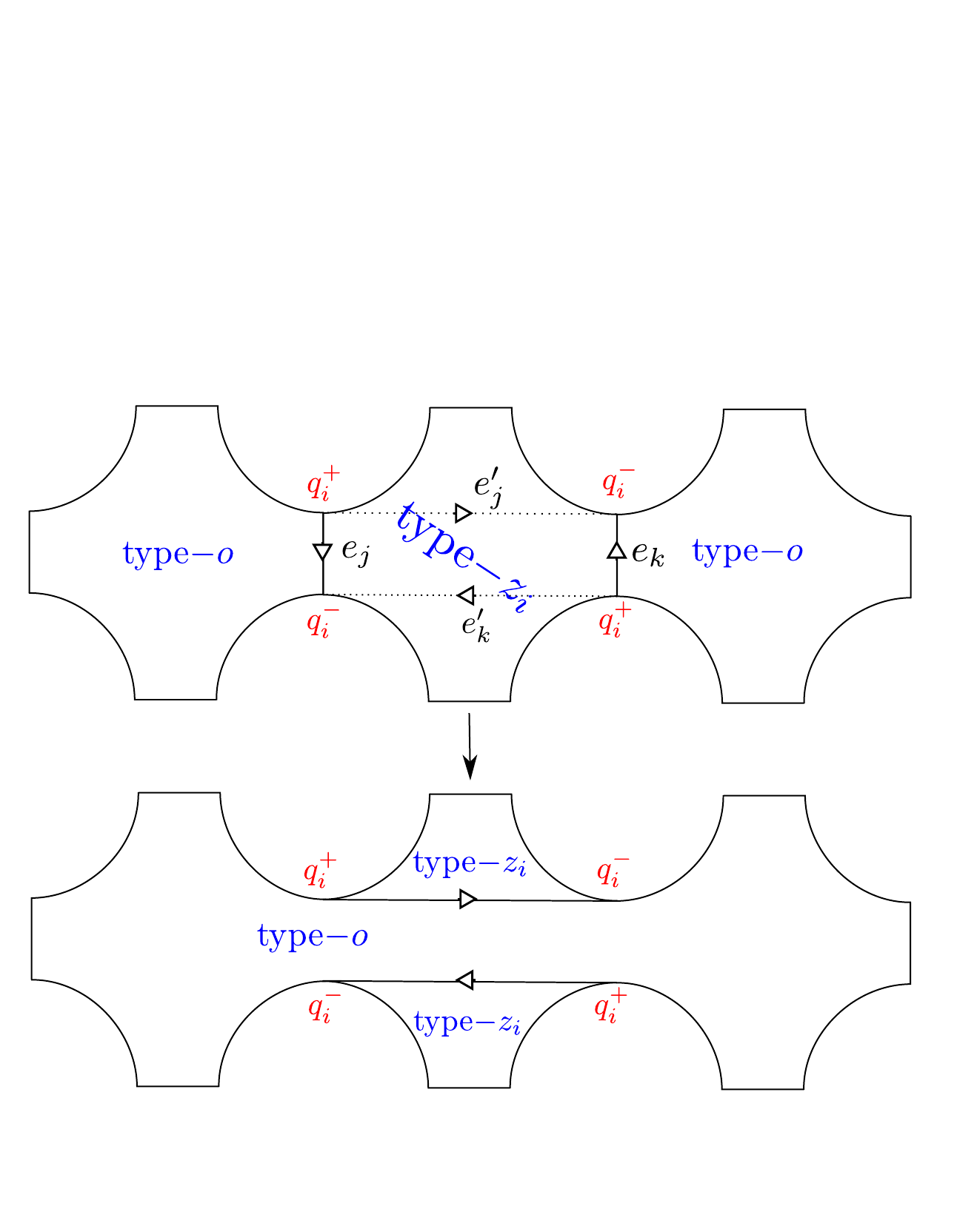}\caption{\label{fig:disc-split}Swapping two $q_{i}$-matching-edges in the
boundary of the same type-$z_{i}$ disc in $\Sigma_{\left(\sigma,\tau\right)}$
for some $\left(\sigma,\tau\right)\in\pmp\left(\Sigma,f\right)$ results
in $\Sigma_{\left(\sigma,\tau'\right)}$ for some other $\left(\sigma,\tau'\right)\in\pmp\left(\Sigma,f\right)$.
The number of type-$z_{i}$ discs increases by one, while the number
of type-$o$ discs decreases by one. This corresponds to moving one
step down, namely, to a covered element, in the poset $\pmp\left(\Sigma,f\right)$.}
\end{figure}

From this description of $\Sigma_{\left(\sigma,\tau'\right)}$ there
is a natural homeomorphism $\Sigma_{\left(\st\right)}\cong\Sigma_{\left(\sigma,\tau'\right)}$,
and $f_{\left(\sigma,\tau\right)}$ and $f_{\left(\sigma,\tau'\right)}$
agree on the entire common parts of the $1$-skeletons, i.e.~on all
boundary and matching-edges surfaces, except for, possibly, on $e_{j},e_{k}$
and $e_{j}',e_{k}'$. But within the freedom left in the definition
of these functions (Definition \ref{def:f_(sigma,tau)}), we can assume
that both are constant functions along all of $e_{j},e_{k},e_{j}'$
and $e_{k}'$, mapping all four matching edges to $q_{i}\in\wedger$.
Then, by Lemma \ref{lem:homotopy=00003Dsame_induced_map}, they are
homotopic to each other. Thus $\left(\Sigma_{\left(\st\right)},f_{\left(\st\right)}\right)\sim\left(\Sigma_{\left(\sigma,\tau'\right)},f_{\left(\sigma,\tau'\right)}\right)$.
\end{proof}
As an example, let $w=\left[x,y\right]\left[x,z\right]$.\label{[x,y][x,z] - PMP}
We already mentioned in Example \ref{example:[x,y][x,z] leading term 0}
above that there are four pairs of matchings, all of which with $\chi=-3$.
An easy application of Lemma \ref{lem:bp(w)-closed-downwards} shows
that all four belong to same class of admissible incompressible $\left[\left(\Sigma,f\right)\right]$.
Two of the four pairs satisfy $\sigma=\tau$, and both are smaller
($\prec$) than the other two pairs in which $\sigma^{-1}\tau$ is
a transposition.

From the last lemma we can deduce that the poset $\pmp\left(\Sigma,f\right)$
is a \emph{graded poset}%
\footnote{\emph{\label{fn:graded-poset}}A graded poset is a poset $\left(P,\le\right)$
together with a rank function $\mathrm{rk}:P\to\mathbb{Z}_{\ge0}$,
such that if $x<y$ then $\mathrm{rk}\left(x\right)<\mathrm{rk}\left(y\right)$,
and if $y$ covers $x$ (that is, $x<y$ and there is no $z$ with
$x<z<y$) then $\mathrm{rk}\left(y\right)=\mathrm{rk}\left(x\right)+1$.
We note that the definition in \cite[Section 3.1]{Stanley-book} is
slightly less general.%
}, with rank function $\pmp\left(\Sigma,f\right)\to\mathbb{Z}_{\ge0}$
given by $\left(\sigma,\tau\right)\mapsto\left\Vert \sigma^{-1}\tau\right\Vert $.
Moreover, recall from the proof of Proposition \ref{prop:order-of-contribution-of-(sigma,tau)}
that $\chi\left(\sigma,\tau\right)=\#\left\{ \mathrm{discs\, in}\,\Sigma_{\left(\sigma,\tau\right)}\right\} -2L$.
Among the pairs in $\pmp\left(\Sigma,f\right)$ the Euler characteristic
$\chi\left(\st\right)$ is constant, and thus so is the total number
of discs. The total number of type-$z_{i}$ discs is equal to $L-\left\Vert \sigma^{-1}\tau\right\Vert $,
hence we obtain:
\begin{claim}
\label{claim:num-type-o can serve as rank for surface}The poset $\pmp\left(\Sigma,f\right)$
is graded, with two possible, natural rank functions: either $\left\Vert \sigma^{-1}\tau\right\Vert $,
or the number of type-$o$ discs in $\Sigma_{\left(\sigma,\tau\right)}$.\end{claim}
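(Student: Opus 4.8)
The plan is to verify both claims directly from structural facts already established. For the claim that $\left\Vert \sigma^{-1}\tau\right\Vert$ is a rank function: Lemma \ref{lem:bp(w)-closed-downwards} shows $\pmp\left(\Sigma,f\right)$ is downward-closed in the poset of pairs of bijections, and from its proof we see that a covering relation $\left(\sigma',\tau'\right)\lessdot\left(\sigma,\tau\right)$ in $\pmp\left(\Sigma,f\right)$ corresponds to composing either $\sigma$ or $\tau$ with a single transposition, in a way that decreases $\left\Vert \sigma^{-1}\tau\right\Vert$ by exactly one. Conversely, if $\left(\sigma',\tau'\right)\prec\left(\sigma,\tau\right)$ then by definition of $\preceq$ we have $\left\Vert \sigma^{-1}\tau\right\Vert=\left\Vert \sigma^{-1}\sigma'\right\Vert+\left\Vert \left(\sigma'\right)^{-1}\tau'\right\Vert+\left\Vert \left(\tau'\right)^{-1}\tau\right\Vert$, and strict inequality $\left(\sigma',\tau'\right)\ne\left(\sigma,\tau\right)$ forces at least one of the first or last norm on the right to be positive, hence $\left\Vert \left(\sigma'\right)^{-1}\tau'\right\Vert<\left\Vert \sigma^{-1}\tau\right\Vert$. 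This gives the strict monotonicity and the covering condition required by the definition of a graded poset (Footnote \ref{fn:graded-poset}), so $\left(\sigma,\tau\right)\mapsto\left\Vert \sigma^{-1}\tau\right\Vert$ is a rank function.

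For the second rank function, I would invoke the identity from the proof of Proposition \ref{prop:order-of-contribution-of-(sigma,tau)}, namely $\chi\left(\sigma,\tau\right)=\#\left\{\mathrm{discs\ in}\,\Sigma_{\left(\sigma,\tau\right)}\right\}-2L$. On the set $\pmp\left(\Sigma,f\right)$, all pairs have $\left(\Sigma_{\left(\sigma,\tau\right)},f_{\left(\sigma,\tau\right)}\right)\sim\left(\Sigma,f\right)$, so $\chi\left(\sigma,\tau\right)=\chi\left(\Sigma\right)$ is constant; hence the total number of discs is constant across $\pmp\left(\Sigma,f\right)$. By Claim \ref{claim:properties-of-perms-surface}(\ref{enu:type-zi-is-a-cycle}), the number of type-$z_i$ discs equals the number of cycles of $\left(\sigma^{-1}\tau\right)\big|_{E_i^+}$, so the total number of type-$z$ discs is $\sum_i \#\mathrm{cycles}\left(\left(\sigma^{-1}\tau\right)\big|_{E_i^+}\right)=L-\left\Vert \sigma^{-1}\tau\right\Vert$. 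Therefore the number of type-$o$ discs equals $\left(\text{constant}\right)-\left(L-\left\Vert \sigma^{-1}\tau\right\Vert\right)$, i.e.\ it differs from $\left\Vert \sigma^{-1}\tau\right\Vert$ by an additive constant on $\pmp\left(\Sigma,f\right)$. An additive constant shift of a rank function is again a rank function (possibly after subtracting the constant to land in $\mathbb{Z}_{\ge0}$, which we may do since $\pmp\left(\Sigma,f\right)$ is finite and has a minimum by the downward-closure and the existence of $\sigma=\tau$ pairs), which gives the claim.

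There is no real obstacle here; the only thing to be slightly careful about is that the number of type-$o$ discs, taken literally, may not start at $0$, so to call it a \emph{rank function} in the strict sense of the cited definition one should note it suffices to match the definition up to an overall additive constant, or to observe that the minimal element (a pair with $\sigma=\tau$, which exists since such pairs lie in $\pmp\left(\Sigma,f\right)$ by Lemma \ref{lem:bp(w)-closed-downwards} applied to the known pairs realizing $\left(\Sigma,f\right)$, and since $\left\Vert \sigma^{-1}\sigma\right\Vert=0$) has type-$o$ disc count equal to the relevant constant. I would phrase the final write-up so that both functions are exhibited as rank functions differing by a constant, deriving gradedness for one and transporting it to the other.
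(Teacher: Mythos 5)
Your proposal is correct and follows essentially the same route as the paper: gradedness with rank $\left\Vert \sigma^{-1}\tau\right\Vert$ is deduced from the downward-closure in Lemma \ref{lem:bp(w)-closed-downwards}, and the second rank function comes from the identity $\chi\left(\sigma,\tau\right)=\#\left\{ \mathrm{discs}\right\} -2L$, the constancy of $\chi$ on $\pmp\left(\Sigma,f\right)$, and the count $L-\left\Vert \sigma^{-1}\tau\right\Vert$ of type-$z_{i}$ discs, so that the number of type-$o$ discs is $\left\Vert \sigma^{-1}\tau\right\Vert$ plus a constant. Your extra care about the constant shift is harmless but unnecessary, since the cited definition of a rank function only requires values in $\mathbb{Z}_{\ge0}$ with the monotonicity and covering properties, both of which are preserved under an additive constant.
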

\begin{rem}
\label{remark:simple rewiring}More generally, a similar argument
as in the proof of Lemma \ref{lem:bp(w)-closed-downwards} shows that
if $\left(\sigma,\tau\right),\left(\sigma',\tau'\right)\in\match\left(\wl\right)^{2}$
and $\left(\sigma',\tau'\right)\preceq\left(\sigma,\tau\right)$,
then $\chi\left(\sigma',\tau'\right)\ge\chi\left(\sigma,\tau\right)$.
If, moreover, $\left(\sigma,\tau\right)$ covers $\left(\sigma',\tau'\right)$,
then $\chi\left(\sigma',\tau'\right)-\chi\left(\st\right)\in\left\{ 0,2\right\} $. 
\end{rem}
The last argument in the proof of Lemma \ref{lem:bp(w)-closed-downwards},
where we made changes to matching-edges in $\Sigma_{\left(\sigma,\tau\right)}$,
can be generalized to the following definition which allows a more
geometric definition of the order $\preceq$ on $\pmp\left(\Sigma,f\right)$.
This equivalent definition will be of great importance in Section
\ref{sec:core-of-result}.
\begin{defn}
\label{def:colored-non-crossing-partition}A partition $P$ of the
matching-edges at the boundary of a disc of $\Sigma_{\left(\sigma,\tau\right)}$
is called a \textbf{colored non-crossing partition}, if 
\begin{itemize}
\item it is colored: every block of $P$ is monochromatic (contains matching-edges
of the same color), and 
\item it is non-crossing: there are no four matching-edges which in cyclic
order are $e_{1},e_{2},e_{3},e_{4}$ and such that $e_{1}$ and $e_{3}$
belong to one block and $e_{2}$ and $e_{4}$ to another.
\end{itemize}
\end{defn}
This is the same as the usual notion of non-crossing partitions (see
\cite[Lecture 9]{NICASPEICHER}), only with the additional constraint
of monochromatic blocks.
\begin{lem}
\label{lem: rewiring from colored-non-crossing partition}Given $\left(\sigma,\tau\right)\in\pmp\left(\Sigma,f\right)$
and a colored non-crossing partition $P$ of a disc (2-cell) $D$
of $\Sigma_{\left(\sigma,\tau\right)}$, we can obtain a new pair
of matchings $\left(\sigma',\tau'\right)\in\pmp\left(\Sigma,f\right)$
by the following procedure: using the orientation on $\partial D$,
match the second endpoint of a matching-edge with the first endpoint
of the following edge in the same block of $P$. Now replace the old
matching-edges along $\partial D$ with the new ones.\end{lem}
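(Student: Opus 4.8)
The plan is to verify, first, that the new wiring along $\partial D$ really is a valid pair of matchings, and second, that it lies in the same class $\pmp\left(\Sigma,f\right)$, i.e.\ that $\left(\Sigma_{\left(\sigma',\tau'\right)},f_{\left(\sigma',\tau'\right)}\right)\sim\left(\Sigma_{\left(\sigma,\tau\right)},f_{\left(\sigma,\tau\right)}\right)$, and finally that the resulting pair satisfies $\left(\sigma',\tau'\right)\preceq\left(\sigma,\tau\right)$. I would begin by reducing to a single block of $P$ of size $\ge 2$ and then, within that block, to a single elementary move: since a non-crossing partition is built by iterating "merge two adjacent-in-cyclic-order edges of the same color into one block'', and each such merge on a colored non-crossing partition is again colored non-crossing, it suffices to treat the case where $P$ has exactly one non-singleton block, and that block has exactly two edges. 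This is precisely the move already carried out in the proof of Lemma~\ref{lem:bp(w)-closed-downwards}: two matching-edges $e_j,e_k$ of the same color $q_i$ (say) at the boundary of $D$ get rewired to $e_j',e_k'$. Note that a type-$o$ disc has matching-edges of possibly many colors on its boundary, but the colored condition forces the two swapped edges to share a color, so the rewiring only permutes, say, the $\tau$-matching (if the common color is a $q_i$-color) or the $\sigma$-matching (if it is a $p_i$-color), leaving the other matching untouched. This is exactly the situation handled in Lemma~\ref{lem:bp(w)-closed-downwards}, so $\left(\sigma',\tau'\right)\in\match\left(\wl\right)$ and $\left(\Sigma_{\left(\sigma',\tau'\right)},f_{\left(\sigma',\tau'\right)}\right)\sim\left(\Sigma_{\left(\sigma,\tau\right)},f_{\left(\sigma,\tau\right)}\right)$, hence $\left(\sigma',\tau'\right)\in\pmp\left(\Sigma,f\right)$; moreover one checks $\left(\sigma',\tau'\right)\preceq\left(\sigma,\tau\right)$ using that the new permutation differs from the old by one transposition that splits a cycle, so $\left\Vert(\sigma')^{-1}\tau'\right\Vert=\left\Vert\sigma^{-1}\tau\right\Vert-1$ (and the triangle-type identity defining $\preceq$ holds).

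For the inductive step I would set up the following. Let $P$ be a colored non-crossing partition of $D$, and choose within some block $B$ a pair $e,e'$ of edges that are adjacent among the edges of $B$ in the cyclic order on $\partial D$, with no other edge of $B$ strictly between them; because $P$ is non-crossing, the edges of every \emph{other} block lie entirely on one side of the chord from $e$ to $e'$, so the elementary rewiring of $e$ and $e'$ does not interfere with those blocks. Performing this one elementary move produces $\left(\sigma_1,\tau_1\right)\in\pmp\left(\Sigma,f\right)$ with $\left(\sigma_1,\tau_1\right)\prec\left(\sigma,\tau\right)$, and $\sigma_1^{-1}\tau_1$ has one more cycle than $\sigma^{-1}\tau$; correspondingly, in $\Sigma_{\left(\sigma_1,\tau_1\right)}$ the disc $D$ has been split into two discs, one of which now has a boundary that carries the partition $P$ with the pair $\{e,e'\}$ merged to a single edge. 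Repeating, after finitely many steps all blocks of $P$ have been collapsed to singletons and the cumulative rewiring is exactly the one described in the statement; transitivity of $\preceq$ (which follows from the geodesic description: concatenating geodesic segments through the intermediate pairs) gives $\left(\sigma',\tau'\right)\preceq\left(\sigma,\tau\right)$, and membership in $\pmp\left(\Sigma,f\right)$ is preserved at each step.

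The main thing to be careful about — and I expect it to be the only genuine obstacle — is bookkeeping the cyclic/non-crossing structure under iterated splitting: one must check that at each stage the "remaining'' partition (what is left to do) is still a colored non-crossing partition of the relevant disc boundary, so that the induction hypothesis applies, and that the sequence of elementary moves actually composes to the single combinatorial rewiring described in the statement (rather than to some conjugate of it). The non-crossing hypothesis is exactly what makes this work: it guarantees that when we pick an innermost adjacent pair in a block, its chord separates the plane/disc so that the rest of $P$ is untouched, and it guarantees that each intermediate partition is again non-crossing. I would phrase the elementary move purely combinatorially — on the cyclic word of colored half-edges read along $\partial D$ — so that "non-crossing pair $\{e,e'\}$ with nothing of $B$ between them'' is literally a cancellation of an adjacent matched pair, making the induction transparent; the geometric picture (Figure~\ref{fig:disc-split}) then just certifies that the surface and the map are unchanged up to equivalence. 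A final remark: the case where the common color of a block is a $p_i$-color is symmetric, swapping the roles of $\sigma$ and $\tau$, exactly as in Lemma~\ref{lem:bp(w)-closed-downwards}, so no new argument is needed there.
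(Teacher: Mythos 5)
Your argument establishes what the lemma actually asserts, but by a different route from the paper. The paper's proof is a one-shot geometric argument: all the new matching-edges are drawn simultaneously as disjoint arcs inside $D$ (disjointness is exactly where non-crossingness enters); Lemma \ref{lem:joint-boundaries-of-discs-in-surface} (which is where incompressibility enters) guarantees that the discs on the far side of the edges in a common block are pairwise distinct, so after the replacement the complement is still cut into discs and the resulting CW-complex is literally $\Sigma_{\left(\sigma',\tau'\right)}$; choosing $f_{\left(\st\right)}$ constant on the new arcs as well, Lemma \ref{lem:homotopy=00003Dsame_induced_map} gives the homotopy, hence $\left(\sigma',\tau'\right)\in\pmp\left(\Sigma,f\right)$. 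You instead decompose the rewiring into elementary two-edge moves and induct; this is essentially the decomposition the paper uses later, in the proof of $\left(2\right)\Rightarrow\left(1\right)$ of Proposition \ref{prop:rewiring-in-PP} --- except that there the paper can cite the present lemma for membership in $\pmp\left(\Sigma,f\right)$ at each step, whereas you must re-derive the elementary case from the geometric argument inside the proof of Lemma \ref{lem:bp(w)-closed-downwards}. That works, but note that when $D$ is a type-$o$ disc the elementary move is the \emph{reverse} of the move performed in that proof (there the two rewired edges lie in a common type-$z_{i}$ disc), so you are really invoking the symmetric reading of that argument, and you still need Lemma \ref{lem:joint-boundaries-of-discs-in-surface} at every intermediate stage (legitimate, since each intermediate pair lies in $\pmp\left(\Sigma,f\right)$ and is therefore again incompressible) to know the two far-side discs are distinct; otherwise the rewiring could fail to cut the surface into discs. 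The bookkeeping you flag (the leftover partition is again colored non-crossing on the two daughter discs, and the composite of elementary moves is exactly the wiring in the statement) does go through, so the induction is viable --- just longer than the direct argument.

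One genuine error, though it concerns a claim the lemma does not make: the rewiring does \emph{not} in general satisfy $\left(\sigma',\tau'\right)\preceq\left(\sigma,\tau\right)$, and the elementary transposition does not always split a cycle of $\sigma^{-1}\tau$. The direction depends on the type of $D$: if $D$ is a type-$z_{i}$ disc, the two swapped edges lie on a common cycle of $\sigma^{-1}\tau$ and the move splits it, so the norm drops and you go down; but if $D$ is a type-$o$ disc, then by Lemma \ref{lem:joint-boundaries-of-discs-in-surface} the two edges border \emph{different} type-$z_{i}$ discs, the transposition merges two cycles, the norm increases by one, and $\left(\sigma,\tau\right)\prec\left(\sigma',\tau'\right)$ --- exactly the direction recorded in Proposition \ref{prop:rewiring-in-PP} (rewiring in type-$o$ discs moves up, in type-$z_{i}$ discs moves down). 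Since membership in $\pmp\left(\Sigma,f\right)$ is all the lemma asserts, this slip does not invalidate your proof of the statement, but the order claims should be dropped or corrected before being used elsewhere.
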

\begin{proof}
First, all matching-edges of a fixed color at the boundary of $D$
have the same orientation, so the instructions in the claim indeed
match marked points on $E^{+}$ with marked points on $E^{-}$, and
lead to a new pair $\left(\sigma',\tau'\right)\in\match\left(\wl\right)^{2}$.
It remains to show that $\left(\sigma',\tau'\right)\in\pmp\left(\Sigma,f\right)$,
and we now show this basically follows from the same argument as in
the proof of Lemma \ref{lem:bp(w)-closed-downwards}.

Note that the new matching-edges can be drawn as disjoint arcs inside
$D$: the disjointness can be achieved thanks to $P$ being non-crossing.
By Lemma \ref{lem:joint-boundaries-of-discs-in-surface}, the discs
on the other side of the matching-edges in the same block $B\in P$
are distinct. Thus, after replacing the matching-edges along $\partial D$
with the new ones, the surface is still cut to discs, and so the CW-complex
obtained that way from $\Sigma_{\left(\st\right)}$ is exactly $\Sigma_{\left(\sigma',\tau'\right)}$.
Finally, we can choose $f_{\left(\st\right)}$ so that it is constant
not only on all matching-edges of $\Sigma_{\left(\st\right)}$ but
also on the new matching-edges in $D$. Then, with Lemma \ref{lem:homotopy=00003Dsame_induced_map},
we get that $f_{\left(\st\right)}$ and $f_{\left(\sigma',\tau'\right)}$
are homotopic. We illustrate this in Figure \ref{fig:non-crossing-partition}.
\end{proof}
\begin{figure}[h]
\centering{}\includegraphics[bb=0bp 0bp 350bp 150bp]{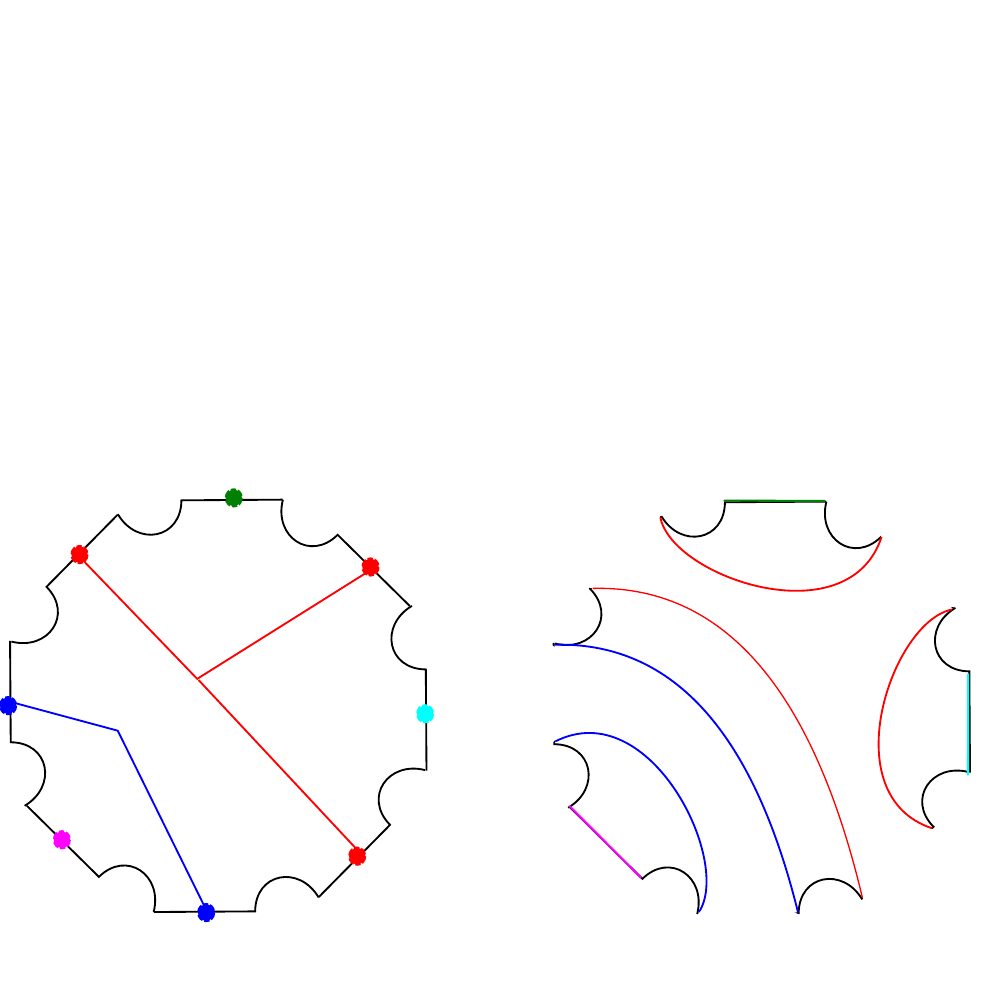}\caption{\label{fig:non-crossing-partition}The figure on the left shows a
non-crossing partition of the eight matching-edges along the boundary
of a disc $D$: every block is marked by a different color. (The matching-edges
in every block need be of the same color of $q_{i}$ or $p_{i}$,
but this is not shown in the figure.) Rewiring the matching-edges
according to this partition results in the figure on the right: the
disc $D$ is split into four smaller discs, and some of its area serves
as {}``corridors'' which merge neighboring discs.}
\end{figure}

\begin{prop}
\label{prop:rewiring-in-PP}Assume that $\left(\sigma',\tau'\right)$
and $\left(\sigma,\tau\right)$ are both in $\pmp\left(\Sigma,f\right)$.
Then the following are equivalent:
\begin{enumerate}
\item $\left(\sigma',\tau'\right)\preceq\left(\sigma,\tau\right)$
\item $\Sigma_{\left(\sigma,\tau\right)}$ can be obtained from $\Sigma_{\left(\sigma',\tau'\right)}$
by a rewiring of matching-edges according to colored non-crossing
partitions in type-$o$ discs. 
\item $\Sigma_{\left(\sigma',\tau'\right)}$ can be obtained from $\Sigma_{\left(\sigma,\tau\right)}$
by a rewiring of matching-edges according to colored non-crossing
partitions in type-$z_{i}$ discs (for all $i$ together). 
\end{enumerate}

Moreover, if indeed $\left(\sigma',\tau'\right)\preceq\left(\sigma,\tau\right)$,
then the set of colored non-crossing partitions in item $2$ (item
$3$) is unique.

\end{prop}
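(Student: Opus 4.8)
The plan is to prove the chain of implications $(2)\Rightarrow(1)$, $(3)\Rightarrow(1)$, and then $(1)\Rightarrow(2)$ together with $(1)\Rightarrow(3)$, since by symmetry of the construction (swapping the roles of type-$o$ and type-$z_i$ discs corresponds to swapping $\sigma$ and $\tau$, or rather reversing the geodesic) the last two are essentially the same argument. First I would observe that a \emph{single} block of size $k$ in a colored non-crossing partition of a disc decomposes, via the standard recursive structure of non-crossing partitions (see \cite[Lecture 9]{NICASPEICHER}), into a composition of $k-1$ "adjacent swaps" of the kind appearing in the proof of Lemma \ref{lem:bp(w)-closed-downwards}: each elementary swap exchanges two matching-edges of the same color that are adjacent among the edges of that color along $\partial D$, and it is realized by multiplying $\tau$ (resp.\ $\sigma$) by a transposition that reduces (resp.\ increases) $\|\sigma^{-1}\tau\|$ by exactly one — this is exactly the content of Remark \ref{remark:simple rewiring} and the covering-relation analysis in Lemma \ref{lem:bp(w)-closed-downwards}. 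Iterating over all blocks of the partition and over all type-$o$ discs, a rewiring of the type in item $(2)$ expresses $\Sigma_{(\sigma,\tau)}$ as the endpoint of a directed path in the Hasse diagram of $\pmp(\Sigma,f)$ starting at $\Sigma_{(\sigma',\tau')}$ and moving \emph{up} at each step; hence $(\sigma',\tau')\preceq(\sigma,\tau)$, giving $(2)\Rightarrow(1)$. The key point making the $\|\cdot\|$-additivity work is that each swap, by the cycle-merging/splitting dichotomy for multiplication by a transposition, changes $\|\sigma^{-1}\tau\|$ by $\pm1$ with the sign fixed by whether we rewire a type-$o$ or a type-$z_i$ disc (Lemma \ref{lem:joint-boundaries-of-discs-in-surface} guarantees the two edges being swapped lie on distinct neighboring discs, so the relevant cycles really do merge rather than collide trivially). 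The implication $(3)\Rightarrow(1)$ is identical with "type-$o$" replaced by "type-$z_i$" and "up" replaced by "down".

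For the converse $(1)\Rightarrow(2)$, I would argue by induction on $\|\sigma^{-1}\tau\| - \|(\sigma')^{-1}\tau'\| = \|\sigma^{-1}\sigma'\| + \|(\sigma')^{-1}\tau'\| \cdots$ — more precisely, on the length of a geodesic in the Cayley graph of $S_L$ from $\sigma$ through $\sigma'$, then $\tau'$, to $\tau$. The base case is trivial. For the inductive step, I would take $(\sigma'',\tau'')$ covered by $(\sigma,\tau)$ with $(\sigma',\tau')\preceq(\sigma'',\tau'')\preceq(\sigma,\tau)$; by the covering analysis in Lemma \ref{lem:bp(w)-closed-downwards}, passing from $(\sigma,\tau)$ to $(\sigma'',\tau'')$ amounts to a single elementary swap of two same-colored matching-edges on the boundary of one type-$z_i$ disc of $\Sigma_{(\sigma,\tau)}$ — equivalently, reading it the other way, $\Sigma_{(\sigma,\tau)}$ is obtained from $\Sigma_{(\sigma'',\tau'')}$ by a single elementary swap inside a type-$o$ disc of $\Sigma_{(\sigma'',\tau'')}$. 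By the inductive hypothesis $\Sigma_{(\sigma'',\tau'')}$ is obtained from $\Sigma_{(\sigma',\tau')}$ by rewiring in type-$o$ discs according to colored non-crossing partitions; one then has to check that composing that rewiring with the final single elementary swap is again a (single) colored non-crossing rewiring. The composition of two non-crossing partitions is in general not a partition, so here the argument must be more careful: I would instead assemble \emph{all} the elementary swaps witnessing $(\sigma',\tau')\preceq(\sigma,\tau)$ simultaneously and show they group, disc-by-disc of $\Sigma_{(\sigma',\tau')}$, into non-crossing partitions — using that the matching-edges on the boundary of a type-$o$ disc that get merged together must, because they are ultimately merged into a common type-$z_i$ cycle of $\sigma^{-1}\tau$ and because of the planar cyclic structure of the disc boundary, form non-crossing blocks (a crossing would contradict minimality of $\|\sigma^{-1}\tau\|$, i.e.\ would force $\chi(\sigma',\tau')<\chi(\sigma,\tau)$, contradicting the grading from Claim \ref{claim:num-type-o can serve as rank for surface} since both pairs lie in $\pmp(\Sigma,f)$).

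The hardest step is precisely this last one: showing that the merging pattern of matching-edges along a type-$o$ disc boundary that is dictated by the comparison $(\sigma',\tau')\preceq(\sigma,\tau)$ is automatically non-crossing and monochromatic, and that it is \emph{unique} — which also gives the "Moreover" clause. Monochromaticity is immediate from Lemma \ref{lem:bp(w)-closed-downwards}'s observation that no geodesic transposition mixes $E_i^+$ with $E_j^+$. For non-crossingness and uniqueness I expect to need the precise dictionary between the poset $(S_L,\preceq)$ and the lattice of non-crossing partitions recorded in \cite[Lecture 23]{NICASPEICHER}: the interval $[\sigma^{-1}\sigma',\ldots]$, read through the cycle structure, corresponds to a non-crossing-partition interval, and the planar embedding supplied by the disc $D$ of $\Sigma_{(\sigma,\tau)}$ is exactly the geometric realization of that non-crossing structure. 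I would use that each $p_i$- (resp.\ $q_i$-) matching-edge appears on exactly one type-$o$ disc boundary (Claim \ref{claim:properties-of-perms-surface}(5)), so the rewiring data is distributed without ambiguity across the type-$o$ discs; uniqueness of the non-crossing partition on each disc then follows from uniqueness of the complement in the non-crossing partition lattice (Kreweras complementation), translated through the cyclic order on $\partial D$. Once these combinatorial facts are in place, the equivalences and the uniqueness clause fall out, and the "item $3$" version is obtained by the same reasoning applied to $\Sigma_{(\sigma,\tau)}$'s type-$z_i$ discs with the geodesic traversed from $\tau$ back to $\sigma$.
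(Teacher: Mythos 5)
Your argument for $(2)\Rightarrow(1)$ is essentially the paper's: reduce to single block-merges (elementary swaps of two same-colored matching-edges inside one type-$o$ disc), note that each such swap multiplies one coordinate by a transposition, and pin down the sign of the change in $\left\Vert \sigma^{-1}\tau\right\Vert$ via Lemma \ref{lem:joint-boundaries-of-discs-in-surface} (the two swapped edges border distinct type-$z_{i}$ discs, which merge), so each step is an up-covering in $\pmp\left(\Sigma,f\right)$; modulo the slip in your first sentence about which operation ``reduces'' the norm (the sign is governed by the disc type, not by whether $\sigma$ or $\tau$ is modified, as you yourself say later), this is fine.

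The gap is in $(1)\Rightarrow(2)$. You correctly flag that composing the inductive rewiring with one more elementary swap is not formally a composition of non-crossing partitions, but your replacement --- ``assemble all the elementary swaps simultaneously and show they group, disc-by-disc of $\Sigma_{\left(\sigma',\tau'\right)}$, into non-crossing partitions'' --- assumes exactly the two things that need proof: (i) that every new matching-edge of $\left(\sigma,\tau\right)$ re-pairs endpoints lying on the boundary of a \emph{single} type-$o$ disc of $\Sigma_{\left(\sigma',\tau'\right)}$ (the swaps along a chain take place in discs of the \emph{intermediate} surfaces, not of the bottom one, so ``grouping disc-by-disc'' is not automatic), and (ii) that the induced pairing in each such disc is non-crossing. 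Your sketch for (ii) --- a crossing would force a drop in Euler characteristic, contradicting constancy of $\chi$ on $\pmp\left(\Sigma,f\right)$ --- is plausible but unproven as stated (and the inequality is written backwards), and the appeal to the $S_{L}$/non-crossing-partition dictionary and Kreweras complementation does not supply (i) at all. The paper's proof closes both points with one geometric observation, which is the real crux: at each up-covering the two new edges can be drawn as disjoint arcs inside a type-$o$ disc (the situation of Figure \ref{fig:disc-split}), and type-$o$ discs only shrink as one moves up the poset, so \emph{all} the new edges of $\left(\sigma,\tau\right)$ can simultaneously be realized as disjoint arcs inside the type-$o$ discs of $\Sigma_{\left(\sigma',\tau'\right)}$; localization and non-crossingness then come for free. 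Your plan never isolates this nesting statement, so as written the inductive step does not go through. Finally, the ``Moreover'' clause needs no machinery: given $\left(\sigma,\tau\right)$, the block structure in each type-$o$ disc of $\Sigma_{\left(\sigma',\tau'\right)}$ is forced by the rewiring rule (each edge's successor in its block is read off from the new matching), so uniqueness is immediate without Kreweras complementation.
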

\begin{proof}
The uniqueness of the partitions is obvious. For example, in item
$\left(2\right)$ the partition in every type-$o$ disc can be read
from the pair $\left(\sigma,\tau\right)$, which is given. We now
prove $\left(1\right)\Longleftrightarrow\left(2\right)$, the equivalence
$\left(1\right)\Longleftrightarrow\left(3\right)$ being completely
analogous.

\textbf{$\left(1\right)\Longrightarrow\left(2\right)$:} We show that
if $\left(\sigma',\tau'\right)\preceq\left(\sigma,\tau\right)$ then
there is a rewiring of matching-edges inside type-$o$ discs of $\Sigma_{\left(\sigma',\tau'\right)}$
which results in $\Sigma_{\left(\sigma,\tau\right)}$. It is then
obvious that the rewiring in every type-$o$ disc corresponds to a
colored non-crossing partition of its matching-edges. We prove there
is such rewiring by induction on the difference in ranks $t=\left\Vert \sigma^{-1}\tau\right\Vert -\left\Vert \left(\sigma'\right)^{-1}\tau'\right\Vert $. 

If $t=1$, namely, if $\left(\sigma,\tau\right)$ covers $\left(\sigma',\tau'\right)$,
we repeat the argument in the proof of Lemma \ref{lem:bp(w)-closed-downwards}:
the difference in the $1$-skeletons is exactly in two matching-edges.
These two matching-edges in $\Sigma_{\left(\st\right)}$ must belong
to the same type-$z_{i}$ disc. Hence, by the proof of Lemma \ref{lem:bp(w)-closed-downwards}
and Figure \ref{fig:disc-split}, these the two matching-edges in
$\Sigma_{\left(\sigma',\tau'\right)}$ must belong to the same type-$o$
disc, and we can rewire both of them inside this disc to obtain $\Sigma_{\left(\sigma,\tau\right)}$.

If $t\ge2$, let $\left(\sigma'',\tau''\right)$ be an intermediate
pair which is covered by $\left(\sigma,\tau\right)$. Use the induction
hypothesis to find a rewiring inside type-$o$ discs of $\Sigma_{\left(\sigma',\tau'\right)}$
which gives $\Sigma_{\left(\sigma'',\tau''\right)}$. Of course, we
can now find a rewiring of two matching-edges inside a type-$o$ disc
of $\Sigma_{\left(\sigma'',\tau''\right)}$ which gives $\Sigma_{\left(\sigma,\tau\right)}$.
The crux of the argument is that type-$o$ discs of $\Sigma_{\left(\sigma'',\tau''\right)}$
are \emph{completely contained} inside type-$o$ discs of $\Sigma_{\left(\sigma',\tau'\right)}$,
so the whole rewiring takes places inside type-$o$ discs of $\Sigma_{\left(\sigma',\tau'\right)}$.

$\left(2\right)\Longrightarrow\left(1\right)$: By Lemma \ref{lem: rewiring from colored-non-crossing partition},
we can perform the rewiring at one type-$o$ disc at a time and obtain
a surface corresponding to some pair in $\pmp\left(\Sigma,f\right)$
at each step. Thus it is enough to show this implication if the rewiring
is in a single type-$o$ disc $D$, and by the colored non-crossing
partition $P$.

Let $P_{0},P_{1},\ldots,P_{m}=P$ be a sequence of partitions of the
matching-edges in $D$, each obtained from the former by merging together
two blocks, so that $P_{0}$ consists entirely of singletons. Denote
by $\left(\sigma_{j},\tau_{j}\right)$ the pair of matchings in $\pmp\left(\Sigma,f\right)$
corresponding to the rewiring by $P_{j}$. Now, $\Sigma_{\left(\sigma_{j},\tau_{j}\right)}$
can be obtained from $\Sigma_{\left(\sigma_{j-1},\tau_{j-1}\right)}$
by rewiring a single pair of matching-edges inside a type-$o$ disc.
Thus, it suffices to show that in this case we go up in the poset
$\pmp\left(\Sigma,f\right)$. Without loss of generality, assume that
this single pair of matching-edges is of color $q_{i}$. Thus, $\sigma_{j}=\sigma_{j-1}$
and $\tau_{j}^{-1}\tau_{j-1}$ is a transposition. So the pairs $\left(\sigma_{j-1},\tau_{j-1}\right)$
and $\left(\sigma_{j},\tau_{j}\right)$ are necessarily comparable,
and indeed $\left(\sigma_{j-1},\tau_{j-1}\right)\prec\left(\sigma_{j},\tau_{j}\right)$
because the number of type-$o$ discs increases in this rewiring.
\end{proof}
Before stating the main theorem of this section we need one more simple
lemma:
\begin{lem}
\label{lem:sum-of-mobius-over-a-pair-is-1}Let $\sigma_{0},\tau_{0}\in S_{L}$.
Then
\[
\sum_{\left(\sigma,\tau\right)\preceq\left(\sigma_{0},\tau_{0}\right)}\moeb\left(\sigma^{-1}\tau\right)=1.
\]
\end{lem}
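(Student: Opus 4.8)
The plan is to reparametrize the sum so that it collapses to two nested applications of the M\"obius relation of Proposition~\ref{prop:mobius function}. Fix $\pi_{0}=\sigma_{0}^{-1}\tau_{0}$. With $\sigma_{0},\tau_{0}$ held fixed, the assignment $(\sigma,\tau)\mapsto(a,b)$ where $a=\sigma_{0}^{-1}\sigma$ and $b=\sigma^{-1}\tau$ is a bijection from $S_{L}^{2}$ onto itself, under which $\sigma=\sigma_{0}a$, $\tau=\sigma_{0}ab$, $\sigma^{-1}\tau=b$, and $\tau^{-1}\tau_{0}=(ab)^{-1}\pi_{0}$. Hence the defining inequality $\left\Vert \sigma_{0}^{-1}\tau_{0}\right\Vert =\left\Vert \sigma_{0}^{-1}\sigma\right\Vert +\left\Vert \sigma^{-1}\tau\right\Vert +\left\Vert \tau^{-1}\tau_{0}\right\Vert $ expressing $(\sigma,\tau)\preceq(\sigma_{0},\tau_{0})$ becomes $\left\Vert \pi_{0}\right\Vert =\left\Vert a\right\Vert +\left\Vert b\right\Vert +\left\Vert (ab)^{-1}\pi_{0}\right\Vert$, and the sum in the lemma equals the sum of $\moeb(b)$ over all such pairs $(a,b)$.

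Next I would use the triangle inequality $\left\Vert xy\right\Vert \le\left\Vert x\right\Vert +\left\Vert y\right\Vert$ for the transposition norm twice: $\left\Vert \pi_{0}\right\Vert \le\left\Vert ab\right\Vert +\left\Vert (ab)^{-1}\pi_{0}\right\Vert \le\left\Vert a\right\Vert +\left\Vert b\right\Vert +\left\Vert (ab)^{-1}\pi_{0}\right\Vert$, so that equality throughout is equivalent to $\left\Vert ab\right\Vert =\left\Vert a\right\Vert +\left\Vert b\right\Vert$ together with $\left\Vert \pi_{0}\right\Vert =\left\Vert ab\right\Vert +\left\Vert (ab)^{-1}\pi_{0}\right\Vert$, i.e. to $e\preceq a\preceq ab\preceq\pi_{0}$ in $(S_{L},\preceq)$. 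Setting $d=ab$, the map $(a,b)\mapsto(a,d)$ is again a bijection with $b=a^{-1}d$, so the quantity to compute is
\[
\sum_{e\preceq a\preceq d\preceq\pi_{0}}\moeb\!\left(a^{-1}d\right)=\sum_{e\preceq d\preceq\pi_{0}}\Bigl(\sum_{e\preceq a\preceq d}\moeb\!\left(a^{-1}d\right)\Bigr).
\]

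The final step is to evaluate the inner sum. I claim $a\mapsto a^{-1}d$ is a bijection of the interval $[e,d]=\{a:e\preceq a\preceq d\}$ onto itself, with inverse $\rho\mapsto d\rho^{-1}$: indeed, if $a\preceq d$ then $\left\Vert a^{-1}d\right\Vert =\left\Vert d\right\Vert -\left\Vert a\right\Vert$, and since conjugation preserves cycle type and hence the norm, $\left\Vert (a^{-1}d)^{-1}d\right\Vert =\left\Vert d^{-1}ad\right\Vert =\left\Vert a\right\Vert$, whence $\left\Vert a^{-1}d\right\Vert +\left\Vert (a^{-1}d)^{-1}d\right\Vert =\left\Vert d\right\Vert$, i.e. $a^{-1}d\preceq d$. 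Reindexing the inner sum by $\rho=a^{-1}d$ turns it into $\sum_{e\preceq\rho\preceq d}\moeb(\rho)$, which by Proposition~\ref{prop:mobius function} (applied with $\sigma=e$ and $\tau=d$) equals $\delta_{e,d}$. Since $e\preceq\pi_{0}$ always holds, only the term $d=e$ survives and the total is $1$. The one point that genuinely needs care is the verification that $a\mapsto a^{-1}d$ preserves $[e,d]$; this is precisely where conjugation-invariance of the transposition norm is used, and it is the crux of the argument, the rest being bookkeeping.
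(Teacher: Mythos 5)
Your proof is correct, and its skeleton is the same as the paper's: both reduce the pair condition $\left(\sigma,\tau\right)\preceq\left(\sigma_{0},\tau_{0}\right)$ to the chain condition $\mathrm{id}\preceq\sigma_{0}^{-1}\sigma\preceq\sigma_{0}^{-1}\tau\preceq\pi_{0}$ with $\pi_{0}=\sigma_{0}^{-1}\tau_{0}$ (you make the triangle-inequality argument behind this equivalence explicit, which the paper leaves implicit), and then collapse the resulting double sum via the M\"obius relation of Proposition \ref{prop:mobius function}. The divergence is in the final summation: the paper fixes the \emph{lower} element of the chain and sums over the upper one, so it can quote the stated identity $\sum_{\pi:\,\sigma\preceq\pi\preceq\tau}\moeb\left(\sigma^{-1}\pi\right)=\delta_{\sigma,\tau}$ verbatim and is done; you fix the \emph{upper} element $d$ and sum over the lower one, which requires the dual relation $\sum_{a:\, e\preceq a\preceq d}\moeb\left(a^{-1}d\right)=\delta_{e,d}$. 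Since the paper only records the first form, you correctly supply the missing step by exhibiting the self-bijection $a\mapsto a^{-1}d$ of the interval $\left[e,d\right]$, using conjugation-invariance of the transposition norm; your verification of this bijection is sound. So your route costs one extra (correct) lemma but has the small virtue of displaying the symmetry of intervals in $\left(S_{L},\preceq\right)$, i.e.\ that the M\"obius-type identity holds equally when summed over the lower argument; the paper's choice of summation order avoids this detour entirely.
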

\begin{proof}
By the definition of the order $\preceq$ on pairs, $\left(\sigma,\tau\right)\preceq\left(\sigma_{0},\tau_{0}\right)$
if and only if $\mathrm{id}\preceq\sigma_{0}^{-1}\sigma\preceq\sigma_{0}^{-1}\tau\preceq\sigma_{0}^{-1}\tau_{0}$
in $S_{L}$. By Proposition \ref{prop:mobius function} and the definition
\eqref{eq:mobius-definition} of the Möbius function $\mu$ of the
poset $\left(S_{L},\preceq\right)$,

\begin{eqnarray*}
\sum_{\left(\sigma,\tau\right)\preceq\left(\sigma_{0},\tau_{0}\right)}\moeb\left(\sigma^{-1}\tau\right) & = & \sum_{\sigma,\tau:\,\mathrm{id}\preceq\sigma_{0}^{-1}\sigma\preceq\sigma_{0}^{-1}\tau\preceq\sigma_{0}^{-1}\tau_{0}}\moeb\left(\sigma^{-1}\tau\right)\\
 & = & \sum_{\sigma,\tau:\,\mathrm{id}\preceq\sigma\preceq\tau\preceq\sigma_{0}^{-1}\tau_{0}}\moeb\left(\sigma^{-1}\tau\right)=\sum_{\sigma:\,\mathrm{id}\preceq\sigma\preceq\sigma_{0}^{-1}\tau_{0}}\left(\sum_{\tau:\,\sigma\preceq\tau\preceq\sigma_{0}^{-1}\tau_{0}}\moeb\left(\sigma^{-1}\tau\right)\right)\\
 & = & \sum_{\sigma:\,\mathrm{id}\preceq\sigma\preceq\sigma_{0}^{-1}\tau_{0}}\delta_{\sigma,\sigma_{0}^{-1}\tau_{0}}=1.
\end{eqnarray*}
\end{proof}
\begin{defn}
\label{def:SC-of-poset}\cite[Section 3.8]{Stanley-book} For every
locally finite poset%
\footnote{See footnote on Page \pageref{fn:locally-finite-posets}.%
} $\left(P,\le\right)$ there is an associated simplicial complex,
the vertices of which are the elements of $P$ and the simplices are
the chains. That is, $x_{1},\ldots,x_{k}\in P$ form a simplex if
and only if, after possible rearrangement, $x_{1}<x_{2}<\ldots<x_{k}$.
We let \marginpar{$\left|P\right|$}\textbf{$\left|P\right|$} denote
the \emph{geometric realization} of this simplicial complex%
\footnote{The space $\left|P\right|$ is a topological space with the following
topology: every simplex $s$ has the Euclidean topology. A general
set $A\subseteq\left|P\right|$ is closed if and only if $A\cap s$
is closed in $s$ for every simplex $s$.%
}.
\end{defn}
The following theorem shows that the Euler characteristic of the simplicial
complex\linebreak{}
$\left|\pmp\left(\Sigma,f\right)\right|$ captures the leading coefficient
of the contribution of the pairs of matchings in $\pmp\left(\Sigma,f\right)$
to $\trwl\left(n\right)$ from Corollary \ref{cor:trw(n) leading exponent}.
Recall that $\chi\left(\right)$ marks Euler characteristic.
\begin{thm}
\label{thm:Euler-char-of-poset}If $\left(\Sigma,f\right)$ is admissible
for $\wl$ and incompressible, then 
\[
\sum_{\left(\sigma,\tau\right)\in\pmp\left(\Sigma,f\right)}\moeb\left(\sigma^{-1}\tau\right)=\chi\left(\left|\pmp\left(\Sigma,f\right)\right|\right).
\]
In particular, 
\begin{equation}
\trwl\left(n\right)=n^{\ch\left(\wl\right)}\left[\sum_{\left[\left(\Sigma,f\right)\right]\in\sol\left(\wl\right)}\chi\left(\left|\pmp\left(\Sigma,f\right)\right|\right)\right]+O\left(n^{\ch\left(\wl\right)-2}\right).\label{eq:trwl as sum over chi of pmp}
\end{equation}
\end{thm}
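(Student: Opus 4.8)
The plan is to prove the two displayed equations separately, the second being an immediate corollary of the first combined with Corollary \ref{cor:trw(n) leading exponent}. For the first equality, I would start from the observation that $\pmp\left(\Sigma,f\right)$ is a finite graded poset (Claim \ref{claim:num-type-o can serve as rank for surface}) which is downward-closed inside the poset of all pairs of bijections $E^{+}\overset{\sim}{\to}E^{-}$ (Lemma \ref{lem:bp(w)-closed-downwards}). The key identity to exploit is the combinatorial M\"obius fact in Lemma \ref{lem:sum-of-mobius-over-a-pair-is-1}: for any $\left(\sigma_{0},\tau_{0}\right)$, the sum of $\moeb\left(\sigma^{-1}\tau\right)$ over all pairs $\preceq\left(\sigma_{0},\tau_{0}\right)$ equals $1$. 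Since $\pmp\left(\Sigma,f\right)$ is downward-closed, it is the union of the principal lower sets $\left\{(\sigma,\tau)\preceq(\sigma_{0},\tau_{0})\right\}$ as $\left(\sigma_{0},\tau_{0}\right)$ ranges over $\pmp\left(\Sigma,f\right)$, and by inclusion–exclusion the sum $\sum_{\pmp\left(\Sigma,f\right)}\moeb\left(\sigma^{-1}\tau\right)$ can be reorganized.

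More precisely, I would identify $\sum_{\left(\sigma,\tau\right)\in\pmp\left(\Sigma,f\right)}\moeb\left(\sigma^{-1}\tau\right)$ with the reduced Euler characteristic computation of the order complex via the standard fact (see \cite[Section 3.8]{Stanley-book} and the homotopy lemmas in the appendix) that for a finite poset $P$ the Euler characteristic of $\left|P\right|$ equals $\sum_{x\in P}\mu_{\hat{P}}\left(\hat{0},x\right)$ applied to $\hat{P}=P\cup\{\hat 0\}$, where $\mu_{\hat P}$ is the M\"obius function of $\hat P$. The crucial point is that the M\"obius function of the interval $\left[\hat 0,(\sigma,\tau)\right]$ in $\hat P$ — or equivalently the value $\moeb\left(\sigma^{-1}\tau\right)$ — matches the M\"obius function of the ambient poset of pairs of bijections restricted to $\pmp\left(\Sigma,f\right)$; this is exactly where Lemma \ref{lem:sum-of-mobius-over-a-pair-is-1} and the fact that $\pmp\left(\Sigma,f\right)$ is an \emph{order ideal} (downward-closed, Lemma \ref{lem:bp(w)-closed-downwards}) are used, so that intervals in $\pmp\left(\Sigma,f\right)$ coincide with intervals in the full pair-of-bijections poset. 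I expect the main obstacle to be precisely this compatibility claim: one must verify that for $\left(\sigma',\tau'\right)\preceq\left(\sigma,\tau\right)$ both in $\pmp\left(\Sigma,f\right)$, the interval $\left[\left(\sigma',\tau'\right),\left(\sigma,\tau\right)\right]$ computed inside $\pmp\left(\Sigma,f\right)$ is the same as the interval computed in the ambient poset — this follows from downward-closedness, but one also needs that $\moeb\left(\sigma^{-1}\tau\right)$ genuinely is the value of the M\"obius function $\mu\left(\left(\sigma',\tau'\right),\left(\sigma,\tau\right)\right)$ of the pair-poset (which reduces, via the definition of $\preceq$ on pairs and Proposition \ref{prop:mobius function}, to the M\"obius function of $\left(S_{L},\preceq\right)$ evaluated at $\left(\sigma'{}^{-1}\sigma,\tau'{}^{-1}\tau\right)$-type products — here I would invoke the product structure $\mathrm{id}\preceq\sigma_0^{-1}\sigma\preceq\sigma_0^{-1}\tau\preceq\sigma_0^{-1}\tau_0$ already used in the proof of Lemma \ref{lem:sum-of-mobius-over-a-pair-is-1}).

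Concretely, the computation I envision: add a formal minimum $\hat 0$ below all of $\pmp\left(\Sigma,f\right)$, so $\left|\pmp\left(\Sigma,f\right)\right|$ is (up to the cone point) related to $\hat P$; then
\[
\chi\left(\left|\pmp\left(\Sigma,f\right)\right|\right)=-\sum_{x\in\pmp\left(\Sigma,f\right)}\mu_{\hat P}\left(\hat 0,x\right)+\big(\text{correction for }\hat 0\big),
\]
and using that $\mu_{\hat P}\left(\hat 0,x\right)=-\sum_{\hat 0<y\le x}\mu_{\hat P}\left(y,x\right)=-\sum_{y\le x,\,y\in\pmp}\moeb\left(y^{-1}x\right)$ together with Lemma \ref{lem:sum-of-mobius-over-a-pair-is-1} (which gives $\sum_{y\le x}\moeb\left(y^{-1}x\right)=1$ when the sum is over \emph{all} $y\preceq x$, hence equals $1$ also over $y\in\pmp$ since $\pmp$ is downward-closed), one gets $\mu_{\hat P}\left(\hat 0,x\right)=-1$ for every $x$, and unwinding the Euler characteristic formula collapses everything to $\sum_{x\in\pmp\left(\Sigma,f\right)}\moeb\left(\sigma^{-1}\tau\right)$. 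I would double-check the sign bookkeeping (reduced vs.\ unreduced Euler characteristic, the role of the minimal element) by testing against the worked example $w=\left[x,y\right]\left[x,z\right]$ on page \pageref{[x,y][x,z] - PMP}, where $\pmp$ has four elements (two minimal with $\sigma=\tau$, two maximal), the order complex is two disjoint edges, hence $\chi=2$, and indeed $\moeb$ sums to $1+1-1-1$... — wait, that test actually forces me to be careful, since Example \ref{example:[x,y][x,z] leading term 0} states the $\moeb$-sum is $0$ while $\chi$ of two disjoint edges is $2$; resolving this apparent discrepancy (the poset structure there is \emph{not} two incomparable edges but has comparabilities between the $\sigma=\tau$ pairs and the transposition pairs, making $\left|\pmp\right|$ contractible or a single edge with $\chi$ matching $0$) is exactly the kind of subtlety I would nail down before writing the final argument. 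Finally, the second displayed equation \eqref{eq:trwl as sum over chi of pmp} follows by substituting the first identity into \eqref{eq:leading-term-of-trw(n)-as-sum-over-min-genus-perms} of Corollary \ref{cor:trw(n) leading exponent}, grouping the pairs $\left(\sigma,\tau\right)$ with $\chi\left(\sigma,\tau\right)=\ch\left(\wl\right)$ according to their equivalence class $\left[\left(\Sigma_{\left(\sigma,\tau\right)},f_{\left(\sigma,\tau\right)}\right)\right]\in\sol\left(\wl\right)$ — using that all pairs in a single $\pmp\left(\Sigma,f\right)$ with $\left[(\Sigma,f)\right]\in\sol(\wl)$ have the maximal Euler characteristic $\ch\left(\wl\right)$ and that, conversely, every such pair lies in exactly one $\pmp\left(\Sigma,f\right)$ by Lemma \ref{lem:every admissible and incompressible obtained from matchings}.
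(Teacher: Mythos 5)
Your overall architecture --- adjoin a formal minimum $\hat{0}$ to $P=\pmp\left(\Sigma,f\right)$, relate $\chi\left(\left|P\right|\right)$ to the values $\mu_{\hat{P}}\left(\hat{0},x\right)$ by the standard chain-counting (Hall--Stanley) formula, and feed in Lemma \ref{lem:sum-of-mobius-over-a-pair-is-1} together with downward-closedness (Lemma \ref{lem:bp(w)-closed-downwards}) --- can be made to work, and in substance it is a repackaging of the paper's proof, which establishes directly, by induction on the size of the lower set, the identity $\moeb\left(\sigma_{0}^{-1}\tau_{0}\right)=\sum_{s:\,\max s=\left(\sigma_{0},\tau_{0}\right)}\left(-1\right)^{\dim s}$ and then sums over $\left(\sigma_{0},\tau_{0}\right)$. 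Your deduction of \eqref{eq:trwl as sum over chi of pmp} from the first identity together with Corollary \ref{cor:trw(n) leading exponent} and Lemma \ref{lem:every admissible and incompressible obtained from matchings} is correct.

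However, the central M\"obius computation as you wrote it is wrong. You conclude $\mu_{\hat{P}}\left(\hat{0},x\right)=-1$ for every $x$; since the chain-counting formula reads $\chi\left(\left|P\right|\right)=-\sum_{x\in P}\mu_{\hat{P}}\left(\hat{0},x\right)$, this would yield $\chi\left(\left|P\right|\right)=\left|P\right|$, the number of vertices, which already fails for $w=\left[x,y\right]\left[x,z\right]$: there $\left|\pmp\right|$ is a $4$-cycle, so $\chi=0$ with four vertices (and indeed the M\"obius sum is $1+1-1-1=0$, so the theorem checks out; your tentative resolution ``contractible or a single edge'' is also off). The error enters where you write $\mu_{\hat{P}}\left(y,x\right)=\moeb\left(y^{-1}x\right)$: this expression is not even well-defined for pairs, the M\"obius function of an interval in the pair-poset is not $\moeb$ of any single permutation in an obvious way, and it is not what Lemma \ref{lem:sum-of-mobius-over-a-pair-is-1} provides --- that lemma sums $\moeb\left(\sigma^{-1}\tau\right)$ over the \emph{lower pairs} $\left(\sigma,\tau\right)$ themselves, not any quotient between $y$ and $x$. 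The statement you actually need is $\mu_{\hat{P}}\left(\hat{0},x\right)=-\moeb\left(\sigma^{-1}\tau\right)$ for $x=\left(\sigma,\tau\right)$, and it follows by induction from the \emph{defining} recursion $\sum_{\hat{0}\le y\le x}\mu_{\hat{P}}\left(\hat{0},y\right)=0$: downward-closedness guarantees that the elements $y\in P$ with $y\preceq x$ are exactly all pairs of bijections $\preceq x$, so the inductive hypothesis gives $\mu_{\hat{P}}\left(\hat{0},x\right)=-1+\sum_{y=\left(\sigma',\tau'\right)\prec x}\moeb\left(\left(\sigma'\right)^{-1}\tau'\right)=-1+\left(1-\moeb\left(\sigma^{-1}\tau\right)\right)=-\moeb\left(\sigma^{-1}\tau\right)$ by Lemma \ref{lem:sum-of-mobius-over-a-pair-is-1}, whence $\chi\left(\left|P\right|\right)=\sum_{\left(\sigma,\tau\right)\in P}\moeb\left(\sigma^{-1}\tau\right)$ as required. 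With this correction your route coincides with the paper's induction.
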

\begin{proof}
Recall that for a simplicial complex $\Delta$, the Euler characteristic
is 
\[
\chi\left(\Delta\right)=\sum_{\emptyset\ne s}\left(-1\right)^{\dim s},
\]
the sum being over all non-empty simplices in $\Delta$, and $\dim s=\left|s\right|-1$.
We prove the statement for any poset $P$ of pairs of bijections with
the downward-closure property elaborated in Lemma \ref{lem:bp(w)-closed-downwards}.
It is enough to show that for every pair $\left(\sigma_{0},\tau_{0}\right)$
we have 
\begin{equation}
\moeb\left(\sigma_{0}^{-1}\tau_{0}\right)=\sum_{s\subseteq P:\,\max s=\left(\sigma_{0},\tau_{0}\right)}\left(-1\right)^{\dim s},\label{eq:mob=00003Dsum-of-faces-containing-elem-and-elems-above}
\end{equation}
the sum being over all chains in $P$ with maximal element $\left(\sigma_{0},\tau_{0}\right)$.
Indeed, if \eqref{eq:mob=00003Dsum-of-faces-containing-elem-and-elems-above}
holds, then 
\[
\sum_{\left(\sigma_{0},\tau_{0}\right)\in P}\moeb\left(\sigma_{0}^{-1}\tau_{0}\right)=\sum_{\left(\sigma_{0},\tau_{0}\right)\in P}\left[\sum_{s\subseteq P:\,\max s=\left(\sigma_{0},\tau_{0}\right)}\left(-1\right)^{\dim s}\right]=\sum_{\emptyset\ne s\subseteq P}\left(-1\right)^{\dim s}=\chi\left(\left|P\right|\right).
\]

So we only need to prove \eqref{eq:mob=00003Dsum-of-faces-containing-elem-and-elems-above}.
Denote by $(-\infty,\left(\sigma_{0},\tau_{0}\right)]{}_{\preceq}$
all pairs below (or equal to) $\left(\sigma_{0},\tau_{0}\right)$
according to $\preceq$. We prove \eqref{eq:mob=00003Dsum-of-faces-containing-elem-and-elems-above}
by induction on the size $t$ of $(-\infty,\left(\sigma_{0},\tau_{0}\right)]{}_{\preceq}$.
It clearly holds for $t=1$, in which case necessarily $\sigma_{0}=\tau_{0}$
by the downward-closeness property. For $t\ge2$, note the one-to-one
correspondence among the chains in $\big(-\infty,\left(\sigma_{0},\tau_{0}\right)\big]_{\preceq}$
between those containing $\left(\sigma_{0},\tau_{0}\right)$ and those
not containing it. This correspondence is given by $s\mapsto s\setminus\left\{ \left(\sigma_{0},\tau_{0}\right)\right\} $.
Now,

\begin{eqnarray*}
\sum_{s\subseteq P:\,\max s=\left(\sigma_{0},\tau_{0}\right)}\left(-1\right)^{\dim s} & = & \left(\sum_{s\subseteq P:\,\max s=\left(\sigma_{0},\tau_{0}\right)}\left[\left(-1\right)^{\dim s}+\left(-1\right)^{\dim\left(s\setminus\left\{ \left(\sigma_{0},\tau_{0}\right)\right\} \right)}\right]\right)\\
 &  & -\left(\left(-1\right)^{\dim\emptyset}+\sum_{\emptyset\ne s\subseteq P:\,\max s\prec\left(\sigma_{0},\tau_{0}\right)}\left(-1\right)^{\dim s}\right)\\
 & = & 0-\left(-1+\sum_{\left(\sigma,\tau\right)\prec\left(\sigma_{0},\tau_{0}\right)}\sum_{s\subseteq P:\,\max s=\left(\sigma,\tau\right)}\left(-1\right)^{\dim s}\right)\\
 & \overset{\left(1\right)}{=} & 1-\sum_{\left(\sigma,\tau\right)\prec\left(\sigma_{0},\tau_{0}\right)}\moeb\left(\sigma^{-1}\tau\right)\overset{\left(2\right)}{=}\moeb\left(\sigma_{0}^{-1}\tau_{0}\right),
\end{eqnarray*}
where in $\overset{\left(1\right)}{=}$ we used the induction hypothesis
for smaller values of $t$, and in $\overset{\left(2\right)}{=}$
we used Lemma \ref{lem:sum-of-mobius-over-a-pair-is-1}.
\end{proof}
As an example, consider again $w=\left[x,y\right]\left[x,z\right]$.\label{[x,y][x,z] - simplicial complex}
We already described above (in Page \pageref{[x,y][x,z] - PMP}) the
poset $\pmp\left(\Sigma,f\right)$ of the only equivalence class in
this case. The associated simplicial complex is one dimensional with
the shape of a $4$-cycle. Topologically, this is simply $S^{1}$,
and the Euler characteristic is $0$. This agrees, of course, with
the direct computation carried out in Example \ref{example:[x,y][x,z] leading term 0}.

In the next section we shall prove the following:
\begin{thm}
\label{thm:pmp is K(G,1)}Let $\left(\Sigma,f\right)$ be admissible
for $\wl$ and incompressible. As above, denote by $\tilde{f}$ the
homotopy class of $f$, relative $\partial\Sigma$. Then $\left|\pmp\left(\Sigma,f\right)\right|$
is a $\mathrm{K}\left(G,1\right)$-space for $G=\mathrm{Stab}_{\mathrm{MCG}\left(\Sigma\right)}\left(\tilde{f}\right)$.
\end{thm}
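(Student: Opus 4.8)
The plan is to deduce Theorem \ref{thm:pmp is K(G,1)} from the properties of a finer, infinite poset $\ap\left(\Sigma,f\right)$ — the arc poset — whose elements are genuine geometric realizations in $\Sigma$ of the pairs of matchings making up $\pmp\left(\Sigma,f\right)$. Concretely, instead of recording only a pair $\left(\sigma,\tau\right)\in\pmp\left(\Sigma,f\right)$, one records an actual isotopy class of $\left|w_{1}\right|+\ldots+\left|w_{\ell}\right|$ disjoint arcs in $\Sigma$ whose marked, colored endpoints spell out $\wl$ along $\partial\Sigma$ and which realize $\left(\sigma,\tau\right)$; the order is the same ``non-crossing rewiring'' relation analyzed in Proposition \ref{prop:rewiring-in-PP}. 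Forgetting the realization and remembering only the underlying pair of matchings gives a surjective, order-preserving forgetful map $\ap\left(\Sigma,f\right)\twoheadrightarrow\pmp\left(\Sigma,f\right)$, and $\MCG\left(\Sigma\right)$ acts on $\ap\left(\Sigma,f\right)$ by change of markings, commuting with this forgetful map and fixing $\pmp\left(\Sigma,f\right)$ pointwise.

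I would then establish exactly the three ingredients listed in Section \ref{sub:Overview-of-proof}. First (Theorem \ref{thm:MCG-action-on-AP}): the induced simplicial map $\left|\ap\left(\Sigma,f\right)\right|\to\left|\pmp\left(\Sigma,f\right)\right|$ is a covering map, the $\MCG\left(\Sigma\right)$-action extends to a free, properly discontinuous covering-space action on $\left|\ap\left(\Sigma,f\right)\right|$, and the quotient is $\left|\pmp\left(\Sigma,f\right)\right|$; here one uses that two arc systems lie over the same pair of matchings if and only if they differ by a mapping class, which fixes $\tilde{f}$ by Lemma \ref{lem:homotopy=00003Dsame_induced_map} (the arcs fill $\Sigma$), and that such a mapping class is then determined by that data. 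Second (first half of Theorem \ref{thm:components-of-AP(Sigma,f)}): the connected components of $\left|\ap\left(\Sigma,f\right)\right|$ biject with the homotopy classes of maps in the equivalence class $\left[\left(\Sigma,f\right)\right]$, the distinguishing invariant being the homotopy class of the extension $f_{\left(\sigma,\tau\right)}$ along a fixed realization, and incompressibility is what guarantees that every pair of matchings in $\pmp\left(\Sigma,f\right)$ is realized over every such component. Third (second half of Theorem \ref{thm:components-of-AP(Sigma,f)}): every connected component of $\left|\ap\left(\Sigma,f\right)\right|$ is contractible.

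Granting these three facts, the deduction is short. By the first and third ingredients, each component $C$ of $\left|\ap\left(\Sigma,f\right)\right|$ is a contractible — in particular simply connected — covering space of $\left|\pmp\left(\Sigma,f\right)\right|$, hence a universal cover; since every pair of matchings is realized over $C$ (incompressibility), the restricted covering $C\to\left|\pmp\left(\Sigma,f\right)\right|$ is surjective, which forces $\left|\pmp\left(\Sigma,f\right)\right|$ to be path-connected, giving $\left(i\right)$, and exhibits a contractible universal cover, giving $\left(iii\right)$. For $\left(ii\right)$, the deck group of $C\to\left|\pmp\left(\Sigma,f\right)\right|$ is $\pi_{1}\left(\left|\pmp\left(\Sigma,f\right)\right|\right)$; by the first ingredient it equals the setwise stabilizer of $C$ in $\MCG\left(\Sigma\right)$, and by the second ingredient $\MCG\left(\Sigma\right)$ permutes the components as it permutes homotopy classes in $\left[\left(\Sigma,f\right)\right]$, which via Dehn-Nielsen-Baer (Theorem \ref{thm:Dehn-Nielsen-Baer}) is its action on the corresponding $\Hom$-type data — so the stabilizer of $C$ is $\Stab_{\MCG\left(\Sigma\right)}\left(\tilde{f}\right)$. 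This proves $\left|\pmp\left(\Sigma,f\right)\right|$ is a $\mathrm{K}\left(G,1\right)$-space for $G=\Stab_{\MCG\left(\Sigma\right)}\left(\tilde{f}\right)$; finiteness of this simplicial complex (Corollary \ref{cor:Finitely many incompressible maps} bounds $\match\left(\wl\right)$) then also yields Theorems \ref{thm:K(G,1) for incompressible} and \ref{thm:stabilizers K(G,1)}.

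The main obstacle is the third ingredient, the contractibility of each component of $\left|\ap\left(\Sigma,f\right)\right|$: this complex is infinite, and there is no Teichm\"uller-theoretic shortcut of the Harer-Zagier/Penner type available. I would prove it directly in Section \ref{sub:Proof-of-contractability}, by exhibiting a countable sequence of $\MCG$-equivariant deformation retracts — each one realized by an order morphism and hence by a deformation retract of geometric realizations, via the poset-homotopy lemmas of the appendix — that successively simplify an arc system (pushing arcs across a chosen handle or boundary collar) until the component collapses to a single vertex. The delicate points are ensuring that the process terminates on every compact subcomplex and that the successive retractions are mutually compatible, so that their composition is a well-defined contraction of the whole infinite component.
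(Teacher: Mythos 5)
Your proposal follows essentially the same route as the paper: it introduces the arc poset $\ap\left(\Sigma,f\right)$ with its forgetful map and $\MCG\left(\Sigma\right)$-action, proves the covering-space statement (Theorem \ref{thm:MCG-action-on-AP}), identifies the components of $\left|\ap\left(\Sigma,f\right)\right|$ with homotopy classes in $\left[\left(\Sigma,f\right)\right]$ and proves each component contractible by a countable sequence of poset-morphism deformation retracts (Theorem \ref{thm:components-of-AP(Sigma,f)}), and then deduces the $\mathrm{K}\left(G,1\right)$ property exactly as in Section \ref{sec:core-of-result}. One minor point: surjectivity of the restricted covering $C\to\left|\pmp\left(\Sigma,f\right)\right|$ follows directly from the definition of $\pmp\left(\Sigma,f\right)$ (pull back the matching-edges of $\Sigma_{\left(\sigma,\tau\right)}$ through the equivalence homeomorphism to get an arc system with $f_{\aa}\simeq f$), rather than from incompressibility as such, but this does not affect the argument.
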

As explained in Section \ref{sec:Introduction}, in order to prove
this theorem we show in the next section that $\left(i\right)$ $\left|\pmp\left(\Sigma,f\right)\right|$
is (path) connected, $\left(ii\right)$ its fundamental group is isomorphic
to $\mathrm{Stab}_{\mathrm{MCG}\left(\Sigma\right)}\left(\tilde{f}\right)$,
and $\left(iii\right)$ its universal cover in contractible. 

Our main theorems now follow immediately from Theorem \ref{thm:pmp is K(G,1)}:
Theorem \ref{thm:K(G,1) for incompressible} follows as $\left|\pmp\left(\Sigma,f\right)\right|$
is a finite simplicial complex, and Theorem \ref{thm:main - general}
follows using \eqref{eq:trwl as sum over chi of pmp}.

\section{The Arc Poset\label{sec:core-of-result}}

In this section we construct yet another poset related to some $\left(\Sigma,f\right)$
(we assume $\left(\Sigma,f\right)$ is admissible for $\wl$ and incompressible
throughout this section). This poset is named the {}``arc poset''
of $\left(\Sigma,f\right)$, and its elements consist of sets of arcs
on the surface $\Sigma$. Each one of them looks like a specific geometric
realization of the matching-edges in $\Sigma_{\left(\st\right)}$
for some $\left(\st\right)\in\pmp\left(\Sigma,f\right)$. However,
in the arc poset we let $\mathrm{MCG}\left(\Sigma\right)$ act freely.
Namely, different sets of arcs representing the same pair of matchings
will constitute different elements in the arc poset as long as they
differ by the action of a non-trivial element of $\mathrm{MCG}\left(\Sigma\right)$.
As we show below, the connected components of the arc poset shall
serve as universal cover of $\left|\pmp\left(\Sigma,f\right)\right|$
and enable us to prove Theorem \ref{thm:pmp is K(G,1)}.

\subsection{Arc systems\label{sub:Arc-systems}}

Recall that if $\left(\Sigma,f\right)$ is admissible for $\wl$,
then the $\ell$ boundary components of $\Sigma$ are identified with
$S^{1}\left(w_{1}\right),\ldots,S^{1}\left(w_{\ell}\right)$ and have
$4L$ marked points on them which spell out $w_{1},\ldots,w_{\ell}$
(consult also the glossary on Page \pageref{sec:Glossary}).
\begin{defn}
\label{def:arc-system}Let $\left(\Sigma,f\right)$ be admissible
for $\wl$ and incompressible. An\textbf{\emph{ }}\textbf{arc system
}for $\left(\Sigma,f\right)$ is\emph{ }an ambient isotopy (relative
to the boundary $\partial\Sigma$) class of sets of $2L$ disjoint
arcs embedded in $\Sigma$, which meet $\partial\Sigma$ only at their
endpoints and so that the matching they induce on the $4L$ marked
points in $\partial\Sigma$ is identical to the one induced by some
$\left(\st\right)\in\pmp\left(\Sigma,f\right)$. 

We denote by $\left[\left\{ \alpha_{1},\ldots,\alpha_{2L}\right\} \right]$\marginpar{$\left[\left\{ \alpha_{1},\ldots,\alpha_{2L}\right\} \right]$}
the arc system with representative $\left\{ \alpha_{1},\ldots,\alpha_{2L}\right\} $.
We also denote by $\left(\sigma_{\aa},\tau_{\aa}\right)$\marginpar{$\sigma_{\aa},\tau_{\aa}$}
the pair of matchings in $\pmp\left(\Sigma,f\right)$ associated with
the arc system $\aa=[\{\alpha_{1},\ldots,\alpha_{2L}\}]$.
\end{defn}
Note, in particular, that an arc system for $\left(\Sigma,f\right)$
must connect $p_{i}^{+}$-points in $\partial\Sigma$ to $p_{i}^{-}$-points,
and $q_{i}^{+}$-points to $q_{i}^{-}$-points, for every $i\in\left[r\right]$.
We call an arc a \textbf{$p_{i}$-arc} (a \textbf{$q_{i}$-arc}, respectively)
if it connects a $p_{i}^{+}$-point with a $p_{i}^{-}$-point (a $q_{i}^{+}$-point
with a $q_{i}^{-}$-point, respectively). We think of the arcs as
colored by $\left\{ p_{i},q_{i}\,\middle|\, i\in\left[r\right]\right\} $.
\begin{claim}
An arc system $\aa$ for $\left(\Sigma,f\right)$ cuts $\Sigma$ into
discs.\end{claim}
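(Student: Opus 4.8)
The plan is to cut $\Sigma$ along the arc system and run a short Euler characteristic count, exploiting that an arc system and the ribbon-graph surface $\Sigma_{\left(\sigma_{\aa},\tau_{\aa}\right)}$ carry the same boundary data. Write $\aa=\left[\left\{ \alpha_{1},\ldots,\alpha_{2L}\right\} \right]$ and let $\hat{\Sigma}$ be the compact surface obtained by cutting $\Sigma$ along representatives of the $2L$ arcs; since the arcs are disjoint and properly embedded and $\Sigma$ has no closed components, $\hat{\Sigma}$ is a disjoint union of compact orientable surfaces with non-empty boundary, and $\aa$ cuts $\Sigma$ into discs exactly when every component of $\hat{\Sigma}$ is a disc. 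A connected compact orientable surface with $b\ge1$ boundary circles and genus $g$ has Euler characteristic $2-2g-b\le b$, with equality if and only if $g=0$ and $b=1$, i.e.\ if and only if it is a disc. Summing over components, $\chi(\hat{\Sigma})\le B$, where $B$ denotes the total number of boundary circles of $\hat{\Sigma}$, with equality if and only if every component is a disc. So it suffices to prove $\chi(\hat{\Sigma})=B$.

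First I would record the elementary fact that cutting a compact surface along a single properly embedded arc raises the Euler characteristic by $1$ (a regular neighbourhood of the arc is a disc which splits into two discs, and regluing two arc-copies into one costs $1$ in $\chi$), so cutting along all $2L$ arcs gives $\chi(\hat{\Sigma})=\chi(\Sigma)+2L$. Next I would compute $B$. Cutting along an arc is a local operation near that arc — a neighbourhood becomes two parallel strips — so the boundary circles of $\hat{\Sigma}$ are traced by precisely the rule used to glue the $2$-cells of $\Sigma_{\left(\sigma_{\aa},\tau_{\aa}\right)}$ in Definition \ref{def:surface-from-perms}: run along a boundary segment of $\partial\Sigma$ in the positive direction until the next marked point, then follow the unique arc of $\aa$ emanating there to its opposite endpoint, then continue along a boundary segment, and so forth. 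Both rules depend only on the matching $\left(\sigma_{\aa},\tau_{\aa}\right)$ and on the marked, oriented boundary $\partial\Sigma=\partial\Sigma_{\left(\sigma_{\aa},\tau_{\aa}\right)}$, and not on how the arcs sit in the interior, so $B$ equals the number of $2$-cells of $\Sigma_{\left(\sigma_{\aa},\tau_{\aa}\right)}$. By the Euler characteristic computation in the proof of Proposition \ref{prop:order-of-contribution-of-(sigma,tau)}, that number of $2$-cells is $\chi\left(\Sigma_{\left(\sigma_{\aa},\tau_{\aa}\right)}\right)+2L$; and since $\left(\sigma_{\aa},\tau_{\aa}\right)\in\pmp\left(\Sigma,f\right)$ we have $\left(\Sigma_{\left(\sigma_{\aa},\tau_{\aa}\right)},f_{\left(\sigma_{\aa},\tau_{\aa}\right)}\right)\sim\left(\Sigma,f\right)$, so in particular $\Sigma_{\left(\sigma_{\aa},\tau_{\aa}\right)}\cong\Sigma$ and $\chi\left(\Sigma_{\left(\sigma_{\aa},\tau_{\aa}\right)}\right)=\chi(\Sigma)$. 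Hence $B=\chi(\Sigma)+2L=\chi(\hat{\Sigma})$, which completes the proof.

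The step requiring the most care is the identification of the boundary circles of $\hat{\Sigma}$ with the $2$-cells of $\Sigma_{\left(\sigma_{\aa},\tau_{\aa}\right)}$. One has to check that, travelling along $\partial\hat{\Sigma}$, after traversing a copy of an arc $\alpha_{j}$ and arriving at the opposite marked point, one is forced onto the boundary segment of $\partial\Sigma$ that continues in the positive boundary orientation — this is where orientability of $\Sigma$, and the fact that each of the $4L$ marked points is the endpoint of exactly one of the $2L$ arcs of $\aa$, are used. It is also worth observing, exactly as in Claim \ref{claim:properties-of-perms-surface}, that these boundary circles split into ``type-$o$'' and ``type-$z_{i}$'' circles according to which kind of boundary segments they pass through; only the total $B$ enters the counting argument, but this refinement is what makes the match with the $2$-cells of $\Sigma_{\left(\sigma_{\aa},\tau_{\aa}\right)}$ transparent.
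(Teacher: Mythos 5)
Your proof is correct and is essentially the paper's own argument in expanded form: the paper's two-line proof appeals to the homeomorphism $\Sigma_{\left(\sigma_{\aa},\tau_{\aa}\right)}\cong\Sigma$ and to "a simple Euler characteristic argument", which is precisely the cut-and-count comparison you carry out ($\chi(\hat{\Sigma})=\chi(\Sigma)+2L$ versus the combinatorially determined number of boundary circles, with $2-2g-b\le b$ forcing discs). The identification of the boundary circles of the cut surface with the $2$-cells of $\Sigma_{\left(\sigma_{\aa},\tau_{\aa}\right)}$, which the paper leaves implicit, is handled correctly in your proposal.
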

\begin{proof}
By definition, the matching-edges in $\Sigma_{\left(\sigma_{\aa},\tau_{\aa}\right)}$
cut $\Sigma_{\left(\sigma_{\aa},\tau_{\aa}\right)}$ into discs. Since
$\Sigma_{\left(\sigma_{\aa},\tau_{\aa}\right)}\cong\Sigma$, a simple
Euler characteristic argument shows the arcs in $\aa$ must also cut
$\Sigma$ into discs: otherwise, the Euler characteristic is too small.
\end{proof}
We can therefore think of $\Sigma$ with the arc system $\aa$ as
a CW-complex which is isomorphic to the CW-complex $\Sigma_{\left(\sigma_{\aa},\tau_{\aa}\right)}$.
We let $\Sigma_{\aa}$\marginpar{$\Sigma_{\aa}$} denote this CW-complex.
We extend some of the notions we had for $\Sigma_{\left(\sigma_{\aa},\tau_{\aa}\right)}$
to $\Sigma_{\aa}$: As in Claim \ref{claim:properties-of-perms-surface},
every disc $D$ in $\Sigma_{\aa}$ is either a \textbf{type-$o$ disc}
(if $\partial D$ contains $o$-points, i.e.~points from $f_{w_{1}}^{-1}\left(o\right)\cup\ldots\cup f_{w_{\ell}}^{-1}\left(o\right)$)
or a \textbf{type-$z_{i}$ disc }(if $\partial D$ contains $z_{i}$-points
for some $i$, i.e.~points from $f_{w_{1}}^{-1}\left(z_{i}\right)\cup\ldots\cup f_{w_{\ell}}^{-1}\left(z_{i}\right)$
). This is illustrated in Figure \ref{fig:first-arc-systems}.

We also define a (homotopy class of a) map \marginpar{$f_{\aa}$}$f_{\aa}\colon\Sigma\to\wedger$
as in Definition \ref{def:f_(sigma,tau)}: we let $f_{\aa}$ extend
$f_{w_{1}},\ldots,f_{w_{\ell}}$ on $\partial\Sigma$, and be constant
on the arcs. There is then a unique way (up to homotopy) to extend
$f_{\aa}$ in the discs of $\Sigma_{\aa}$. Evidently, $\left(\Sigma,f_{\aa}\right)\sim\left(\Sigma_{\left(\sigma_{\aa},\tau_{\aa}\right)},f_{\left(\sigma_{\aa},\tau_{\aa}\right)}\right)\sim\left(\Sigma,f\right)$
and, in particular, $\left(\Sigma,f_{\aa}\right)$ is admissible for
$\wl$. Finally, as in Lemma \ref{lem:joint-boundaries-of-discs-in-surface},
two bordering discs of $\Sigma_{\aa}$, which must be one of type-$o$
and the other of type-$z_{i}$, have at most $2$ common arcs at their
boundaries: at most one $p_{i}$-arc and at most $q_{i}$-arc.

\begin{figure}[h]
\centering{}\includegraphics[bb=0bp 0bp 350bp 130bp,scale=1.2]{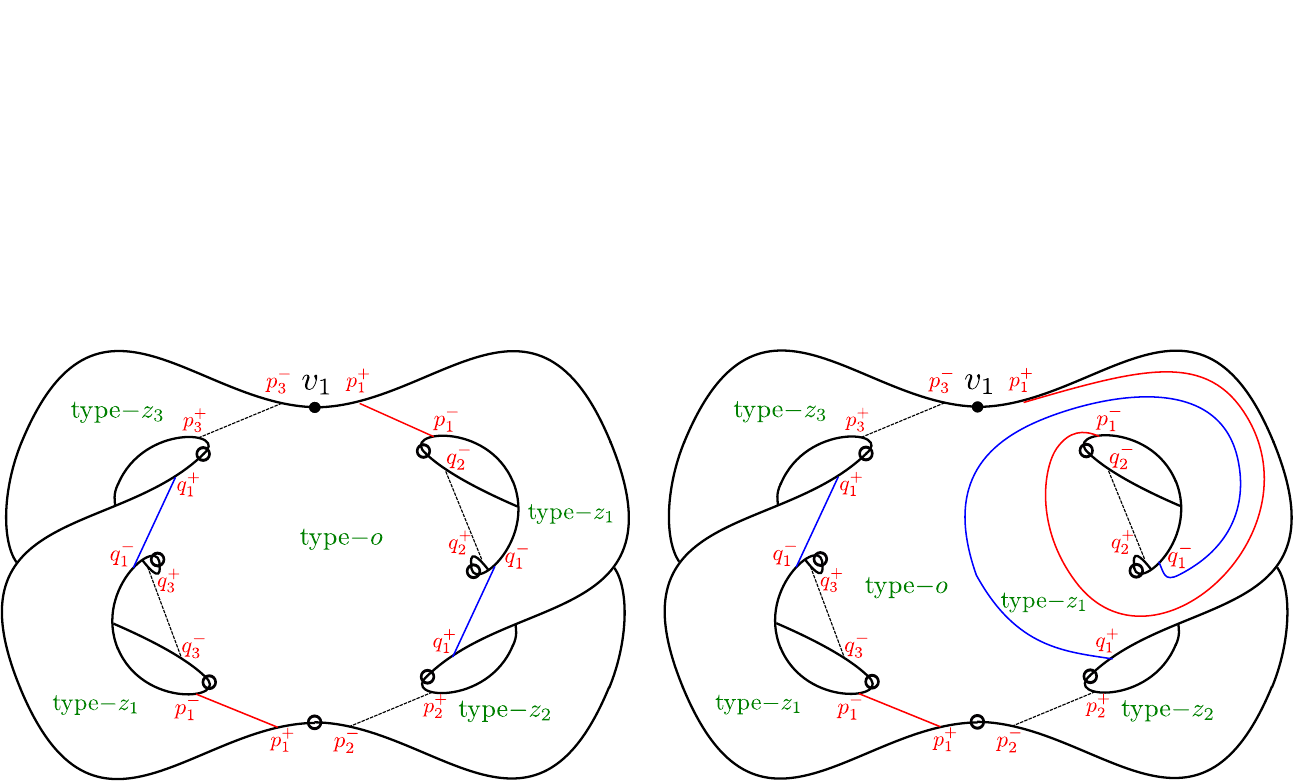}\caption{\label{fig:first-arc-systems}Two arc systems drawn on $\Sigma$ of
genus $2$ and with one boundary component identified with $S^{1}\left(w\right)$
for the word $w=\left[x,y\right]\left[x,z\right]=\left[x_{1},x_{2}\right]\left[x_{1},x_{3}\right]$.
The $p_{1}$-arcs are red, the $q_{1}$-arcs are blue and all the
others are drawn in black. These two arc systems are distinct yet
induce the same pair of matchings. Each of the associated CW-complexes
has five discs: one of type-$o$, two of type-$z_{1}$, one of type-$z_{2}$
and one of type-$z_{3}$. }
\end{figure}

The following useful claim is evident from the definition of $f_{\aa}$:
\begin{claim}
\label{claim:f_aa on arcs}Let $\aa$ be an arc system for $\left(\Sigma,f\right)$,
and let $\gamma$ be an oriented arc in $\Sigma$ with endpoints in
$v_{1},\ldots,v_{\ell}$. Then the word $\left[f_{\aa}\left(\gamma\right)\right]\in\F_{r}$
can be computed as follows: fix a representative $\left\{ \alpha_{1},\ldots,\alpha_{2L}\right\} $
of $\aa$ which meets $\gamma$ transversely. Now follow the intersections
of $\gamma$ with the $\alpha_{i}$'s:
\begin{itemize}
\item Whenever $\gamma$ enters a type-$z_{i}$ disc through a $p_{i}$-arc
and leaves through a $q_{i}$-arc, write $x_{i}$.
\item Whenever $\gamma$ enters a type-$z_{i}$ disc through a $q_{i}$-arc
and leaves through a $p_{i}$-arc, write $x_{i}^{-1}$.
\item Whenever $\gamma$ enters and leaves a type-$z_{i}$ disc through
$p{}_{i}$-arcs, or enter and leaves through $q_{i}$-arcs, write
nothing.
\end{itemize}

The final result is $\left[f_{\aa}\left(\gamma\right)\right]$, albeit
not necessarily in reduced form.

\end{claim}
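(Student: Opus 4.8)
The plan is to choose a convenient representative of the homotopy class $f_{\aa}$ and then read $[f_{\aa}(\gamma)]$ off the intersection pattern of $\gamma$ with the arc system. For each $i$ let $I_{i}\subseteq\wedger$ be the closed sub-arc of the $x_{i}$-circle running from $p_{i}$ through $z_{i}$ to $q_{i}$ (so $o\notin I_{i}$, and $I_{i}$ is an interval with endpoints $p_{i},q_{i}$), and let $T\subseteq\wedger$ be the subset consisting of $o$ together with, for every $i$, the two complementary sub-arcs $[o,p_{i}]$ and $[q_{i},o]$ of the $x_{i}$-circle; thus $T$ is a star graph (a wedge of $2r$ arcs centred at $o$) and hence contractible, while $I_{i}\cap T=\{p_{i},q_{i}\}$. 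First I would observe that $f_{\aa}$ sends the boundary of every type-$z_{i}$ disc of $\Sigma_{\aa}$ into $I_{i}$: each of its $\partial\Sigma$-segments carries exactly one $z_{i}$-point and is therefore sent by the relevant $f_{w_{j}}$ onto $I_{i}$ (traversed once, forwards or backwards), and each of its bounding arcs is a $p_{i}$- or $q_{i}$-arc, sent to the point $p_{i}$ or $q_{i}$. Likewise $f_{\aa}$ sends the boundary of every type-$o$ disc into $T$, since each of its $\partial\Sigma$-segments carries exactly one $o$-point and is sent onto a union of two legs of $T$, while its bounding arcs are sent to the leaves $p_{j},q_{j}$. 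Since $I_{i}$ and $T$ are contractible and each $\partial\Sigma$-segment lies on the boundary of exactly one disc (proof of Claim \ref{claim:properties-of-perms-surface}), I may and do pick the representative $f_{\aa}$ so that it maps every type-$z_{i}$ disc into $I_{i}$ and every type-$o$ disc into $T$; this changes $f_{\aa}$ only in the disc interiors and so, by Lemma \ref{lem:homotopy=00003Dsame_induced_map}, stays in the same homotopy class.

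Next I would isotope the chosen representative $\{\alpha_{1},\ldots,\alpha_{2L}\}$ of $\aa$ (rel $\partial\Sigma$) to meet $\gamma$ transversely, so that the intersection points cut $\gamma$ into consecutive segments $s_{0},s_{1},\ldots,s_{m}$, each lying in a single disc $D_{\nu}$ of $\Sigma_{\aa}$. Since every arc separates a type-$o$ disc from a type-$z_{i}$ disc (Claim \ref{claim:properties-of-perms-surface}(5)) and the two endpoints of $\gamma$ (among $v_{1},\ldots,v_{\ell}$, hence sent to $o$) lie on $\partial\Sigma$-segments whose special point is an $o$-point, hence on boundaries of type-$o$ discs, the types of $D_{0},D_{1},\ldots,D_{m}$ alternate $o,z,o,z,\ldots,o$; in particular $m$ is even and there are $m/2$ type-$z$ segments, which are exactly those examined by the algorithm. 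Now $f_{\aa}(\gamma)$ is a loop at $o$, and I would compute its class in $\pi_{1}(\wedger,o)=\F_{r}$ segment by segment: $f_{\aa}(s_{\nu})$ is a path lying in $T$ (if $D_{\nu}$ is type-$o$) or in $I_{i}$ (if $D_{\nu}$ is type-$z_{i}$), joining the images of the endpoints of $s_{\nu}$ — namely $o$ if the endpoint is an endpoint of $\gamma$, and the leaf $p_{i}$ or $q_{i}$ if the endpoint lies on a $p_{i}$- or $q_{i}$-arc. Because $T$ and each $I_{i}$ are simply connected, $f_{\aa}(s_{\nu})$ is homotopic rel endpoints to the reduced path in the tree $T$ (respectively the interval $I_{i}$) between those two points.

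Writing $x_{i}=k_{i}\cdot\iota_{i}\cdot j_{i}$ for the decomposition of the $x_{i}$-loop into the three sub-arcs cut off by $p_{i}$ and $q_{i}$ — $k_{i}\colon o\to p_{i}$, $\iota_{i}\colon p_{i}\to q_{i}$ (through $z_{i}$), $j_{i}\colon q_{i}\to o$ — the reduced $I_{i}$-path of a type-$z_{i}$ segment is $\iota_{i}$ when $\gamma$ enters through a $p_{i}$-arc and leaves through a $q_{i}$-arc, is $\iota_{i}^{-1}$ in the opposite situation, and is the constant path when $\gamma$ enters and leaves through arcs of the same colour; the reduced $T$-paths are concatenations of the $k_{j}^{\pm1}$ and $j_{j}^{\pm1}$. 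Concatenating $f_{\aa}(s_{0})f_{\aa}(s_{1})\cdots f_{\aa}(s_{m})$ and simplifying in the two trees, the flanking $T$-legs of a type-$z_{i}$ segment combine with its $I_{i}$-path so that the segment contributes the full loop $x_{i}$, or $x_{i}^{-1}$, or the trivial loop, exactly according to the three clauses of the algorithm; in the trivial case the two flanking $T$-legs cancel against each other, effectively deleting that disc, while the initial leg out of the first endpoint and the final leg into the last endpoint get absorbed into the first and last non-trivial contributions (or cancel altogether when every type-$z$ segment is trivial). Telescoping over the $m/2$ type-$z$ segments then gives $[f_{\aa}(\gamma)]$ equal to the product of the letters the algorithm writes — which is not necessarily reduced, as stated. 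I expect the main obstacle to be precisely this final bookkeeping: verifying that, after reduction in $T$ and in each $I_{i}$, the flanking legs assemble with the correct orientation into $x_{i}^{\pm1}$ and that the ``write nothing'' clauses genuinely contribute the trivial loop. This is a routine path-homotopy computation in a free group; the real content lies in the Step-1 choice of representative that confines $f_{\aa}$ to the contractible pieces $I_{i}$ and $T$.
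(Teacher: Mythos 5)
Your proposal is correct. Note that the paper itself offers no proof of this claim at all: it is introduced with the sentence that it ``is evident from the definition of $f_{\aa}$'', so there is nothing to compare against line by line; what you have written is a careful formalization of exactly the reasoning the authors leave implicit. Your key move --- replacing $f_{\aa}$ within its homotopy class (rel $\partial\Sigma$, using asphericity of $\wedger$ for uniqueness of the disc extensions) by a representative sending every closed type-$z_{i}$ disc into the interval $I_{i}$ and every closed type-$o$ disc into the star $T$ --- is sound, since the boundary of each disc already lands in the corresponding contractible subset by the definition of $f_{\aa}$ (constant on arcs, $f_{w_{j}}$ on $\partial\Sigma$-segments). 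The alternation of disc types along $\gamma$, the well-definedness of $\left[f_{\aa}\left(\gamma\right)\right]$ under homotopies rel $\partial\Sigma$ (the endpoints of $\gamma$ lie on $\partial\Sigma$), and the final telescoping with $x_{i}=k_{i}\iota_{i}j_{i}$, including the cancellation of the flanking legs in the ``write nothing'' cases, all check out, covering both orientations of letters ($x_{i}$ versus $x_{i}^{-1}$) and repeated crossings of the same arc. If anything, one could shorten the endgame by observing that after your Step-1 choice the composite $f_{\aa}\circ\gamma$ is a loop in the graph $\wedger$ whose passages through the midpoints $z_{i}$ occur exactly inside type-$z_{i}$ segments, and reading the word off these passages with signs; but your tree-by-tree reduction is equally valid and arguably more transparent.
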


\subsection{The Arc Poset of $\left(\Sigma,f\right)$\label{sub:The-Arc-Poset}}
\begin{defn}
\label{def:arc-poset}Let $\left(\Sigma,f\right)$ be admissible for
$\wl$ and incompressible. The \textbf{arc poset of $\mathbf{\left(\Sigma,f\right)}$},
denoted $\ap\left(\Sigma,f\right)$\marginpar{$\ap\left(\Sigma,f\right)$},
consists of the set of all arc systems for $\left(\Sigma,f\right)$
together with the partial order $\preceq$ \marginpar{$\preceq$}defined
by 
\[
\aa\preceq\bb
\]
whenever, for some representatives of $\aa$ and $\bb$, the arcs
of $\bb$ are embedded entirely \uline{inside type-$o$ discs}
of $\aa$.\end{defn}
\begin{rem}
\label{remark:about-order-in-arc-poset}
\begin{enumerate}
\item The type-$o$ discs in the definition can be taken to be either open
or closed (although the endpoints of the arcs, of course, are always
contained in their boundaries). However, using closed discs is more
convenient: some of the arcs can be left unchanged when moving from
$\aa$ to $\bb$.
\item Of course, if $\aa\preceq\bb$ then for every representative $\left\{ \alpha_{1},\ldots,\alpha_{2L}\right\} $
of $\aa$ there is a representative $\left\{ \beta_{1},\ldots,\beta_{2L}\right\} $
of $\bb$ with arcs embedded inside the type-$o$ discs defined by
$\left\{ \alpha_{1},\ldots,\alpha_{2L}\right\} $. 
\item This rewiring of arcs is completely analogous to the one in Proposition
\ref{prop:rewiring-in-PP}. As we explained there, if $\aa\preceq\bb$
then this rewiring corresponds to a unique set of colored non-crossing
partitions of the arcs of $\aa$ inside its type-$o$ discs.
\item An equivalent definition for the order $\preceq$ in $\ap\left(\Sigma,f\right)$
is the following: $\aa\preceq\bb$ if and only if for some representatives
of $\aa$ and $\bb$, the arcs of $\aa$ are embedded entirely \uline{inside
type-$z_{i}$ discs} of $\bb$ (union of type-$z_{i}$ discs for all
$i$).
\end{enumerate}
\end{rem}
The following claim says, in particular, that the partial order we
just defined is indeed an order:
\begin{claim}
\label{claim:AP-is-graded}
\begin{enumerate}
\item If $\aa\preceq\bb$ and $\aa\ne\bb$ then the number of type-$o$
discs in $\bb$ is strictly larger.
\item Moreover, the number of type-$o$ discs can serve as a rank for the
poset $\ap\left(\Sigma,f\right)$, which turns it into a \emph{graded
poset}%
\footnote{See footnote on Page \pageref{fn:graded-poset}.%
}\emph{.}
\item If $\aa\preceq\bb$ and $\bb\preceq\cc$ then $\aa\preceq\cc$.
\end{enumerate}
\end{claim}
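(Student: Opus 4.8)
The plan is to analyze a single elementary step of the order $\preceq$, i.e.\ to understand what happens when we pass from an arc system $\aa$ to an arc system $\bb$ with $\aa\preceq\bb$ by rewiring the arcs of $\aa$ inside its type-$o$ discs according to a colored non-crossing partition (Remark \ref{remark:about-order-in-arc-poset}(3)). Concretely, I would first reduce to the case where, inside a single type-$o$ disc $D$ of $\aa$, the partition merges exactly two singleton blocks into one block of size two (two arcs of the same color $p_i$ or $q_i$) and leaves everything else a singleton; a general rewiring is obtained as a composition of such steps, exactly as in the proof of Proposition \ref{prop:rewiring-in-PP} (the induction on the number of merges, using that type-$o$ discs of the later arc system sit inside type-$o$ discs of the earlier one). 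For such an elementary step, I would draw the local picture as in Figure \ref{fig:disc-split}: the two arcs being merged both bound the disc $D$ on one side and, on the other side, bound type-$z_i$ discs of $\aa$. By Lemma \ref{lem:joint-boundaries-of-discs-in-surface} (which applies to $\Sigma_\aa$ just as to $\Sigma_{(\sigma,\tau)}$, since they are isomorphic CW-complexes) these two type-$z_i$ discs are distinct. Rewiring the pair of arcs inside $D$ therefore merges these two type-$z_i$ discs into one and splits $D$ into two type-$o$ discs: the total disc count is unchanged (consistent with $\chi(\Sigma)$ being fixed), the number of type-$z_i$ discs drops by one, and the number of type-$o$ discs increases by exactly one.

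Granting this local computation, part (1) follows: if $\aa\preceq\bb$ and $\aa\neq\bb$, the rewiring is a nonempty composition of elementary steps, each of which strictly increases the type-$o$ count, so $\bb$ has strictly more type-$o$ discs than $\aa$. For part (3), transitivity, suppose $\aa\preceq\bb$ and $\bb\preceq\cc$. Using the ``type-$z_i$'' reformulation of the order (Remark \ref{remark:about-order-in-arc-poset}(4)): $\aa\preceq\bb$ means the arcs of $\aa$ can be drawn inside the type-$z_i$ discs of $\bb$, and $\bb\preceq\cc$ means the arcs of $\bb$ can be drawn inside the type-$o$ discs of $\cc$. I would argue that every type-$z_i$ disc of $\bb$ is contained in a single type-$z_i$ disc of $\cc$ — this is the disc-nesting statement dual to the one used in Proposition \ref{prop:rewiring-in-PP}, and it follows because passing from $\cc$ to $\bb$ only rewires arcs inside type-$z_i$ discs of $\cc$, which can only subdivide them. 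Hence the arcs of $\aa$, being drawable inside type-$z_i$ discs of $\bb$, are drawable inside type-$z_i$ discs of $\cc$, giving $\aa\preceq\cc$. (One must be a little careful to choose mutually compatible representatives; the cleanest route is to fix a representative of $\cc$, then choose the representative of $\bb$ with arcs inside the type-$o$ discs of $\cc$, then choose the representative of $\aa$ with arcs inside the type-$o$ discs of $\bb$, and check directly from this nested picture that the arcs of $\aa$ avoid the $z_i$-discs of $\cc$.)

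Finally, part (2): antisymmetry of $\preceq$ is immediate from (1) (if $\aa\preceq\bb$ and $\bb\preceq\aa$ with $\aa\neq\bb$ we would get strict inequality of type-$o$ counts in both directions), reflexivity is trivial, and (3) gives transitivity, so $\preceq$ is a genuine partial order; and the elementary-step analysis shows that if $\bb$ covers $\aa$ then the rewiring is a single elementary step, so the type-$o$ count goes up by exactly one, while if $\aa\prec\bb$ in general the count is strictly larger. This is precisely the definition of a graded poset with rank function $\aa\mapsto\#\{\text{type-}o\text{ discs of }\Sigma_\aa\}$.

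The main obstacle I anticipate is the bookkeeping in part (3): making precise the claim that the type-$z_i$ discs (resp.\ type-$o$ discs) behave monotonically under rewiring — i.e.\ that a rewiring inside type-$o$ discs only subdivides type-$o$ discs and coarsens type-$z_i$ discs, never mixing a disc of one type across a disc of another — and doing this with a coherent choice of isotopy representatives for $\aa,\bb,\cc$ simultaneously. Everything else is a direct transcription of the arguments already developed for $\pmp$ in Section \ref{sec:The-pairs-of-matchings-Poset}, using the CW-isomorphism $\Sigma_\aa\cong\Sigma_{(\sigma_\aa,\tau_\aa)}$ to import Lemma \ref{lem:joint-boundaries-of-discs-in-surface}.
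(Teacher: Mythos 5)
Your proposal is correct in substance and uses the same toolkit as the paper (rewiring by colored non-crossing partitions, the local picture of Figure \ref{fig:disc-split}, disc nesting, Lemma \ref{lem:joint-boundaries-of-discs-in-surface}), but it organizes parts (1) and (2) differently. For (1) the paper argues in one line: any non-trivial partition in a type-$o$ disc $D$ leaves at least two of the regions of $D$ as disjoint type-$o$ discs of $\bb$, so the count strictly increases; you instead decompose the rewiring into elementary two-arc merges and count exactly $+1$ per step (your use of Lemma \ref{lem:joint-boundaries-of-discs-in-surface} to see that the two bordering type-$z_{i}$ discs are distinct, so that exactly one merge and one split occur, is the right justification). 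For (2) the paper does not decompose at all: given that $\bb$ covers $\aa$, it takes one genuinely new arc $\beta_{1}$ of $\bb$ inside a type-$o$ disc of $\aa$, draws a companion arc $\beta'$ joining the other endpoints of the two $\aa$-arcs met by $\beta_{1}$, and observes $\aa\prec\cc\preceq\bb$ for $\cc=\left[\left\{ \beta_{1},\beta',\alpha_{3},\ldots,\alpha_{2L}\right\} \right]$, so covering forces $\cc=\bb$ and the rank jump is exactly one. Your route (covering $\Rightarrow$ single elementary step) works, but it carries an extra obligation the paper's construction avoids: you must check that each intermediate arc system $\aa_{1}$ of your decomposition is a genuine element of $\ap\left(\Sigma,f\right)$ (transport Lemma \ref{lem: rewiring from colored-non-crossing partition} through $\Sigma_{\aa}\cong\Sigma_{\left(\sigma_{\aa},\tau_{\aa}\right)}$) and, crucially, that $\aa_{1}\preceq\bb$, i.e.\ that the arcs of $\bb$ can be isotoped into the (closed) type-$o$ discs of $\aa_{1}$ while avoiding the newly created corridor; this is true but needs to be said, together with a choice of merge order keeping the intermediate partitions non-crossing. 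For (3) your argument is the exact dual of the paper's: the paper nests type-$o$ discs downward (type-$o$ discs of $\bb$ sit inside type-$o$ discs of $\aa$), you nest type-$z_{i}$ discs upward; both give transitivity.

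One caution: in your statement of part (3) you wrote that $\bb\preceq\cc$ means "the arcs of $\bb$ can be drawn inside the type-$o$ discs of $\cc$", and the parenthetical about representatives repeats this inversion. By Definition \ref{def:arc-poset} and Remark \ref{remark:about-order-in-arc-poset}(4), $\bb\preceq\cc$ means the arcs of $\cc$ lie inside type-$o$ discs of $\bb$, equivalently the arcs of $\bb$ lie inside type-$z_{i}$ discs of $\cc$. Your operative sentences (that passing from $\cc$ to $\bb$ rewires only inside type-$z_{i}$ discs of $\cc$, which therefore contain the type-$z_{i}$ discs of $\bb$) use the correct reading, so this is a slip rather than a gap, but it should be fixed, and the nested choice of representatives should be phrased accordingly (fix $\aa$, put the arcs of $\bb$ in its type-$o$ discs, then the arcs of $\cc$ in the type-$o$ discs of $\bb$ — which is precisely the paper's argument).
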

\begin{proof}
$\left(1\right)$ Let $D$ be a type-$o$ disc of $\aa$ where new
arcs of $\bb$ are introduced (namely, where the non-crossing partition
is non-trivial). With the new arcs instead of the old ones, at least
two of the regions of $D$ are now disjoint type-$o$ discs of $\bb$,
thus strictly increasing the total number of type-$o$ discs. (The
other effect is that the other areas in $D$ now serve as {}``corridors'',
merging together several neighboring type-$z_{i}$ discs, as in Figure
\ref{fig:non-crossing-partition}.)

$\left(2\right)$ One needs to show that if $\aa$ is covered by $\bb$
(see footnote on Page \ref{fn:graded-poset}), then $\bb$ has exactly
one more type-$o$ disc than $\aa$. Let $\left\{ \alpha_{1},\ldots,\alpha_{2L}\right\} $
and $\left\{ \beta_{1},\ldots,\beta_{2L}\right\} $ be representatives
with the $\beta_{i}$'s contained in the type-$o$ discs defined by
the $\alpha_{i}$'s. Assume without loss of generality that $\beta_{1}$
is a genuine new arc (does not share the same two endpoints as any
of the $\alpha_{i}$'s), which is contained inside the type-$o$ disc
$D$ and meets at its two endpoints $\alpha_{1}$ and $\alpha_{2}$.
It is evident that we can draw an arc $\beta'$ embedded in $D$ and
disjoint from all the (interiors of) $\beta_{1},\ldots,\beta_{2L}$,
which connects the other endpoints of $\alpha_{1}$ and $\alpha_{2}$.
Then $\cc=\left[\left\{ \beta_{1},\beta',\alpha_{3},\ldots,\alpha_{2L}\right\} \right]$
clearly satisfies $\aa\prec\cc\preceq\bb$, and by the covering assumption,
$\cc=\bb$. The number of type-$o$ discs in $\cc$ is clearly one
larger than in $\aa$.

$\left(3\right)$ This is true by an argument similar to the one in
the proof of Proposition \ref{prop:rewiring-in-PP}: if $\beta_{1},\ldots,\beta_{2L}$
are contained inside type-$o$ discs defined by $\left\{ \alpha_{1},\ldots,\alpha_{2L}\right\} $,
then the union of type-$o$ discs associated with $\left\{ \beta_{1},\ldots,\beta_{2L}\right\} $
is contained in the union of type-$o$ discs associated with $\left\{ \alpha_{1},\ldots,\alpha_{2L}\right\} $.
Thus, if $\gamma_{1},\ldots,\gamma_{2L}$ are contained inside type-$o$
discs defined by $\left\{ \beta_{1},\ldots,\beta_{2L}\right\} $,
they are also contained inside type-$o$ discs defined by $\left\{ \alpha_{1},\ldots,\alpha_{2L}\right\} $.\end{proof}
\begin{prop}
\label{prop:graded-poset-morphism AP to PMP}The map $\Psi:\ap\left(\Sigma,f\right)\to\pmp\left(\Sigma,f\right)$
defined by $\aa\mapsto\left(\sigma_{\aa},\tau_{\aa}\right)$ is a
graded poset surjective morphism%
\footnote{For our cause, a map $\varphi\colon\left(P_{1},\le\right)\to\left(P_{2},\le\right)$
between two graded posets is a graded-poset morphism if it preserves
the order ($x\le y\,\Rightarrow\,\varphi\left(x\right)\le\varphi\left(y\right)$)
and preserves the rank up to a constant shift: $\mathrm{rank}\left(\varphi\left(x\right)\right)=\mathrm{rank}\left(x\right)+c_{0}$.%
}.\end{prop}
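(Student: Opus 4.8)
The plan is to verify the three properties of $\Psi$ in turn: it is well-defined, it preserves the order, and it preserves the rank up to a constant shift; and finally that it is surjective. Well-definedness is immediate from Definition~\ref{def:arc-system}: every arc system $\aa$ is by construction associated with a pair $(\sigma_{\aa},\tau_{\aa})\in\pmp(\Sigma,f)$, and this pair depends only on the ambient isotopy class of the arcs (relative to $\partial\Sigma$), since the matching induced on the $4L$ marked points of $\partial\Sigma$ is an isotopy invariant. Surjectivity is also easy: given $(\sigma,\tau)\in\pmp(\Sigma,f)$, we have $(\Sigma_{(\sigma,\tau)},f_{(\sigma,\tau)})\sim(\Sigma,f)$ by definition of $\pmp$, so there is a homeomorphism $\rho\colon\Sigma_{(\sigma,\tau)}\to\Sigma$ respecting the boundary identifications; pushing the matching-edges of $\Sigma_{(\sigma,\tau)}$ forward by $\rho$ gives a set of $2L$ disjoint arcs in $\Sigma$ realizing $(\sigma,\tau)$, hence an arc system $\aa$ with $\Psi(\aa)=(\sigma,\tau)$.

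The heart of the argument is order-preservation: if $\aa\preceq\bb$ in $\ap(\Sigma,f)$ then $(\sigma_{\aa},\tau_{\aa})\preceq(\sigma_{\bb},\tau_{\bb})$ in $\pmp(\Sigma,f)$. Here I would use the characterization of $\aa\preceq\bb$ from Remark~\ref{remark:about-order-in-arc-poset}(3): this relation corresponds to a unique set of colored non-crossing partitions of the arcs of $\aa$ inside its type-$o$ discs, the arcs of $\bb$ being obtained by the rewiring procedure of Lemma~\ref{lem: rewiring from colored-non-crossing partition}. Since passing from arcs to the induced pair of matchings is exactly the passage from $\Sigma_{\aa}\cong\Sigma_{(\sigma_{\aa},\tau_{\aa})}$ to $\Sigma_{\bb}\cong\Sigma_{(\sigma_{\bb},\tau_{\bb})}$, and this is precisely a rewiring of matching-edges according to colored non-crossing partitions in type-$o$ discs, Proposition~\ref{prop:rewiring-in-PP} (implication $(2)\Rightarrow(1)$) gives $(\sigma_{\aa},\tau_{\aa})\preceq(\sigma_{\bb},\tau_{\bb})$ directly. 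One subtlety to handle carefully is that the non-crossing partitions witnessing $\aa\preceq\bb$ are partitions of \emph{arcs}, which must be translated into partitions of \emph{matching-edges} of $\Sigma_{(\sigma_{\aa},\tau_{\aa})}$; but the isomorphism $\Sigma_{\aa}\cong\Sigma_{(\sigma_{\aa},\tau_{\aa})}$ carries arcs to matching-edges and type-$o$ discs to type-$o$ discs, so a colored non-crossing partition on one side transports to one on the other, and the rewired configurations correspond under the isomorphism.

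For the rank condition, recall from Claim~\ref{claim:AP-is-graded}(2) that $\ap(\Sigma,f)$ is graded with rank given by the number of type-$o$ discs in $\aa$, and from Claim~\ref{claim:num-type-o can serve as rank for surface} that $\pmp(\Sigma,f)$ is graded with the same rank function, namely the number of type-$o$ discs in $\Sigma_{(\sigma,\tau)}$. Since $\Sigma_{\aa}\cong\Sigma_{(\sigma_{\aa},\tau_{\aa})}$ as CW-complexes and this isomorphism matches up type-$o$ discs with type-$o$ discs, the number of type-$o$ discs of $\aa$ equals the number of type-$o$ discs of $\Sigma_{(\sigma_{\aa},\tau_{\aa})}$, i.e.\ $\mathrm{rank}(\aa)=\mathrm{rank}(\Psi(\aa))$; so $\Psi$ preserves rank exactly, with shift constant $c_0=0$.

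I expect the main obstacle to be purely expository rather than mathematical: making precise the identification of the colored non-crossing partition data attached to the relation $\aa\preceq\bb$ (which lives on the surface $\Sigma$ with its arcs) with the corresponding data for the relation $(\sigma_{\aa},\tau_{\aa})\preceq(\sigma_{\bb},\tau_{\bb})$ (which lives on the abstract CW-complex $\Sigma_{(\sigma_{\aa},\tau_{\aa})}$), and checking that the rewiring operations on the two sides commute with the homeomorphism $\Sigma_{\aa}\cong\Sigma_{(\sigma_{\aa},\tau_{\aa})}$. Once that dictionary is set up, all three conditions follow from the already-established Propositions~\ref{prop:rewiring-in-PP} and the grading claims with no further work.
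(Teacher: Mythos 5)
Your proposal is correct and follows essentially the same route as the paper: order-preservation via transporting the arc rewiring through the CW-isomorphism $\Sigma_{\aa}\cong\Sigma_{(\sigma_{\aa},\tau_{\aa})}$ and invoking Proposition \ref{prop:rewiring-in-PP}, rank-preservation via the common rank function counting type-$o$ discs (Claims \ref{claim:num-type-o can serve as rank for surface} and \ref{claim:AP-is-graded}), and surjectivity by pulling the matching-edges of $\Sigma_{(\sigma,\tau)}$ back to $\Sigma$ through the homeomorphism witnessing $(\Sigma_{(\sigma,\tau)},f_{(\sigma,\tau)})\sim(\Sigma,f)$. The "subtlety" you flag about translating partitions of arcs into partitions of matching-edges is handled in the paper exactly as you suggest, so no gap remains.
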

\begin{proof}
Let $\aa\preceq\bb$ in $\ap\left(\Sigma,f\right)$ and let $\left\{ \alpha_{1},\ldots,\alpha_{2L}\right\} $
and $\left\{ \beta_{1},\ldots,\beta_{2L}\right\} $ be representatives
so that $\beta_{1},\ldots,\beta_{2L}$ are embedded inside the type-$o$
discs defined by $\left\{ \alpha_{1},\ldots,\alpha_{2L}\right\} $.
Using the isomorphism of CW-complexes $\Sigma_{\aa}\cong\Sigma_{\left(\sigma_{\aa},\tau_{\aa}\right)}$
we can use the same rewiring of the arcs inside type-$o$ discs in
$\Sigma_{\aa}$, to get a rewiring of matching-edges inside type-$o$
discs of $\Sigma_{\left(\sigma_{\aa},\tau_{\aa}\right)}$. The resulting
CW-complex is $\Sigma_{(\sigma_{\bb},\tau_{\bb})}$. By Proposition
\ref{prop:rewiring-in-PP}, this means that $\left(\sigma_{\aa},\tau_{\aa}\right)\preceq(\sigma_{\bb},\tau_{\bb})$,
hence $\Psi$ is order preserving. Since the number of type-$o$ discs
can serve as a rank for both posets (Claims \ref{claim:num-type-o can serve as rank for surface}
and \ref{claim:AP-is-graded}), $\Psi$ is a graded-poset morphism.
It is surjective because given $\left(\st\right)\in\pmp\left(\Sigma,f\right)$,
the homeomorphism $\Sigma_{\left(\st\right)}\cong\Sigma$ which yields
the equivalence $\left(\Sigma_{\left(\st\right)},f_{\left(\st\right)}\right)\sim\left(\Sigma,f\right)$
can map the matching-edges in $\Sigma_{\left(\st\right)}$ to a valid
arc system for $\left(\Sigma,f\right)$, and this system is mapped
by $\Psi$ to $\left(\st\right)$.
\end{proof}
As before, we denote by \marginpar{$\left|\ap\left(\Sigma,f\right)\right|$}$\left|\ap\left(\Sigma,f\right)\right|$
the (geometric realization of the) simplicial complex associated with
$\ap\left(\Sigma,f\right)$ (see Definition \ref{def:SC-of-poset}). 

\medskip{}

\textit{\emph{Recall $\MCG\left(\Sigma\right)$}}, the mapping class
group of $\Sigma$ defined on Page \pageref{MCG(Sigma)}. Clearly,
the action of homeomorphisms of $\Sigma$ relative $\partial\Sigma$
on sets of arcs $\{\alpha_{1},\ldots,\alpha_{2L}\}$ as in Definition
\ref{def:arc-system} descends to an action of $\MCG(\Sigma)$ on
their isotopy classes, namely, on arc systems. In the following theorem
we analyze this action:
\begin{thm}
\label{thm:MCG-action-on-AP}
\begin{enumerate}
\item \label{enu:MCG action on AP}The action $\MCG\left(\Sigma\right)\curvearrowright\ap\left(\Sigma,f\right)$
is a graded-poset free action%
\footnote{A group action is said to be a graded-poset action if is order-preserving
and rank-preserving.%
}. The quotient is isomorphic to $\pmp\left(\Sigma,f\right)$ as a
graded poset.
\item \label{enu:MCG action on |AP|}The action $\MCG\left(\Sigma\right)\curvearrowright\left|\ap\left(\Sigma,f\right)\right|$
is a covering space action%
\footnote{Namely, every point in $\left|\ap\left(\Sigma,f\right)\right|$ has
a neighborhood $U$ so that $g.U\cap U=\emptyset$ for every $\mathrm{id}\ne g\in\MCG\left(\Sigma\right)$.%
}. The quotient is isomorphic to $\left|\pmp\left(\Sigma,f\right)\right|$
as a simplicial complex.
\end{enumerate}
\end{thm}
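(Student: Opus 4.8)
The plan is to prove the two items of Theorem \ref{thm:MCG-action-on-AP} in turn, using the graded-poset morphism $\Psi\colon\ap\left(\Sigma,f\right)\to\pmp\left(\Sigma,f\right)$ from Proposition \ref{prop:graded-poset-morphism AP to PMP} as the backbone. For item \eqref{enu:MCG action on AP}, I first observe that a homeomorphism $\rho$ of $\Sigma$ fixing $\partial\Sigma$ pointwise sends an arc system to an arc system with the \emph{same} induced matching on the $4L$ marked points (since the marked points, lying on $\partial\Sigma$, are fixed), so $\Psi$ is $\MCG\left(\Sigma\right)$-invariant; moreover $\rho$ maps type-$o$ discs to type-$o$ discs, so it preserves the rank and the order $\preceq$ of Definition \ref{def:arc-poset}. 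Hence the action is a graded-poset action. Freeness is the crucial point: if $\left[\rho\right]$ fixes an arc system $\aa$, then $\rho$ maps a representative $\left\{\alpha_1,\ldots,\alpha_{2L}\right\}$ to an isotopic (rel $\partial\Sigma$) system, so after an isotopy we may assume $\rho$ fixes $\bigcup\alpha_i$ setwise, and in fact pointwise up to reordering; since the $\alpha_i$ are \emph{colored} and any two distinct arcs in a type-$o$ disc carry distinct colors (Lemma \ref{lem:joint-boundaries-of-discs-in-surface}, transported to $\Sigma_{\aa}$), $\rho$ cannot permute the arcs, so it fixes $\bigcup\alpha_i$ pointwise (up to isotopy fixing $\partial\Sigma$). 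Then $\rho$ restricts to each disc $D$ of $\Sigma_{\aa}$ as a homeomorphism fixing $\partial D$ pointwise, hence isotopic rel $\partial D$ to the identity on $D$; piecing these isotopies together shows $\left[\rho\right]=\left[\id\right]$ in $\MCG\left(\Sigma\right)$. This is the step I expect to be the main obstacle: the careful bookkeeping of isotopies rel boundary and the use of the fact that a homeomorphism of a disc fixing the boundary pointwise is isotopic rel boundary to the identity (Alexander's trick).

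Once freeness is established, I identify the quotient. The orbit map $\ap\left(\Sigma,f\right)\to\ap\left(\Sigma,f\right)/\MCG\left(\Sigma\right)$ factors through $\Psi$ because $\Psi$ is $\MCG\left(\Sigma\right)$-invariant; conversely, if $\Psi\left(\aa\right)=\Psi\left(\bb\right)$, i.e.\ $\left(\sigma_{\aa},\tau_{\aa}\right)=\left(\sigma_{\bb},\tau_{\bb}\right)$, then $\aa$ and $\bb$ are two arc systems on $\Sigma$ inducing the same matching and both cutting $\Sigma$ into the same CW-complex (isomorphic, rel boundary, to $\Sigma_{\left(\sigma_{\aa},\tau_{\aa}\right)}$), so there is a homeomorphism of $\Sigma$ fixing $\partial\Sigma$ pointwise carrying one to the other --- build it disc-by-disc, just as in the construction of $f_{\aa}$ and in the proof of Lemma \ref{lem:every admissible and incompressible obtained from matchings}. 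Thus the fibers of $\Psi$ are exactly the $\MCG\left(\Sigma\right)$-orbits, and since $\Psi$ is a surjective graded-poset morphism that is moreover \emph{full} (if $\Psi\left(\aa\right)\preceq\left(\st\right)$ then one can realize $\left(\st\right)$ by rewiring $\aa$ inside its type-$o$ discs, using the unique colored non-crossing partition from Proposition \ref{prop:rewiring-in-PP} and Remark \ref{remark:about-order-in-arc-poset}), the induced map $\ap\left(\Sigma,f\right)/\MCG\left(\Sigma\right)\to\pmp\left(\Sigma,f\right)$ is an isomorphism of graded posets.

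For item \eqref{enu:MCG action on |AP|}, I pass to geometric realizations. Any order-preserving free action of a group on a poset induces a simplicial action on the associated simplicial complex which permutes simplices; I must check it is a \emph{covering space action}, i.e.\ that every point of $\left|\ap\left(\Sigma,f\right)\right|$ has a neighborhood $U$ with $g.U\cap U=\emptyset$ for all $\id\neq g\in\MCG\left(\Sigma\right)$. The standard argument: take the open star of a vertex $\aa$; a point in the interior of a simplex spanned by a chain containing $\aa$ is moved off its star unless $g$ fixes $\aa$, which by freeness forces $g=\id$. A point in the interior of a simplex $s$ is handled by taking the intersection of the open stars of the vertices of $s$; if $g.s\cap s$ meets in an interior point then $g$ fixes $s$ pointwise (simplicial maps are determined on vertices and $g$ permutes the vertices of $s$, but an order-preserving bijection of a finite chain is the identity), so again $g$ fixes a vertex and $g=\id$. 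Finally, a covering space action of a group $\Gamma$ on a simplicial complex $X$ such that $X/\Gamma$ is a simplicial complex gives $\left|X\right|/\Gamma\cong\left|X/\Gamma\right|$ canonically; combined with item \eqref{enu:MCG action on AP} this yields $\left|\ap\left(\Sigma,f\right)\right|/\MCG\left(\Sigma\right)\cong\left|\pmp\left(\Sigma,f\right)\right|$ as simplicial complexes. The only subtlety here is ensuring the quotient of the simplicial complex is again a simplicial complex (no simplex gets folded onto itself), which again follows from freeness and the chain argument above. None of this is deep beyond the freeness statement; the real content is entirely in item \eqref{enu:MCG action on AP}.
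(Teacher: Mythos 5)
Your item (1) is essentially the paper's argument (Alexander's lemma on the discs cut out by the arc system, plus identification of the fibers of $\Psi$ with the orbits), and your extra ``fullness'' observation for order-reflection in the quotient is fine. One small repair: your reason why $\rho$ cannot permute the arcs is off --- Lemma \ref{lem:joint-boundaries-of-discs-in-surface} does not say that distinct arcs on the boundary of a type-$o$ disc have distinct colors (a type-$o$ disc may border several type-$z_1$ discs and hence meet several $p_1$-arcs). The correct and simpler reason is that $\rho$ fixes $\partial\Sigma$ pointwise, hence fixes the endpoints of every arc, and each marked point of $\partial\Sigma$ is the endpoint of exactly one arc of the system, so each arc is sent to itself.

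The genuine gap is in item (2), in the step ``a covering space action of $\Gamma$ on a simplicial complex $X$ such that $X/\Gamma$ is a simplicial complex gives $\left|X\right|/\Gamma\cong\left|X/\Gamma\right|$, and the only subtlety (no simplex folded onto itself) follows from freeness and the chain argument.'' This implication is false, and the paper itself flags exactly this point in Remark \ref{remark:need-for-regular-action}: take the poset $P=\left\{ x_{1},x_{2},y_{1},y_{2}\right\} $ with $x_{i}\prec y_{j}$ and $G=\mathbb{Z}/2\mathbb{Z}$ swapping $x_{1}\leftrightarrow x_{2}$, $y_{1}\leftrightarrow y_{2}$. The action on $\left|P\right|$ (a $4$-cycle) is free, simplicial, rank-preserving and a covering space action, the quotient poset $P/G$ is the chain $x\prec y$ so $\left|P/G\right|$ is a single edge, yet $\left|P\right|/G$ is a circle. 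What fails there, and what your chain argument does not address, is condition (2) of regularity (Definition \ref{def:regular-action}): the simplices $\left(x_{1},y_{1}\right)$ and $\left(x_{1},g.y_{1}\right)$ are both simplices but are not related by a \emph{single} group element. Your argument only rules out a single $g$ folding a simplex onto itself (condition (1)), which is not the issue. To close the gap you must verify, as the paper does via Corollary \ref{cor:regular-poset-actions}, that whenever $\aa_{0}\prec\ldots\prec\aa_{r}$ and $g_{0}.\aa_{0}\prec\ldots\prec g_{r}.\aa_{r}$ there is one $g$ with $g.\aa_{i}=g_{i}.\aa_{i}$ for all $i$; this reduces to showing that $\aa\prec\bb$ and $\aa\prec g.\bb$ force $g=\mathrm{id}$, which the paper obtains from the uniqueness of the colored non-crossing partitions realizing a relation $\preceq$ (Proposition \ref{prop:rewiring-in-PP}) together with the freeness from item (1). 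Without this extra verification the identification $\left|\ap\left(\Sigma,f\right)\right|/\MCG\left(\Sigma\right)\cong\left|\pmp\left(\Sigma,f\right)\right|$ --- the statement actually needed later to make components of $\left|\ap\right|$ universal covers of $\left|\pmp\right|$ --- is not established.
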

\begin{rem}
\label{remark:need-for-regular-action}Item \ref{enu:MCG action on |AP|}
of Theorem \ref{thm:MCG-action-on-AP} does not automatically follow
from item \ref{enu:MCG action on AP}. Consider, for example, the
poset $P=\left\{ x_{1},x_{2},y_{1},y_{2}\right\} $ with order $x_{i}\prec y_{j}$
for every $ $$i$ and $j$, and the action of $G=\nicefrac{\mathbb{Z}}{2\mathbb{Z}}$
on $P$ by swapping $x_{1}$ with $x_{2}$ and $y_{1}$ with $y_{2}$.
Whereas $\nicefrac{P}{G}$ is the poset $\left\{ x\prec y\right\} $
and $\left|\nicefrac{P}{G}\right|$ consists of two vertices and an
edge connecting them, the quotient $\nicefrac{\left|P\right|}{G}$
consists of two vertices with \emph{two} edges connecting them, and
is not even a simplicial complex. See Appendix \ref{sub:Regular G-complexes}
for more details.\end{rem}
\begin{proof}
\textbf{Item \ref{enu:MCG action on AP}: }It is clear that the action
of $\MCG\left(\Sigma\right)$ on $\ap\left(\Sigma,f\right)$ preserves
the number of discs of each type, which shows it preserves the rank
of the elements. It is also clear that the action commutes with rewiring
of arcs inside type-$o$ discs, which shows it is order-preserving.
Assume that $\left[\varphi\right]\in\MCG\left(\Sigma\right)$ fixes
$\aa=\left[\left\{ \alpha_{1},\ldots,\alpha_{2L}\right\} \right]\in\ap\left(\Sigma,f\right)$.
Since $\left[\varphi\right]$ and $\aa$ are defined up to $\mathrm{Homeo}_{0}\left(\Sigma\right)$,
we can assume $\varphi\in\mathrm{Homeo}_{\delta}\left(\Sigma\right)$
fixes $\partial\Sigma\cup\alpha_{1}\cup\ldots\cup\alpha_{2L}$ pointwise.
Because the boundary of every disc $D$ in $\Sigma$ contains segments
from $\partial\Sigma$, the homeomorphism $\varphi$ maps $D$ to
itself, and is the identity on $\partial D$. But $\MCG\left(D\right)$
is trivial (by the Alexander Lemma, e.g.~\cite[Lemma 2.1]{FM}),
and so $\varphi\Big|_{D}$ is isotopic (inside $D$, relative to $\partial D$)
to $\id\Big|_{D}$. Thus $\varphi$ is isotopic to the identity in
the whole of $\Sigma$, and so $\left[\varphi\right]$ is trivial.
This proves the action $\MCG\left(\Sigma\right)\curvearrowright\ap\left(\Sigma,f\right)$
is free. 

To see the quotient is $\pmp\left(\Sigma,f\right)$, we need to show
a correspondence between the orbits of the action and the elements
of $\pmp\left(\Sigma,f\right)$. Note first that $\Psi\left(\aa\right)=\left(\sigma_{\aa},\tau_{\aa}\right)$
only depends on the endpoints of the arcs which sit at the boundary
of $\Sigma$, and the elements of $\MCG\left(\Sigma\right)$ fix the
boundary pointwise. Thus the action commutes with $\Psi$. On the
other hand, if $\Psi\left(\aa\right)=\Psi(\bb)$, then the isomorphisms
of CW-complexes $\varphi_{\aa}\colon\Sigma_{\aa}\overset{\cong}{\to}\Sigma_{\left(\sigma_{\aa},\tau_{\aa}\right)}$
and $\varphi_{\bb}\colon\Sigma_{\bb}\overset{\cong}{\to}\Sigma_{(\sigma_{\bb},\tau_{\bb})}=\Sigma_{\left(\sigma_{\aa},\tau_{\aa}\right)}$
satisfy that $[\varphi_{\bb}^{-1}\circ\varphi_{\aa}]\in\MCG\left(\Sigma\right)$
maps $\aa$ to $\bb$. So, indeed, the orbits of the action $\MCG\left(\Sigma\right)\curvearrowright\ap\left(\Sigma,f\right)$
correspond to the elements of $\pmp\left(\Sigma,f\right)$. That $\nicefrac{\ap\left(\Sigma,f\right)}{\MCG\left(\Sigma\right)}\cong\pmp\left(\Sigma,f\right)$
is an isomorphism of graded-posets now follows from the fact that
$\Psi$ is a graded-poset morphism (which is the content of Proposition
\ref{prop:graded-poset-morphism AP to PMP}).

\medskip{}

\textbf{Item \ref{enu:MCG action on |AP|}:} A simplicial action of
a group $G$ on (the geometric realization of) a simplicial complex
$K$ is a covering space action if and only if the action is free:
there is clearly a neighborhood $U_{x}$ for every point $x$ such
that if $g.x\ne x$ then $g.U_{x}\cap U_{x}=\emptyset$ (take $U_{x}$
that does not intersect any closed simplices in the barycentric subdivision
of $K$ which do not contain $x$). In our case, the freeness of the
action $\MCG\left(\Sigma\right)\curvearrowright\left|\ap\left(\Sigma,f\right)\right|$
on the vertices is proved in item \ref{enu:MCG action on AP}. Since
the action preserves ranks, it cannot mix different vertices of the
same simplex, so if $g.s=s$ for some simplex $s$ and $g\in\MCG\left(\Sigma\right)$,
then necessarily $g$ fixes the vertices of $s$, hence $g=\id$.
So the action is free on all points. 

To see that $\nicefrac{\left|\ap\left(\Sigma,f\right)\right|}{\MCG\left(\Sigma\right)}\cong\left|\nicefrac{\ap\left(\Sigma,f\right)}{\MCG\left(\Sigma\right)}\right|$,
we use Corollary \ref{cor:regular-poset-actions} from the Appendix.
According to this corollary, it is enough to check that if $\aa_{0}\prec\ldots\prec\aa_{r}$
in $\ap\left(\Sigma,f\right)$ and $g_{0}.\aa_{0}\prec\ldots\prec g_{r}.\aa_{r}$
for some $g_{0},\ldots,g_{r}\in\MCG\left(\Sigma\right)$, then there
is a $g\in\MCG\left(\Sigma\right)$ with $g.\aa_{i}=g_{i}.\aa_{i}$
for every $i$. In fact, we show more: we show that in this case,
necessarily $g_{0}=g_{1}=\ldots=g_{r}$. To prove this stronger property,
it is enough to show it for a pair of elements, namely, that if $\mbox{\ensuremath{\aa\prec\bb}}$
and $g.\aa\prec g'.\bb$, then $g=g'$. By acting on the latter pair
by $g^{-1}$, we get that $\aa\prec\left(g^{-1}g'\right).\bb$. So,
replacing $g^{-1}g'$ with $g$, we reduce to showing that if $\aa\prec\bb$
and $\aa\prec g.\bb$ then $g=\id$. 

Consider again the isomorphism of CW-complexes $\varphi_{\aa}\colon\Sigma_{\aa}\overset{\cong}{\to}\Sigma_{\left(\sigma_{\aa},\tau_{\aa}\right)}$.
Let ${\cal P}_{1}$ and ${\cal P}_{2}$ be the unique sets of colored
non-crossing partitions of the arcs in type-$o$ discs of $\Sigma_{\aa}$
which yield $\bb$ and $g.\bb$, respectively. They both pass through
the homeomorphism induced by $\varphi_{\aa}$ to the unique set of
colored non-crossing partitions of type-$o$ discs in $\Sigma_{\left(\sigma_{\aa},\tau_{\aa}\right)}$
yielding $(\sigma_{\bb},\tau_{\bb})=\Psi(\bb)=\Psi(g.\bb)$. Thus,
${\cal P}_{1}={\cal P}_{2}$ and $\bb=g.\bb$. Using the freeness
from item \textbf{\ref{enu:MCG action on AP}}, we obtain that $g=\id$. \end{proof}
\begin{example}
\label{exa:a,b,a,c}We already analyzed above the pairs of matchings
poset $\pmp\left(\Sigma,f\right)$ and the simplicial complex $\left|\pmp\left(\Sigma,f\right)\right|$
of the sole incompressible $\left(\Sigma,f\right)$ which is admissible
for $w=\left[x,y\right]\left[x,z\right]$ (see example \ref{example:[x,y][x,z] leading term 0}
as well as Pages \pageref{[x,y][x,z] - PMP} and \pageref{[x,y][x,z] - simplicial complex}).
We saw that $\left|\pmp\left(\Sigma,f\right)\right|$ was a cycle
(composed of $4$ vertices and 4 edges). We already know that $\left|\ap\left(\Sigma,f\right)\right|$
is a covering space of $\left|\pmp\left(\Sigma,f\right)\right|$,
so every connected component of it is either a cycle or an infinite
line. In Figure \ref{fig:5 elems of AP} we show a piece of a connected
component of $\left|\ap\left(\Sigma,f\right)\right|$ made of three
elements of smallest rank together with two elements of one rank higher,
forming together a path of four edges. By carefully analyzing this
component, it is possible to see that it is actually homeomorphic
to an infinite line, and by Theorem \ref{thm:MCG-action-on-AP} it
follows that all components are of the same form. The fact it is a
line is an instance of Theorem \ref{thm:components-of-AP(Sigma,f)}
below. 

The middle element in Figure \ref{fig:5 elems of AP} is the same
as the left element in Figure \ref{fig:first-arc-systems}. The right
element in Figure \ref{fig:first-arc-systems} is yet another element
of the same poset $\ap\left(\Sigma,f\right)$. It is easy to see (by,
e.g., Claim \ref{claim:f_aa on arcs}) that this element induces a
different homotopy class of maps to $\wedger$. By Theorem \ref{thm:components-of-AP(Sigma,f)}
below this means it belongs to a different connected component of
$\left|\ap\left(\Sigma,f\right)\right|$. However, this element induces
the same bijections as the middle element in Figure \ref{fig:5 elems of AP}
and thus can be mapped to it by some mapping class in $\mathrm{MCG}\left(\Sigma\right)$
(a Dehn twist in this case).
\end{example}
\begin{figure}[h]
\centering{}\includegraphics[bb=50bp 0bp 350bp 150bp]{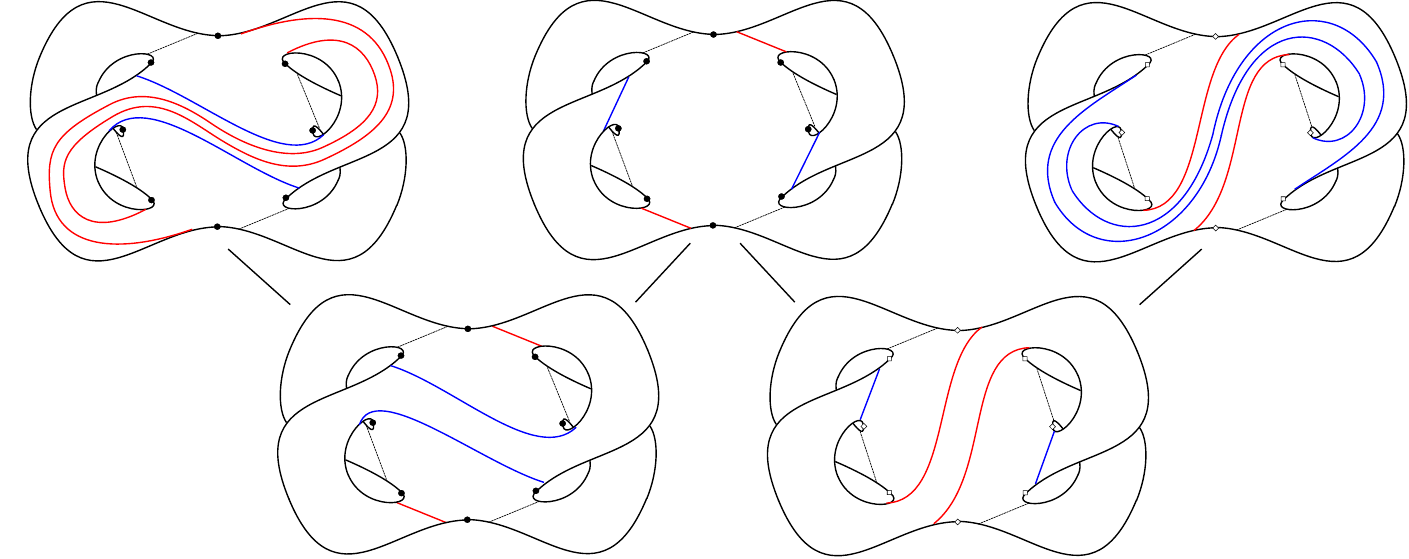}\caption{\label{fig:5 elems of AP}A series of five elements in the same connected
component of the arc poset $\ap\left(\Sigma,f\right)$ of the sole
equivalence class $\left[\left(\Sigma,f\right)\right]$ of incompressible
map admissible for the word $[x,y][x,z]$. The red lines are $p_{1}$-arcs
and the blue lines are $q_{1}$-arcs. The first and last elements
differ by an element of $\MCG(\Sigma)$ and induce the same matchings
$E^{+}\protect\overset{\sim}{\to}E^{-}$.}
\end{figure}

\begin{example}
\label{exa:aa,b}Now consider $w=\left[x^{2},y\right]=x_{1}x_{2}y_{3}X_{4}X_{5}Y_{6}$.
An easy computation yields that $\sol\left(w\right)$ consists of
exactly two equivalence classes. One $\left[\left(\Sigma_{1,1},f\right)\right]$
is represented by the pair of matchings $\sigma=\tau=\left(\begin{array}{ccc}
x_{1} & x_{2} & y_{3}\\
X_{5} & X_{4} & Y_{6}
\end{array}\right)$ and corresponds to the presentation of $w$ as the commutator $\left[x^{2},y\right]$;
the other equivalence class $\left[\left(\Sigma_{1,1},f'\right)\right]$
is represented by the pair of matchings $\sigma=\tau=\left(\begin{array}{ccc}
x_{1} & x_{2} & y_{3}\\
X_{4} & X_{5} & Y_{6}
\end{array}\right)$ and corresponds to the non-equivalent (under $\mathrm{Aut}_{\delta}\left(\F_{2}\right)$)
presentation as $\left[x^{2},yx\right]$. Both $\left|\pmp\left(\Sigma_{1,1},f\right)\right|$
and $\left|\pmp\left(\Sigma_{1,1},f'\right)\right|$ are each an isolated
point. It follows from Theorem \ref{thm:MCG-action-on-AP} that $\left|\ap\left(\Sigma_{1,1},f\right)\right|$
and $\left|\ap\left(\Sigma_{1,1},f'\right)\right|$ are also composed
of isolated points. In fact, there are infinitely countably many of
them in each of the two (this follows from Theorem \ref{thm:components-of-AP(Sigma,f)}
below). In Figure \ref{fig:three arc systems} we draw three elements
from these two arc posets. \FigBesBeg \\
\begin{figure}[h]
\centering{}\includegraphics[bb=0bp 0bp 375bp 300bp,scale=0.8]{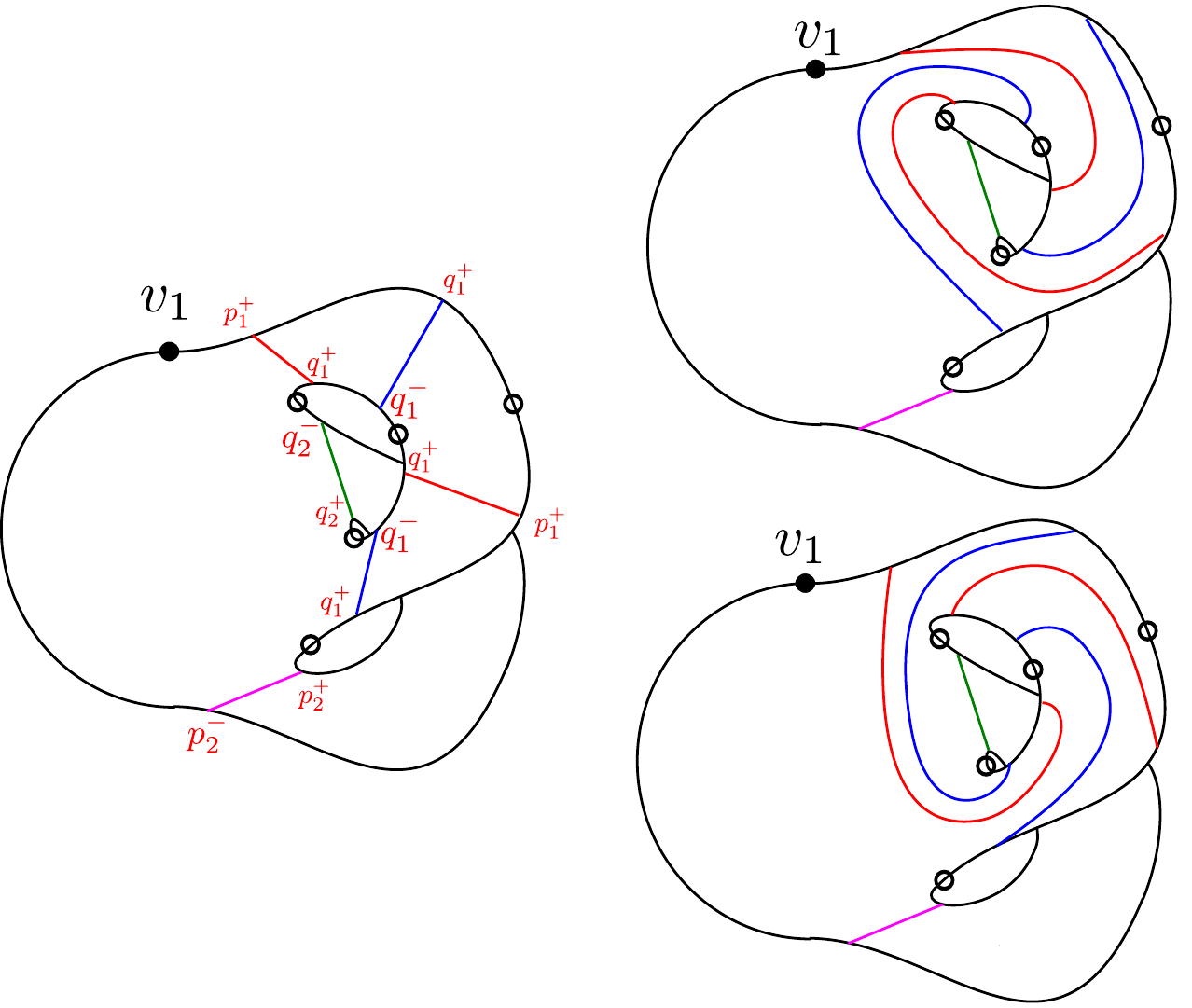}\caption{\label{fig:three arc systems} on the left: an element from $\ap\left(\Sigma_{1,1},f\right)$,
where $\left(\Sigma_{1,1},f\right)$ is admissible for $w=\left[x^{2},y\right]$
and corresponds to the solution $w=\left[x^{2},y\right]$. On the
right: two elements from $\ap\left(\Sigma_{1,1},f'\right)$, where
$\left(\Sigma_{1,1},f'\right)$ is admissible for $w=\left[x^{2},y\right]$
and corresponds to the solution $w=\left[x^{2},yx\right]$. Clearly,
the two arc systems on the right are in the same orbit of the action
of $\mathrm{MCG}\left(\Sigma_{1,1}\right)$.}
\end{figure}
\FigBesEnd 
\end{example}
In both examples the connected components of $\left|\ap\left(\Sigma,f\right)\right|$
are contractible: infinite lines in Example \ref{exa:a,b,a,c} and
isolated points in Example \ref{exa:aa,b}. In particular, in both
examples, every connected component is the universal covering space
of the corresponding connected component of $\left|\pmp\left(\Sigma,f\right)\right|$.
This turns out to be the general case:
\begin{thm}
\label{thm:components-of-AP(Sigma,f)}The map $\ap\left(\Sigma,f\right)\to\left\{ \Sigma\to\wedger\right\} $
given by $\aa\mapsto f_{\aa}$ induces a one-to-one correspondence
between the connected components of $\left|\ap\left(\Sigma,f\right)\right|$
and the homotopy classes (relative $\partial\Sigma$) of maps $\Sigma\to\wedger$
which are equivalent to $f$: 
\[
\pi_{0}\left(\left|\ap\left(\Sigma,f\right)\right|\right)\overset{\sim}{\to}\left\{ \begin{gathered}\mathrm{homotopy\,\, classes\,\, relative\,\,}\partial\Sigma\mathrm{\,\, of}\\
f'\colon\Sigma\to\wedger
\end{gathered}
\,\middle|\,\left(\Sigma,f'\right)\sim\left(\Sigma,f\right)\right\} .
\]
Moreover, every connected component of $ $$\left|\ap\left(\Sigma,f\right)\right|$
is contractible. 
\end{thm}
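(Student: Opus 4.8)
The statement has two halves: (a) the map $\aa \mapsto f_{\aa}$ descends to a bijection between $\pi_0(|\ap(\Sigma,f)|)$ and the homotopy classes in $[(\Sigma,f)]$, and (b) each connected component is contractible. I would treat (a) first as it is essentially bookkeeping, and reserve the bulk of the work for (b).

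\textbf{Part (a): identifying the components.} First I would check that $f_{\aa}$ depends only on the connected component of $\aa$ in $|\ap(\Sigma,f)|$. It suffices to show that if $\aa \preceq \bb$ then $f_{\aa} \simeq f_{\bb}$ rel $\partial\Sigma$; since moving from $\aa$ to $\bb$ is a rewiring of arcs inside type-$o$ discs (Remark \ref{remark:about-order-in-arc-poset}(3)), and both $f_{\aa}$ and $f_{\bb}$ can be chosen constant on all the relevant arcs, Lemma \ref{lem:homotopy=00003Dsame_induced_map} gives the homotopy (one checks $[f_{\aa}(\gamma)] = [f_{\bb}(\gamma)]$ for a filling arc system using Claim \ref{claim:f_aa on arcs}). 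So the assignment factors through $\pi_0$. That the image lies in $[(\Sigma,f)]$ is already noted after Definition \ref{def:arc-system}. Surjectivity: given $f'$ with $(\Sigma,f')\sim(\Sigma,f)$, apply Lemma \ref{lem:every admissible and incompressible obtained from matchings} to produce an arc system realizing $f'$ (the lemma's construction literally builds the arcs on $\Sigma$). Injectivity is the one point needing care: if $\aa,\bb$ have $f_{\aa}\simeq f_{\bb}$ rel $\partial\Sigma$, I must connect them by a zigzag of $\preceq$-relations. The natural route: pass to a common ``finest'' refinement. Given any two arc systems with the same induced map, one can isotope so they intersect transversally and minimally; the union of the two systems cuts $\Sigma$ into polygons, and because the induced maps agree each polygon's boundary word is trivial, so one can find an arc system $\cc$ lying in the type-$o$ discs of both $\aa$ and $\bb$ (i.e. $\cc \succeq \aa$, $\cc \succeq \bb$ — or the dual, depending on orientation convention), placing $\aa,\bb$ in the same component. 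This ``common refinement / confluence'' argument is where Claim \ref{claim:f_aa on arcs} and the incompressibility of $f$ (forcing all the cut pieces to be discs) get used.

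\textbf{Part (b): contractibility of components.} This is the technical heart, and the paper itself flags Section \ref{sub:Proof-of-contractability} as the hardest part. The strategy is a sequence of deformation retractions of a fixed component $C$ of $|\ap(\Sigma,f)|$ down to a point, each retraction realized by a poset morphism (Appendix \ref{sub:Homotopy-of-poset} translates a poset morphism with the right comparability property into a deformation retract of geometric realizations). Concretely I would fix a ``target'' arc system $\aa_0$ in $C$ — for instance the one coming from a reduced spelling so that $\sigma_{\aa_0}=\tau_{\aa_0}$ — and try to exhibit, for each arc system $\aa$ in $C$, a canonical way to ``move toward'' $\aa_0$ that is monotone in the poset order and compatible with comparability. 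The bookkeeping is organized by a second filtration: one retracts away the arcs one intersection-point-with-a-reference-curve at a time, or alternatively one performs countably many rounds, round $k$ pushing all arc systems into the type-$o$ discs of a fixed reference system restricted to a $k$-ball, and shows each round is a poset deformation retraction onto the subposet of systems already ``aligned'' up to level $k$. Since $\Sigma$ is compact, finitely many rounds suffice on each simplex, and the limit contracts $C$ to the vertex $\aa_0$. The verification that each individual step is a deformation retract is the routine-but-lengthy Appendix-style check; the genuinely hard part is defining the steps so that (i) each is a poset morphism (order-preserving, and with $x$ comparable to its image), (ii) the sequence terminates (using compactness and the graded structure from Claim \ref{claim:AP-is-graded}), and (iii) nothing escapes the component, which follows from part (a) once we know each step preserves $f_{\aa}$ up to homotopy.

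\textbf{Expected main obstacle.} The crux is the confluence property underlying both halves: that any two arc systems inducing homotopic maps admit a common upper (or lower) bound obtained by a colored non-crossing rewiring, and more robustly, that the ``move toward $\aa_0$'' operation in part (b) can be made simultaneously monotone and convergent. The difficulty is geometric — transverse arc systems produce complicated polygonal decompositions, and keeping track of which rewirings are non-crossing (so that Lemma \ref{lem: rewiring from colored-non-crossing partition} applies and the result stays in $\ap(\Sigma,f)$) while also decreasing some well-founded complexity measure is delicate. I expect this is exactly why the paper devotes an entire subsection to it, and I would structure the argument around a carefully chosen complexity function (total geometric intersection number with a fixed filling system, refined lexicographically by rank) that is provably decreased by each elementary move, with incompressibility invoked repeatedly to rule out spurious annular or disc components.
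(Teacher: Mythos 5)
Your part (a) is fine up through well-definedness and surjectivity, but the injectivity step rests on a confluence claim that is false. You assert that two arc systems $\aa,\bb$ inducing homotopic maps admit a common refinement $\cc$ lying inside the type-$o$ discs of both (i.e.\ a common upper bound in $\ap\left(\Sigma,f\right)$, ``or the dual''). The paper's own running example $w=\left[x,y\right]\left[x,z\right]$ kills this: there $\left|\pmp\left(\Sigma,f\right)\right|$ is a $4$-cycle with two vertices of rank $0$ and two of rank $1$ (rank $\left\Vert \sigma^{-1}\tau\right\Vert \le1$ since there are only two matchings), and the component of $\left|\ap\left(\Sigma,f\right)\right|$ is its universal cover, an infinite zigzag line alternating rank-$0$ and rank-$1$ arc systems (Example \ref{exa:a,b,a,c}, Figure \ref{fig:5 elems of AP}). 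Two rank-$0$ systems four steps apart on that line induce homotopic maps, yet have no common upper bound (every rank-$1$ element dominates only its two neighbours, and rank $1$ is the maximum) and no common lower bound (rank-$0$ elements are minimal). So a single common refinement cannot exist; what one actually needs is a zigzag of comparabilities, and producing that zigzag is exactly as hard as the contraction argument itself — which is why the paper derives injectivity as a byproduct of part (b) rather than separately.

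For part (b) you have correctly identified the overall shape (filter the component, retract step by step via poset morphisms, invoke the Appendix), but what you give is a plan, not a proof: the entire difficulty is in \emph{defining} a retraction that is simultaneously a retract onto the smaller subposet, comparable to the identity, and order-preserving, and your proposed device (a complexity function such as intersection number with a filling system, decreased by elementary moves) would at best prove connectivity, since a move defined element-by-element need not assemble into a poset morphism on the whole subposet, which is what Corollary \ref{cor:deformation-retract} requires. The paper's construction supplies the missing ingredients: fix $\aa_{0}$ with a set of guide-arcs, record for each $\aa$ the crossing word $u_{m}\left(\aa\right)$ along each guide-arc, prove it reduces to the empty word (this is where $f_{\aa}\simeq f_{\aa_{0}}$ enters), read it as a closed walk in the biregular tree $\mathbb{T}_{2r,2}$, filter by the depth of that walk, and prune all depth-$n$ leaves at once by rewiring along the colored non-crossing partitions determined by the $\gamma$-arcs. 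The genuinely delicate lemmas — that the pruning lands in ${\cal P}_{m,n-1}$ without creating new crossings with other guide-arcs, and that it is order-preserving (via the subword decomposition of $u_{m}$ and the analysis of $\gamma$-arcs crossing a finer system) — are precisely the content your sketch defers, and they are also what repairs the injectivity gap above, since each pruning step moves an arc system to a comparable one within the same component until $\aa_{0}$ is reached.
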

Recall that homotopy classes relative $\partial\Sigma$ of maps $\Sigma\to\wedger$
are in one-to-one correspondence with the homomorphisms of {}``fundamental
groupoid'' (Lemma \ref{lem:homotopy=00003Dsame_induced_map}). In
particular, when $\ell=1$, if $g=\cl\left(w_{1}\right)$, the correspondence
in Theorem \ref{thm:components-of-AP(Sigma,f)} can be interpreted
as a one-to-one correspondence between $\pi_{0}\left(\left|\ap\left(\Sigma,f\right)\right|\right)$
and the elements in the $\mathrm{Aut}_{\delta}\left(\F_{2g}\right)$-orbit
of $f_{*}$ in $\mathrm{Hom}_{w}\left(\F_{2g},\F_{r}\right)$.

The proof of Theorem \ref{thm:components-of-AP(Sigma,f)} is the most
technical in the paper, and we postpone it to Section \ref{sub:Proof-of-contractability}.
We first explain how it readily yields Theorem \ref{thm:pmp is K(G,1)}
and thus our main results.
\begin{proof}[Proof of Theorem \ref{thm:pmp is K(G,1)} given Theorem \ref{thm:components-of-AP(Sigma,f)}:]
\textbf{} Recall that to show that $\left|\pmp\left(\Sigma,f\right)\right|$
is a $\mathrm{K}\left(G,1\right)$-space for $G=\mathrm{Stab}_{\mathrm{MCG}\left(\Sigma\right)}(\tilde{f})$,
one needs to establish that $\left|\pmp\left(\Sigma,f\right)\right|$
is path-connected, that its fundamental group is isomorphic to $G$
and that its universal covering is contractible. 

By Theorem \ref{thm:MCG-action-on-AP}, $\left|\ap\left(\Sigma,f\right)\right|$
is a covering space of $\left|\pmp\left(\Sigma,f\right)\right|$.
In particular, so is every connected component of $\left|\ap\left(\Sigma,f\right)\right|$.
For instance, by Theorem \ref{thm:components-of-AP(Sigma,f)}, we
can take the connected component of $\left|\ap\left(\Sigma,f\right)\right|$
corresponding to $\tilde{f}$, the homotopy class of $f$. Denote
this component by $C$. By Theorem \ref{thm:components-of-AP(Sigma,f)}
again, $C$ is contractible and therefore the universal covering of
$\left|\pmp\left(\Sigma,f\right)\right|$. 

The subgroup of $\mathrm{MCG}\left(\Sigma\right)$ of elements mapping
$C$ to itself are precisely those preserving $\tilde{f}$, namely,
precisely $G=\mathrm{Stab}_{\mathrm{MCG}\left(\Sigma\right)}(\tilde{f})$.
Thus, the action of $\mathrm{MCG}\left(\Sigma\right)$ on $\left|\ap\left(\Sigma,f\right)\right|$
restricts to the action of $G$ on $C$. This action is precisely
the covering action, hence $G\cong\pi_{1}\left(\left|\pmp\left(\Sigma,f\right)\right|\right)$. 

Finally, to show $\left|\pmp\left(\Sigma,f\right)\right|$ is path-connected,
it is enough to show there is a path between any two of its vertices.
Let $\left(\st\right)\in\pmp\left(\Sigma,f\right)$ be a pair of matchings.
By definition, since $\left(\Sigma,f\right)\sim\left(\Sigma_{\left(\st\right)},f_{\left(\st\right)}\right)$,
there is a homeomorphism $\rho\colon\Sigma\to\Sigma_{\left(\st\right)}$
with $f\simeq f_{\left(\st\right)}\circ\rho$ homotopic. We can use
the image through $\rho^{-1}$ of the matching-edges in $\Sigma_{\left(\st\right)}$
to get an arc system $\aa\in\ap\left(\Sigma,f\right)$ with $f_{\aa}\simeq f$
homotopic, and so that $\Psi\left(\aa\right)=\left(\st\right)$ (see
the notation from Proposition \ref{prop:graded-poset-morphism AP to PMP}).
But $\left(\st\right)\in\pmp\left(\Sigma,f\right)$ was arbitrary,
and we can, likewise, obtain $\bb\in\ap\left(\Sigma,f\right)$ with
$f_{\bb}\simeq f$ and $\Psi(\bb)=\left(\sigma',\tau'\right)$ for
any $\left(\sigma',\tau'\right)\in\pmp\left(\Sigma,f\right)$. By
Theorem \ref{thm:components-of-AP(Sigma,f)}, $\aa$ and $\bb$ belong
to the same connected component of $\left|\ap\left(\Sigma,f\right)\right|$
(specifically, to $C$, the one corresponding to $\tilde{f}$). We
can now take any path between them in $C$ and project it to a path
between $\left(\st\right)$ and $\left(\sigma',\tau'\right)$ in $\left|\pmp\left(\Sigma,f\right)\right|$.
\end{proof}

\subsection{\label{sub:Proof-of-contractability}Contractability of connected
components}

We now come to prove Theorem \ref{thm:components-of-AP(Sigma,f)},
regarding the connected components of $\ap\left(\Sigma,f\right)$.
Let 
\[
\Upsilon\colon\pi_{0}\left(\left|\ap\left(\Sigma,f\right)\right|\right)\to\left\{ \begin{gathered}\mathrm{homotopy\,\, classes\,\, relative\,\,}\partial\Sigma\mathrm{\,\, of}\\
f'\colon\Sigma\to\wedger
\end{gathered}
\,\middle|\,\left(\Sigma,f'\right)\sim\left(\Sigma,f\right)\right\} 
\]
be the map defined on every connected component $C$ by taking an
arbitrary vertex $\aa\in C$ and mapping $C$ to $f_{\aa}$. We need
to show that $\Upsilon$ is a well-defined bijection, and that every
such $C$ is contractible.
\begin{lem}
\label{lem:Upsilon well defined}$\Upsilon$ is well-defined.\end{lem}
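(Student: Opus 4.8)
The plan is to show that the map $\Upsilon$ does not depend on the choice of vertex within a connected component $C$ of $\left|\ap\left(\Sigma,f\right)\right|$. Since a connected component is path-connected and the poset is graded (Claim \ref{claim:AP-is-graded}), any two vertices are joined by a sequence of vertices in which consecutive ones are comparable in $\ap\left(\Sigma,f\right)$. Hence it suffices to prove: if $\aa\preceq\bb$ in $\ap\left(\Sigma,f\right)$, then $f_{\aa}$ and $f_{\bb}$ are homotopic relative $\partial\Sigma$. Once this is established, $\Upsilon$ is constant on each component and therefore well-defined.

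To prove this local statement, first I would recall from Remark \ref{remark:about-order-in-arc-poset}(2) and (3) that $\aa\preceq\bb$ means we may choose representatives $\left\{\alpha_{1},\ldots,\alpha_{2L}\right\}$ of $\aa$ and $\left\{\beta_{1},\ldots,\beta_{2L}\right\}$ of $\bb$ so that each $\beta_{j}$ lies inside a (closed) type-$o$ disc of $\Sigma_{\aa}$, the rewiring being governed by a set of colored non-crossing partitions of the arcs of $\aa$ in its type-$o$ discs. Recall that both $f_{\aa}$ and $f_{\bb}$ restrict to $f_{w_{1}}\circ\partial_{1}^{-1},\ldots,f_{w_{\ell}}\circ\partial_{\ell}^{-1}$ on $\partial\Sigma$, and both are constant on every arc of the corresponding system (by the construction in Definition \ref{def:arc-system}, analogous to Definition \ref{def:f_(sigma,tau)}). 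The key point, exactly as in the proof of Lemma \ref{lem: rewiring from colored-non-crossing partition}, is that we may choose $f_{\aa}$ so that it is constant not only on the arcs $\alpha_{1},\ldots,\alpha_{2L}$ but also on the new arcs $\beta_{1},\ldots,\beta_{2L}$ embedded in the type-$o$ discs of $\Sigma_{\aa}$ (each $\beta_{j}$ being a $p_{i}$- or $q_{i}$-arc gets mapped to the constant $p_{i}$ or $q_{i}$). Then I would verify, using Claim \ref{claim:f_aa on arcs} or directly, that this adjusted representative of $\tilde{f_{\aa}}$ also realizes the constancy property defining $f_{\bb}$ on the system $\bb$.

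The conclusion then follows from Lemma \ref{lem:homotopy=00003Dsame_induced_map}: take a set of disjoint oriented arcs $\gamma_{1},\ldots,\gamma_{t}$ with endpoints in $v_{1},\ldots,v_{\ell}$ which fill $\Sigma$ and which can be taken transverse to all arcs in both systems. By Claim \ref{claim:f_aa on arcs}, the element $\left[f_{\aa}\left(\gamma_{s}\right)\right]$ is read off from the sequence of signed intersections of $\gamma_{s}$ with the arcs, recording $x_{i}^{\pm1}$ exactly when $\gamma_{s}$ crosses a type-$z_{i}$ disc entering through a $p_{i}$-arc and exiting through a $q_{i}$-arc (or vice versa). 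Both systems $\aa$ and $\bb$ induce the same pair of matchings $\left(\sigma_{\aa},\tau_{\aa}\right)=\left(\sigma_{\bb},\tau_{\bb}\right)$ on $\partial\Sigma$ — wait, this is false in general: $\aa\preceq\bb$ allows $\sigma_{\aa}\ne\sigma_{\bb}$. So instead I would argue more carefully that the homotopy classes agree, using that $\left(\Sigma,f_{\aa}\right)\sim\left(\Sigma_{\left(\sigma_{\aa},\tau_{\aa}\right)},f_{\left(\sigma_{\aa},\tau_{\aa}\right)}\right)\sim\left(\Sigma,f\right)$ and the same for $\bb$, together with the observation that the rewiring of Proposition \ref{prop:rewiring-in-PP}, which takes $\Sigma_{\left(\sigma_{\aa},\tau_{\aa}\right)}$ to $\Sigma_{\left(\sigma_{\bb},\tau_{\bb}\right)}$ inside type-$o$ discs, is precisely engineered (as in the last paragraph of the proof of Lemma \ref{lem:bp(w)-closed-downwards}) to preserve the homotopy class of the associated map. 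Thus $\tilde{f_{\aa}}=\tilde{f_{\bb}}$ as homotopy classes relative $\partial\Sigma$, and $\Upsilon$ is well-defined.

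The main obstacle I anticipate is bookkeeping the relationship between the two descriptions of the order (arcs of $\bb$ inside type-$o$ discs of $\aa$, versus arcs of $\aa$ inside type-$z_i$ discs of $\bb$) and making sure the constancy-on-arcs property can be arranged \emph{simultaneously} for both systems by a single homotopy class representative; this is routine but requires care since $f_{\aa}$ and $f_{\bb}$ are only defined up to homotopy and one must pin down a common representative before invoking Lemma \ref{lem:homotopy=00003Dsame_induced_map}. The injectivity and surjectivity of $\Upsilon$, and the contractibility of each $C$, are deferred to the subsequent lemmas and the long argument of Section \ref{sub:Proof-of-contractability}, and are not needed here.
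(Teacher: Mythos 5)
Your core argument is the paper's argument. The paper also reduces to showing that comparable arc systems give homotopic maps (it does so one covering relation at a time, so that only two arcs change), and the key step is exactly the one in your second paragraph: choose a single representative of $f_{\aa}$ that is constant not only on the arcs of $\aa$ but also on the arcs of $\bb$ lying inside the (closed) type-$o$ discs of $\Sigma_{\aa}$; since this map then satisfies the defining conditions for $f_{\bb}$ as well, Lemma \ref{lem:homotopy=00003Dsame_induced_map} (equivalently, uniqueness up to homotopy of extensions over the discs cut out by the arcs of $\bb$) gives $f_{\aa}\simeq f_{\bb}$ relative $\partial\Sigma$. So the proposal is correct, provided you let that paragraph carry the proof.

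The concluding paragraph, however, should be dropped or rewritten: the fallback you reach for after (rightly) discarding $\left(\sigma_{\aa},\tau_{\aa}\right)=\left(\sigma_{\bb},\tau_{\bb}\right)$ is not sufficient as stated. Knowing $\left(\Sigma,f_{\aa}\right)\sim\left(\Sigma,f\right)$ and $\left(\Sigma,f_{\bb}\right)\sim\left(\Sigma,f\right)$ is true for \emph{every} vertex of $\ap\left(\Sigma,f\right)$ by construction and says nothing about equality of the homotopy classes $\tilde{f_{\aa}}$ and $\tilde{f_{\bb}}$ on the fixed surface $\Sigma$; equivalence of admissible pairs allows precomposition by a nontrivial mapping class, and distinguishing homotopy classes within one equivalence class is precisely what $\Upsilon$ (and its eventual injectivity) is about --- see Example \ref{exa:a,b,a,c}, where two arc systems inducing the same matchings give non-homotopic maps. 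The rewiring step of Proposition \ref{prop:rewiring-in-PP} / Lemma \ref{lem:bp(w)-closed-downwards} only preserves homotopy after the identification $\Sigma_{\aa}\cong\Sigma_{\left(\sigma_{\aa},\tau_{\aa}\right)}$ is fixed, which is exactly the content of your common-representative argument on $\Sigma$ itself; so simply finish with Lemma \ref{lem:homotopy=00003Dsame_induced_map} applied to that common representative and omit the equivalence-chain detour.
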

\begin{proof}
To see that $\Upsilon$ is well-defined, it is enough to show that
if $\bb$ covers $\aa$ in $\ap\left(\Sigma,f\right)$, then $f_{\bb}\simeq f_{\aa}$
are homotopic. This is shown by an argument we already used in Section
\ref{sec:The-pairs-of-matchings-Poset}: in this case, there is a
particular type-$z_{i}$ disc $D$ defined by $\bb$, and two equally-colored
arcs at its boundary, say $\beta_{1}$ and $\beta_{2}$, which are
replaced by $\alpha_{1}$ and $\alpha_{2}$ to obtain $\aa=\left[\left\{ \alpha_{1},\alpha_{2},\beta_{3},\ldots,\beta_{2L}\right\} \right]$.
We can take both $f_{\aa}$ and $f_{\bb}$ to be constant (and identical)
on all arcs $\alpha_{1},\alpha_{2},\beta_{1},\beta_{2},\beta_{3},\ldots,\beta_{2L}$.
Since these arcs cut $\Sigma$ to discs, Lemma \ref{lem:homotopy=00003Dsame_induced_map}
shows $f_{\aa}\simeq f_{\bb}$. \end{proof}
\begin{lem}
\label{lem:Upsilon is onto}$\Upsilon$ is onto.\end{lem}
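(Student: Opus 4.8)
\textbf{Proof proposal for Lemma \ref{lem:Upsilon is onto}.}

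The plan is to show that every homotopy class $\tilde{f'}$ relative $\partial\Sigma$ with $\left(\Sigma,f'\right)\sim\left(\Sigma,f\right)$ is realized by some arc system, i.e. lies in the image of $\Upsilon$. First I would observe that it suffices to produce a single arc system $\aa \in \ap\left(\Sigma,f\right)$ with $f_{\aa}\simeq f'$ relative $\partial\Sigma$; since $\Upsilon$ sends the connected component of $\aa$ to $\tilde{f}_{\aa}=\tilde{f'}$, this gives surjectivity. So fix a representative $f'$ of the target homotopy class. Because $\left(\Sigma,f'\right)$ is admissible for $\wl$ and $f'$ is incompressible (it is equivalent to $f$, which is incompressible, and incompressibility is clearly an invariant of the equivalence class since it is phrased in terms of simple closed curves and their images up to free homotopy), we may apply Lemma \ref{lem:every admissible and incompressible obtained from matchings}: there is a pair of matchings $\left(\sigma,\tau\right)\in\match\left(\wl\right)$ with $\left(\Sigma,f'\right)\sim\left(\Sigma_{\left(\st\right)},f_{\left(\st\right)}\right)$. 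In particular $\left(\sigma,\tau\right)\in\pmp\left(\Sigma,f\right)$ since $\left(\Sigma_{\left(\st\right)},f_{\left(\st\right)}\right)\sim\left(\Sigma,f'\right)\sim\left(\Sigma,f\right)$.

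Next I would transport the matching-edges of $\Sigma_{\left(\st\right)}$ onto $\Sigma$ using a concrete homeomorphism. Let $\rho\colon\Sigma\to\Sigma_{\left(\st\right)}$ be a homeomorphism fixing the boundary pointwise (in the sense of the equivalence relation, $\partial_i' = \rho\circ\partial_i$) with $f'\simeq f_{\left(\st\right)}\circ\rho$ relative $\partial\Sigma$. Pulling back the $2L$ matching-edges of $\Sigma_{\left(\st\right)}$ via $\rho^{-1}$ yields $2L$ disjoint embedded arcs in $\Sigma$ meeting $\partial\Sigma$ only at the $4L$ marked points, inducing exactly the pair of matchings $\left(\st\right)$ on those marked points. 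Hence these arcs define an arc system $\aa\in\ap\left(\Sigma,f\right)$ with $\left(\sigma_{\aa},\tau_{\aa}\right)=\left(\st\right)$, i.e. $\Psi\left(\aa\right)=\left(\st\right)$. This is essentially the surjectivity argument already used inside the proof of Proposition \ref{prop:graded-poset-morphism AP to PMP} and again in the last paragraph of the proof of Theorem \ref{thm:pmp is K(G,1)}.

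It remains to identify the induced map. I would argue $f_{\aa}\simeq f'$ relative $\partial\Sigma$. By construction $\rho$ carries $\Sigma_{\aa}$ to $\Sigma_{\left(\st\right)}$ as CW-complexes, matching $1$-skeleton to $1$-skeleton, and $f_{\aa}$ agrees with $f_{\left(\st\right)}\circ\rho$ on the boundary (both restrict to $f_{w_i}\circ\partial_i^{-1}$ on $S^1(w_i)$) and on each matching-edge/arc (both may be taken constant there, at the appropriate $p_i$ or $q_i$). Since the arcs of $\aa$ cut $\Sigma$ into discs, Lemma \ref{lem:homotopy=00003Dsame_induced_map} gives $f_{\aa}\simeq f_{\left(\st\right)}\circ\rho\simeq f'$ relative $\partial\Sigma$. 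Therefore $\Upsilon$ applied to the component of $\aa$ returns $\tilde{f'}$, proving surjectivity. The only mild subtlety — and the one point I would spell out carefully — is the claim that incompressibility passes to the equivalence class, so that Lemma \ref{lem:every admissible and incompressible obtained from matchings} genuinely applies to $\left(\Sigma,f'\right)$; this is immediate because an equivalence $\left(\Sigma,f'\right)\sim\left(\Sigma,f\right)$ is a homeomorphism taking simple closed curves to simple closed curves and intertwining $f,f'$ up to homotopy, so a compressing curve for one yields a compressing curve for the other.
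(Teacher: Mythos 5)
Your proof is correct and follows essentially the same route as the paper: the paper picks an arbitrary $\aa\in\ap\left(\Sigma,f\right)$ (nonempty by Lemma \ref{lem:every admissible and incompressible obtained from matchings} and Proposition \ref{prop:graded-poset-morphism AP to PMP}) and pulls it back along the homeomorphism coming from $\left(\Sigma,f'\right)\sim\left(\Sigma,f_{\aa}\right)$, whereas you apply Lemma \ref{lem:every admissible and incompressible obtained from matchings} to $\left(\Sigma,f'\right)$ directly and pull back the matching-edges of $\Sigma_{\left(\st\right)}$ — the same mechanism with the middle step removed. Your extra care about incompressibility passing to the equivalence class and the use of Lemma \ref{lem:homotopy=00003Dsame_induced_map} to identify $f_{\aa}\simeq f'$ are both fine and consistent with the paper.
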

\begin{proof}
Let $f'\colon\Sigma\to\wedger$ satisfy $\left(\Sigma,f'\right)\sim\left(\Sigma,f\right)$.
We want to show there is an arc system $\bb\in\ap\left(\Sigma,f\right)$
with $f_{\bb}\simeq f'$. First, note we have already seen that $\pmp\left(\Sigma,f\right)$
is non-empty (Lemma \ref{lem:every admissible and incompressible obtained from matchings}),
and thus nor is $\ap\left(\Sigma,f\right)$ (Proposition \ref{prop:graded-poset-morphism AP to PMP}).
So there is some $\aa\in\ap\left(\Sigma,f\right)$. Now we can repeat
an argument we used in the very end of Section \ref{sub:The-Arc-Poset}:
by definition, $\left(\Sigma,f'\right)\sim\left(\Sigma,f_{\aa}\right)$,
so there is a homeomorphism $\rho\colon\Sigma\to\Sigma$ with $f'\simeq f_{\aa}\circ\rho$
homotopic. The arc system $\bb=\rho^{-1}\left(\aa\right)$ now satisfies
$f_{\bb}\simeq f'$.
\end{proof}
We are left to show that $\Upsilon$ is injective and that every connected
component of $\left|\ap\left(\Sigma,f\right)\right|$ is contractible.
Although the former is easier than the latter, we prove both at once.
Consider the subposet\marginpar{$\P\left(f\right)$}

\[
\P\left(f\right)\overset{\mathrm{def}}{=}\left\{ \aa\in\ap\left(\Sigma,f\right)\,\middle|\, f_{\aa}\simeq f\right\} \subseteq\ap\left(\Sigma,f\right).
\]
We show that $\left|\P\left(f\right)\right|$ is connected and, moreover,
contractible. Since $f$ is arbitrary (if $\left(\Sigma,f'\right)\sim\left(\Sigma,f\right)$,
then $\ap\left(\Sigma,f\right)=\ap\left(\Sigma,f'\right)$ and we
could work just as well with $f'$), this yields that the same is
true for any $f'\colon\Sigma\to\wedger$ with $\left(\Sigma,f'\right)\sim\left(\Sigma,f\right)$,
and thus proves Theorem \ref{thm:components-of-AP(Sigma,f)}.

It already follows from Lemmas \ref{lem:Upsilon well defined} and
\ref{lem:Upsilon is onto} that $\left|\P\left(f\right)\right|$ is
a non-empty collection of connected components of $\left|\ap\left(\Sigma,f\right)\right|$.
It is left to show it consists of a single component, and that this
component is contractible.

\subsubsection*{Guide-arcs}

Fix $\aa_{0}\in{\cal P}\left(f\right)$ (so $f_{\aa_{0}}\simeq f$).
Let $\left\{ \alpha_{1},\ldots,\alpha_{2L}\right\} $ be a representative
of $\aa_{0}$. 
\begin{defn}
\label{def:guide-arcs}A finite set of arcs $\gamma_{1},\ldots,\gamma_{M}$
embedded in $\Sigma$ is said to be \textbf{a set of guide-arcs }for
$\left\{ \alpha_{1},\ldots,\alpha_{2L}\right\} $ if
\begin{itemize}
\item the $\gamma_{m}$'s are disjoint from each other and from the $\alpha_{i}$'s,
and
\item the only arc system in $\ap\left(\Sigma,f\right)$ with a representative
which is disjoint from $\gamma_{1}\cup\ldots\cup\gamma_{M}$ is $\aa$.
\end{itemize}
\end{defn}
Every (representative of an) arc system has a set of guide-arcs: for
example, for every arc $\alpha$ in the system take two guide arcs
which follow $\alpha$ very closely, one from each side, in a parallel
fashion. Figure \ref{fig:guide-arcs} illustrates a set of guide-arcs
of size five for an element of $\ap\left(\Sigma,f\right)$ where $\left[\left(\Sigma,f\right)\right]\in\sol\left(\left[x,y\right]\left[x,z\right]\right)$. 

\begin{figure}[h]
\centering{}\includegraphics[bb=0bp 0bp 350bp 225bp,scale=0.8]{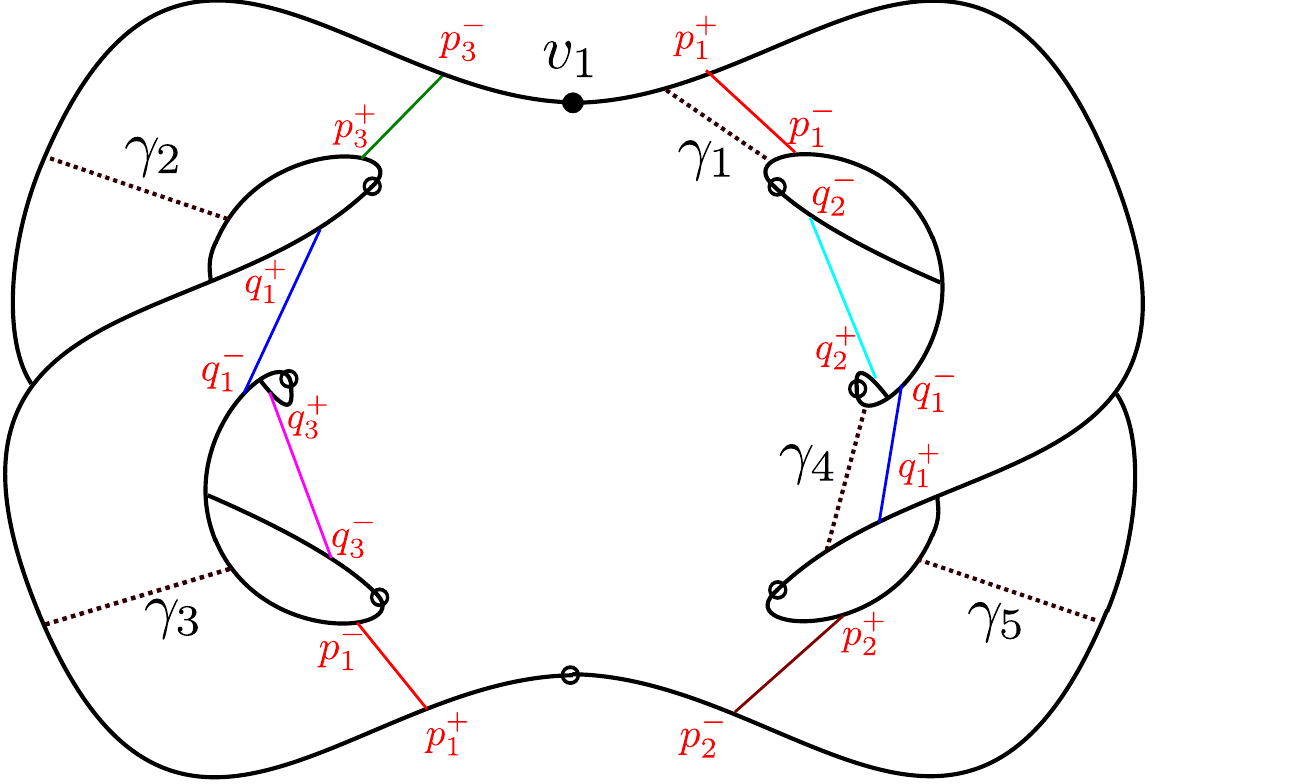}\caption{\label{fig:guide-arcs}A set of guide-arcs (marked in dotted lines)
for an element of $\ap(\Sigma,f)$ with $\left[\left(\Sigma,f\right)\right]\in\sol\left(\left[x,y\right]\left[x,z\right]\right)$.
This element is the central one in Figure \ref{fig:5 elems of AP}.}
\end{figure}

Given a set of guide-arcs $\gamma_{1},\ldots,\gamma_{M}$, let \marginpar{${\cal P}_{m}$}$\P_{m}$,
$0\le m\le M$, denote the subposet of $\P\left(f\right)$ consisting
of arc systems which have a representative which does not cross $\gamma_{m+1},\ldots,\gamma_{M}$
(but may cross $\gamma_{1},\ldots,\gamma_{m}$). So
\[
\left\{ \aa_{0}\right\} =\P_{0}\subseteq\P_{1}\subseteq\ldots\subseteq\P_{M}=\P\left(f\right)
\]
is an increasing sequence of posets. Consider, for example, the set
of guide arcs given in Figure \ref{fig:guide-arcs}, and denote the
five elements in Figure \ref{fig:5 elems of AP}, from left to right,
by $\aa_{-2}$, $\aa_{-1}$, $\aa_{0}$, $\aa_{1}$ and $\aa_{2}$.
Then, ${\cal P}_{0}=\left\{ \aa_{0}\right\} $, ${\cal P}_{1}=\P_{2}=\left\{ \aa_{0},\aa_{1}\right\} $,
${\cal P}_{3}=\left\{ \aa_{0},\aa_{1},\aa_{2}\right\} $ and ${\cal P}_{4}={\cal P}_{5}={\cal P}\left(f\right)$
contain the entire connected component of $\left|\ap\left(\Sigma,f\right)\right|$
(the component a piece of which is given in Figure \ref{fig:5 elems of AP}).
We stress that there may be many more elements in $\ap\left(\Sigma,f\right)$
with representatives which do not cross subsets of the guide-arcs
(for instance, the arc system in the right hand side of Figure \ref{fig:first-arc-systems}
does not cross $\gamma_{2}$, $\gamma_{3}$ nor $\gamma_{5}$), but
they do not belong to ${\cal P}\left(f\right)$, and thus nor to the
${\cal P}_{m}$'s.

We shall prove the contractability of $\left|\P\left(f\right)\right|$
by showing that each $\left|\P_{m}\right|$ deformation retracts to
$\left|\P_{m-1}\right|$.

\subsubsection*{Depth of words along guide-arcs}

Fix an arbitrary orientation for each guide-arc $\gamma_{m}$. For
every $\aa\in\P\left(f\right)$ find a representative which meets
the guide-arcs transversely and in minimal position (so that no arc
of $\aa$ crosses twice in a row the same guide-arc). Define $u_{m}\left(\aa\right)$\marginpar{$u_{m}\left(\aa\right)$}
to be a word in the alphabet $\left\{ p_{1},q_{1},\ldots,p_{r},q_{r}\right\} $
which describes the sequence of crossings between $\gamma_{m}$ and
$\aa$: simply follow $\gamma_{m}$ according to the given orientation,
whenever it crosses a $p_{i}$-arc write $p_{i}$, and whenever it
crosses a $q_{i}$-arc, write $q_{i}$. In this language,
\[
\P_{m}=\left\{ \aa\in\P\left(f\right)\,\middle|\, u_{m+1}\left(\aa\right)=u_{m+2}\left(\aa\right)=\ldots=u_{M}\left(\aa\right)\,\,\mathrm{are\, all\, the\, empty\, word}\right\} .
\]

\begin{lem}
\label{lem:u_i null-homotopic}For every $\aa\in\P\left(f\right)$
and every $1\le m\le M$, the formal word $u_{m}\left(\aa\right)$
can be reduced to the empty word by a series of deletions of subwords
$p_{i}p_{i}$ and $q_{i}q_{i}$. \end{lem}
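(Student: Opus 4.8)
The plan is to show that $u_{m}\left(\aa\right)$ represents the trivial element in a suitable free group, and then invoke the fact that in a free group on $\left\{ p_{1},q_{1},\ldots,p_{r},q_{r}\right\}$ — where we declare each generator to be its own inverse, i.e.\ we work in the free product $\ast_{i=1}^{r}\left(\mathbb{Z}/2\mathbb{Z}\right)^{2}$ (one copy of $\mathbb{Z}/2\mathbb{Z}$ for $p_{i}$ and one for $q_{i}$) — every word representing the identity reduces to the empty word by successive deletions of subwords of the form $p_{i}p_{i}$ and $q_{i}q_{i}$. This is just the normal-form theorem for free products of the groups $\mathbb{Z}/2\mathbb{Z}$: a word in the generators is reduced (cannot be shortened by such a deletion) iff no two consecutive letters are equal, and a reduced word is trivial iff it is empty. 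So the entire content of the lemma is the claim that $u_{m}\left(\aa\right) = 1$ in this group.

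To prove $u_{m}\left(\aa\right) = 1$, I would argue as follows. The guide-arc $\gamma_{m}$ was chosen (Definition \ref{def:guide-arcs}) to be disjoint from the fixed representative $\left\{ \alpha_{1},\ldots,\alpha_{2L}\right\}$ of $\aa_{0}$; since it is disjoint from all the $\alpha_{i}$'s, which cut $\Sigma$ into discs, the arc $\gamma_{m}$ lies entirely inside a single disc of $\Sigma_{\aa_{0}}$. In particular its two endpoints lie on $\partial\Sigma$ (or we may close it up / note its endpoints lie in the boundary of that disc, which consists of segments of $\partial\Sigma$ and of arcs $\alpha_{i}$; since $\gamma_{m}$ is disjoint from the $\alpha_{i}$'s its endpoints are on $\partial\Sigma$). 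Now the key point is that crossings of $\gamma_{m}$ with the arcs of a general $\aa\in\P\left(f\right)$ are governed by $f_{\aa}$ in exactly the way described in Claim \ref{claim:f_aa on arcs}: each time $\gamma_{m}$ crosses a $p_{i}$- or $q_{i}$-arc of $\aa$ it enters/leaves a type-$z_{i}$ disc, and I want to read off from the sequence of crossings a word closely related to $\left[f_{\aa}\left(\gamma_{m}\right)\right]$. Concretely, map each letter $p_{i}, q_{i}$ of $u_{m}\left(\aa\right)$ into the free product $\ast\left(\mathbb{Z}/2\right)$ above; because $f_{\aa}\simeq f\simeq f_{\aa_{0}}$ and $\gamma_{m}$ sits inside one disc of $\Sigma_{\aa_{0}}$ on whose interior $f_{\aa_{0}}$ has trivial image around its boundary, the loop/path $f_{\aa}\left(\gamma_{m}\right)$ — more precisely the image under the natural quotient — is trivial.

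Let me be more careful about which "trivial" I need. I would set up the argument via the relationship between $u_{m}$ and the word read by Claim \ref{claim:f_aa on arcs}: along $\gamma_{m}$, crossing a $p_{i}$-arc and then a $q_{i}$-arc contributes $x_{i}$ to $\left[f_{\aa}\left(\gamma_{m}\right)\right]$, crossing $q_{i}$ then $p_{i}$ contributes $x_{i}^{-1}$, and crossing two $p_{i}$'s or two $q_{i}$'s contributes nothing. The word $u_{m}\left(\aa\right)$ records \emph{all} the crossings. There is a standard group-theoretic formalism (a "doubling" map): the assignment $p_{i}\mapsto s_{i}$, $q_{i}\mapsto t_{i}$, where $s_{i},t_{i}$ generate copies of $\mathbb{Z}/2\mathbb{Z}$, sends $u_{m}\left(\aa\right)$ to an element of $\ast_{i}\left(\langle s_{i}\rangle \ast \langle t_{i}\rangle\right)$, and under the homomorphism $s_{i}t_{i}\mapsto x_{i}$ to $\F_{r}$ (well-defined on the subgroup generated by alternating products) one recovers $\left[f_{\aa}\left(\gamma_{m}\right)\right]$. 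What I actually need is the stronger statement that the $\ast\left(\mathbb{Z}/2\right)$-element is trivial, which is equivalent to $u_{m}\left(\aa\right)$ reducing to the empty word by $p_{i}p_{i}$- and $q_{i}q_{i}$-deletions. For this I would use that $\gamma_{m}$ bounds (together with part of $\partial\Sigma$, or with a nullhomotopic piece) inside a disc of $\Sigma_{\aa_{0}}$, hence is isotopic, rel its endpoints on $\partial\Sigma$, to an arc lying in $\partial\Sigma$ together with a small push-off that crosses \emph{no} arc of $\aa_{0}$; then a homotopy-invariance argument — the relevant "crossing word" is an isotopy invariant of $\gamma_{m}$ rel endpoints into the complement of the $1$-skeleton, up to the $\mathbb{Z}/2$-relations, because sliding $\gamma_{m}$ across a vertex or an edge of $\Sigma_{\aa_{0}}$ changes the word only by inserting/deleting a subword $p_{i}p_{i}$ or $q_{i}q_{i}$ — shows $u_{m}\left(\aa_{0}\right)$ reduces to empty. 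Finally, since $f_{\aa}\simeq f_{\aa_{0}}$ and both the disc structure and the crossing word behave homotopy-theoretically in the same way, $u_{m}\left(\aa\right)$ reduces to the empty word for every $\aa\in\P\left(f\right)$, not just $\aa=\aa_{0}$.

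The main obstacle I anticipate is making precise the "homotopy/isotopy invariance of the crossing word modulo $\mathbb{Z}/2$-relations" step — i.e.\ checking that as we perform an isotopy of $\gamma_{m}$ (equivalently, move through a path in $\P\left(f\right)$, or change $\aa$ within its homotopy class of $f_{\aa}$), the word $u_{m}\left(\aa\right)$ only changes by the allowed deletions and insertions. This is exactly the kind of elementary but fiddly argument about moves of arcs across edges and vertices of a CW-structure on a surface (an "innermost disc / bigon removal" style argument), analogous to the reasoning already used in the proof of Lemma \ref{lem:every admissible and incompressible obtained from matchings} and Lemma \ref{lem:joint-boundaries-of-discs-in-surface}. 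Once that bookkeeping is in place, the conclusion is immediate: $u_{m}\left(\aa\right)$ is trivial in $\ast_{i}\left(\mathbb{Z}/2\ast\mathbb{Z}/2\right)$, hence reduces to the empty word by deletions of $p_{i}p_{i}$ and $q_{i}q_{i}$, which is the assertion of the lemma.
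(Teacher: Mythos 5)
The reduction of the lemma to the statement that $u_{m}\left(\aa\right)$ is trivial in the free product $\ast_{i}\left(\mathbb{Z}/2\ast\mathbb{Z}/2\right)$ is correct (and consistent with how the paper itself later treats these words, as walks on $\mathbb{T}_{2r,2}$). But the heart of the lemma is precisely the step you leave unproved: deducing triviality of $u_{m}\left(\aa\right)$ for an \emph{arbitrary} $\aa\in\P\left(f\right)$ from the hypothesis $f_{\aa}\simeq f_{\aa_{0}}$. The mechanism you offer --- isotopy invariance of the crossing word, up to $p_{i}p_{i}$/$q_{i}q_{i}$ moves, as $\gamma_{m}$ is slid across edges and vertices of $\Sigma_{\aa_{0}}$ --- compares crossing words of isotopic copies of $\gamma_{m}$ against one \emph{fixed} arc system. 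Here the situation is the opposite: $\gamma_{m}$ is fixed and the arc system changes from $\aa_{0}$ to $\aa$, and $\aa$ and $\aa_{0}$ are in general \emph{not} isotopic (they are distinct vertices of $\ap\left(\Sigma,f\right)$); the only link between them is the homotopy $f_{\aa}\simeq f_{\aa_{0}}$ rel $\partial\Sigma$. That this hypothesis must enter quantitatively is shown by the example given right after the lemma: the arc system on the right of Figure \ref{fig:first-arc-systems} has $u_{1}=q_{1}p_{1}$, which does not reduce --- it lies in $\ap\left(\Sigma,f\right)$ but not in $\P\left(f\right)$. Your preliminary observations (that $\gamma_{m}$ lies in a single disc of $\Sigma_{\aa_{0}}$ and is boundary-parallel there) only re-derive that $u_{m}\left(\aa_{0}\right)$ is empty, which is immediate from Definition \ref{def:guide-arcs}; so the concluding sentence ``since $f_{\aa}\simeq f_{\aa_{0}}$ and both the disc structure and the crossing word behave homotopy-theoretically in the same way'' is exactly the missing content, not a consequence of what precedes it.

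To close the gap one must convert the homotopy of maps into a statement about crossing words, which is what the paper does: extend $\gamma_{m}$ along $\partial\Sigma$ to a path $\overline{\gamma}$ between basepoints $v_{i},v_{j}$. Since arcs of any arc system meet $\partial\Sigma$ only at the fixed marked points, the crossings contributed by the boundary pieces are the same for every element of $\ap\left(\Sigma,f\right)$, so the crossing word of $\overline{\gamma}$ with respect to $\aa$ is $c_{1}\ast u_{m}\left(\aa\right)\ast c_{2}$ with $c_{1},c_{2}$ independent of $\aa$. By the reformulation of Claim \ref{claim:f_aa on arcs}, for a path between basepoints the reduction class of the crossing word determines, and is determined by, the element $\left[f_{\aa}\left(\overline{\gamma}\right)\right]\in\F_{r}$ (reduced crossing words correspond to non-backtracking walks in the tree $\mathbb{T}_{2r,2}$ from a fixed $o$-vertex, hence to their terminal $o$-vertices, hence to elements of $\F_{r}$). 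Since $\aa\in\P\left(f\right)$ gives $\left[f_{\aa}\left(\overline{\gamma}\right)\right]=\left[f_{\aa_{0}}\left(\overline{\gamma}\right)\right]$, one gets $c_{1}u_{m}\left(\aa\right)c_{2}\sim c_{1}c_{2}$, and cancelling $c_{1},c_{2}$ in the free product yields $u_{m}\left(\aa\right)\sim\emptyset$. Equivalently, in your own language: $u_{m}\left(\aa\right)$ records the passages of the path $f_{\aa}\circ\gamma_{m}$ through the marked points of $\wedger$, its class in $\ast_{i}\left(\mathbb{Z}/2\ast\mathbb{Z}/2\right)$ is a homotopy invariant of that path rel endpoints, and the homotopy $f_{\aa}\simeq f_{\aa_{0}}$ (stationary on $\partial\Sigma$, where the endpoints of $\gamma_{m}$ lie) pushes forward along $\gamma_{m}$. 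Either way, it is the homotopy of the maps, not an isotopy of $\gamma_{m}$, that forces $u_{m}\left(\aa\right)$ to reduce to the empty word.
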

\begin{proof}
Recall that for any path $\overline{\gamma}$ in $\Sigma$ from $v_{i}$
to $v_{j}$ ($i,j\in\left[\ell\right]$) which meets the arcs of $\aa$
transversely, the value of $\left[f_{\aa}\left(\overline{\gamma}\right)\right]$
is determined by the sequence of crossings between $\overline{\gamma}$
and the arcs, as detailed in Claim \ref{claim:f_aa on arcs}. It is
easy to see that an equivalent way to define $\left[f_{\aa}\left(\overline{\gamma}\right)\right]$
is the following: write a word in $\left\{ p_{1},q_{1},\ldots,p_{r},q_{r}\right\} $
which depicts the sequence of crossings of $\overline{\gamma}$ with
the arcs of $\aa$ (as in the definition of $u_{m}\left(\aa\right)$),
then reduce this word by deleting subwords of the form $p_{i}p_{i}$
or $q_{i}q_{i}$, and eventually scan the word from beginning to end
and replace every $p_{i}q_{i}$ with $x_{i}$ and every $q_{i}p_{i}$
with $x_{i}^{-1}$. (It is standard that the order of reductions does
not effect the final result.) 

Now let $\gamma=\gamma_{m}$ for some $m$ starting at the boundary
component $i$ and arriving at the boundary component $j$. Let $\overline{\gamma}$
be a path in $\Sigma$ which begins at $v_{i}$, then goes along $\partial\Sigma$
from $v_{i}$ to the beginning of $\gamma$, then goes along $\gamma$,
and then arrives to $v_{j}$ through $\partial\Sigma$ (the parts
trough $\partial\Sigma$ can be chosen arbitrarily). Since the arcs
of an arc system always meet the boundary only at their endpoints,
the sequence of crossings along the pieces of $\overline{\gamma}$
at the boundary are the same for all elements of $\ap\left(\Sigma,f\right)$.
Since $\left[f_{\aa}\left(\overline{\gamma}\right)\right]=\left[f_{\aa_{0}}\left(\overline{\gamma}\right)\right]$,
the words $u_{m}\left(\aa\right)$ and $u_{m}\left(\aa_{0}\right)$
must be equivalent (through reductions). We are done as $u_{m}\left(\aa_{0}\right)$
is empty by the definition of guide-arcs.
\end{proof}
For example, for $\aa\in\ap\left(\Sigma,f\right)$ the element in
Figure \ref{fig:first-arc-systems} on the right and the element $\aa_{0}$
and guide arcs in Figure \ref{fig:guide-arcs}, $u_{1}\left(\aa\right)=q_{1}p_{1}$
and thus $\aa\notin{\cal P}\left(f_{\aa_{0}}\right)$.

Next, we define the depth of $u_{m}\left(\aa\right)$. Let $\mathbb{T}_{2r,2}$\marginpar{$\mathbb{T}_{2r,2}$}
be the infinite $\left(2r,2\right)$-biregular tree%
\footnote{A $\left(2r,2\right)$-biregular tree has vertices of degrees $2r$
and $2$. Every vertex of degree $2r$ is connected only with vertices
of degree $2$, and vice-versa.%
}. We think of it as the universal cover of the graph $\wedger$, where
the point $o$ and the points $\left\{ z_{i}\,\middle|\, i\in\left[r\right]\right\} $
are vertices. We also label every vertex of $\mathbb{T}_{2r,2}$ by
$o$ or $z_{i}$ according to the vertex it covers, and every edge
of $\mathbb{T}_{2r,2}$ by $p_{i}$ or $q_{i}$, according to the
marked point contained in the edge of $\wedger$ it covers.

Since $\gamma_{m}$ is disjoint from the arcs of $\aa_{0}$, it is
completely embedded in a (closed, type-$o$ or type-$z_{i}$) disc
of $\aa_{0}$. If this disc is type-$o$ (type-$z_{i}$), then $\gamma_{m}$
begins and ends in a type-$o$ (type-$z_{i}$, respectively) disc
in any arc system in $\ap\left(\Sigma,f\right)$. If it begins and
ends in a type-$o$ (type-$z_{i}$) disc, we choose a basepoint \marginpar{$\otimes_{m}$}$\otimes_{m}$
for $\mathbb{T}_{2r,2}$ in some $o$-vertex ($z_{i}$-vertex, respectively).
We can think of $u_{m}\left(\aa\right)$ as a path in the tree: we
begin at the basepoint $\otimes_{m}$, whenever we write $p_{i}$,
we traverse a $p_{i}$-edge, and whenever we write $q_{i}$ we traverse
a $q_{i}$-edge. It it easy to verify that we never get stuck (if
our walk reaches a $z_{i}$-vertex, the following step will necessarily
be a $p_{i}$ or a $q_{i}$ with the same $i$). Moreover, $u_{m}\left(\aa\right)$
reduces to the empty word if and only if the associated walk in the
tree is closed. 

We define \textbf{the depth of $u_{m}\left(\aa\right)$}, denoted
\marginpar{${\scriptstyle \mathrm{depth}\left(u_{m}\left(\aa\right)\right)}$}$\mathrm{depth}\left(u_{m}\left(\aa\right)\right)$,\textbf{
}to be the largest distance from the basepoint $\otimes_{m}$ of a
vertex in $\mathbb{T}_{2r,2}$ visited in the walk of $u_{m}\left(\aa\right)$.
For example, in the following word we write the distance from the
basepoint to the vertex visited after every step: 
\[
\overset{0}{\,\,\,}p_{1}\overset{1}{\,\,\,}q_{1}\overset{2}{\,\,\,}p_{2}\overset{3}{\,\,\,}p_{2}\overset{2}{\,\,\,}q_{3}\overset{3}{\,\,\,}q_{3}\overset{2}{\,\,\,}q_{4}\overset{3}{\,\,\,}p_{4}\overset{4}{\,\,\,}q_{4}\overset{5}{\,\,\,}q_{4}\overset{4}{\,\,\,}p_{4}\overset{3}{\,\,\,}q_{4}\overset{2}{\,\,\,}q_{1}\overset{1}{\,\,\,}q_{1}\overset{2}{\,\,\,}q_{1}\overset{1}{\,\,\,}p_{1}\overset{0}{\,\,\,}
\]
hence the depth of this word is $5$.

This notion of depth allows us to define a finer sequence of nested
subposets $\P_{m,n}$ ($1\le m\le M$ and $n\in\mathbb{Z}_{\ge0}$)
as follows:\marginpar{$\P_{m,n}$}
\[
\P_{m,n}\overset{\mathrm{def}}{=}\left\{ \aa\in\P\left(f\right)\,\middle|\,\begin{gathered}\mathrm{depth}\left(u_{m}\left(\aa\right)\right)\le n,\,\,\mathrm{and}\\
u_{m+1}\left(\aa\right)=u_{m+2}\left(\aa\right)=\ldots=u_{M}\left(\aa\right)\,\,\mathrm{are\, all\, the\, empty\, word}
\end{gathered}
\right\} .
\]
So 
\[
\P_{m-1}=\P_{m,0}\subseteq\P_{m,1}\subseteq\ldots\subseteq\P_{m,n}\subseteq\ldots\subseteq\P_{m},
\]
and 
\[
\bigcup_{n=0}^{\infty}\P_{m,n}=\P_{m}.
\]
For instance, if we continue with the example of $w=\left[x,y\right]\left[x,z\right]$,
the five guide-arcs drawn in Figure \ref{fig:guide-arcs} and the
five elements $\aa_{-2},\ldots,\aa_{2}$ in Figure \ref{fig:5 elems of AP},
then $\P_{0}=\P_{1,0}=\left\{ \aa_{0}\right\} $ and $\P_{1,1}=\P_{1,2}=\ldots={\cal P}_{1}=\P_{2,0}=\left\{ \aa_{0},\aa_{1}\right\} $.
{}``Opening'' $\gamma_{2}$ does not add elements so $\P_{2,n}=\P_{2}=\P_{3,0}=\left\{ \aa_{0},\aa_{1}\right\} $
for every $n$. When we allow words of depth $1$ on $\gamma_{3}$
we get ${\cal P}_{3,1}=\left\{ \aa_{0},\aa_{1},\aa_{2}\right\} $,
but allowing bigger depth there without {}``opening'' $\gamma_{4}$
does not add any elements, so $\P_{3,n}=\P_{3}=\P_{4,0}$ for every
$n\ge1$. The subposet $\P_{4,1}$ already contains, in addition,
$\aa_{-1}$ as well as the element to the right of $\aa_{2}$ which
we may denote by $\aa_{3}$. The leftmost element in Figure \ref{fig:5 elems of AP},
$\aa_{-2}$, is contained only in $\P_{4,2}$, and so does {}``$\aa_{4}$''.
This goes on: $\P_{4,n}$ consists of $\P_{4,n-1}$ together with
one more element to the right and one more element to the left in
the component a piece of which is given in Figure \ref{fig:5 elems of AP}.
Finally, $\P_{4}=\P_{5,n}=\P_{5}=\P\left(f\right)$ for every $n$.

Using Corollary \ref{cor:deformation-retract}, we now show that $\left|\P_{m,n}\right|$
deformation retracts to $\left|\P_{m,n-1}\right|$. Namely, we show
there is a map $\left|\P_{m,n}\right|\to\left|\P_{m,n-1}\right|$
which restricts to the identity in $\left|\P_{m,n-1}\right|$ and
is homotopic to the identity in $\left|\P_{m,n}\right|$, through
an homotopy that fixes $\left|\P_{m,n-1}\right|$ pointwise. Showing
this means that $\P_{m}$ deformation retracts to $\P_{m-1}$, and
thus completes the proof. (To be sure: we can let the deformation
retract $\left|\P_{m,n}\right|\to\left|\P_{m,n-1}\right|$ take place
at time $\left[\frac{1}{2^{n}},\frac{1}{2^{n-1}}\right]$. This is
a well-defined deformation retract $\left|\P_{m}\right|\to\left|\P_{m-1}\right|$
since every point in $\P_{m}$ belongs to some $\P_{m,n}$, and the
retracts of $\left|\P_{m,n+1}\right|,\left|\P_{m,n+2}\right|,\ldots$
leave $\left|\P_{m,n}\right|$ fixed pointwise.)

\subsubsection*{A deformation retract $\left|\P_{m,n}\right|\to\left|\P_{m,n-1}\right|$ }

The retract $\left|\P_{m,n}\right|\to\left|\P_{m,n-1}\right|$ is
defined by a map \marginpar{$h_{m,n}$}$h_{m,n}:\P_{m,n}\to\P_{m,n-1}$
which prunes all leaves of depth $n$ in the walk $u_{m}\left(\aa\right)$
for every $\aa\in\P_{m,n}$. The basic idea is that if $\aa\in\P_{m,n}\setminus\P_{m,n-1}$
then $u_{m}\left(\aa\right)$ has at least one leaf of depth $n$.
Every such leaf means that $\gamma_{m}$ crosses two equally-colored
arcs of $\aa$ in a row, and we can {}``rewire'' these two arcs
locally to prune the leaf, as in Figure \ref{fig:pruning}. We remark
that in every such step, $\aa$ is modified to some \emph{comparable
}$\bb$, so $\bb$ is in the same connected component of $\left|\ap\left(\Sigma,f\right)\right|$
as $\aa$. By successive steps of this kind we can decrease the depth
of all $u_{m}\left(\aa\right)$ until they are all empty and we arrive
at $\aa_{0}$. This alone suffices to show the connectivity of $\left|\P\left(f\right)\right|$. 

\begin{figure}[h]
\centering{}\includegraphics[bb=0bp 0bp 350bp 135bp]{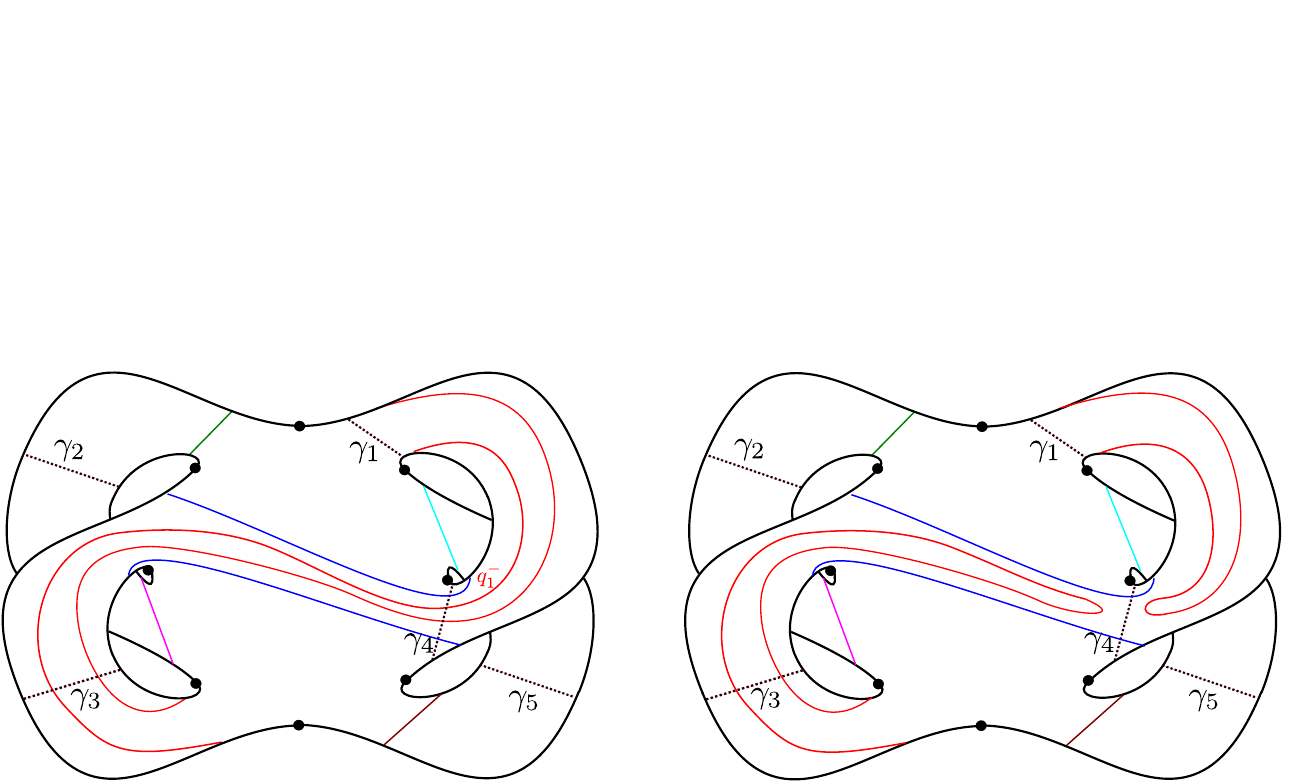}\caption{\label{fig:pruning}Pruning (from left to right) a leaf node of depth
$2$ in $u_{4}\left(\aa_{-2}\right)$, where $\aa_{-2}$ is the left
most arc system in Figure \ref{fig:5 elems of AP}. The result is
$\aa_{-1}$, the left of center arc system in Figure \ref{fig:5 elems of AP}.
We use the guide-arcs from Figure \ref{fig:guide-arcs}. This pruning
is the resulting of applying $h_{4,2}$ on $\aa_{-2}$.}
\end{figure}

More formally, fix $m$ and $n$ and consider all leaves of depth
$n$ in $u_{m}\left(\aa\right)$ (every visit of the walk to a vertex
of distance $n$ from $\otimes_{m}$ is considered a leaf). Every
such leaf corresponds to some backtracking move $p_{i}p_{i}$ or $q_{i}q_{i}$,
and we consider the segment of $\gamma_{m}$ which lies between these
two crossings (between the two crossings with $p_{i}$-arcs of $\aa$,
or two crossings with $q_{i}$-arcs of $\aa$). From the point of
view of the arc system $\aa$, these segments of $\gamma_{m}$ correspond
to disjoint arcs, which we call \textbf{$\gamma$-arcs}\marginpar{$\gamma$-arcs},
inside the discs of $\aa$. Each $\gamma$-arc meets the boundary
of the disc only at its endpoints, and at two equally-colored $\aa$-arcs.
Moreover, the $\gamma$-arcs never cross each other as $\gamma_{m}$
is embedded in $\Sigma$ (and does not self-intersect). In addition,
all vertices at distance $n$ from the basepoint $\otimes_{m}$ in
$\mathbb{T}_{2r,2}$ are of type-$o$, or all are of type-$z_{i}$
(not necessarily the same $i$ for all vertices), depending solely
on the parity of $n$. In the former case, all $\gamma$-arcs are
contained in type-$o$ discs; in the latter in type-$z_{i}$ discs.
From now on we assume that $n$ is such that the $\gamma$-arcs are
all contained in type-$o$ discs, the other case being completely
analogous.

For every type-$o$ disc $D$ of $\aa$ ($\aa\in\P_{m,n}$), the $\gamma$-arcs
determine a partition $P_{D}$ of the arcs in (the boundary of) $D$:
this is the finest partition such that any two arcs connected by a
$\gamma$-arc belong to the same block. We claim that $P_{D}$ is
colored and non-crossing. The monochromaticity of blocks stems from
the fact that the $\gamma$-arcs correspond to subwords of the form
$p_{i}p_{i}$ or $q_{i}q_{i}$ for some $i$. The partition $P_{D}$
is non-crossing because the $\gamma$-arcs are disjoint. We define
$h_{m,n}\left(\aa\right)$\marginpar{$h_{m,n}\left(\aa\right)$} to
be the arc system obtained from $\aa$ by the set of partitions $P_{D}$
of its type-$o$ discs (see Definition \ref{def:arc-poset} and Remark
\ref{remark:about-order-in-arc-poset}).

It is evident that $h_{m,n}\Big|_{\P_{m,n-1}}$ is the identity, and
that $\aa\preceq h_{m,n}\left(\aa\right)$ for every $\aa\in\P_{m,n}$.
Moreover, we claim that indeed $h_{m,n}\left(\P_{m,n}\right)\subseteq\P_{m,n-1}$:
to see this, we show that the modification we made to obtain $h_{m,n}\left(\aa\right)$
from $\aa$ prunes all backtracking steps of $u_{m}\left(\aa\right)$
which correspond to leaves at depth $n$ and does not introduce any
new steps in $u_{m}\left(\aa\right)$ or in $u_{m'}\left(\aa\right)$
for any $m'$. (In contrast, $h_{m,n}$ may prune backtracking steps
at depth smaller than $n$ in $u_{m}\left(\aa\right)$ or at any depth
in $u_{m'}\left(\aa\right)$ for $m'<m$). First, if $\eta$ is any
$\gamma$-arc in $\Sigma_{\aa}$ corresponding to a backtracking step
at distance $n$, it necessarily enters and exists $D$ through two
arcs in the same block of $P_{D}$ and these two crossings disappear
in $h_{m,n}\left(\aa\right)$, hence this leaf is indeed pruned. Second,
any piece $\eta$ of the arc $\gamma_{m'}$ for some $m'\le m$ which
is allocated by two successive crossings of $\aa$-arcs in $\Sigma_{\aa}$
and which is contained in a type-$o$ disc $D$ of $\aa$ satisfies
the following:
\begin{itemize}
\item If $\eta$ enters and exists $D$ through two arcs in the same block
of $P_{D}$, then these two crossings disappear in $h_{m,n}\left(\aa\right)$,
and the corresponding subword $p_{i}p_{i}$ (or $q_{i}q_{i}$) of
$u_{m'}\left(\aa\right)$ is reduced.
\item If $\eta$ enters and exists $D$ through two arcs $e_{1}$ and $e_{2}$
in two different blocks (or {}``$z_{i}$-corridors'') $B_{1}$ and
$B_{2}$, respectively, of $P_{D}$, then it necessarily does not
cross any other block. I.e., there cannot be two other arcs, $e_{3}$
and $e_{4}$ at the same block $B_{3}$ of $P_{D}$, $B_{3}\ne B_{1},B_{2}$,
with the cyclic order of the four being $e_{1},e_{3},e_{2},e_{4}$,
because $\eta$ does not intersect the $\gamma$-arcs. Thus, in minimal
position, the only crossings of $\eta$ with arcs in $h_{m,n}\left(\aa\right)$
are with the arc through which it leaves $B_{1}$ and then through
the arc through which it enters $B_{2}$. By definition of $P_{D}$,
the first arc has the same color as $e_{1}$, and the second arc has
the same color as $e_{2}$. Thus, in this case, there is no change
to the part of $u_{m'}\left(\aa\right)$ corresponding to $\eta$,
when moving from $\aa$ to $h_{m,n}\left(\aa\right)$. 
\end{itemize}
There are also pieces of $\gamma_{m'}$ at its very beginning or very
end which may be contained in type-$o$ discs of $\aa$. The same
argument shows there is no change in the subword of $u_{m'}\left(\aa\right)$
read along such segments when applying $h_{m,n}\left(\aa\right)$.

By Corollary \ref{cor:deformation-retract}, if we want to show that
$h_{m,n}$ induces a deformation retract $\left|h_{m,n}\right|\colon\left|\P_{m,n}\right|\to\left|\P_{m,n-1}\right|$,
we have left to show that $h_{m,n}$ is order-preserving. So assume
$\aa\preceq\bb$, and both are in $\P_{m,n}$. We need to show that
$h_{m,n}\left(\aa\right)\preceq h_{m,n}(\bb)$. This follows from
two properties expressed in the following two lemmas:
\begin{lem}
Let $\aa\preceq\bb$ in $\P\left(f\right)$. We divide the word $u_{m}\left(\aa\right)$
to subwords $x_{1},\ldots,x_{t}$ by grouping together successive
crossings with arcs at the boundary of the same type-$o$ disc. So
$u_{m}\left(\aa\right)=x_{1}*x_{2}*\ldots*x_{t}$, with $*$ denoting
concatenation and each $x_{j}$ of length $2$ except for, possibly,
$x_{1}$ and $x_{t}$, which may be of length 1. Each $x_{j}$ corresponds
to a segment $\eta_{j}$ of $\gamma_{m}$ (allocated by the two crossings).
Since the type-$o$ discs of $\bb$ can be thought of as being contained
inside the type-$o$ discs of $\aa$, we let $y_{j}$ ($1\le j\le t$)
be the subword of $u_{m}(\bb)$ which corresponds to $\eta_{i}$ and
then $u_{m}(\bb)=y_{1}*\ldots*y_{t}$. We claim that for every $j$,
the vertex in $\mathbb{T}_{2r,2}$ that the walk of $u_{m}(\bb)$
visits at the beginning of $y_{j}$, is the same as the vertex visited
by $u_{m}\left(\aa\right)$ at the beginning of $x_{j}$.\end{lem}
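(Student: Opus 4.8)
The plan is to track how the walk of $u_m(\bb)$ sits relative to the walk of $u_m(\aa)$ through the decomposition into the segments $\eta_j$, and show inductively on $j$ that the two walks agree at the point where $y_j$ (resp.\ $x_j$) starts. The base case $j=1$ is immediate: both walks start at the same basepoint $\otimes_m$, since $\otimes_m$ depends only on the type ($o$ or $z_i$) of the disc of $\aa_0$ in which $\gamma_m$ is embedded, and this type is unchanged under refinement. For the inductive step, I would assume the two walks agree at the vertex $v_{j}$ (the common start of $x_j$ and $y_j$) and show they agree again at the start of $x_{j+1}$. The key point is that $x_{j+1}$ begins exactly where $x_j$ ends, and similarly for $y$: so it suffices to prove that the walk of $u_m(\bb)$ restricted to $y_j$ starts and ends at the same two vertices of $\mathbb{T}_{2r,2}$ as the walk of $u_m(\aa)$ restricted to $x_j$.

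Here is the crucial observation. The segment $\eta_j$ of $\gamma_m$ lies inside a single type-$o$ disc $D$ of $\aa$, and it enters/leaves $D$ through two arcs whose colors are exactly the two letters of $x_j$ (or the one letter, in the boundary cases $j=1,t$). When we pass to $\bb\succeq\aa$, the type-$o$ disc $D$ is subdivided by a colored non-crossing rewiring into several smaller type-$o$ discs with ``$z_i$-corridors'' in between, and $\eta_j$ now records a longer word $y_j$ of crossings. But $\eta_j$ is a fixed segment of the \emph{embedded} arc $\gamma_m$, so $[f_{\bb}(\eta_j)] = [f_{\aa}(\eta_j)]$ as elements of the free group — this is precisely the kind of statement guaranteed by Claim \ref{claim:f_aa on arcs} together with the fact that $f_{\aa}\simeq f_{\bb}$ (both being in $\P(f)$) and Lemma \ref{lem:homotopy=00003Dsame_induced_map}. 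Concretely, the words $x_j$ and $y_j$, read as (unreduced) words in $\{p_1,q_1,\dots,p_r,q_r\}$, give rise to the same reduced word, hence to the same endpoint when traced as a walk in $\mathbb{T}_{2r,2}$ from the common starting vertex $v_j$. Since the start of $x_{j+1}$ is the end of $x_j$, and likewise on the $\bb$ side, the induction closes. I would also need to observe that the starting vertex of $x_j$ in $\mathbb{T}_{2r,2}$ has the right type — $o$ if $\eta_j$ is in a type-$o$ disc of $\aa$, $z_i$ if in a type-$z_i$ disc — which follows since $\gamma_m$ starts in a type-$o$ disc (the parity-$n$ case we are in) and the types alternate along the walk exactly as the vertex types alternate in $\mathbb{T}_{2r,2}$.

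The main obstacle, I expect, is making precise the claim that $u_m(\bb)$ decomposes as $y_1 * \dots * y_t$ compatibly with the decomposition $u_m(\aa) = x_1 * \dots * x_t$ — that is, that the segments $\eta_j$ of $\gamma_m$ which lie in a single type-$o$ disc of $\aa$ continue to make sense after passing to $\bb$, and that \emph{all} crossings of $\gamma_m$ with $\bb$-arcs occur inside these segments (no new crossings arise at the ``seams'' between consecutive $\eta_j$). This is where one uses that consecutive segments $\eta_j,\eta_{j+1}$ meet at a crossing of $\gamma_m$ with an $\aa$-arc lying between a type-$o$ and a type-$z_i$ disc, and that this particular $\aa$-arc survives unchanged into $\bb$ (rewiring inside type-$o$ discs only affects arcs on the boundary of type-$o$ discs, replacing them by new arcs that are still incident to the same type-$z_i$ discs, but the crossing structure of $\gamma_m$ with the boundary between type-$o$ and type-$z_i$ regions is controlled by Lemma \ref{lem:joint-boundaries-of-discs-in-surface}). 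Once this bookkeeping is set up cleanly, the homotopy-invariance argument of the previous paragraph does the real work, and the lemma follows.
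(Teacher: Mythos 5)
There is a genuine gap at the key step. You justify the segment-wise equality --- that $x_{j}$ and $y_{j}$ trace walks with the same endpoints in $\mathbb{T}_{2r,2}$ --- by asserting $\left[f_{\bb}\left(\eta_{j}\right)\right]=\left[f_{\aa}\left(\eta_{j}\right)\right]$ and citing Claim \ref{claim:f_aa on arcs}, Lemma \ref{lem:homotopy=00003Dsame_induced_map} and the homotopy $f_{\aa}\simeq f_{\bb}$. But those results apply to arcs whose endpoints lie at the basepoints $v_{1},\ldots,v_{\ell}$, and the homotopy between $f_{\aa}$ and $f_{\bb}$ is only relative to $\partial\Sigma$: it does not fix the images of the interior endpoints of $\eta_{j}$, so {}``$\left[f\left(\eta_{j}\right)\right]$'' is not a homotopy-invariant quantity and the cited lemmas do not apply to the segments $\eta_{j}$. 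If you repair this by completing $\gamma_{m}$ to a path between basepoints (as in the proof of Lemma \ref{lem:u_i null-homotopic}), homotopy invariance only yields that the \emph{full} words $u_{m}\left(\aa\right)$ and $u_{m}\left(\bb\right)$ are reduction-equivalent (indeed both reduce to the empty word); it does not localize to the individual segments. The entire content of the lemma is precisely that the extra letters picked up along each $\eta_{j}$ cancel \emph{within} $y_{j}$, rather than against letters coming from neighbouring segments, and this finer statement cannot be extracted from a global homotopy argument --- at this point your proposal begs the question.

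What actually does the work, and what your sketch never uses, is the non-crossing property of the colored partition $P_{D}$ of the type-$o$ disc $D$ of $\aa$ containing $\eta_{j}$: because $P_{D}$ is non-crossing, whenever $\eta_{j}$ enters a {}``$z_{i}$-corridor'' (a block of $P_{D}$) it must exit it, through an arc of the same block and hence of the same color, before it can meet any other block; so each such passage contributes an adjacent pair $p_{i}p_{i}$ or $q_{i}q_{i}$ inside $y_{j}$. Hence $y_{j}$ reduces to $x_{j}$, and induction over $j$ (the start of $x_{j+1}$, resp.~$y_{j+1}$, being the end of $x_{j}$, resp.~$y_{j}$) gives the claim; this is the paper's argument. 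A secondary inaccuracy in your bookkeeping paragraph: the $\aa$-arc at the seam between consecutive segments does \emph{not} in general {}``survive unchanged into $\bb$'' --- arcs lying in non-singleton blocks are rewired and disappear. The correct (and simpler) reason that all crossings of $\gamma_{m}$ with $\bb$-arcs occur along the segments $\eta_{j}$ is that, by definition of $\aa\preceq\bb$, representatives of $\bb$ can be taken with all arcs inside the closed type-$o$ discs of $\aa$.
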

\begin{proof}
It is enough to show that $x_{j}$ and $y_{j}$ are equivalent through
reduction for every $j$. Indeed, assume that $x_{j}$ corresponds
to the type-$o$ disc $D$ of $\aa$ and that the partition of this
disc inside the set of partitions leading from $\aa$ to $\bb$ is
$P_{D}$. Because $P_{D}$ is non-crossing, there is a clear order
on the set of blocks of $P_{D}$ (or {}``$z_{i}$-corridors'') crossed
by $\eta_{j}$ ($\eta_{j}$ has to exit a block immediately after
entering it, before entering the next block). We are done as entering
and exiting a block of $P_{D}$ corresponds to a pair of backtracking
steps in $y_{j}$.\end{proof}
\begin{lem}
\label{lem:gamma-arcs in beta}Assume that $\aa\preceq\bb$ in $\P_{m,n}$.
Let $\eta$ be a $\gamma$-arc in $\aa$. Assume that the $\bb$-arcs
intersected by $\eta$ are $\beta_{1},\beta_{2},\ldots,\beta_{2\ell}$.
Then they are all of the same color and represent $\ell$ leaves of
depth $n$ in $\bb$. 
\end{lem}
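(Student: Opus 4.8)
The plan is to analyze the local picture of the $\gamma$-arc $\eta$ inside a type-$o$ disc $D$ of $\aa$ and to track what happens as one passes from $\aa$ to the finer arc system $\bb$. Recall that $\eta$ is a segment of $\gamma_m$ whose two endpoints lie on two equally-colored (say $p_i$-)arcs $e_1,e_2$ of $\aa$ on $\partial D$, and that the backtracking step $p_ip_i$ it realizes is a leaf of depth $n$ in the walk $u_m(\aa)$ — meaning the walk is at distance $n-1$ before crossing $e_1$, at distance $n$ between $e_1$ and $e_2$, and back at distance $n-1$ after $e_2$. Since $\aa\preceq\bb$, the type-$o$ discs of $\bb$ sit inside the type-$o$ discs of $\aa$, so a representative of $\bb$ can be chosen whose arcs inside $D$ come from a colored non-crossing partition $P_D$ of the $\aa$-arcs on $\partial D$, together with some extra arcs forming the boundaries of the type-$z_j$ "corridors" carved out of $D$; $\eta$ can be isotoped to meet these $\bb$-arcs transversely and in minimal position.

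The key observation is that, since $e_1$ and $e_2$ have the same color $p_i$ and both lie on $\partial D$, and since the walk on $\mathbb{T}_{2r,2}$ associated to $u_m(\aa)$ dips to depth $n$ exactly between them and nowhere else along $\eta$, the corresponding subword of $u_m(\bb)$ read along $\eta$ must still be a word that reduces to the empty word (by Lemma \ref{lem:u_i null-homotopic} applied to the appropriate path), and in fact — because $\eta$ stays inside one type-$o$ disc $D$ of $\aa$ — it only ever moves between $D$-adjacent $z_j$-vertices, so its walk on the tree stays within distance $1$ of the level-$(n-1)$ vertex $v$ that $\eta$ starts and ends at. Combined with minimal position, every crossing of $\eta$ with a $\bb$-arc is a "down" step to a level-$n$ vertex immediately followed by an "up" step back to $v$; there are no other options because $\eta$ does not cross the $\gamma$-arcs of $\aa$ and $P_D$ is non-crossing (this is exactly the dichotomy used in the proof of the previous lemma: $\eta$ either enters-and-exits one block of $P_D$ or crosses two distinct blocks, but here the level constraint forces the former at each crossing). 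Hence the $\bb$-arcs met by $\eta$, listed in order, are $\beta_1,\beta_2,\dots,\beta_{2\ell}$ where each consecutive pair $\beta_{2k-1},\beta_{2k}$ bounds a $z$-corridor (equivalently, each pair is the two arcs of a single block of $P_D$ that $\eta$ enters and leaves), each such pair is a backtracking step $p_ip_i$ to a level-$n$ vertex, i.e.\ a leaf of depth $n$ in $u_m(\bb)$, and all $\beta_k$ are $p_i$-arcs — the color being inherited, since by the definition of $P_D$ the two arcs of a block have the same color as each other and that color is forced to be $p_i$ by following $\eta$ from $e_1$.

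Concretely the steps are: (1) fix the good representative of $\bb$ inside $D$ coming from $P_D$, isotope $\eta$ into minimal position with it; (2) lift the walk read along $\eta$ (for $\aa$ and for $\bb$) to the tree $\mathbb{T}_{2r,2}$ with the chosen basepoint $\otimes_m$, and note the level of the endpoints is $n-1$; (3) use the non-crossing property of $P_D$ and disjointness of $\eta$ from the $\aa$-side $\gamma$-arcs to argue that each crossing of $\eta$ with a $\bb$-arc is forced to be an enter-exit of a single block/corridor, hence a backtracking pair; (4) invoke the level computation to see each such pair reaches level exactly $n$, i.e.\ is a depth-$n$ leaf of $u_m(\bb)$; (5) read off the common color $p_i$ from $P_D$. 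The main obstacle is step (3)–(4): ruling out the possibility that $\eta$, inside $D$, wanders through a block of $P_D$ without exiting it immediately, or crosses a corridor in a way that produces a crossing at level $n+1$ or at level $n-2$. This is handled by the same case analysis as in the preceding lemma's proof (an arc crossing two distinct blocks crosses no third block, because it is disjoint from all $\gamma$-arcs), upgraded by the depth-$n$-leaf hypothesis on $\eta$ which pins the levels; I would write this out carefully, as it is where the geometry of "non-crossing inside $D$" and the combinatorics of the tree walk interact most delicately.
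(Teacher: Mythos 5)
Your set-up and several intermediate steps do match the paper's argument: the corridors inside $D$ created by the colored non-crossing partition $P_{D}$ taking $\aa$ to $\bb$ are bounded, inside $D$, by new arcs of a single block, so each visit of $\eta$ to a corridor is an enter-and-exit through equally colored arcs (giving the monochromatic pairs $\left(\beta_{2k},\beta_{2k+1}\right)$); the first arc $\beta_{1}$ bounds the corridor containing $e_{1}$ and hence has color $p_{i}$; and the walk of $u_{m}(\bb)$ along $\eta$ starts at the level-$(n-1)$ vertex $v$. The gap is at the crux. You justify the claim that this walk ``stays within distance $1$ of $v$'' (equivalently, that every excursion is a down-step to the level-$n$ vertex immediately followed by a step back) by the facts that $\eta$ stays inside the single type-$o$ disc $D$, that $P_{D}$ is non-crossing, and by ``the depth-$n$-leaf hypothesis on $\eta$''. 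None of these suffices. Staying inside $D$ only confines the walk to the level-$n$ vertex $u$ reached after $\beta_{1}$ and to the neighbors of $u$, and every neighbor of $u$ other than $v$ lies at level $n+1$; nothing geometric prevents $P_{D}$ from creating a corridor of another color lying across $\eta$ (e.g.\ a block consisting of two $q_{i}$-arcs on opposite sides of $\eta$ along $\partial D$), in which case some middle pair $\left(\beta_{2k},\beta_{2k+1}\right)$ has a different color and the walk reaches level $n+1$. Such a $\bb$ is a legitimate element of $\P\left(f\right)$ above $\aa$ --- it is excluded only because it violates $\mathrm{depth}\left(u_{m}(\bb)\right)\le n$. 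And the leaf hypothesis on $\eta$ concerns $u_{m}\left(\aa\right)$, so it cannot pin the levels of $u_{m}(\bb)$. The one hypothesis that does the work --- the one the paper calls the crux of the lemma --- is $\bb\in\P_{m,n}$: once $\beta_{1}$ takes the walk from $v$ to $u$ at level $n$, the bound $\mathrm{depth}\left(u_{m}(\bb)\right)\le n$ forces the next crossing to backtrack along the unique $p_{i}$-edge to $v$, and alternating this with the same-block pairing propagates the color $p_{i}$ and produces exactly $\ell$ depth-$n$ leaves by induction along $\beta_{1},\ldots,\beta_{2\ell}$. You state this hypothesis when restating the lemma but never invoke it; inserting it at this point is precisely what closes the argument, after which your route coincides with the paper's.

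Two smaller points. The fact that the subword of $u_{m}(\bb)$ read along $\eta$ starts at the same level-$(n-1)$ vertex as the corresponding subword of $u_{m}\left(\aa\right)$ is not given by Lemma \ref{lem:u_i null-homotopic}; it is the content of the unnamed lemma immediately preceding the present one (the comparison of the subwords $x_{j}$ and $y_{j}$), which is also where the ``exit a block immediately after entering it'' observation lives. Also, disjointness of $\eta$ from the other $\gamma$-arcs is not a constraint on the arcs of $\bb$: the partition $P_{D}$ here is the arbitrary one taking $\aa$ to $\bb$, not the partition defined by the $\gamma$-arcs, so the only available properties are that it is colored and non-crossing --- enough for the same-color in-out pairing, but not for the level bound.
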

Note that since $\eta$ begins and ends in (the boundary of) type-$z_{i}$
discs of $\aa$, it must indeed intersect an even number of arcs of
$\bb$ (recall that the type-$o$ discs of $\bb$ can be assumed to
be contained in type-$o$ discs of $\aa$). Of course, $\ell=0$ is
possible.
\begin{proof}
Assume $\ell>0$ (otherwise the statement is trivial). Let $D$ be
the type-$o$ disc of $\aa$ in which $\eta$ is embedded. Since $\eta$
represents a leaf in $u_{m}\left(\aa\right)$, it enters and exits
$D$ through equally-colored arcs $\alpha_{1}$ and $\alpha_{2}$,
and assume w.l.o.g.~these are $p_{1}$-arcs. Now consider the partition
$P_{D}$ of the arcs of $D$ which is part of the set of partitions
yielding $\bb$ from $\aa$. By construction, the two arcs $\beta_{2i}$
and $\beta_{2i+1}$ ($1\le i\le\ell-1$) are formed by rewiring of
the $\aa$-arcs in the same block of $P_{D}$, and thus are of the
same color. 

Since $\eta$ is a $\gamma$-arc, then, by definition, the piece of
walk in $u_{m}\left(\aa\right)$ it corresponds to moves from a vertex
at distance $n-1$ from $\otimes_{m}$ to a vertex of distance $n$
and back. By the previous lemma, the piece of walk represented by
$\eta$ in $u_{m}(\bb)$ also starts at the same vertex of $\mathbb{T}_{2r,2}$,
at distance $n-1$ from $\otimes_{m}$. The arc $\beta_{1}$ is formed
by the rewiring of the block containing $\alpha_{1}$, and thus has
also color $p_{1}$. Thus, after the intersection of $\eta$ with
$\beta_{1}$, the walk $u_{m}(\bb)$ is at distance $n$ from $\otimes_{m}$.
But, and this is the crux of this lemma, $\bb\in\P_{m,n}$ so $\mathrm{depth}(u_{m}(\bb))\le n$.
So the next step of $u_{m}(\bb)$ must backtrack, hence $\beta_{2}$
is also of color $p_{1}$. We already know that $\beta_{2}$ and $\beta_{3}$
have the same color, so $\beta_{3}$ is also of color $p_{1}$ and
represents a step to the vertex at distance $n$. The same argument
as before now shows that $\beta_{4}$ must also be a $p_{1}$-arc
and represents a backtracking step. Repeating these arguments proves
the lemma. 
\end{proof}
We now reach the endgame. Assume that $\aa\preceq\bb$ and both are
in $\P_{m,n}$. We already know that $\aa\preceq h_{m,n}\left(\aa\right)$,
and that $\aa\preceq\bb\preceq h_{m,n}(\bb)$ so $\aa\preceq h_{m,n}(\bb)$.
We need to show that $h_{m,n}\left(\aa\right)\preceq h_{m,n}(\bb)$,
namely, that the partitions in type-$o$ discs of $\aa$ yielding
$h_{m,n}(\bb)$ are \emph{coarser }than those yielding $h_{m,n}\left(\aa\right)$.
To see this, it is convenient to think of these partitions at the
type-$o$ disc $D$ as partitions of the neighboring type-$z_{i}$
discs: each arc at the boundary of $D$ separates it from some type-$z_{i}$
disc%
\footnote{More precisely, we may have to take some of the neighboring discs
with multiplicity two if they have two borders with $D$, a $p_{i}$-border
and a $q_{i}$-border (see Lemma \ref{lem:joint-boundaries-of-discs-in-surface}).
But the partition is colored and thus never merges these two copies
together.%
}. The neighboring type-$z_{i}$ discs in the same block are those
which are merged together through new {}``$z_{i}$-corridors'' formerly
belonging to $D$. It is enough to show that for any $\gamma$-arc
$\eta$, the two type-$z_{i}$ discs of $\aa$ it connects are also
in the same block in the partition leading from $\aa$ to $h_{m,n}(\bb)$.
This is clearly the case by Lemma \ref{lem:gamma-arcs in beta} and
the fact that all depth-$n$ leaves in $\bb$ are pruned in $h_{m,n}(\bb)$.
This completes the proof of Theorem \ref{thm:components-of-AP(Sigma,f)}
and thus also of Theorem \textbf{\ref{thm:pmp is K(G,1)}} and hence
of our main results, Theorems \ref{thm:main - general} and \ref{thm:K(G,1) for incompressible}.
\begin{rem}
A slightly different approach for the proof of contractability would
treat all guide-arcs at one shot, and define the depth of $\aa$ as
the maximal depth of one of $u_{1}\left(\aa\right),\ldots,u_{M}\left(\aa\right)$.
The only subtlety is that the basepoint in $\mathbb{T}_{2r,2}$ of
different $u_{m}\left(\aa\right)$'s may be different, depending on
the type of the disc where $\gamma_{m}$ begins and ends. There are
several ways to go around this: for example, one can prune the depth-$j$
leaves in two steps, one for each subset of the guide-arcs. Another
solution is to fix some $\aa_{0}$ which satisfies $\sigma_{\aa_{0}}=\tau_{\aa_{0}}$.
It is easy to see that in this case the guide-arcs can be taken to
be all inside type-$o$ discs.
\end{rem}

\section{More Consequences\label{sec:More-Consequences}}

In this section we gather some further consequences of our analysis
which are worth mentioning.

\subsubsection*{Finding all solutions and incompressible maps to the (generalized)
commutator problem}

Already in the late 1970's, several algorithms were found to determine
the commutator length of a given word $w\in\left[\F_{r},\F_{r}\right]$
(as mentioned on Page \pageref{Algorithms-to-compute-cl}). One of
these algorithms, due to Culler in \cite{CULLER}, basically follows
the same argument as in Lemma \ref{lem:every admissible and incompressible obtained from matchings}
above --- see Remark \ref{remark: algo-for-cl}. By enumerating all
matchings $\sigma\in\match\left(w\right)$, one can find $g=\cl\left(w\right)$
as $\frac{1}{2}\left(1-\max_{\sigma\in\match\left(w\right)}\chi\left(\sigma,\sigma\right)\right)$,
and then find representatives of every equivalence class of solutions
to 
\[
\left[u_{1},v_{1}\right]\cdots\left[u_{g},v_{g}\right]=w.
\]
By the same Lemma \ref{lem:every admissible and incompressible obtained from matchings},
the same algorithm extends to finding representatives for all classes
in $\sol\left(\wl\right)$ for any $\wl\in\F_{r}$, and more generally,
to all incompressible $\left[\left(\Sigma,f\right)\right]$ which
is admissible for $\wl$.

The main additional contributions of the current paper to this problem
are the following:
\begin{enumerate}
\item \textbf{Identifying all incompressible $\left(\Sigma,f\right)$. }It
can be inferred from the analysis in this paper that $\left(\Sigma_{\left(\st\right)},f_{\left(\st\right)}\right)$
is incompressible if and only if, roughly speaking, $\pmp\left(\Sigma_{\left(\st\right)},f_{\left(\st\right)}\right)$
is downward-closed. More accurately, $\left(\Sigma_{\left(\st\right)},f_{\left(\st\right)}\right)$
is \emph{compressible} if and only if there is a path (each step is
between comparable elements) in the poset $(\match\left(\wl\right)^{2},\preceq)$
from $\left(\st\right)$ to some $\left(\sigma',\tau'\right)$ with
$\chi\left(\sigma',\tau'\right)>\chi\left(\sigma,\tau\right)$ and
without going through elements of Euler characteristic smaller than
$\chi\left(\st\right)$.
\item \textbf{Distinguishing equivalence classes}. The current paper yields
a convenient way of distinguishing the different classes of solutions,
or more generally, of incompressible maps. By Theorem \ref{thm:pmp is K(G,1)},
given $\left(\st\right)\in\match\left(\wl\right)$ with $\left(\Sigma_{\left(\st\right)},f_{\left(\st\right)}\right)$
incompressible, we can construct $\pmp\left(\Sigma_{\left(\st\right)},f_{\left(\st\right)}\right)$
by restricting to pairs $\left(\sigma',\tau'\right)\in\match\left(\wl\right)^{2}$
with $\chi\left(\sigma',\tau'\right)=\chi\left(\st\right)$ and then
taking the connected component of $\left(\st\right)$. This allows
us to identify all $\left(\sigma',\tau'\right)$ belonging to the
same equivalence class of admissible incompressible $\left(\Sigma,f\right)$
as $\left(\st\right)$. \\
In fact, the analysis shows it is enough to follow this algorithm
solely in the bottom two layers of $\match\left(\wl\right)^{2}$:
namely, the pairs where $\left\Vert \sigma^{-1}\tau\right\Vert $
is $0$ (so $\sigma=\tau$) or 1 (so $\sigma^{-1}\tau$ is a transposition).
\end{enumerate}

\subsubsection*{A bound on the dimension of the $\mathrm{K}\left(G,1\right)$-complex
from Theorems \ref{thm:stabilizers K(G,1)} and \ref{thm:K(G,1) for incompressible}}

Recall that if $\left(\Sigma,f\right)$ is admissible for $\wl$ and
incompressible, then $\left|\pmp\left(\Sigma,f\right)\right|$ is
a finite $\mathrm{K}\left(G,1\right)$-complex for $G=\mathrm{Stab}_{\mathrm{MCG}\left(\Sigma\right)}\left(\tilde{f}\right)$.
We can bound the dimension of this $\mathrm{K\left(G,1\right)}$-complex
in terms of $\chi\left(\Sigma\right)$. 

Although we have not stressed it so far, some of the objects in this
paper, such as\linebreak{}
$\match\left(\wl\right)$, $\pmp$ or $\ap$ depend on the particular
presentation of $\wl$ as in \eqref{eq:word-expression}. In our analysis
we assume we fix a particular presentation (e.g.~the reduced one)
and stick to it. We say a presentation is cyclically reduced if $x_{i_{\left(j+1\right)\mod\left|w\right|}}^{\varepsilon_{\left(j+1\right)\mod\left|w\right|}}\ne x_{i_{j}}^{-\varepsilon_{j}}$
for every $1\le j\le\left|w\right|$. Since the objects we study depend
only on the conjugacy class of the words, we can assume they are taken
to be cyclically reduced. 
\begin{cor}
Assume $\wl\ne1$ and that the presentations of $\wl$ are cyclically
reduced. If $\left(\Sigma,f\right)$ is admissible for $\wl$ and
incompressible, then the dimension of $\left|\pmp\left(\Sigma,f\right)\right|$
is at most $-\chi\left(\Sigma\right)$.\end{cor}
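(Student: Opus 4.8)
The plan is to deduce the bound from the grading of $\pmp(\Sigma,f)$ by $\|\sigma^{-1}\tau\|$, translated into a count of the $2$-cells of $\Sigma_{(\sigma,\tau)}$, the only input specific to cyclically reduced words being a lower bound on the number of $o$-segments bounding a type-$o$ disc.

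\emph{Reducing to a bound on the rank.} Recall (Definition \ref{def:SC-of-poset}) that $|\pmp(\Sigma,f)|$ is the order complex of the poset, so its dimension is $d_{\max}$, where $d_{\max}+1$ is the largest size of a chain. By Claim \ref{claim:num-type-o can serve as rank for surface} the poset is graded with rank function $(\sigma,\tau)\mapsto\|\sigma^{-1}\tau\|$, so along a chain the rank strictly increases through integer values; hence $d_{\max}\le\max_{(\sigma,\tau)\in\pmp(\Sigma,f)}\|\sigma^{-1}\tau\|-\min_{(\sigma,\tau)\in\pmp(\Sigma,f)}\|\sigma^{-1}\tau\|$. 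The minimum is $0$: $\pmp(\Sigma,f)$ is nonempty by Lemma \ref{lem:every admissible and incompressible obtained from matchings}, and by downward-closedness (Lemma \ref{lem:bp(w)-closed-downwards}) it contains the rank-$0$ element $(\sigma,\sigma)$ for any of its elements $(\sigma,\tau)$. Thus it suffices to show $\|\sigma^{-1}\tau\|\le-\chi(\Sigma)$ for every $(\sigma,\tau)\in\pmp(\Sigma,f)$.

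\emph{From the rank to cell counts.} Let $o$ and $z$ denote the numbers of type-$o$ discs and of type-$z_i$ discs (over all $i$) of $\Sigma_{(\sigma,\tau)}$. By Claim \ref{claim:properties-of-perms-surface}, $z=\#\mathrm{cycles}(\sigma^{-1}\tau)$, so $\|\sigma^{-1}\tau\|=L-z$; and since $\Sigma_{(\sigma,\tau)}\cong\Sigma$, the computation in the proof of Proposition \ref{prop:order-of-contribution-of-(sigma,tau)} gives $\chi(\Sigma)=\chi(\sigma,\tau)=o+z-2L$. Eliminating $z$ yields $\|\sigma^{-1}\tau\|=o-L-\chi(\Sigma)$, so the claim reduces to proving $o\le L$ for every $(\sigma,\tau)\in\pmp(\Sigma,f)$.

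\emph{The main step.} I would prove that, when the $w_j$ are cyclically reduced and nontrivial, every type-$o$ disc $D$ of $\Sigma_{(\sigma,\tau)}$ has at least two $o$-segments on its boundary. Indeed $\partial D$ alternates between segments of $S^1(w_1)\cup\dots\cup S^1(w_\ell)$ and matching-edges, and on a type-$o$ disc all such segments are $o$-segments (Claim \ref{claim:properties-of-perms-surface}(2)). If $D$ had a single $o$-segment it would have a single matching-edge, joining the two endpoints of that segment; but the $o$-segment runs from the last marked point of some letter $x_i^{\pm1}$ to the first marked point of the cyclically next letter, and a monochromatic matching-edge joins those two marked points only when the two consecutive letters are $x_i^{-1}x_i^{+1}$ or $x_i^{+1}x_i^{-1}$ — impossible for a cyclically reduced word. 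Since each nontrivial $w_j$ contributes exactly $|w_j|$ $o$-segments to $\partial\Sigma_{(\sigma,\tau)}$, each lying on a unique type-$o$ disc, we get $2o\le\sum_j|w_j|=2L$. Combined with the previous step, $\dim|\pmp(\Sigma,f)|\le\max_{(\sigma,\tau)\in\pmp(\Sigma,f)}(o-L-\chi(\Sigma))\le-\chi(\Sigma)$. I expect the color-chasing in this last step to be the only delicate point; the rest is bookkeeping with the cell structure of $\Sigma_{(\sigma,\tau)}$ established in Section \ref{sec:surface-from-matchings}.
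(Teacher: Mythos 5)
Your proof is correct and follows essentially the same route as the paper: bound the dimension by the maximal rank $\left\Vert \sigma^{-1}\tau\right\Vert$, convert rank and $\chi(\Sigma)$ into disc counts of $\Sigma_{(\sigma,\tau)}$, and use cyclic reducedness to rule out type-$o$ discs with a single matching-edge. Your reduction to the inequality $o\le L$ is just an algebraic rearrangement of the paper's comparison of $\sum_{D}\left(\frac{\deg(D)}{2}-1\right)$ over type-$z_{i}$ discs versus all discs, and your color-chasing argument is the (in the paper, unstated) justification of its claim that every disc has $\deg(D)\ge2$.
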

\begin{proof}
It is enough to show that $\left\Vert \sigma^{-1}\tau\right\Vert \le-\chi\left(\Sigma\right)$
for every $\left(\sigma,\tau\right)\in\pmp\left(\Sigma,f\right)$.
The rank $\left\Vert \sigma^{-1}\tau\right\Vert $ is equal to $L-\#\mathrm{cycles}\left(\sigma^{-1}\tau\right)$
which is also equal to $\sum_{c}\left(\left|c\right|-1\right)$, the
summation being over all cycles of $\sigma^{-1}\tau$. These cycles
are in one-to-one correspondence with type-$z_{i}$ discs of $\Sigma_{\left(\sigma,\tau\right)}$,
and the size of a cycle is half the number of matching-edges at the
boundary of the corresponding type-$z_{i}$ disc. If we denote the
number of matching-edges at the boundary of a disc $D$ in $\Sigma_{\left(\sigma,\tau\right)}$
by $\deg\left(D\right)$, we obtain 
\begin{equation}
\left\Vert \sigma^{-1}\tau\right\Vert =\sum_{D:\,\mathrm{type-}z_{i}\,\mathrm{disc\, in}\,\Sigma_{\left(\sigma,\tau\right)}}\left(\frac{\deg\left(D\right)}{2}-1\right).\label{eq:rank-as-number-of-bijection-edges}
\end{equation}
Recall that the CW-complex $\Sigma_{\left(\sigma,\tau\right)}$ has
$4L$ $0$-cells, $4L$ 1-cells along the boundary $\partial\Sigma$
and $2L$ $1$-cells as matching-edges, so
\[
\chi\left(\st\right)=\chi\left(\Sigma_{\left(\sigma,\tau\right)}\right)=4L-\left(2L+4L\right)+\#\left\{ \mathrm{discs}\right\} =-2L+\#\left\{ \mathrm{discs}\right\} .
\]
Since every matching-edge is at the boundary of exactly two discs,
\begin{equation}
-\chi\left(\st\right)=2L-\#\left\{ \mathrm{discs}\right\} =\sum_{D:\,\mathrm{disc}}\left(\frac{\deg\left(D\right)}{2}-1\right)\label{eq:euler of surface}
\end{equation}
But when $\wl$ are cyclically reduced, every disc $D$ in $\Sigma_{\left(\sigma,\tau\right)}$
has at least two matching-edges at its boundary, i.e., $\deg\left(D\right)\ge2$.
Hence the right hand side of \eqref{eq:euler of surface} is an upper
bound for the rank in \eqref{eq:rank-as-number-of-bijection-edges}.
\end{proof}

\subsubsection*{Explicit finite presentations of the stabilizers in $\mathrm{Aut}_{\delta}\left(\F_{2g}\right)$
or $\mathrm{MCG}\left(\Sigma\right)$ }

Our analysis also yields a straight-forward algorithm to explicitly
find elements in the stabilizers of solutions $\phi\in\mathrm{Hom}_{w}\left(\F_{2g},\F_{r}\right)$
or $\left[\left(\Sigma,f\right)\right]\in\sol\left(\wl\right)$. One
way to obtain this is the following. For simplicity, we restrict to
the case of a single word $w$ with $\cl\left(w\right)=g$ and find
the stabilizer in $\mathrm{Aut}_{\delta}\left(\F_{2g}\right)$ of
a solution $\phi\in\mathrm{Hom}_{w}\left(\F_{2g},\F_{r}\right)$.
Let $\left(\Sigma,f\right)$ be associated with the solution $\phi$.
Choose an arc system $\aa_{0}\in\ap\left(\Sigma,f\right)$ sitting
above some $\left(\sigma_{0},\tau_{0}\right)\in\pmp\left(\Sigma,f\right)$.
Also fix generators $a_{1},b_{1},\ldots,a_{g},b_{g}$ to $\pi_{1}\left(\surface,v_{1}\right)$
with $\left[a_{1},b_{1}\right]\ldots\left[a_{g},b_{g}\right]=\left[\partial\surface\right]$,
and for each generator write down the sequence of discs it traverses
and the color of the arc it crosses at each step (a disc can be recognized
after an action of $\mathrm{MCG}\left(\surface\right)$ by the pieces
in $\partial\surface$ it touches). Then, for any element $\theta\in\pi_{1}\left(\left|\pmp\left(\Sigma,f\right)\right|,\left(\sigma_{0},\tau_{0}\right)\right)$,
lift it to $\left(\left|\ap\left(\Sigma,f\right)\right|,\aa_{0}\right)$
and find the corresponding element $\bb\in\ap\left(\Sigma,f\right)$.
For every generator $a_{i}$ (or $b_{i}$), follow the same sequence
of discs in $\bb$ as it traversed in $\aa_{0}$ (this is well-defined
by Lemma \ref{lem:joint-boundaries-of-discs-in-surface}). This defines
an element of $\pi_{1}\left(\surface,v_{1}\right)$, which is exactly
$\theta\left(a_{i}\right)$, where $\theta$ is identified with the
corresponding element of the stabilizer $\mathrm{Stab}_{\mathrm{Aut_{\delta}\left(\F_{2g}\right)}}\left(\phi\right)$.

As an example, let us return to the word $w=\left[x,y\right]\left[x,z\right]$
and two of the elements of $\ap\left(\Sigma,f\right)$ drawn in Figure
\ref{fig:5 elems of AP}. Let $\aa_{0}$ be the right most element
in this figure, and $\bb$ be the left most one, both of which sit
above the same element of $\pmp\left(\Sigma,f\right)$. These two
elements are redrawn in Figure \ref{fig:explicit-stabilizing-element},
and assume that $\theta\in\mathrm{MCG}\left(\surface\right)$ maps
$\aa_{0}$ to $\bb$. Let the generators $a_{1},b_{1},a_{2},b_{2}$
be the loops at $v_{1}$ around the four handles at the two sides
of the surface, so $a_{1}$ is a clockwise loop around the top-right
handle (drawn in Figure \ref{fig:explicit-stabilizing-element} on
the right), $b_{1}$ is a counter-clockwise loop around the bottom-right
handle, $a_{2}$ is clockwise around the bottom-left and $b_{2}$
is counter-clockwise around the top-left. In $\aa_{0}$, the loop
corresponding to $a_{1}$ traverses the discs marked by $I$, $II$
and $III$ in the following order:
\[
I\overset{p_{1}\mathrm{-arc}}{\to}II\overset{q_{1}\mathrm{-arc}}{\to}I\overset{p_{1}\mathrm{-arc}}{\to}III\overset{q_{1}\mathrm{-arc}}{\to}I\overset{q_{1}\mathrm{-arc}}{\to}II\overset{p_{1}\mathrm{-arc}}{\to}I.
\]
Following the same pattern in $\bb$ results in the dotted loop marked
on the left side of Figure \ref{fig:explicit-stabilizing-element}.
In the generators we chose for $\pi_{1}\left(\surface,v_{1}\right)$,
this new loop is $a_{1}a_{2}a_{1}A_{2}A_{1}$, so $\theta\left(a_{1}\right)=a_{1}a_{2}a_{1}A_{2}A_{1}$.
In the same manner we can figure out how $\theta$ acts on the other
generators:
\begin{equation}
a_{1}\mapsto a_{1}a_{2}a_{1}A_{2}A_{1}\,\,\,\,\, b_{1}\mapsto a_{1}a_{2}A_{1}A_{2}b_{1}a_{1}a_{1}A_{2}A_{1}\,\,\,\,\, a_{2}\mapsto a_{1}a_{2}A_{1}\,\,\,\,\, b_{2}\mapsto b_{2}a_{2}A_{1},\label{eq:generator for [x,y][x,z]}
\end{equation}
which gives an explicit description of $\theta$. Since in this case
$\left|\pmp\left(\Sigma,f\right)\right|$ is a cycle with four edges,
$\theta$ generates the stabilizer. The solution corresponding to
the entire connected component of $\aa_{0}$ and $\bb$ (with respect
to these generators of $\pi_{1}\left(\surface,v_{1}\right)$) is $w=\left[x,y\right]\left[x,z\right]$,
and we deduce 
\begin{equation}
\mathrm{Stab}_{\mathrm{Aut}_{\delta}\left(\F_{4}\right)}\left(\left[x,y\right]\left[x,z\right]\right)=\left\langle \theta|\,\right\rangle .\label{eq:stab of [x,y][x,z]}
\end{equation}

\begin{figure}[h]
\centering{}\includegraphics[bb=90bp 0bp 250bp 80bp,scale=1.7]{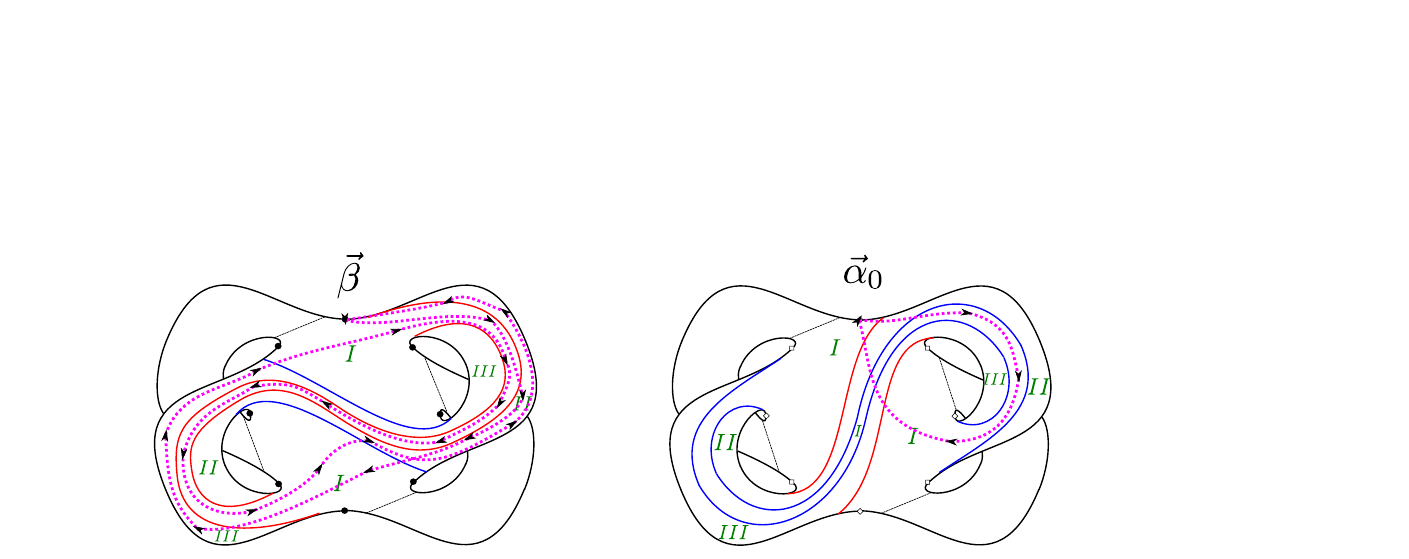}\caption{\label{fig:explicit-stabilizing-element}Two elements in the same
connected component of $\ap\left(\Sigma,f\right)$ for the sole admissible
$\left(\Sigma,f\right)$ for $w=\left[x,y\right]\left[x,z\right]$,
sitting above the same element of $\pmp\left(\Sigma,f\right)$. The
element of $\mathrm{MCG}\left(\surface\right)$ mapping $\aa_{0}$
to $\bb$ maps the generator $a_{1}$ marked in dotted pink line on
the right, to the dotted pink line on the left. }
\end{figure}

We can always find an explicit presentation for the stabilizers. One
method would be to find a generating set for the fundamental group
of the $1$-skeleton of $\left|\pmp\left(\Sigma,f\right)\right|$,
which is free, and then add a relation for every $2$-simplex. We
give one more detailed presentation in Section \ref{sec:Examples}.

\subsubsection*{Solvability of the word problem}

Finally, let us mention another consequence of our constructions:
they show that the word problem for the stabilizers is solvable. To
illustrate this, use the generators we constructed in the previous
paragraph. For every word in these generators, trace the lift in $\left|\ap\left(\Sigma,f\right)\right|$
of the corresponding loop in $\left|\pmp\left(\Sigma,f\right)\right|$.
This word is the identity if and only if the lifted path is also closed,
which can be easily checked algorithmically.

\section{Examples \label{sec:Examples}}

In this section we gather some concrete examples of the solutions
of the commutator equation for a single word and their stabilizers
in $\mathrm{Aut}_{\delta}\left(\F_{2g}\right)$. We always denote
$g=\cl\left(w\right)$.
\begin{itemize}
\item As mentioned in Remark \ref{remark: free solutions}, if $\phi\in\mathrm{Hom}_{w}\left(\F_{2g,}\F_{r}\right)$
is injective, namely, if $\left\{ \phi\left(a_{1}\right),\ldots,\phi\left(b_{g}\right)\right\} $
is a free set in $\F_{r}$, then the stabilizer of $\phi$ is trivial,
and thus its Euler characteristic is $1$. For instance,

\begin{itemize}
\item If $\cl\left(w\right)=1$, every solution is free.
\item The word $w=\left[x,y\right]^{3}$ has commutator length $2$, and
admits $9$ equivalence classes of solutions, each of which is injective.
One of them was already mentioned in Section \ref{sub:cl-of-word}:
$\left[x,y\right]^{3}=\left[xyX,YxyX^{2}\right]\left[Yxy,y^{2}\right]$.
The coefficient of $\frac{1}{n^{3}}$ in $\tr_{\left[x,y\right]^{3}}\left(n\right)$
is, therefore, $9$. Each of the nine complexes $\left|\pmp\left(\cdot\right)\right|$
consists of a single isolated point. The full expression is $\tr_{\left[x,y\right]^{3}}\left(n\right)=\frac{9\left(n^{2}+4\right)}{n^{5}-5n^{3}+4n}$.
\end{itemize}
\item There are also {}``non-injective'' solutions with trivial stabilizer.
For example,\linebreak{}
\foreignlanguage{american}{$w=\left[x,y\right]\left[x^{2}y^{2},z\right]$
has $\cl\left(w\right)=2$ with one solution which is non-injective.
Yet, $\left|\pmp\left(\cdot\right)\right|$ is a path composed of
ten edges, and is contractible. Hence the stabilizer is trivial, and
the coefficient of $\frac{1}{n^{3}}$ is $1$. The full expression
is $\frac{n^{2}-8}{n^{5}-5n^{3}+4n}$.}
\item Along the paper we mentioned the word $w=\left[x,y\right]\left[x,z\right]$.
We computed the only pairs of matchings poset associated with it and
the corresponding simplicial complex (a cycle of length $4$), showed
pieces of its arc poset and also computed its stabilizer in \eqref{eq:stab of [x,y][x,z]}.
The Euler characteristic of this $\left|\pmp\left(\cdot\right)\right|$
is $0$, and thus so is the coefficient of $\frac{1}{n^{3}}$. As
we mentioned in Example \ref{example:[x,y][x,z] leading term 0},
$\tr_{\left[x,y\right]\left[x,z\right]}\left(n\right)=0$ for $n\ge2$
in this case.
\item The leading term vanishes also for $w=\left[x,y\right]\left[x,z\right]\left[x,t\right]$.
Here $\cl\left(w\right)=3$ and there is a single equivalence class
of solutions. The pairs of matchings poset $\pmp$ is of size $30$:
six of rank 0, eighteen of rank 1 and six of rank $2$. Hence $\left|\pmp\right|$
is $2$-dimensional. It consists of $30$ vertices, $102$ edges and
$72$ $2$-simplices, and thus $\chi\left(\left|\pmp\right|\right)=0$
and the coefficient of $\frac{1}{n^{5}}$ is $0$. In fact, here too,
$\tr_{\left[x,y\right]\left[x,z\right]\left[x,t\right]}\left(n\right)\equiv0$
(for $n\ge3$). A closer look at $\left|\pmp\right|$ reveals it is
homeomorphic to the cross product of $S^{1}$ with a Theta figure,
so its fundamental group is isomorphic to $\mathbb{Z}\times\F_{2}$.
A computation conducted as explained in Section \ref{sec:More-Consequences}
reveals that 
\[
\mathrm{Stab}_{\mathrm{Aut}_{\delta}\left(\F_{6}\right)}\left(\left[x,y\right]\left[x,z\right]\left[x,t\right]\right)=\left\langle \theta_{1},\theta_{2},\theta_{3}\,|\,\left[\theta_{1},\theta_{2}\right],\left[\theta_{1},\theta_{3}\right]\right\rangle ,
\]
where the $\theta_{i}$'s are given by:\\
\begin{tabular}{|c||c|c|c|}
\cline{2-4} 
\multicolumn{1}{c|}{} & $\theta_{1}$ & $\theta_{2}$ & $\theta_{3}$\tabularnewline
\hline 
$a_{1}\mapsto$ & $a_{1}^{a_{1}a_{2}a_{3}}$ & $a_{1}^{a_{1}a_{2}}$ & $a_{1}^{a_{1}a_{3}}$\tabularnewline
\hline 
$b_{1}\mapsto$ & $\left(a_{2}a_{3}a_{1}b_{1}A_{1}A_{1}A_{1}\right)^{a_{1}a_{2}a_{3}}$ & $\left(a_{2}a_{1}b_{1}A_{1}A_{1}\right)^{a_{1}a_{2}}$ & $\left(a_{3}a_{1}b_{1}A_{1}A_{1}\right)^{a_{1}a_{3}}$\tabularnewline
\hline 
$a_{2}\mapsto$ & $a_{2}^{a_{1}a_{2}a_{3}}$ & $a_{2}^{a_{1}a_{2}}$ & $a_{2}^{A_{1}A_{3}a_{1}a_{3}}$\tabularnewline
\hline 
$b_{2}\mapsto$ & $\left(a_{3}a_{1}a_{2}b_{2}A_{2}A_{2}A_{2}\right)^{a_{1}a_{2}a_{3}}$ & $\left(a_{1}a_{2}b_{2}A_{2}A_{2}\right)^{a_{1}a_{2}}$ & $b_{2}^{A_{1}A_{3}a_{1}a_{3}}$\tabularnewline
\hline 
$a_{3}\mapsto$ & $a_{3}^{a_{1}a_{2}a_{3}}$ & $a_{3}$ & $a_{3}^{a_{1}a_{3}}$\tabularnewline
\hline 
$b_{3}\mapsto$ & $\left(a_{1}a_{2}a_{3}b_{3}A_{3}A_{3}A_{3}\right)^{a_{1}a_{2}a_{3}}$ & $b_{3}$ & $\left(a_{1}a_{3}b_{3}A_{3}A_{3}\right)^{a_{1}a_{3}}$\tabularnewline
\hline 
\end{tabular}\\
(by $u^{v}$ we mean $v^{-1}uv$, so $a_{1}^{a_{1}a_{2}a_{3}}=A_{3}A_{2}A_{1}a_{1}a_{1}a_{2}a_{3}=A_{3}A_{2}a_{1}a_{2}a_{3}$).
\item If $w=\left[x,y\right]^{2}$, then $\cl\left(w\right)=2$ with exactly
one solution. The sole $\left|\pmp\right|$ is $1$-dimensional with
$12$ vertices and $ $$16$ edges. Here $\chi\left(\left|\pmp\right|\right)=-4$
is the leading coefficient. The stabilizer is isomorphic to $\F_{5}$.
One possible generator (a primitive element of this $\F_{5}$) is
given in \eqref{eq:generator for [x,y][x,z]}.
\item If $w=w_{1}w_{2}$ is a product of two words with disjoint letters
(or more generally of two words from complementing free factors of
$\F_{r}$), then $\trw\left(n\right)=\tr_{w_{1}}\left(n\right)\cdot\tr_{w_{2}}\left(n\right)\cdot n^{-1}$,
the stabilizer of a solution is the direct product of the stabilizer
of the corresponding solution of $w_{1}$ and that of $w_{2}$, and
the Euler characteristics of the stabilizers are multiplicative as
well. 
\end{itemize}

\section{Some Open Problems\label{sec:Some-Open-Problems}}

We mention some open problems that naturally arise from the discussion
in this paper.
\selectlanguage{american}%
\begin{enumerate}
\item In this work we analyzed the expected trace of a random element of
$\U\left(n\right)$, which corresponds to a natural series of (irreducible)
characters $\xi_{n}$ of ${\cal U}\left(n\right)$. As explained in
Section \ref{sub:Word-measures}, the more general Theorem \ref{thm:main - general}
also gives information about other series of irreducible characters
of ${\cal U}\left(n\right)$. A similar question was studied in \cite{PP15}
regarding the series of irreducible characters of $S_{n}$ which count
the number of fixed points in a permutation (minus one). It should
be very interesting to realize what $\mathrm{Aut}\left(\F_{r}\right)$-invariants
of words play a role in similar questions surrounding:

\begin{itemize}
\item The expected trace of elements in the orthogonal group $O\left(n\right)$
or the symplectic group $Sp$$\left(n\right)$: as the results of
Collins and \'{S}niady \cite{CS} extend to these groups, there should
be rational expressions in $n$ as in Theorem \ref{thm:trw-rational}.
What is the leading term of each expression?
\item There should also be rational expressions for other series of characters
of the groups $S_{n}$, $O\left(n\right)$ and $\mathrm{Sp}\left(n\right)$.
What is the leading term for each series?
\selectlanguage{english}%
\item In particular, what are the $\mathrm{Aut}\left(\F_{r}\right)$-invariants
of words controlling (the asymptotics of) balanced characters of ${\cal U}\left(n\right)$
(recall that balanced characters are those invariant under rotations
- see \foreignlanguage{american}{Section \ref{sub:Word-measures}).}
\selectlanguage{american}%
\item What about completely different families of groups? For example, consider
the action of $\mathrm{PSL}_{2}\left(q\right)$ on the projective
line $\mathbb{P}^{1}\left(q\right)$. What it the expected number
of fixed points in this action when $g\in\mathrm{PSL}_{2}\left(q\right)$
is sampled by some $w$-measure and $q$ varies?
\item Is it possible to find the algebraic meaning of the other ($\mathrm{Aut}(\F_{r})$-invariant)
coefficients of the rational function $\trw\left(n\right)$?
\end{itemize}
\item In some cases, the coefficient of $\trwl\left(n\right)$ we analyze
in Theorem \ref{thm:main - general} vanishes. This is the case, for
example, for $w=\left[x,y\right]\left[x,z\right]$ and also for $w=\left[x,y\right]\left[x,z\right]\left[x,t\right]$.
What is the leading coefficient in these cases? Interestingly, among
the dozens of concrete examples we computed, there were a handful
where the coefficient from Theorem \ref{thm:main} vanished. In all
these cases the entire expression turned out to be zero, namely, $\trwl\left(n\right)=0$
for any large enough $n$.
\end{enumerate}
\selectlanguage{english}%

\section*{Acknowledgments}

We would also like to thank Danny Calegari, Alexei Entin, Mark Feighn,
Alex Gamburd, Peter Sarnak, Zlil Sela, Avi Wigderson and Ofer Zeitouni
for valuable discussions about this work.

\section*{Appendices}

\begin{appendices}

\section{Appendix: Posets and Complexes}

In this appendix we include some auxiliary general results regarding
posets and complexes, which are directly used in the proofs along
the paper. These results are not new.

\subsection{Homotopy of poset morphisms\label{sub:Homotopy-of-poset}}

In our proof of contractability of the connected components of $\left|\ap\left(\Sigma,f\right)\right|$
in Section \ref{sub:Proof-of-contractability}, we use a series of
deformation retracts of simplicial complexes associated with posets
(see Definition \ref{def:SC-of-poset}). Here, we establish a criterion
which guarantees that a retract of posets $f:P_{2}\to P_{1}$, where
$P_{1}$ is a subposet of $P_{2}$, is a deformation retract of the
associated simplicial complexes. This is the criterion we use in the
proof of contractability.

The main ingredient in establishing this criterion deals with direct
products of posets. The direct product $P\times Q$ of the posets
$\left(P,\le_{P}\right)$ and $\left(Q,\le_{Q}\right)$ is defined
on the set $P\times Q$ with partial order $\left(p_{1},q_{1}\right)\le_{P\times Q}\left(p_{2},q_{2}\right)$
if and only if $p_{1}\le_{P}p_{2}$ and $q_{1}\le_{Q}q_{2}$. The
following lemma is well known: see, for instance, \cite[Theorem 3.2]{walker1988canonical}.
\begin{lem}
\label{lem:direct products of posets}Let $P$ and $Q$ be posets.
The function $\gamma\colon\left|P\times Q\right|\to\left|P\right|\times\left|Q\right|$
defined by 
\[
\sum\lambda_{i}\left(p_{i},q_{i}\right)\,\mapsto\,\left(\sum\lambda_{i}p_{i},\sum\lambda_{i}q_{i}\right)
\]
is an homeomorphism.
\end{lem}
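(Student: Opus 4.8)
The plan is to recall the standard construction of the geometric realization of a simplicial complex associated with a poset and to verify directly that the map $\gamma$ is a well-defined continuous bijection with continuous inverse. First I would set up coordinates: a point of $\left|P\times Q\right|$ lies in a unique open simplex spanned by a chain $(p_1,q_1)<(p_2,q_2)<\ldots<(p_k,q_k)$ in $P\times Q$, and is written as a convex combination $\sum_{i=1}^{k}\lambda_i (p_i,q_i)$ with all $\lambda_i>0$ and $\sum\lambda_i=1$. The images $\sum\lambda_i p_i$ and $\sum\lambda_i q_i$ make sense in $\left|P\right|$ and $\left|Q\right|$ respectively, because the sequences $p_1\le p_2\le\ldots\le p_k$ and $q_1\le q_2\le\ldots\le q_k$ are (weakly increasing) chains, so after collapsing repetitions they span genuine simplices of $\left|P\right|$ and $\left|Q\right|$; hence $\gamma$ is well-defined as a set map, and it is clearly continuous on each closed simplex, therefore continuous by the weak topology on $\left|P\times Q\right|$.

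Next I would construct the inverse explicitly. Given $(x,y)\in\left|P\right|\times\left|Q\right|$, write $x=\sum_{a}\mu_a\, p_a$ supported on a chain $p_1<\ldots<p_s$ in $P$ and $y=\sum_{b}\nu_b\, q_b$ supported on a chain $q_1<\ldots<q_t$ in $Q$. The two partial sums $M_a=\mu_1+\ldots+\mu_a$ and $N_b=\nu_1+\ldots+\nu_b$ give two marked subdivisions of $[0,1]$; merging them yields a partition $0=r_0<r_1<\ldots<r_m=1$, and on the half-open interval $(r_{j-1},r_j]$ there is a well-defined pair $(p_{a(j)},q_{b(j)})$ (the smallest indices not yet "used up"). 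One checks that the pairs $(p_{a(1)},q_{b(1)}),\ldots,(p_{a(m)},q_{b(m)})$ form a chain in $P\times Q$ — this is precisely the "staircase" argument: moving from one subinterval to the next we advance either the $P$-coordinate or the $Q$-coordinate (or both, at a shared breakpoint), so the pair strictly increases in $P\times Q$. Setting $\delta_j=r_j-r_{j-1}$ and defining $\gamma^{-1}(x,y)=\sum_j \delta_j\,(p_{a(j)},q_{b(j)})$ (after coalescing repeated pairs and adding their coefficients) gives a point of $\left|P\times Q\right|$. A direct computation shows $\gamma\circ\gamma^{-1}=\mathrm{id}$ and $\gamma^{-1}\circ\gamma=\mathrm{id}$, and $\gamma^{-1}$ is continuous on each product of closed simplices $\sigma\times\tau$ with $\sigma\subseteq\left|P\right|$, $\tau\subseteq\left|Q\right|$, because the breakpoints $r_j$ and the coefficients $\delta_j$ depend piecewise-linearly on $(\mu_a,\nu_b)$; since $\left|P\right|\times\left|Q\right|$ carries the topology generated by such products (for finite, or more generally locally finite, posets), $\gamma^{-1}$ is continuous.

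The main obstacle — the only genuinely delicate point — is verifying that the staircase pairs actually form a chain in $P\times Q$ and that the construction of $\gamma^{-1}$ is independent of the (non-unique) choices made when a breakpoint of the $P$-subdivision coincides with one of the $Q$-subdivision; at such a shared breakpoint one must advance both coordinates simultaneously, and one must check this produces the same point of $\left|P\times Q\right|$ regardless of bookkeeping. This is a finite combinatorial check. I would also remark that for infinite posets one restricts to locally finite ones and uses that both realizations carry the weak (colimit) topology with respect to finite subcomplexes, so continuity can be checked simplex-by-simplex; in the application (to $\ap\left(\Sigma,f\right)$ etc.) the relevant posets are either finite or locally finite, so this causes no trouble. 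Finally, since the reference \cite[Theorem 3.2]{walker1988canonical} already contains this statement, I would present the above as a sketch and cite that source for the full verification.
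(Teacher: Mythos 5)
Your proposal is correct, but note that the paper does not actually prove this lemma — it simply cites \cite[Theorem 3.2]{walker1988canonical} — so you are supplying the argument the paper outsources. What you sketch is the standard shuffle (``staircase'') proof that geometric realization commutes with products: $\gamma$ is affine on each closed simplex, hence continuous in the weak topology, and the inverse is built by merging the two partial-sum subdivisions of $[0,1]$ coming from the barycentric coordinates of $x$ and $y$; the verification that the merged pairs form a chain in $P\times Q$ and that coinciding breakpoints cause no ambiguity (the skipped intermediate pairs simply receive coefficient zero) is exactly the finite combinatorial check you describe, and it goes through. The one genuinely delicate point is the topology on the codomain: for arbitrary posets (not locally finite, uncountable) the product topology on $\left|P\right|\times\left|Q\right|$ need not be coherent with the products of closed simplices (Dowker-type examples), so continuity of $\gamma^{-1}$ — and hence the lemma as literally stated — requires either a local finiteness or countability hypothesis, or reading the product in the compactly generated sense; you flag this caveat yourself, which is the right instinct, and it is harmless for the paper's sole application of the lemma (Corollary \ref{cor:homotopy of simplicial maps}), where one factor is the two-element chain $\left\{ 0\le1\right\} $ whose realization is the compact interval, so the product topology is automatically coherent with products of simplices and your piecewise-linear inverse is continuous there.
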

The following corollary appears in \cite[Section 1.3]{QUILLEN}. Recall
that a map $f$ between posets is called a poset-morphism if it is
order preserving. If $f\colon P\to Q$ is a poset morphism, we let
$\left|f\right|$ denote the induced map 
\[
\left|f\right|\colon\left|P\right|\to\left|Q\right|
\]
defined naturally as $\left|f\right|\left(\sum\lambda_{i}p_{i}\right)=\sum\lambda_{i}f\left(p_{i}\right)$.
\begin{cor}
\label{cor:homotopy of simplicial maps}Let $P$ and $Q$ be posets,
and $f,g\colon P\to Q$ poset morphisms. If $f\left(p\right)\le g\left(p\right)$
for every $p\in P$, then $\left|f\right|$ and $\left|g\right|$
are homotopic.\end{cor}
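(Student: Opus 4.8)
The plan is to produce an explicit homotopy by working with the product of $P$ with the two-element chain. Write $C=\{0<1\}$ for the poset with two elements, $0<1$. Its associated simplicial complex is a single $1$-simplex, so $\left|C\right|$ is canonically identified with the unit interval $[0,1]$, the vertices $0$ and $1$ corresponding to the endpoints.

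First I would define a map $h\colon P\times C\to Q$ by $h(p,0)=f(p)$ and $h(p,1)=g(p)$, and check it is a poset morphism. A comparison $(p_{1},i_{1})\le(p_{2},i_{2})$ in $P\times C$ means $p_{1}\le p_{2}$ in $P$ and $i_{1}\le i_{2}$ in $C$. If $i_{1}=i_{2}=0$ then $h(p_{1},0)=f(p_{1})\le f(p_{2})=h(p_{2},0)$ since $f$ is order preserving; similarly if $i_{1}=i_{2}=1$ using $g$. The only remaining case is $i_{1}=0$, $i_{2}=1$, where $h(p_{1},0)=f(p_{1})\le f(p_{2})\le g(p_{2})=h(p_{2},1)$, using that $f$ is order preserving together with the hypothesis $f(p_{2})\le g(p_{2})$. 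Hence $h$ is order preserving.

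Next I would pass to geometric realizations. The poset morphism $h$ induces $\left|h\right|\colon\left|P\times C\right|\to\left|Q\right|$, and by Lemma \ref{lem:direct products of posets} the canonical map $\gamma\colon\left|P\times C\right|\to\left|P\right|\times\left|C\right|=\left|P\right|\times[0,1]$ is a homeomorphism. Set $H:=\left|h\right|\circ\gamma^{-1}\colon\left|P\right|\times[0,1]\to\left|Q\right|$, a continuous map. Unwinding the definition of $\gamma$, a point of $\left|P\times C\right|$ whose barycentric coordinates are supported on vertices of the form $(p_{i},0)$ is sent by $\gamma$ to the corresponding point of $\left|P\right|\times\{0\}$, and likewise with $1$ in place of $0$; therefore $H(\,\cdot\,,0)=\left|f\right|$ and $H(\,\cdot\,,1)=\left|g\right|$. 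Thus $H$ is a homotopy from $\left|f\right|$ to $\left|g\right|$, proving the corollary.

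The argument is essentially routine; the only point requiring a little care is the identification of $\left|C\right|$ with $[0,1]$ and the verification that the two ends of $H$ are genuinely $\left|f\right|$ and $\left|g\right|$, which is exactly where Lemma \ref{lem:direct products of posets} is used. No substantive obstacle arises beyond this bookkeeping.
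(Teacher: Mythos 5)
Your proposal is correct and is essentially the paper's own argument: both define the morphism $P\times\{0\le 1\}\to Q$ sending $(p,0)\mapsto f(p)$, $(p,1)\mapsto g(p)$, and use Lemma \ref{lem:direct products of posets} to identify $\left|P\times\{0\le1\}\right|$ with $\left|P\right|\times[0,1]$, obtaining the homotopy. The only difference is that you spell out the verification that the map is order-preserving (the case $i_{1}=0$, $i_{2}=1$, where the hypothesis $f(p)\le g(p)$ enters), which the paper leaves implicit.
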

\begin{proof}
Let $\left\{ 0\le1\right\} $ denote the poset with two comparable
elements $0$ and 1. Define a map $\left(f,g\right)\colon P\times\left\{ 0\le1\right\} \to Q$
by $\left(p,0\right)\mapsto f\left(p\right)$ and $\left(p,1\right)\mapsto g\left(p\right)$.
This is clearly a poset-morphism by the assumptions, so it induces
a continuous map 
\[
\left|\left(f,g\right)\right|\colon\left|P\times\left\{ 0\le1\right\} \right|\to\left|Q\right|.
\]
By Lemma \ref{lem:direct products of posets}, there is an homeomorphism
\[
\left|P\times\left\{ 0\le1\right\} \right|\overset{\cong}{\to}\left|P\right|\times\left|\left\{ 0\le1\right\} \right|=\left|P\right|\times\left[0,1\right],
\]
so we get that $\left|\left(f,g\right)\right|$ is a continuous map
$\left|P\right|\times\left[0,1\right]\to\left|Q\right|$. Because
$\left|\left(f,g\right)\right|\Big|_{\left|P\times\left\{ 0\right\} \right|}\equiv\left|f\right|$
and $\left|\left(f,g\right)\right|\Big|_{\left|P\times\left\{ 1\right\} \right|}\equiv\left|g\right|$,
the map $\left|\left(f,g\right)\right|$ is the sought after homotopy.\end{proof}
\begin{rem}
\label{remark:when f=00003Dg}Note that the homotopy does not move
the points where $f$ and $g$ agree. Namely, if $P_{0}\subseteq P$
is the subposet where $f\left(p\right)=g\left(p\right)$, then $\left|\left(f,g\right)\right|\left(x,t\right)=f\left(x\right)=g\left(x\right)$
for every $x\in\left|P_{0}\right|$ and $t\in\left[0,1\right]$.\end{rem}
\begin{cor}
\label{cor:deformation-retract}Let $P$ be a subposet of the poset
$Q$. Assume that $f\colon Q\to P$ satisfies the following:
\begin{itemize}
\item it is a poset morphism, 
\item it is a retract (i.e., $f\Big|_{P}\equiv\id$), and 
\item $f\left(q\right)\le q$ for every $q\in Q$, or $q\le f\left(q\right)$
for every $q\in Q$.
\end{itemize}

Then $\left|f\right|$ is a (strong) deformation retract.

\end{cor}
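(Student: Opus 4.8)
The plan is to deduce Corollary \ref{cor:deformation-retract} directly from Corollary \ref{cor:homotopy of simplicial maps} together with Remark \ref{remark:when f=00003Dg}. First I would set up the inclusion $\iota\colon P\hookrightarrow Q$ and the induced map $\left|\iota\right|\colon\left|P\right|\to\left|Q\right|$, and observe that $\left|f\right|\circ\left|\iota\right|=\left|f\circ\iota\right|=\left|\id_{P}\right|=\id_{\left|P\right|}$, since $f$ restricted to $P$ is the identity by hypothesis. This is exactly the retract condition at the level of geometric realizations.

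Next I would produce the homotopy from $\left|\iota\right|\circ\left|f\right|$ to $\id_{\left|Q\right|}$. Consider the two poset morphisms $\iota\circ f\colon Q\to Q$ and $\id_{Q}\colon Q\to Q$. By the third hypothesis, either $f\left(q\right)\le q$ for all $q$ or $q\le f\left(q\right)$ for all $q$; in either case $\left(\iota\circ f\right)\left(q\right)$ and $q$ are comparable in a uniform direction, so Corollary \ref{cor:homotopy of simplicial maps} applies (with the roles of $f$ and $g$ taken by $\iota\circ f$ and $\id_{Q}$, in whichever order makes the inequality $f(q)\le g(q)$ hold). Hence $\left|\iota\circ f\right|=\left|\iota\right|\circ\left|f\right|$ is homotopic to $\id_{\left|Q\right|}$. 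This shows $\left|P\right|$ is a deformation retract of $\left|Q\right|$.

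For the \emph{strong} deformation retract claim, I would invoke Remark \ref{remark:when f=00003Dg}: the homotopy $\left|\left(\iota\circ f,\id_{Q}\right)\right|\colon\left|Q\right|\times[0,1]\to\left|Q\right|$ constructed via the product poset $Q\times\{0\le1\}$ fixes pointwise the subcomplex on which $\iota\circ f$ and $\id_{Q}$ agree. That subposet is precisely $\{q\in Q\mid f(q)=q\}$, which contains $P$ since $f|_{P}=\id$. Therefore the homotopy is constant on $\left|P\right|$ for all $t\in[0,1]$, which is exactly the requirement for a strong deformation retract.

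The only mild subtlety — and the step I would be most careful about — is bookkeeping the direction of the inequality so that Corollary \ref{cor:homotopy of simplicial maps} is applied with its hypothesis $f(p)\le g(p)$ literally satisfied: in the case $q\le f(q)$ one takes $f\mapsto\id_{Q}$, $g\mapsto\iota\circ f$, and in the case $f(q)\le q$ one takes $f\mapsto\iota\circ f$, $g\mapsto\id_{Q}$. Since homotopy is symmetric this is harmless, but it is worth stating explicitly so the reader sees both cases are covered. Everything else is formal once Lemma \ref{lem:direct products of posets}, Corollary \ref{cor:homotopy of simplicial maps}, and Remark \ref{remark:when f=00003Dg} are in hand, so there is no real obstacle — the corollary is essentially a packaging of the preceding results.
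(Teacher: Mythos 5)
Your proof is correct and follows essentially the same route as the paper: viewing $f$ as a map $Q\to Q$ and applying Corollary \ref{cor:homotopy of simplicial maps} to it and $\id_{Q}$, then invoking Remark \ref{remark:when f=00003Dg} to see the homotopy fixes $\left|P\right|$ pointwise. Your extra care about which of the two maps plays the role of the smaller one in the hypothesis $f(p)\le g(p)$ is a fine (and harmless) elaboration of what the paper leaves implicit.
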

\noindent By a strong deformation retract we mean that there is a
homotopy of $\left|f\right|$ with the identity on $\left|Q\right|$
which fixes the points in $\left|P\right|$ throughout the homotopy.
\begin{proof}
Simply note that the map $f\colon Q\to Q$ and the identity $\id\colon Q\to Q$
satisfy the conditions in Corollary \ref{cor:homotopy of simplicial maps}
hence $\left|f\right|$ is homotopic to $ $the identity. The fact
that the homotopy fixes $\left|P\right|$ pointwise follows from
Remark \ref{remark:when f=00003Dg}.
\end{proof}

\subsection{Regular $G$-complexes\label{sub:Regular G-complexes}}

When we say that a discrete group $G$ acts on a simplicial complex
$K$, we mean, in particular, that the action is simplicial. Namely,
we mean that $G$ acts on the set of vertices, and the induced map
on the subsets of vertices maps every simplex to a simplex. There
are two natural ways to construct a quotient space for this action.
One way is to construct a simplicial complex as follows: the set of
vertices consists of the orbits $\nicefrac{V\left(K\right)}{G}$ of
vertices and whenever $\left(v_{0},\ldots,v_{r}\right)$ is an $r$-simplex
of $K$, then $\left(\left[v_{0}\right],\ldots,\left[v_{r}\right]\right)$
is an $r$-simplex of the quotient. We denote this quotient by $\left|\nicefrac{K}{G}\right|$.
The second way is to consider the geometric realization of $K$, which
$G$ clearly acts on, and take the usual quotient of an action on
a topological space. We denote this quotient by $\nicefrac{\left|K\right|}{G}$.

The problem is that these two quotient spaces do not coincide in general.
First, if the action mixes different vertices of the same simplex,
the topological quotient results in pieces which are fractions of
simplices. This is the case, for example, in the case that $\nicefrac{\mathbb{Z}}{2\mathbb{Z}}$
acts on a graph with a single edge by flipping the edge. Secondly,
as illustrated by the action of $\nicefrac{\mathbb{Z}}{2\mathbb{Z}}$
on the boundary of a square by a $180^{\circ}$-rotation mentioned
in Remark \ref{remark:need-for-regular-action}, the orbits of the
simplices in the geometric realization are not always determined by
the orbits of the vertices. 

These, however, can be easily remedied by adding the following assumptions:
\begin{defn}
\label{def:regular-action}\cite[Definition III.1.2]{bredon1972introduction}
A simplicial $G$-action on the simplicial complex $K$ is called
\textbf{regular}, if 
\begin{enumerate}
\item \label{enu:regular-action-1}If $v\in V\left(K\right)$ and $g.v$
belong to same simplex for some $g\in G$, then $g.v=v$.
\item \label{enu:regualr-action-2}Whenever $g_{0},\ldots,g_{r}$ are elements
of $G$ and $\left(v_{0},\ldots,v_{r}\right)$ and $\left(g_{0}.v_{0},\ldots,g_{r}.v_{r}\right)$
are $r$-simplices of $K$, there is some $g\in G$ with $\left(g_{0}.v_{0},\ldots,g_{r}.v_{r}\right)=\left(g.v_{0},\ldots,g.v_{r}\right)$.
\end{enumerate}
\end{defn}
In other words, these additional conditions exactly guarantee that
$\left(1\right)$ the action does not {}``break'' simplices by identifying
different points of the same simplex, and that $\left(2\right)$ the
orbits of the simplices in the geometric realization can be deduced
from those of the vertices. 
\begin{lem}
\label{lem:regular-simplicial-actions}\cite[Page 117]{bredon1972introduction}
If the action of $G$ on the simplicial complex $K$ is regular then
\[
\left|\nicefrac{K}{G}\right|\cong\nicefrac{\left|K\right|}{G}.
\]

\end{lem}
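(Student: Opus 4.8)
\textbf{Proof proposal for Lemma~\ref{lem:regular-simplicial-actions}.}

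The plan is to produce an explicit simplicial isomorphism between $\left|\nicefrac{K}{G}\right|$ and the topological quotient $\nicefrac{\left|K\right|}{G}$, built directly from the quotient map on vertices. First I would recall the canonical surjection $\pi\colon\left|K\right|\to\nicefrac{\left|K\right|}{G}$ and the simplicial surjection $q\colon K\to\nicefrac{K}{G}$ sending a vertex $v$ to its orbit $[v]$. Regularity condition~\eqref{enu:regular-action-1} is exactly what makes $q$ \emph{simplicial} in the strong sense: if $(v_0,\ldots,v_r)$ is a simplex then the $[v_i]$ are pairwise distinct, so $([v_0],\ldots,[v_r])$ is a genuine $r$-simplex of $\nicefrac{K}{G}$ (no collapse of dimension). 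Hence $|q|\colon\left|K\right|\to\left|\nicefrac{K}{G}\right|$ is well defined and continuous, and it is constant on $G$-orbits, so it factors through $\pi$ to give a continuous surjection $\bar q\colon\nicefrac{\left|K\right|}{G}\to\left|\nicefrac{K}{G}\right|$.

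Next I would show $\bar q$ is a bijection, which is where condition~\eqref{enu:regualr-action-2} enters. A point of $\left|\nicefrac{K}{G}\right|$ lies in the interior of a unique simplex $([v_0],\ldots,[v_r])$ and is recorded by barycentric coordinates $\lambda_0,\ldots,\lambda_r>0$ summing to $1$. To see injectivity of $\bar q$, suppose two points $x=\sum\lambda_i\,p_i$ and $y=\sum\lambda_i\,g_i.p_i$ of $\left|K\right|$ (lying in simplices $(p_0,\ldots,p_r)$ and $(g_0.p_0,\ldots,g_r.p_r)$ with matching coordinate data) have the same image in $\left|\nicefrac{K}{G}\right|$; then after reindexing the $[p_i]$ agree, so using condition~\eqref{enu:regular-action-1} we may assume the orbit representatives are literally equal up to the $G$-action within each simplex, and condition~\eqref{enu:regualr-action-2} supplies a single $g\in G$ with $g.p_i=g_i.p_i$ for all $i$, whence $g.x=y$ and $x,y$ are in the same $\pi$-fibre. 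Surjectivity onto the set of barycentric-coordinate tuples is immediate by lifting each orbit of vertices to an actual vertex and using that $q$ is surjective on simplices. So $\bar q$ is a continuous bijection.

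Finally I would argue $\bar q$ is a homeomorphism by checking it is a closed (equivalently open) map, or more cleanly by exhibiting a continuous inverse. The cleanest route: $\pi$ is an open map (quotient by a group action on a space, restricted to the open stars this is transparent), and on each open star $\mathrm{st}(v)$ the composite $|q|\circ(\text{local section})$ realizes a homeomorphism onto an open star of $\left|\nicefrac{K}{G}\right|$ because condition~\eqref{enu:regular-action-1} forbids any nontrivial identifications inside a single closed simplex; patching these local homeomorphisms (they agree on overlaps by condition~\eqref{enu:regualr-action-2}) yields a continuous inverse to $\bar q$. This gives $\left|\nicefrac{K}{G}\right|\cong\nicefrac{\left|K\right|}{G}$.

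The main obstacle is the injectivity step: one must be careful that matching barycentric data plus matching vertex-orbits really does force the two points into a common $G$-orbit, and this is precisely the content of regularity condition~\eqref{enu:regualr-action-2} (without it the boundary-of-a-square example in Remark~\ref{remark:need-for-regular-action} shows $\bar q$ can fail to be injective). Everything else — continuity, the factoring through $\pi$, surjectivity — is formal. Since this lemma is classical (\cite[Page~117]{bredon1972introduction}), in the paper I would simply cite it, but the argument above is the one I would write out if a self-contained proof were wanted. $\qed$
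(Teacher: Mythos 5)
The paper gives no proof of this lemma at all: it is quoted verbatim from Bredon (\cite[Page 117]{bredon1972introduction}), and you yourself say you would cite it, so there is no in-paper argument to compare against. Your reconstruction follows the standard route, and the division of labour you assign to the two regularity conditions is right: condition \ref{enu:regular-action-1} makes $q\colon K\to K/G$ nondegenerate on simplices, so $|q|$ factors through $\pi$ to a continuous bijection $\bar q\colon\left|K\right|/G\to\left|K/G\right|$, and condition \ref{enu:regualr-action-2} is exactly what makes $\bar q$ injective.

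The one soft spot is your final step. The claim that $|q|$ composed with a local section restricts to a homeomorphism of open stars, ``because condition \ref{enu:regular-action-1} forbids any nontrivial identifications inside a single closed simplex,'' does not follow: the stabilizer of a vertex $v$ can still permute the simplices of the star of $v$ nontrivially. For instance, let $K$ be the cone on two vertices $a,b$ with apex $v$ and let $\mathbb{Z}/2$ swap $a$ and $b$; this action is regular, yet the open star of $v$ maps two-to-one onto the star of $[v]$. So no choice of local section gives a local homeomorphism on stars, and the patching argument as stated breaks. Two standard repairs: (i) define the inverse directly, sending a point of a closed simplex $\left(\left[v_{0}\right],\ldots,\left[v_{r}\right]\right)$ with barycentric coordinates $\lambda_{i}$ to $\pi\!\left(\sum\lambda_{i}w_{i}\right)$ for any lifted simplex $\left(w_{0},\ldots,w_{r}\right)$ of $K$; condition \ref{enu:regualr-action-2} says any two lifts differ by a single $g\in G$, so this is independent of the choice, and continuity follows from the weak topology since the map is affine on each closed simplex; or (ii) avoid constructing an inverse altogether by observing that $|q|$ is itself a quotient map -- it is a homeomorphism on each closed simplex by condition \ref{enu:regular-action-1}, every closed simplex of $\left|K/G\right|$ is an image simplex, and both geometric realizations carry the weak topology -- whence the continuous bijection $\bar q$ through which the quotient map $|q|=\bar q\circ\pi$ factors is automatically a homeomorphism. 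With either repair your outline becomes a complete proof of the cited result.
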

In the current paper, we are interested in $G$-actions on graded
posets and on their corresponding simplicial complexes. Lemma \ref{lem:regular-simplicial-actions}
translates to the following (see Definition \ref{def:SC-of-poset}
and the footnote on Page \pageref{fn:graded-poset} for some of the
terminology):
\begin{cor}
\label{cor:regular-poset-actions}Let $G$ act on a locally-finite
graded poset $\left(P,\le\right)$ by a graded-poset action, and assume
that whenever $x_{0}<\ldots<x_{r}$ and $g_{0}.x_{0}<\ldots<g_{r}.x_{r}$
for some $g_{0},\ldots,g_{r}\in G$ and $x_{0},\ldots,x_{r}\in P$,
there is a $g\in G$ with $g.x_{i}=g_{i}.x_{i}$ for every $i$. Then
\[
\left|\nicefrac{P}{G}\right|\cong\nicefrac{\left|P\right|}{G}.
\]
\end{cor}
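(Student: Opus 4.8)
The plan is to deduce Corollary \ref{cor:regular-poset-actions} directly from Lemma \ref{lem:regular-simplicial-actions} by verifying that the hypothesized graded-poset action satisfies the two conditions of Definition \ref{def:regular-action} for the associated simplicial complex $K=\left|P\right|$ (more precisely, for the abstract simplicial complex whose simplices are the chains of $P$). Recall that a $k$-simplex of this complex is a chain $x_{0}<x_{1}<\cdots<x_{k}$ in $P$, and the $G$-action on simplices is the one induced from the action on vertices. So the task reduces to checking regularity.

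First I would check condition \ref{enu:regular-action-1}: suppose $v$ and $g.v$ lie in a common simplex, i.e.\ they both appear in some chain. Then $v$ and $g.v$ are comparable, say $v\le g.v$ (the other case is symmetric). Since the action is a graded-poset action it preserves the rank function, so $\mathrm{rk}(v)=\mathrm{rk}(g.v)$. But in a graded poset, if $v\le w$ and $\mathrm{rk}(v)=\mathrm{rk}(w)$ then $v=w$ (a strict inequality $v<w$ forces $\mathrm{rk}(v)<\mathrm{rk}(w)$). Hence $g.v=v$, which is exactly condition \ref{enu:regular-action-1}. This is where the grading hypothesis does its work, and it is the analogue of the argument used in the proof of Theorem \ref{thm:MCG-action-on-AP}\eqref{enu:MCG action on |AP|}, where rank-preservation was used to rule out a group element fixing a simplex setwise without fixing its vertices.

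Next I would check condition \ref{enu:regualr-action-2}: given $g_{0},\dots,g_{r}\in G$ and chains $x_{0}<\cdots<x_{r}$ and $g_{0}.x_{0}<\cdots<g_{r}.x_{r}$ in $P$, we must produce a single $g\in G$ with $g.x_{i}=g_{i}.x_{i}$ for all $i$. But this is precisely the hypothesis of the corollary, stated verbatim. So condition \ref{enu:regualr-action-2} holds by assumption, with nothing further to prove. (One should also note the trivial point that the induced map on vertex-subsets sends simplices to simplices, i.e.\ the action really is simplicial: this is immediate because the action is order-preserving, so it carries chains to chains.)

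With both conditions of Definition \ref{def:regular-action} verified, Lemma \ref{lem:regular-simplicial-actions} applies and gives $\left|\nicefrac{K}{G}\right|\cong\nicefrac{\left|K\right|}{G}$. Unwinding the notation: $K$ is the simplicial complex of chains of $P$, so $\left|K\right|=\left|P\right|$; and $\nicefrac{K}{G}$ is the simplicial complex whose vertices are the $G$-orbits of elements of $P$ and whose simplices are images of chains, which — because $G$ acts by order-preserving maps and the quotient $\nicefrac{P}{G}$ inherits a partial order from $P$ (this is the one used implicitly throughout Section \ref{sec:core-of-result}, and needs only that orbits can be compared consistently, which follows from regularity) — is exactly the chain complex of $\nicefrac{P}{G}$, i.e.\ $\left|\nicefrac{K}{G}\right|=\left|\nicefrac{P}{G}\right|$. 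Therefore $\left|\nicefrac{P}{G}\right|\cong\nicefrac{\left|P\right|}{G}$, as claimed. I do not expect any genuine obstacle here: the only mildly delicate point is making sure that the quotient poset $\nicefrac{P}{G}$ is well-defined as a poset and that its chain complex coincides with $\nicefrac{K}{G}$, but regularity (specifically condition \ref{enu:regular-action-1}) is exactly what prevents two distinct comparable elements from being identified, so the induced relation on orbits is antisymmetric and the identification goes through cleanly.
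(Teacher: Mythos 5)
Your proposal is correct and follows essentially the same route as the paper: verify the two conditions of Definition \ref{def:regular-action} (condition \ref{enu:regualr-action-2} is the corollary's hypothesis verbatim, and condition \ref{enu:regular-action-1} follows from rank-preservation, since comparable elements of equal rank in a graded poset coincide) and then invoke Lemma \ref{lem:regular-simplicial-actions}. Your extra remarks identifying the chain complex of $\nicefrac{P}{G}$ with the quotient complex are a harmless elaboration of what the paper leaves implicit.
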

\begin{proof}
We only need to check that the action is regular. Item \ref{enu:regualr-action-2}
of Definition \ref{def:regular-action} holds by our extra assumption,
while item \ref{enu:regular-action-1} follows from the fact that
the action preserves rank, thus guaranteeing that $x$ and $g.x$
cannot belong to same simplex of $\left|P\right|$ unless $x=g.x$. 
\end{proof}
\end{appendices}

\section*{Glossary\label{sec:Glossary}}

\begin{center}
\begin{tabular}{|>{\centering}m{0.17\columnwidth}|>{\centering}m{0.45\columnwidth}|>{\centering}m{0.18\columnwidth}|>{\centering}m{0.2\columnwidth}|}
\hline 
 &  & \textbf{Reference} & \textbf{Remarks}\tabularnewline[\doublerulesep]
\hline 
$\F_{r}$ & the free group on $r$ generators &  & \tabularnewline[\doublerulesep]
\hline 
$x_{1},\ldots,x_{r}$ & a set of generators for $\F_{r}$ &  & sometimes $x,y,z,t$ used instead\tabularnewline[\doublerulesep]
\hline 
$X_{1},\ldots,X_{r}$ & $X_{i}=x_{i}^{-1}$ marks the inverse &  & likewise, $X,Y,Z,T$\tabularnewline[\doublerulesep]
\hline 
$\U\left(n\right)$ & the group of $n\times n$ unitary matrices &  & \tabularnewline[\doublerulesep]
\hline 
$\mu_{n}$ & the Haar measure on $\U\left(n\right)$ &  & \tabularnewline[\doublerulesep]
\hline 
$\trw\left(n\right)$ & expected trace of $A\in\U\left(n\right)$ sampled according to the
$w$-measure  & \eqref{eq:firstorder} & \tabularnewline[\doublerulesep]
\hline 
$\cl\left(w\right)$ & the commutator length of $w$ & Page \pageref{cl} & \tabularnewline[\doublerulesep]
\hline 
$a_{1},b_{1},\ldots,a_{g},b_{g}$ & a set of generators for $\F_{2g}$ &  & $A_{1},B_{1},\ldots,A_{g},B_{g}$ mark inverses\tabularnewline[\doublerulesep]
\hline 
$\delta_{g}$ & $\left[a_{1},b_{1}\right]\ldots\left[a_{g},b_{g}\right]$ &  & \tabularnewline[\doublerulesep]
\hline 
$\mathrm{Hom}_{w}\left(\F_{2g},\F_{r}\right)$ & $\left\{ \phi\in\mathrm{Hom}\left(\F_{2g},\F_{r}\right)\,\middle|\,\phi\left(\delta_{g}\right)=w\right\} $ &  & \tabularnewline[\doublerulesep]
\hline 
$\mathrm{Aut}_{\delta}\left(\F_{2g}\right)$ & $\left\{ \rho\in\mathrm{Aut}\left(\F_{2g}\right)\,\middle|\,\rho\left(\delta_{g}\right)=\delta_{g}\right\} $ &  & \tabularnewline[\doublerulesep]
\hline 
$\chi$ & Euler characteristic of a space or a group & Page \ref{The-Euler-characteristic} & \tabularnewline[\doublerulesep]
\hline 
$\surface$ & Orientable surface of genus $g$ and one boundary component. & Section \ref{sub:Expected-product-of} & \tabularnewline[\doublerulesep]
\hline 
$\trwl\left(n\right)$ & $\mathbb{E}\left[\prod_{i=1}^{\ell}\mathrm{tr}\left(w_{i}\left(U_{1}^{\left(n\right)},\ldots,U_{r}^{\left(n\right)}\right)\right)\right]$ &  & \tabularnewline[\doublerulesep]
\hline 
$\left(\wedger,\mbox{o}\right)$ & a wedge of $r$ circles, fundamental group identified with $\F_{r}$,
pointed at the wedge point $o$ & Section \ref{sub:Expected-product-of}, Figure \ref{fig:S^1(w) and marked wedge} & sometimes additional marked points\tabularnewline[\doublerulesep]
\hline 
$\left(\Sigma,f\right)$ admissible for $\wl$ & $\Sigma$ a compact oriented surface with $\ell$ boundary components
and $f\colon\Sigma\to\wedger$ maps these components to $\wl$  & Definition \ref{def: admissible maps} & \tabularnewline[\doublerulesep]
\hline 
$\left(\Sigma,f\right)$ incompressible & no essential simple closed curve mapped to nullhomotopic loop & Definition \ref{def:incompressible} & \tabularnewline[\doublerulesep]
\hline 
$\tilde{f}$ & homotopy class of $f\colon\Sigma\to\wedger$, relative $\partial\Sigma$ & Theorem \ref{thm:main - general} & \tabularnewline[\doublerulesep]
\hline 
\end{tabular}\\
\par\end{center}

\begin{center}
\begin{tabular}{|>{\centering}m{0.17\columnwidth}|>{\centering}m{0.45\columnwidth}|>{\centering}m{0.18\columnwidth}|>{\centering}m{0.2\columnwidth}|}
\hline 
 &  & \textbf{Reference} & \textbf{Remarks}\tabularnewline[\doublerulesep]
\hline 
$v_{1},\ldots,v_{\ell}$  & {}``basepoints'' of $\Sigma$, one at every boundary component &  & \tabularnewline[\doublerulesep]
\hline 
$\partial_{1},\ldots,\partial_{\ell}$ & identifications of boundary components of $\Sigma$ with $S^{1}$ &  & \tabularnewline[\doublerulesep]
\hline 
$f_{w}$ & a map $\left(S^{1},1\right)\to\left(\wedger,o\right)$ with image
representing $w$ & Page \pageref{f_w} and more detailed in Section \ref{sub:cl-of-word} & \tabularnewline[\doublerulesep]
\hline 
$\mathrm{MCG}\left(\Sigma\right)$ & mapping class group of $\Sigma$, consisting of mapping classes which
fix $\partial\Sigma$ pointwise &  & \tabularnewline[\doublerulesep]
\hline 
$\ch\left(\wl\right)$ &  & Definition \ref{def:chi(words)} & \tabularnewline[\doublerulesep]
\hline 
$\left(\Sigma,f\right)\sim\left(\Sigma',f'\right)$ &  & Page \pageref{(Sigma,f) sim (Sigma',f')} & \tabularnewline[\doublerulesep]
\hline 
$\left[\left(\Sigma,f\right)\right]$ & equivalence class of $\left(\Sigma,f\right)$ & Definition \pageref{def:sol(words)} & \tabularnewline[\doublerulesep]
\hline 
$\sol\left(\wl\right)$ & set of equivalence classes of admissible maps of maximal $\chi$ & Definition \pageref{def:sol(words)} & \tabularnewline[\doublerulesep]
\hline 
balanced set of words & words such that the total number of $x_{i}^{+1}$ is the same as total
number of $x_{i}^{-1}$ &  & \tabularnewline[\doublerulesep]
\hline 
$S^{1}\left(w\right)$ & a marked circle which spells out $w$ & Section \ref{sec:surface-from-matchings} and Figure \ref{fig:S^1(w) and marked wedge} & \tabularnewline[\doublerulesep]
\hline 
$o$, $p_{i}$, $z_{i},$ $q_{i}$ & marked points on $\wedger$ & Sections \ref{sub:cl-of-word} and \ref{sec:surface-from-matchings} & \tabularnewline[\doublerulesep]
\hline 
$p_{i}^{\pm}$, $q_{i}^{\pm}$ & marked points of $S^{1}\left(w\right)$, $\partial\Sigma$ & Sections \ref{sub:cl-of-word} and \ref{sec:surface-from-matchings} & \tabularnewline[\doublerulesep]
\hline 
$\mathrm{Wg}$ & the Weingarten function & Definition \ref{def:weingarten}  & \tabularnewline[\doublerulesep]
\hline 
$\left\Vert \sigma\right\Vert $ & the norm of the permutation $\sigma$ & Section \ref{sub:Weingarten-function-and-Collins-Sniady} & \tabularnewline[\doublerulesep]
\hline 
$\moeb\left(\sigma\right)$ & the Möbius function of $\sigma$ & Proposition \ref{prop:mobius function} & \tabularnewline[\doublerulesep]
\hline 
$L$, $L_{i}$ & assuming $\wl$ balanced , $2L=\sum\left|w_{i}\right|$ and $L_{i}$
is the number of appearances of $x_{i}^{+1}$ & Section \ref{sec:A-Rational-Expression} & \tabularnewline[\doublerulesep]
\hline 
$E^{\pm},E_{i}^{\pm}$ & subsets of the letter of $\wl$ & Section \ref{sec:surface-from-matchings} & \tabularnewline[\doublerulesep]
\hline 
${\scriptstyle \match\left(\wl\right)}$ & the set of bijections $E^{+}\overset{\sim}{\to}E^{-}$ which map $E_{i}^{+}$
to $E_{i}^{-}$ & Definition \ref{def:match(w) and B(sigma,tau)} & \tabularnewline[\doublerulesep]
\hline 
$\Sigma_{\left(\sigma,\tau\right)}$ & the CW-complex associated with $\left(\sigma,\tau\right)\in\match\left(\wl\right)^{2}$ & Definition \ref{def:surface-from-perms} & \tabularnewline[\doublerulesep]
\hline 
matching-edges &  & Definition \ref{def:surface-from-perms} & \tabularnewline[\doublerulesep]
\hline 
$\chi\left(\sigma,\tau\right)$ & the Euler characteristic of $\Sigma_{\left(\sigma,\tau\right)}$ & Definition \ref{def:genus(sigma,tau)}  & \tabularnewline[\doublerulesep]
\hline 
type-$o$ and type-$z_{i}$ discs & types of discs in $\Sigma_{\left(\sigma,\tau\right)}$ as well as
in $\Sigma_{\aa}$ & Claims \ref{claim:properties-of-perms-surface}, Section \ref{sub:Arc-systems} & \tabularnewline[\doublerulesep]
\hline 
$f_{\left(\st\right)},f_{\aa}$ & $f_{\left(\st\right)}\colon\Sigma_{\left(\st\right)}\to\wedger$ and
$f_{\aa}\colon\Sigma_{\aa}\to\wedger$ are homotopy classes of maps & Definition \ref{def:f_(sigma,tau)} and Section \ref{sub:Arc-systems} & \tabularnewline[\doublerulesep]
\hline 
$\pmp\left(\Sigma,f\right)$, $\left|\pmp\left(\Sigma,f\right)\right|$ & the pair of matchings poset and its associated simplicial complex & Definitions \ref{def:perm-poset} and \ref{def:SC-of-poset} & \tabularnewline[\doublerulesep]
\hline 
$\sigma_{\aa},\tau_{\aa}$ & the pair of matchings induced by the arc system $\aa$ & Section \ref{sub:Arc-systems} & \tabularnewline[\doublerulesep]
\hline 
\end{tabular}\\
\par\end{center}

\begin{center}
\begin{tabular}{|>{\centering}m{0.17\columnwidth}|>{\centering}m{0.45\columnwidth}|>{\centering}m{0.18\columnwidth}|>{\centering}m{0.2\columnwidth}|}
\hline 
 &  & \textbf{Reference} & \textbf{Remarks}\tabularnewline[\doublerulesep]
\hline 
$\Sigma_{\aa}$ & the CW-complex structure induced on $\Sigma$ by the arc system $\aa$ & Section \ref{sub:Arc-systems} & \tabularnewline[\doublerulesep]
\hline 
$\ap\left(\Sigma,f\right)$, $\left|\ap\left(\Sigma,f\right)\right|$ & the arc poset and its associated simplicial complex & Definitions \ref{def:arc-poset}, \ref{def:SC-of-poset} & \tabularnewline[\doublerulesep]
\hline 
$\preceq$ & partial orders defined on $S_{L}$, $S_{L}^{\,2}$, $\match\left(\wl\right)^{2}$,
$\pmp\left(\Sigma,f\right)$, $\ap\left(\Sigma,f\right)$ & Sections \ref{sub:Weingarten-function-and-Collins-Sniady} and \ref{sec:The-pairs-of-matchings-Poset}
and Definitions \ref{def:perm-poset} and \ref{def:arc-poset} & \tabularnewline[\doublerulesep]
\hline 
graded poset &  & \multirow{2}{0.18\columnwidth}{Footnote on Page \pageref{fn:graded-poset}} & \tabularnewline[\doublerulesep]
\cline{1-2} \cline{4-4} 
$x$ covers $y$ &  &  & for $x,y$ in a poset\tabularnewline[\doublerulesep]
\hline 
\end{tabular}
\par\end{center}

\bibliographystyle{alpha}
\bibliography{database}

\noindent Michael Magee,\\
Department of Mathematics,\\
Yale University,

\noindent PO Box 208283, New Haven, CT 06520 USA \\
michael.magee@yale.edu\\

\noindent Doron Puder, \\
School of Mathematics,\\
Institute for Advanced Study,\\
Einstein Drive, Princeton, NJ 08540 USA\\
doronpuder@gmail.com
\end{document}